\theoremstyle{definition} 
 \newtheorem{definition}{Definition}[section]
 \newtheorem{remark}[definition]{Remark}
 \newtheorem{example}[definition]{Example}
 \newtheorem{problem}[definition]{Problem}
\theoremstyle{plain}      
 \newtheorem{proposition}[definition]{Proposition}
 \newtheorem{theorem}[definition]{Theorem}
 \newtheorem{corollary}[definition]{Corollary}
 \newtheorem{lemma}[definition]{Lemma}
\newtheorem{conjecture}[definition]{Conjecture}
\newcommand{\ad}{\operatorname{ad}}
\newcommand{\Arf}{\operatorname{Arf}}
\newcommand{\Aut}{\operatorname{Aut}}
\newcommand{\Der}{\operatorname{Der}}
\newcommand{\diff}{\operatorname{d}}
\newcommand{\End}{\operatorname{End}}
\newcommand{\IAut}{\operatorname{IAut}}           
\newcommand{\IOut}{\operatorname{IOut}}           
\newcommand{\incl}{\operatorname{incl}}
\newcommand{\interior}{\operatorname{int}}
\newcommand{\Id}{\operatorname{Id}}
\newcommand{\Gr}{\operatorname{Gr}}
\newcommand{\Hom}{\operatorname{Hom}}
\newcommand{\Ker}{\operatorname{Ker}}
\newcommand{\ODer}{\operatorname{ODer}}      
\newcommand{\Out}{\operatorname{Out}}
\newcommand{\sgn}{\operatorname{sgn}}
\newcommand{\Sp}{\operatorname{Sp}}
\newcommand{\Spin}{\operatorname{Spin}}
\newcommand{\T}{\operatorname{T}}
\newcommand{\GLike}{\operatorname{GLike}}      
\newcommand{\Prim}{\operatorname{Prim}}        
\newcommand{\Q}{\mathbb{Q}}
\newcommand{\R}{\mathbb{R}} 
\newcommand{\Z}{\mathbb{Z}} 
\newcommand{\A}{\mathcal{A}}
\newcommand{\Lie}{\mathfrak{L}}                
\newcommand{\MalcevLie}{\mathfrak{m}}          
\newcommand{\Torelli}{\mathcal{I}}            
\newcommand{\mcg}{\mathcal{M}}                
\newcommand{\mcyl}{\mathbf{c}}                
\newcommand{\cyl}{\mathcal{IC}}               
\newcommand{\cob}{\mathcal{C}}                
\newcommand{\gcob}{\mathcal{H}}               
\newcommand{\gcyl}{\mathcal{IH}}              
\newcommand{\osqcup}{\hphantom{}^<_\sqcup}
\newcommand{\centereddot}{\stackrel{\centerdot}{}}
\newcommand{\clocase}[1]{\wideparen{#1}}      
\newcommand{\closure}[1]{\overline{#1}}      
\newcommand{\set}[1]{\lfloor #1\rceil}
\newcommand{\ie}{i$.$e$.$ }
\newcommand{\Ygraphbottoptop}[3]
{  \begin{array}{c} 
\hphantom{.}\\
\labellist  \hair 2pt 
\pinlabel {$#1$} [t] at 56 0
\pinlabel {$#2$} [br] at 40 165
\pinlabel {$#3$} [bl] at 90 165
\endlabellist
\includegraphics[scale=0.13]{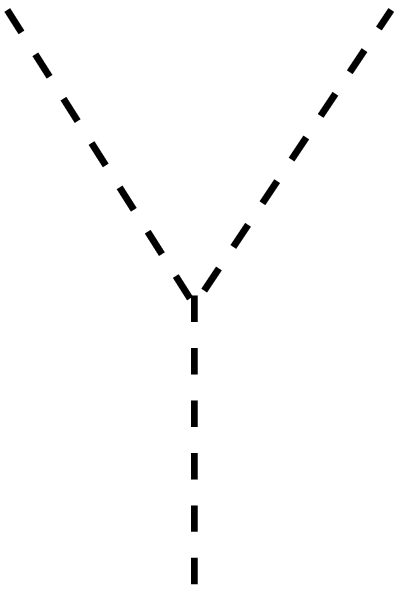}\\
\hphantom{.}
\end{array}  }
\newcommand{\figtotext}[3]{\begin{array}{c}\includegraphics[width=#1pt,height=#2pt]{#3}\end{array}}
\newcommand{\thetagraph}
{\hspace{-0.2cm} \figtotext{18}{18}{theta} \hspace{-0.2cm}}
\begin{document}

\title{From mapping class groups\\ to  monoids of homology cobordisms: a survey}

\date{February 8, 2012}

\author{Kazuo Habiro and Gw\'ena\"el Massuyeau}

\address{Research Institute for Mathematical Sciences\\ Kyoto University\\ Kyoto 606-8502, Japan\\
email:\,\tt{habiro@kurims.kyoto-u.ac.jp}
\\[4pt]
Institut de Recherche Math\'ematique Avanc\'ee\\ Universit\'e de Strasbourg \& CNRS\\
7 rue Ren\'e Descartes\\ 67084 Strasbourg, France\\
email:\,\tt{massuyeau@math.unistra.fr}
}

\maketitle

\begin{abstract}
Let $\Sigma$ be a compact oriented surface.
A homology cobordism of $\Sigma$ is a cobordism $C$ between two copies of $\Sigma$, 
such that  both the ``top'' inclusion and the ``bottom'' inclusion $\Sigma \subset C$ induce isomorphisms in homology.
Homology cobordisms of $\Sigma$ form a monoid,
into which the mapping class group of $\Sigma$ embeds by the mapping cylinder construction.
In this chapter, we survey recent works on the structure of the monoid of homology cobordisms,
and we outline their relations with the study of the mapping class group.
We are mainly interested in the cases where $\partial \Sigma$ is empty or connected. 
\end{abstract}

\begin{classification}
57M27, 57N10, 57N70, 57R56, 20F14, 20F34, 20F38, 20F40.
\end{classification}


\tableofcontents   

\section{Introduction}

It is well-known that $3$-manifolds can be presented in terms of self-homeomorphisms of surfaces.
In particular, any closed  oriented $3$-manifold $M$ can be presented by a \emph{Heegaard splitting}:
then $M$ is obtained by gluing  two copies of the same handlebody $H$
with a self-homeomorphism of the closed surface $\partial H$.
In that way, the study of mapping class groups interplays with $3$-dimensional topology.
For example, Lickorish has used Heegaard splittings to show 
that any closed oriented $3$-manifold can be obtained from $S^3$ by surgery \cite{Lickorish} ---
a theorem deduced from the fact that the mapping class group  is generated by Dehn twists. 
This is  a contribution of mapping class groups to $3$-dimensional topology.
In the other direction and in a more recent context, the TQFT approach to $3$-manifolds 
has produced a variety of representations for mapping class groups: see Masbaum's survey \cite{Masbaum}.
As an alternative to Heegaard splittings, \emph{open book decompositions} with connected binding
can also be used to present all closed oriented $3$-manifolds:
one then needs to consider surfaces with connected nonempty boundary instead of closed surfaces.

Adding homological considerations to $3$-manifolds, 
$3$-dimensional topology becomes intertwined with the study of the Torelli group.
Let $\Sigma$ be a compact connected oriented surface with at most one boundary component.
Then, the Torelli group $\Torelli(\Sigma)$ of the surface $\Sigma$ is defined as the subgroup
of the mapping class group $\mcg(\Sigma)$ acting trivially on the homology of $\Sigma$.
The study of the Torelli group, from a topological point of view, started with works of Birman \cite{Birman_Siegel}
and was followed by Johnson through a series of paper, 
which notably resulted in a finite generating set for $\Torelli(\Sigma)$ \cite{Johnson_finite_generation}
and an explicit computation of its abelianization \cite{Johnson_abelianization}. 
The reader is referred to Johnson's survey \cite{Johnson_survey} for an account of his work on $\Torelli(\Sigma)$.
A key ingredient in his paper \cite{Johnson_abelianization} is the use
of the Birman--Craggs homomorphisms \cite{BC},
whose definition involves the Rochlin invariant of closed spin $3$-manifolds
and can be stated in a simple way using open book decompositions (with connected binding).

The study of the structure of the Torelli group has been pursued by Morita.
He discovered a deep relation with the Casson invariant of homology $3$-spheres \cite{Morita_Casson_1,Morita_Casson_2,Morita_Casson_3}
and he reinforced the use of algebraico-topological methods in that study 
(via the action of $\Torelli(\Sigma)$ on $\pi_1(\Sigma)$) \cite{Morita_Abelian,Morita_linear}.
We refer to his survey \cite{Morita_ICM} and to \cite{Morita_prospect} for more recent developments.
Morita's work on the Casson invariant prefigures 
the use of finite-type invariants of homology $3$-spheres in the study of the Torelli group. 
This approach of $\Torelli(\Sigma)$ has been developed 
in subsequent works of Garoufalidis and Levine \cite{GL_FTI_Torelli,GL_blinks}.

Works of Goussarov \cite{Goussarov,Goussarov_clovers,GGP} 
and the first author \cite{Habiro} breathed new life into this approach to the Torelli group.
In their works, new surgery techniques, known as calculus of claspers,
are developed for the study of finite-type invariants. 
Besides, the study of the mapping class group is tied to $3$-dimensional topology in the following way.
Goussarov and the first author consider \emph{homology cobordisms} of $\Sigma$, \ie cobordisms $C$ (with corners if $\partial \Sigma \neq \varnothing$)
such that both the ``top'' inclusion and the ``bottom'' inclusion $\Sigma \subset C$
induce isomorphisms at the level of homology.
With the usual ``stacking'' operation, the set of homology cobordisms forms a monoid $\cob(\Sigma)$,
into which the group $\mcg(\Sigma)$ embeds by the mapping cylinder construction:
$$
\mcyl: \mcg(\Sigma) \longrightarrow \cob(\Sigma).
$$
Calculus of claspers applies  to the monoid $\cyl(\Sigma)$ of \emph{homology cylinders} over $\Sigma$, 
\ie cobordisms of $\Sigma$ that can not be distinguished from the trivial cylinder by homology.
Since $\Torelli(\Sigma)$ is mapped into $\cyl(\Sigma)$ by $\mcyl$, homology cylinders have opened new perspectives for the Torelli group.\\

In this chapter, we shall survey recent developments in that direction.
The  monoid $\cob(\Sigma)$ of homology cobordisms is presented, 
with special attention given to the submonoid $\cyl(\Sigma)$ of homology cylinders.
We outline how the structure of the monoid $\cyl(\Sigma)$ can be studied by means of finite-type invariants and claspers.
At the same time, we explain how this study is related to more classical results and constructions for the Torelli group.
We are mainly interested in compact connected oriented surfaces $\Sigma$ without boundary 
(referred to, below, as the \emph{closed case}) or with connected boundary (the \emph{bordered case}).
Indeed, when the surface $\Sigma$ has more than one boundary component,
the study of homology cobordisms of $\Sigma$ by means of finite-type invariants is still possible, but it gets a little bit more complicated.
Since closed oriented $3$-manifolds can be presented by Heegaard splittings 
(or, alternatively, by open book decompositions with connected binding),
the interactions between $3$-dimensional topology and mapping class groups 
are somehow contained in the closed case (or in the bordered case).\\

The survey is organized as follows.
Section \ref{sec:cob_cyl} introduces the monoids $\cob(\Sigma)$ and $\cyl(\Sigma)$, starting with their precise definitions.
We give a few elementary facts about these monoids. 
For instance, it is shown that the groups of invertible elements of $\cob(\Sigma)$ and $\cyl(\Sigma)$
coincide with $\mcg(\Sigma)$ and $\Torelli(\Sigma)$, respectively.
We also discuss the passage from the bordered case to the closed case.

Section \ref{sec:Johnson_Morita} reviews constructions based on the Dehn--Nielsen representation, 
which is given by the canonical action of  $\mcg(\Sigma)$ on the fundamental group  $\pi_1(\Sigma)$.
Thus, we survey Johnson's homomorphisms (in the bordered and closed cases)
and their extensions by Morita (in the bordered case). Originally defined on subgroups of the mapping class group, 
these homomorphisms encode the action of $\mcg(\Sigma)$ on nilpotent quotients of $\pi_1(\Sigma)$.
Since, by virtue of Stallings' theorem, a homology equivalence between groups induces isomorphisms at the level of their nilpotent quotients,
Johnson's homomorphisms have natural extensions to  the monoid $\cob(\Sigma)$.
By using the Malcev Lie algebra of $\pi_1(\Sigma)$, 
we define ``infinitesimal'' versions of the Dehn--Nielsen representation and
we use them to reformulate Johnson's and Morita's homomorphisms.

Section \ref{sec:LMO} introduces the LMO homomorphism, 
which is a diagrammatic representation of the monoid $\cyl(\Sigma)$.
Derived from the Le--Murakami--Ohtsuki invariant of closed $3$-manifolds, 
this invariant of homology cylinders is universal among $\Q$-valued finite-type invariants.
Thus, the LMO homomorphism seems very appropriate to the study of the Torelli group from the point of view of finite-type invariants.
The \emph{tree-reduction} of the LMO homomorphism (where all looped diagrams are ``killed'')
encodes the action of $\cyl(\Sigma)$ on the Malcev Lie algebra of $\pi_1(\Sigma)$.
It follows that the LMO homomorphism is injective on the Torelli group, 
and that it determines the Johnson homomorphisms as well as the Morita homomorphisms.
(The results of this section apply to the bordered and closed cases.)

Section \ref{sec:Y} gives an overview of claspers 
and its applications to the study of the monoid $\cyl(\Sigma)$.
We recall the definition of the $Y_k$-equivalence relations ($k\geq 1$), which is based on clasper surgery,
and a few of their properties. These relations define a filtration 
$$
\cyl(\Sigma) = Y_1 \cyl(\Sigma) \supset Y_2 \cyl(\Sigma) \supset Y_3 \cyl(\Sigma) \supset \cdots 
$$
of the monoid $\cyl(\Sigma)$ by submonoids. 
Although $\cyl(\Sigma)$ by itself is only a monoid, 
this filtration shares several properties with the lower central series of a group.
In particular, there is a graded Lie ring $\Gr^Y \cyl(\Sigma)$ associated with this filtration.
When restricted to the Torelli group, the $Y$-filtration contains the lower central series,
and the two filtrations are expected to coincide (in stable genus). 

Section \ref{sec:Lie_ring_homology_cylinders} is an exposition of  results 
on the graded Lie ring $\Gr^Y \cyl(\Sigma)$ for a bordered or a closed surface $\Sigma$.
On one hand, the graded Lie algebra $\Gr^Y \cyl(\Sigma) \otimes \Q$ 
has an explicit diagrammatic description. The proof, which is only sketched here,
is based on the LMO homomorphism and claspers. 
This result is connected to Hain's ``infinitesimal'' presentation of the Torelli group.
On the other hand, the abelian group $\cyl(\Sigma)/Y_2$ 
(\ie the degree $1$ part of $\Gr^Y \cyl(\Sigma)$) is explicitly described 
using the first Johnson homomorphism and the Birman--Craggs homomorphism.
This is similar to Johnson's result on the abelianization of $\Torelli(\Sigma)$,
which was mentioned above.

Finally, following Garoufalidis and Levine,
we consider in Section \ref{sec:group} homology cobordisms up to the \emph{relation} $\sim_H$ of homology cobordism. 
The quotient $\gcob(\Sigma):= \cob(\Sigma)/\!\sim_H$ is a group, 
in which the mapping class group $\mcg(\Sigma)$ still embeds.
We outline the extent to which the previous constructions apply to the study of the group $\gcob(\Sigma)$.
This group has been the subject of recent works in other directions: those developments are only evoked here,
while they are presented in more detail in Sakasai's chapter.\\

\noindent
\textbf{Conventions.}
Homology groups are assumed to be with integer coefficients unless otherwise specified.
On pictures, the blackboard framing convention is used to represent $1$-dimensional objects with framing.
Equivalence classes are denoted by curly brackets $\{\centereddot\}$, 
except for homology/homotopy/isotopy classes which are denoted by square brackets $[\centereddot]$.
We denote by the same symbol a graded vector space  and its degree completion.

\section{Homology cobordisms and homology cylinders}

\label{sec:cob_cyl}

In the sequel, we denote by $\Sigma_{g,b}$ a compact connected oriented surface of genus $g$ with $b$ boundary components.
We define the monoid of cobordisms of $\Sigma_{g,b}$,
and we recall how the mapping class group of $\Sigma_{g,b}$ embeds into this monoid.
At the end of this section, we restrict ourselves to the \emph{closed} surface $\Sigma_g$ ($b=0$)
and to the \emph{bordered} surface $\Sigma_{g,1}$ ($b=1$).

\subsection{Definition of the monoids}

\label{subsec:definitions}

A \emph{cobordism}\index{cobordism} of $\Sigma_{g,b}$ is a pair $(M,m)$ where  $M$ is a compact connected oriented $3$-manifold 
and $m: \partial\left(\Sigma_{g,b} \times [-1,1] \right) \to \partial M$ is an orientation-preserving homeomorphism.
(We will usually denote the cobordism $(M,m)$ simply by $M$, 
the convention being that the boundary parameterization is denoted by the lower-case letter $m$.)
Two cobordisms $M,M'$ are \emph{homeomorphic} if there is an orientation-preserving homeomorphism
$f:M \to M'$ such that $f|_{\partial M} \circ m = m'$.
The inclusion $\Sigma_{g,b} \to M$ defined by $s \mapsto m(s,\pm 1)$ is denoted by $m_\pm$.
We call $m_+(\Sigma_{g,b})$  the \emph{top} surface of $M$ and  $m_-(\Sigma_{g,b})$ the \emph{bottom} surface.

Two cobordisms $M,M'$ of $\Sigma_{g,b}$ can be \emph{composed} by gluing the bottom of $M'$ to the top of $M$:
$$
M \circ M'  := M \cup_{m_+ \circ (m'_-)^{-1}} M'.
$$
The resulting $3$-manifold is a cobordism of $\Sigma_{g,b}$ with the obvious parameterization of its boundary.
When cobordisms are considered up to homeomorphisms, the operation $\circ$ is associative
and the \emph{trivial} cobordism $\Sigma_{g,b} \times [-1,1] := \left(\Sigma_{g,b} \times [-1,1], \Id \right)$
is an identity element for that operation.

\begin{definition}
A \emph{homology cobordism}\index{homology cobordism}\index{cobordism!homology}
of $\Sigma_{g,b}$ is a cobordism $(M,m)$ such that both  inclusions $m_+$ and $m_-$ induce isomorphisms $H_*(\Sigma_{g,b}) \to H_*(M)$.
A \emph{homology cylinder}\index{homology cylinder}\index{cylinder!homology}
over $\Sigma_{g,b}$ is a cobordism $(M,m)$ for which we can find an isomorphism
$h: H_*(\Sigma_{g,b} \times [-1,1]) \to H_*(M)$ such that the following diagram commutes:
$$
\xymatrix{
H_*\left(\Sigma_{g,b} \times [-1,1]\right) \ar[r]_-\simeq^-h & H_*(M)\\
H_*\left(\partial(\Sigma_{g,b} \times [-1,1])\right) \ar[r]^-\simeq_-{m_*} \ar[u]^-{\incl_*}
& H_*(\partial M) \ar[u]_-{\incl_*}.
}
$$
In other words, a homology cylinder $(M,m)$ 
has the same homology type as the trivial cobordism  $\left(\Sigma_{g,b} \times [-1,1], \Id \right)$.
\end{definition}

\noindent
The set of homology cobordisms contains the set of homology cylinders,
and an application of the Mayer--Vietoris theorem shows that both sets are stable by composition. 
In the sequel, homology cobordisms and homology cylinders are considered up to homeomorphisms.
Thus, they form monoids\index{homology cobordism!monoid of}\index{homology cylinder!monoid of}
which we denote by 
$$
\cob\left(\Sigma_{g,b}\right) \supset \cyl\left(\Sigma_{g,b}\right).
$$

\begin{example}[Genus 0]
\label{ex:genus_0_cobordisms}
For $b>1$, a \emph{framed string-link} in $D^2 \times [-1,1]$ 
on $(b-1)$ strands in the sense of \cite{HL_link-homotopy,HL_concordance} 
can be regarded as a cobordism of $\Sigma_{0,b}$ by taking its complement.
More generally, the monoid $\cob(\Sigma_{0,b})$ can be identified with the monoid of framed string-links in homology $3$-balls.  
The monoid $\cyl(\Sigma_{0,b})$ corresponds by this identification 
to the monoid of framed string-links whose linking matrix is trivial.
For $b=0$ or $b=1$, homology cobordisms of $\Sigma_{0,b}$ can be transformed into homology $3$-spheres
by gluing balls along their boundary components:  thus,
the monoid $\cob(\Sigma_{0,b}) = \cyl(\Sigma_{0,b})$ is canonically isomorphic
to the monoid of homology $3$-spheres with the connected sum operation.

\end{example}

\subsection{The mapping class group and the Torelli group}

\label{subsec:groups}

We shall now identify the invertible elements of the monoid $\cob\left(\Sigma_{g,b}\right)$.

\begin{definition}
The \emph{mapping class group}\index{mapping class group}\index{group!mapping class} of $\Sigma_{g,b}$ is the group
$$
\mcg\left(\Sigma_{g,b}\right) := \operatorname{Homeo}_{+}\left(\Sigma_{g,b},\partial \Sigma_{g,b}\right)/ \cong
$$
of isotopy classes of homeomorphisms $\Sigma_{g,b} \to \Sigma_{g,b}$ which preserve the orientation and fix the boundary pointwise.
\end{definition}

A homeomorphism $f:\Sigma_{g,b} \to \Sigma_{g,b}$ (which preserves the orientation and fixes the boundary pointwise)  gives rise to a cobordism
$$
\big(\Sigma_{g,b} \times [-1,1], 
(\Id \times (-1)) \cup (\partial \Sigma_{g,b} \times \Id) \cup (f \times 1)\big),
$$
called  the \emph{mapping cylinder}\index{mapping cylinder}\index{cylinder!mapping} of $f$.  
This construction defines a monoid map
$$
\mcyl: \mcg\left(\Sigma_{g,b}\right)  \longrightarrow \cob\left(\Sigma_{g,b}\right).
$$
By a classical result of Baer \cite{Baer1,Baer2}, any two homeomorphisms   $\Sigma_{g,b} \to \Sigma_{g,b}$
of the above type are homotopic relatively to the boundary if
and only if they are isotopic relatively to the boundary. It follows that the homomorphism $\mcyl$ is injective. 
The following result is folklore, it appears in \cite{HL_concordance} for the genus $0$ case.

\begin{proposition}
An element of the monoid $\cob\left(\Sigma_{g,b}\right)$ is left-invertible if and only if it is right-invertible, 
and the group of invertible elements of  $\cob\left(\Sigma_{g,b}\right)$ coincides with 
the image of $\mcyl: \mcg\left(\Sigma_{g,b}\right)  \to \cob\left(\Sigma_{g,b}\right)$.
\end{proposition}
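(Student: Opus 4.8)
The plan is to show three things: (i) a left-invertible element of $\cob(\Sigma_{g,b})$ is automatically a homeomorphism onto a mapping cylinder, (ii) every mapping cylinder is invertible, and (iii) symmetrically for right-invertibility. Step (ii) is immediate: if $f \in \mcg(\Sigma_{g,b})$, then $\mcyl(f) \circ \mcyl(f^{-1})$ and $\mcyl(f^{-1}) \circ \mcyl(f)$ are both homeomorphic to the trivial cobordism $\Sigma_{g,b} \times [-1,1]$, since gluing two mapping cylinders along a copy of $\Sigma_{g,b}$ amounts to composing the homeomorphisms. So the image of $\mcyl$ lies in the group of units, and by the injectivity of $\mcyl$ (already established via Baer's theorem) this image is a group. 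It remains to prove the reverse inclusion and the left/right equivalence, which is the substance of the statement.

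The key step is (i). Suppose $M \circ N$ is homeomorphic to $\Sigma_{g,b} \times [-1,1]$ for some cobordism $N$; we want to conclude that $M$ is (homeomorphic to) a mapping cylinder. First I would observe that $M$ is then a homology cobordism: from $M \circ N \cong \Sigma_{g,b}\times[-1,1]$ and a Mayer--Vietoris argument over the gluing surface, the inclusion $m_-: \Sigma_{g,b} \hookrightarrow M$ induces an injection on homology that factors the isomorphism $H_*(\Sigma_{g,b}) \to H_*(M\circ N)$, forcing $m_{-*}$ to be injective, and a dimension/duality count over the connected $3$-manifold $M$ (using that $\partial M$ is a fixed closed surface and half-lives-half-dies for the kernel of $H_*(\partial M)\to H_*(M)$) upgrades this to an isomorphism; similarly for $m_+$. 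Having $M$ a homology cobordism, I would then run the argument at the level of $\pi_1$: write $\Sigma = \Sigma_{g,b}$ and consider the inclusions $(m_\pm)_*: \pi_1(\Sigma) \to \pi_1(M)$. The hard part will be promoting the homological invertibility to $\pi_1$-invertibility and then to an actual homeomorphism. In the bordered case this is cleanest: $\pi_1(\Sigma_{g,1})$ is free, and one shows $(m_-)_*$ is an isomorphism (surjectivity because a one-sided inverse $N$ supplies a retraction $\pi_1(M)\to\pi_1(\Sigma)$ after gluing, injectivity because $M$ deformation retracts appropriately onto a handlebody-like piece); then $(m_+)_* \circ ((m_-)_*)^{-1}$ is an automorphism of $\pi_1(\Sigma_{g,1})$ realized, by Dehn--Nielsen--Baer, by a homeomorphism $f$, and a relative-handle or Waldhausen-type argument identifies $M$ with the mapping cylinder of $f$. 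In the closed case one replaces this by the fact that a homology cobordism of $\Sigma_g$ which is left-invertible must have $M \cong \Sigma_g \times [-1,1]$-type fundamental group and then appeals to the same rigidity.

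Finally, for the left/right equivalence: once (i) is known, a left-invertible $M$ is a mapping cylinder $\mcyl(f)$, and then $\mcyl(f^{-1})$ is a genuine two-sided inverse by step (ii); so left-invertible $\Rightarrow$ invertible, and symmetrically right-invertible $\Rightarrow$ invertible, which collapses the three notions. I expect the genuine obstacle to be the passage from ``$M$ is a homology cobordism with $\pi_1$-isomorphism on each side'' to ``$M$ is homeomorphic to a mapping cylinder''—this is where one needs to invoke irreducibility/Haken-ness and Waldhausen's rigidity theorem (or, in the presence of boundary, the relative version), rather than anything elementary; everything else is Mayer--Vietoris bookkeeping and the already-cited theorems of Baer.
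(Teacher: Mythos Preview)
Your overall architecture matches the paper's: reduce to showing a one-sided invertible $M$ is a mapping cylinder, establish that $m_-$ induces a $\pi_1$-isomorphism, then invoke a Waldhausen-type rigidity result. But the $\pi_1$ step---which you correctly identify as the crux---has a real gap.

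The retraction argument is backwards. From $M\circ N \cong \Sigma\times[-1,1]$ you get that the composite $\pi_1(\Sigma)\xrightarrow{m_{-,*}}\pi_1(M)\xrightarrow{i_*}\pi_1(M\circ N)\cong\pi_1(\Sigma)$ is an isomorphism; this gives a \emph{left} inverse to $m_{-,*}$, hence injectivity of $m_{-,*}$, not surjectivity. And your line ``$M$ deformation retracts appropriately onto a handlebody-like piece'' is precisely the conclusion you are trying to reach. What is actually needed is to show that the inclusion $i_*:\pi_1(M)\to\pi_1(M\circ N)$ is injective (then $m_{-,*}$ is bijective since its composite with an injection is an isomorphism). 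The paper does this with a van Kampen argument showing $i_*^{-1}(j_*(\pi_1(N)))\subset m_{+,*}(\pi_1(\Sigma))$, combined with the fact that surface groups are Hopfian: one first deduces $m_{-,*}(\pi_1(\Sigma))\subset m_{+,*}(\pi_1(\Sigma))$, hence $i_*m_{+,*}$ is surjective and therefore an isomorphism by Hopficity, and then injectivity of $i_*$ follows from the same van Kampen inclusion. None of this appears in your sketch.

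Two smaller points. First, you never argue that $M$ is irreducible, and this is required for the rigidity theorem you invoke; the paper gets it for free because $M\circ N\cong\Sigma\times[-1,1]$ is irreducible and any embedded $2$-sphere in $M$ sits in $M\circ N$. Second, the Mayer--Vietoris/half-lives-half-dies detour through homology is unnecessary---the paper works directly at the level of $\pi_1$---and the case split into bordered versus closed surfaces is likewise avoidable: once $m_{-,*}$ is a $\pi_1$-isomorphism and $M$ is irreducible, Theorem~10.2 of Hempel gives $M\cong\Sigma\times[-1,1]$ uniformly (with the case $\Sigma=S^2,D^2$ handled separately via Alexander's theorem).
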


\begin{proof}
When $g=0$ and $b=0$ or $1$,  the monoid $\cob\left(\Sigma_{g,b}\right)$
can be interpreted as the monoid of homology $3$-spheres. (See Example \ref{ex:genus_0_cobordisms}.)
The proposition then amounts to Alexander's theorem \cite{Alexander}:  the  $3$-manifold $S^3$ is irreducible.
Consequently, we can assume that $\Sigma_{g,b}\neq D^2 , S^2$.  

Let $M,N \in \cob\left(\Sigma_{g,b}\right)$ be such that $M\circ N=\Sigma_{g,b} \times [-1,1]$.
We are asked to show that $M$ is a mapping cylinder.

\begin{quote}
\textbf{Claim.}
The inclusion $m_-: \Sigma_{g,b} \to M$ induces an isomorphism at the level of the fundamental group.
\end{quote}

\noindent
Moreover, the manifold $M$ is irreducible because $M\circ N =  \Sigma_{g,b} \times [-1,1]$ is irreducible.
Then, it follows from \cite[Theorem 10.2]{Hempel}
that there is a homeomorphism between $M$ and $\Sigma_{g,b}\times [-1,1]$ 
which sends $m_-(\Sigma_{g,b})$ to $\Sigma_{g,b} \times (-1)$
and, so,  $m_+(\Sigma_{g,b})$ to $\Sigma_{g,b} \times 1$. The conclusion follows.

To prove the claim, we denote by $i:M \to M \circ N$ and $j: N \to M \circ N$ the inclusions.
The map $i \circ m_-$ is equivalent to the inclusion of $\Sigma_{g,b} \times (-1)$ in 
$\Sigma_{g,b} \times [-1,1]$ and, so, induces an isomorphism at the level of the fundamental group.
Thus, it is enough to prove that $i_*:\pi_1(M) \to \pi_1(M\circ N)$ is injective.
Since $M \circ N$ is the trivial cylinder, the image of $j_* n_{+,*}$ in $\pi_1(M\circ N)$
contains the image of $i_* m_{-,*}$. We deduce the following inclusion of subgroups of $\pi_1(M)$:
$$
m_{-,*}\left(\pi_1(\Sigma_{g,b})\right) \subset {i_*}^{-1}\left( j_*\left(\pi_1(N)\right) \right).
$$
Besides, an application of the van Kampen theorem shows that
\begin{equation}
\label{eq:inverse_image} 
{i_*}^{-1}\left(j_*\left(\pi_1(N)\right)\right) \subset m_{+,*}\left(\pi_1(\Sigma_{g,b})\right)
\end{equation}
so that  $m_{-,*}\left(\pi_1(\Sigma_{g,b})\right) \subset m_{+,*}\left(\pi_1(\Sigma_{g,b})\right)$.
Since the homomorphism $i_* m_{-,*}: \pi_1(\Sigma_{g,b}) \to \pi_1(M\circ N)$ is surjective, we deduce that
$i_* m_{+,*}: \pi_1(\Sigma_{g,b}) \to \pi_1(M\circ N)$ is surjective and, so, is an isomorphism.
(Here, we use the fact that the fundamental group of a surface is Hopfian \cite{Hopf}.)
Another application of (\ref{eq:inverse_image}) then shows that $i_*$ is injective, and we are done.
\end{proof}

Usually in the literature, the Torelli group is only defined when the surface is closed or has a single boundary component.
In our situation, it is natural to define the Torelli group $\Sigma_{g,b}$ for any $b\geq 0$ in the following way.
(When $b=0$ or $b=1$, our definition is equivalent to the usual one recalled in \S \ref{subsec:bordered_to_closed}.)

\begin{definition}
The \emph{Torelli group}\index{Torelli group}\index{group!Torelli} of $\Sigma_{g,b}$ is the subgroup 
$$
\Torelli\left(\Sigma_{g,b}\right) := \mcyl^{-1}\left(\cyl(\Sigma_{g,b})\right) \ \subset \mcg\left(\Sigma_{g,b}\right).
$$
\end{definition}

\begin{example}[Genus $0$]
For $b>1$, the mapping class group of $\Sigma_{0,b}$
can be identified with the group of \emph{framed pure braids} on $(b-1)$ strands over the disk $D^2$ \cite{Birman_book}.
The Torelli group then corresponds to those pure braids with trivial linking matrix.
For $b=0$ or $b=1$, the groups $\mcg(\Sigma_{0,b})$ and $\Torelli(\Sigma_{0,b})$ are trivial. 
\end{example}

\subsection{The closure construction}

\label{subsec:closure}

There is a systematic way of transforming a homology cobordism
into a $3$-manifold \emph{without} boundary: we simply ``close'' a
homology cobordism by identifying its top and bottom surfaces.
This basic construction will be used several times in the following sections 
to define invariants of homology cylinders. So, we would like to define it carefully.

\begin{definition}
The \emph{closure}\index{closure of a homology cobordism}\index{homology cobordism!closure of}
of an $M \in \cob(\Sigma_{g,b})$ is the closed connected oriented $3$-manifold
\begin{equation}
\label{eq:closure}
\closure{M} := (M /\! \sim) \cup \left(S^1 \times D^2\right)_1 \cup \cdots \left(S^1 \times D^2\right)_b.
\end{equation}
Here, the equivalence relation $\sim$ identifies $m_+(s)$ with $m_-(s)$ for all $s\in \Sigma_{g,b}$.
The resulting $3$-manifold $M /\! \sim$ has a toroidal boundary which is recapped by gluing $b$ solid tori.
More precisely, we number the boundary components $\partial_1 \Sigma_{g,b},\dots,\partial_b \Sigma_{g,b}$ 
of $\Sigma_{g,b}$ from $1$ to $b$, and we choose a base point $*_i$ on each of them.
Then, the meridian $1 \times \partial D^2$ of  $\left(S^1 \times D^2\right)_i$ 
is glued along the circle $m(*_i \times [-1,1]) /\! \sim$
while its longitude $S^1 \times 1$ is glued along $m(\partial_i \Sigma_{g,b} \times 0)$. 
\end{definition}

In the special case where $M=\mcyl(f)$ is a mapping cylinder, 
the closure of $M$ is obtained from the mapping torus of $f$ by gluing $b$ solid tori.
If $b=0$, the $3$-manifold $\closure{\mcyl(f)}$ is the total space of a surface bundle over $S^1$ and,
so, one only reaches a special (although interesting) class of closed oriented $3$-manifolds.
If $b>0$, the $3$-manifold $\closure{\mcyl(f)}$ comes with an open book decomposition
whose binding consists of the cores of the glued solid tori. 
Any closed connected oriented $3$-manifold can be presented in that way \cite{Alexander_open_book},
and one can even require the binding to be connected \cite{GA,Myers}, \ie we can assume $b=1$.

\begin{example}
The closure of the trivial cobordism $\Sigma_{g,b} \times [-1,1]$ 
is the product $\Sigma_g \times S^1$ if $b=0$ and the connected sum $\sharp^{2g+b-1}( S^2 \times S^1)$ if $b >0$.
For $g=0$ and $b=1$, the closure operation is the same as the ``recapping'' operation 
described at the end of Example \ref{ex:genus_0_cobordisms}.
\end{example}

\subsection{The closed case and the bordered case}

\label{subsec:bordered_to_closed}

As mentioned in the introduction, we are specially interested
in the closed surface $\Sigma_g$ and the bordered surface $\Sigma_{g,1}$.
The monoids defined in \S \ref{subsec:definitions} are  denoted by
$$
\cob_{g}  \supset \cyl_{g} 
\quad \hbox{and} \quad
\cob_{g,1}  \supset \cyl_{g,1}
$$
in the closed case and the bordered case, respectively. 
Observe that, for $b=0$ and $b=1$, a homology cobordism $(M,m)$ of $\Sigma_{g,b}$
is a homology cylinder if and only if $m_+$ and $m_-$ induce 
the \emph{same} isomorphism $H_*\left(\Sigma_{g,b}\right) \to H_*(M)$.
Similarly, the groups introduced in \S \ref{subsec:groups} are simply denoted by
$$
\mcg_{g}  \supset \Torelli_{g} 
\quad \hbox{and} \quad
\mcg_{g,1}  \supset \Torelli_{g,1}.
$$ 
Observe that, for $b=0$ and $b=1$, an $f\in \mcg\left(\Sigma_{g,b}\right)$
belongs to $\Torelli\left(\Sigma_{g,b}\right)$  if and only if $f$ induces the identity in homology.
This is the way the Torelli group is usually defined in the literature \cite{Johnson_survey}.

The study of $\mcg_g$ can be somehow ``reduced'' to the study of $\mcg_{g,1}$ thanks to Birman's exact sequence, which we now recall.
For this, we need to fix a closed disk $D \subset \Sigma_g$. Then, we can think of the closed surface $\Sigma_g$ as the union of $\Sigma_{g,1}$ with $D$.

\begin{theorem}[Birman's exact sequence \cite{Birman_exact_sequence}]
\label{th:Birman}
Give $\Sigma_{g}$ an arbitrary smooth structure as well as a Riemannian metric,
and let $\operatorname{U}(\Sigma_{g})$ be the total space of the unit tangent bundle of $\Sigma_{g}$.
Then, there is an exact sequence of groups
$$
\xymatrix{
\pi_1\left(\operatorname{U}(\Sigma_{g})\right) \ar[r]^-{\ \operatorname{Push}\ }
& \mcg_{g,1} \ar[r]^-{\ \centereddot \cup \Id_D\ } & \mcg_g \ar[r] & 1
}
$$
where the ``Push'' map is defined below.
\end{theorem}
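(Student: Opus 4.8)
The plan is to establish Birman's exact sequence by analysing the effect of filling in the disk $D$. First I would make precise the ``Push'' map and the ``capping'' map $\centereddot \cup \Id_D$. Given a homeomorphism $f$ of $\Sigma_{g,1}$ fixing $\partial\Sigma_{g,1}$ pointwise, extending it by the identity on $D$ yields a homeomorphism of $\Sigma_g$; this clearly descends to a well-defined homomorphism $\mcg_{g,1}\to\mcg_g$. For the ``Push'' map, a loop $\gamma$ in $\operatorname{U}(\Sigma_g)$ should be thought of as an isotopy of $\Sigma_g$ that drags the disk $D$ (together with a framing, recording the rotation of the tangent vector) around the loop and brings it back to its original position; the terminal homeomorphism of this isotopy is the identity outside a neighbourhood of the trace of $\gamma$, so after identifying a disk neighbourhood of the final $D$ with the original $D$, it defines an element of $\mcg_{g,1}$. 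That this ``point-pushing plus disk-spinning'' construction is a homomorphism follows from the concatenation of isotopies.

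Next I would prove exactness at the three spots. Exactness at $\mcg_g$, i.e. surjectivity of $\centereddot\cup\Id_D$: any orientation-preserving homeomorphism of $\Sigma_g$ may be isotoped so that it fixes a chosen disk $D$ pointwise (isotope it to fix a point, then, using that the space of orientation-preserving embeddings of a disk is connected, isotope so that it fixes all of $D$), and the restriction to $\Sigma_{g,1}=\Sigma_g\setminus\interior(D)$ gives the required preimage. Exactness at $\mcg_{g,1}$: the composite $\mathrm{Push}$ followed by capping is trivial because the pushing isotopy is defined on all of $\Sigma_g$, so its time-one map is isotopic to the identity on $\Sigma_g$; conversely, if $f\in\mcg_{g,1}$ becomes isotopically trivial after capping by $D$, then the ambient isotopy of $\Sigma_g$ from $f\cup\Id_D$ to $\Id$ traces out a loop followed by the disk $D$, and running this isotopy backwards exhibits $f$ as a push along the corresponding element of $\pi_1(\operatorname{U}(\Sigma_g))$ (one must check the tangential framing is recovered, which is why the unit tangent bundle rather than just $\Sigma_g$ appears). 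The main work is exactness at $\pi_1(\operatorname{U}(\Sigma_g))$, i.e. injectivity of $\mathrm{Push}$: if a pushing isotopy has time-one map isotopic rel $\partial$ to the identity on $\Sigma_{g,1}$, one assembles these two isotopies into a based self-homotopy of $\operatorname{U}(\Sigma_g)$ and argues the original loop was null-homotopic. This is cleanest to deduce from the long exact sequence of homotopy groups of the fibration $\operatorname{Diff}(\Sigma_g;D)\to\operatorname{Diff}(\Sigma_g)\to\operatorname{Emb}(D,\Sigma_g)$ together with the identification (by Hamstrom's theorem, $\operatorname{Diff}(\Sigma_g)$ having contractible components for $g\geq 1$) of $\pi_0\operatorname{Diff}(\Sigma_g;D)$ with $\mcg_{g,1}$ and $\pi_1$ of the base with $\pi_1(\operatorname{U}(\Sigma_g))$ via the derivative-at-the-center evaluation, which is a homotopy equivalence $\operatorname{Emb}(D,\Sigma_g)\simeq\operatorname{U}(\Sigma_g)$.

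I expect the main obstacle to be the appeal to the contractibility of the components of the diffeomorphism group of a surface of positive genus (Earle--Eells, Gramain, Hamstrom), which is the ingredient that makes the $\pi_1$ of the base contribute exactly once and identifies $\pi_0$ of the fibre with the mapping class group; in a survey this is legitimately quoted rather than reproved. A secondary subtlety is bookkeeping the framing: one must work with embeddings of $D$ (equivalently, with $\operatorname{U}(\Sigma_g)$) rather than with configuration points in $\Sigma_g$, since fixing $D$ pointwise is strictly stronger than fixing its centre, and it is precisely the $\pi_1 S^1$ of the fibre $\operatorname{Emb}(D,\Sigma_g)\to\Sigma_g$ (the rotations of the disk) that accounts for the difference between $\pi_1(\operatorname{U}(\Sigma_g))$ and $\pi_1(\Sigma_g)$, contributing the boundary Dehn twist. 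Once the framing is tracked correctly, the five-term exact sequence in homotopy collapses to exactly the asserted three-term sequence, since $\pi_2(\Sigma_g)=0$ kills any further terms on the left.
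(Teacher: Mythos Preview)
Your approach is essentially the same as the paper's: both rely on the fibration of diffeomorphism groups over the space of (framed) disk embeddings/unit tangent vectors and the resulting long exact sequence in homotopy. The paper uses the evaluation fibration $\operatorname{Diffeo}_+(\Sigma_g)\to\operatorname{U}(\Sigma_g)$, $f\mapsto d_pf(v)$, with fibre $\operatorname{Diffeo}_+(\Sigma_g,v)$, and reads off the three-term sequence directly from the tail of the homotopy long exact sequence, together with the identifications $\mcg_g=\pi_0\operatorname{Diffeo}_+(\Sigma_g)$ and $\mcg_{g,1}\simeq\pi_0\operatorname{Diffeo}_+(\Sigma_g,v)$. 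Your fibration $\operatorname{Diff}(\Sigma_g;D)\to\operatorname{Diff}(\Sigma_g)\to\operatorname{Emb}(D,\Sigma_g)$ is the same up to the homotopy equivalence $\operatorname{Emb}(D,\Sigma_g)\simeq\operatorname{U}(\Sigma_g)$ that you yourself mention.

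There is, however, one genuine overreach. You write that ``the main work is exactness at $\pi_1(\operatorname{U}(\Sigma_g))$, i.e.\ injectivity of $\operatorname{Push}$''. But the theorem as stated has no $1$ on the left: it does not assert that $\operatorname{Push}$ is injective, and in fact the paper's proof records the preceding term $\pi_1(\operatorname{Diffeo}_+(\Sigma_g),\Id)$, which is nonzero for $g\le 1$. Your appeal to contractibility of components (and the claim that this holds for $g\ge 1$; it is $g\ge 2$) is therefore both unnecessary for the stated result and slightly misstated. Drop that paragraph and your argument matches the paper's.
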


\begin{proof}[Sketch of the proof]
Let $\operatorname{Diffeo}_{+}(\Sigma_{g})$ be the group of orientation-preserving 
diffeomorphisms $\Sigma_g \to \Sigma_g$.
Since ``diffeotopy groups'' coincide with ``homeotopy groups'' in dimension two, we have
\begin{equation}
\label{eq:Birman_one}
\mcg_g = \pi_0\left(\operatorname{Diffeo}_{+}(\Sigma_g)\right).
\end{equation}
Let $v$ be a  ``unit tangent vector'' of $D$: $v\in T_p\Sigma_g$ with $\Vert v \Vert =1$ and $p\in D$.
Then, we can consider the subgroup $\operatorname{Diffeo}_{+}(\Sigma_g,v)$ 
consisting of diffeomorphisms whose differential fixes $v$. One can show that
\begin{equation}
\label{eq:Birman_two}
\mcg_{g,1} \simeq \pi_0\left(\operatorname{Diffeo}_{+}(\Sigma_g,v)\right).
\end{equation}
The map $\operatorname{Diffeo}_{+}(\Sigma_g) \to \operatorname{U}(\Sigma_g)$ defined by
$f \mapsto \operatorname{d}_p\! f(v)$ is a fiber bundle with fiber $\operatorname{Diffeo}_{+}(\Sigma_g,v)$.
According to (\ref{eq:Birman_one}) and (\ref{eq:Birman_two}), 
the long exact sequence for homotopy groups induced by this fibration terminates with
$$
\pi_1\left(\operatorname{Diffeo}_{+}(\Sigma_g),\Id\right) \longrightarrow
\pi_1\left(\operatorname{U}(\Sigma_g),v\right) \longrightarrow \mcg_{g,1} \longrightarrow \mcg_g \longrightarrow 1.
$$
The map $\pi_1\left(\operatorname{U}(\Sigma_g),v\right) \to \mcg_{g,1}$ is called 
the ``Push'' map because it has the following description. 
A loop $\gamma$ in $\operatorname{U}(\Sigma_g)$ based at $v$ can be
regarded as an isotopy $I:D \times [0,1] \to \Sigma_g$
of the disk $D$ in $\Sigma_g$ such that  $I(\centereddot,0)=I(\centereddot,1)$ is the inclusion $D \subset \Sigma_g$.
This isotopy can be extended to an ambient isotopy $\overline{I}:\Sigma_g \times [0,1] \to \Sigma_g$
starting with $\overline{I}(\centereddot,0)=\Id_{\Sigma_g}$. Then, 
$$
\operatorname{Push}([\gamma]) := \left[ \hbox{restriction of $\overline{I}(\centereddot,1)$ 
to $\Sigma_{g,1}=\Sigma_g \setminus \operatorname{int}(D)$} \right]
$$
is the image of the loop $\gamma$ in $\mcg_{g,1}$.
\end{proof}

For cobordisms, we have the following analogue of Birman's exact sequence.
This lemma will be used in the following sections to define homomorphisms on $\cob_g$
by first defining them on $\cob_{g,1}$.

\begin{lemma}
\label{lem:closing}
Gluing a $2$-handle along $\partial \Sigma_{g,1} \times [-1,1]$ defines a surjection
$$
\cob_{g,1} \longrightarrow \cob_g, \ M \longmapsto \clocase{M}.
$$
Moreover, two cobordisms $M$ and $M'$ satisfy $\clocase{M} = \clocase{M'}$ 
if and only if $M'$ can be  obtained from $M$ by surgery
along a $2$-component framed link $K_0 \cup K_1$ in $M$ as shown in Figure \ref{fig:link_K}.
\end{lemma}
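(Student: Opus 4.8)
The plan is to prove the two assertions of Lemma~\ref{lem:closing} separately, leaning on Birman's exact sequence (Theorem~\ref{th:Birman}) as the guiding model and on the way $\cob_{g,1}$ surjects onto $\cob_g$ via the analogous ``fill in the disk'' operation. First I would make the construction of $\clocase{M}$ precise: a cobordism $M$ of $\Sigma_{g,1}$ has boundary parametrized by $\partial(\Sigma_{g,1}\times[-1,1])$, which contains the annulus $\partial\Sigma_{g,1}\times[-1,1]$; gluing a $2$-handle $D^2\times[-1,1]$ along this annulus (attaching $\partial D^2\times[-1,1]$ to $\partial\Sigma_{g,1}\times[-1,1]$) replaces the annular piece of $\partial M$ by two disks, which are naturally the ``extra'' disks completing $m_\pm(\Sigma_{g,1})$ to copies of $\Sigma_g$. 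One checks that the top and bottom inclusions $\Sigma_g\to\clocase{M}$ still induce homology isomorphisms — this follows from a Mayer--Vietoris computation, since attaching a $2$-handle along a boundary annulus only changes homology in a controlled way and the disks are contractible — so $\clocase{M}\in\cob_g$, and composition is respected because the $2$-handle attachments along the two factors of a composite glue up to a single $2$-handle along the composite.

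For surjectivity, given $N\in\cob_g$ I want to exhibit an $M\in\cob_{g,1}$ with $\clocase{M}\cong N$. Pick a vertical arc in $\Sigma_g\times[-1,1]$ of the form $\{p\}\times[-1,1]$ for $p$ in the interior of the distinguished disk $D\subset\Sigma_g$; via $n$ this gives a properly embedded arc $\alpha$ in $N$ running from the bottom surface to the top surface. Drilling out a tubular neighborhood $D^2\times[-1,1]$ of $\alpha$ produces a $3$-manifold $M$ whose boundary acquires exactly the annular piece $\partial\Sigma_{g,1}\times[-1,1]$ needed to make it a cobordism of $\Sigma_{g,1}$; one verifies $M$ is a homology cobordism (again Mayer--Vietoris: removing a neighborhood of an unknotted arc connecting the two boundary surfaces), and that filling the $2$-handle back in recovers $N$, i.e. $\clocase{M}=N$. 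The main point to be careful about here is that the arc $\alpha$ can be taken so that its regular neighborhood meets the top and bottom surfaces in the prescribed disks compatibly with the parametrization; this is where one uses that any two proper arcs in a $3$-manifold with endpoints on the boundary are ambient isotopic, so the choice is irrelevant up to homeomorphism of $M$.

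For the ``moreover'' statement, suppose $\clocase{M}\cong\clocase{M'}$. Running the surjectivity argument in reverse, both $M$ and $M'$ are obtained from the \emph{same} closed-up manifold $W:=\clocase{M}\cong\clocase{M'}$ by drilling out neighborhoods of two arcs $\alpha,\alpha'$ connecting bottom to top; since all such arcs are ambient isotopic in $W$, there is a self-homeomorphism of $W$ carrying $\alpha$ to $\alpha'$, hence a homeomorphism $M\to M'$ — but this is not quite the statement, since we need the relation to be realized by surgery on a specific $2$-component link $K_0\cup K_1$ inside $M$ itself rather than an abstract homeomorphism. So instead I would argue as follows: the ambiguity in recovering $M$ from $W$ is exactly the choice of arc $\alpha$, and changing $\alpha$ by an isotopy that ``drags one endpoint around'' corresponds, on the level of $M$, to a surgery. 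Concretely, $K_1$ is a meridian of the drilled-out tube (so that surgery on $K_1$ undoes the drilling, recovering a manifold containing a copy of $W$), and $K_0$ is a push-off of an arc describing how the new tube for $M'$ sits relative to the old one; the framed link $K_0\cup K_1$ of Figure~\ref{fig:link_K} encodes precisely this. The verification that surgery on $K_0\cup K_1$ has the effect of replacing the tube for $\alpha$ by the tube for $\alpha'$ — and nothing else — is the technical heart of the proof, and is most cleanly done by a Kirby-calculus / handle-slide picture local to the tube. Conversely, if $M'$ is obtained from $M$ by surgery on $K_0\cup K_1$, then capping both sides with the $2$-handle and checking that the surgery link becomes trivial (it bounds disjoint disks once the annular boundary is filled) shows $\clocase{M}=\clocase{M'}$.

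I expect the main obstacle to be the ``only if'' direction of the moreover clause: passing from the soft statement ``$M$ and $M'$ become homeomorphic after capping'' to the precise claim that they differ by surgery on the explicit link $K_0\cup K_1$ of Figure~\ref{fig:link_K}. This requires identifying, inside $M$, a standard local model for the region where the two cappings disagree, and recognizing the move relating them as the indicated surgery — essentially a uniqueness-of-the-drilling argument upgraded to an explicit cobordism between $M$ and $M'$ supported near the attaching region.
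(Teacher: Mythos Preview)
Your overall architecture is right --- think of $M$ as $\clocase{M}$ minus a neighborhood of a properly embedded framed arc $X$ (the cocore of the $2$-handle), and observe that the ambiguity in recovering $M$ from $\clocase{M}$ is exactly the choice of this arc. But there is a genuine error that undermines the ``moreover'' direction: the claim that ``any two proper arcs in a $3$-manifold with endpoints on the boundary are ambient isotopic'' is false. Such arcs can be knotted in many inequivalent ways (already in a ball with two boundary points this is the theory of long knots), so in general $M$ and $M'$ are \emph{not} homeomorphic, and the surgery on $K_0\cup K_1$ is not merely a way of realizing an isotopy --- it is doing real work. Your backtracking (``but this is not quite the statement'') diagnoses the wrong problem: the issue is not that we want a surgery description rather than an abstract homeomorphism, it is that no homeomorphism need exist at all.

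The paper's argument fixes this cleanly with the \emph{slam dunk} move from Kirby calculus. View the $2$-handle as a framed string-knot $X\subset\clocase{M}$, and let $\tilde X\subset\clocase{M}$ be the image of the corresponding string-knot $X'\subset\clocase{M'}$ under a homeomorphism $f:\clocase{M'}\to\clocase{M}$. The slam dunk is the following: take $K_0$ to be a $0$-framed meridian of $X$ near the vertical boundary, and $K_1$ a curve (linking $K_0$ once) that guides $X$ to $\tilde X$. Surgery on $K_0\cup K_1$ produces a manifold homeomorphic to $\clocase{M}$, but the homeomorphism carries $X$ to $\tilde X$. Since $K_0\cup K_1$ lies in the complement of $X$, i.e.\ in $M$, this exhibits $M'$ (the complement of $\tilde X$) as the result of surgery on $M$. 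No isotopy of arcs is invoked; the slam dunk handles arbitrary $\tilde X$ in one stroke. Your description of $K_0,K_1$ was heading in this direction but had the roles somewhat reversed and lacked the key observation that this single local move suffices regardless of how $\tilde X$ sits globally.
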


\begin{figure}[h!]
\begin{center}
{\labellist \small \hair 0pt 
\pinlabel {$K_0$} [tl] at 102 98
\pinlabel {$K_1$} [t] at 24 77
\endlabellist}
\includegraphics[scale=0.5]{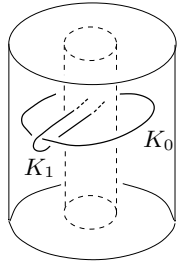}
\end{center}
\caption{The framed link $K_0 \cup K_1$ in the vicinity of the ``vertical'' boundary of $M$,
\ie the part $m\left(\partial \Sigma_{g,1} \times [-1,1]\right)$ of $\partial M$.}
\label{fig:link_K}
\end{figure}

\begin{proof}
We define the cobordism $\clocase{M}$ of $\Sigma_g$ 
by attaching a $2$-handle along the ``vertical'' boundary of $M$: 
$$
\clocase{M} := M \cup_{m|_{\partial \Sigma_{g,1} \times [-1,1]}} \left(D \times [-1,1]\right).
$$
The cobordism $\clocase{M'}$ is similarly defined from $M'$.
The $2$-handle $D \times [-1,1]$ can be regarded as a framed string-knot $X$
in the $3$-manifold $\clocase{M}$ and, similarly, we have a framed string-knot $X' \subset \clocase{M'}$.
Assume that there is a homeomorphism $f: \clocase{M'} \to \clocase{M}$. 
If the image $\tilde{X} := f(X')$ happens to be isotopic to $X$, 
then $f$ restricts to a homeomorphism $M' \to M$ and we are done.
Otherwise, we can apply the ``slam dunk'' move: 

\begin{equation}
\label{eq:slam_dunk}
\labellist \small \hair 2pt
\pinlabel {$K_0$} [l] at 217 260
\pinlabel {$K_1$} [l] at 243 429
\pinlabel {$X$} [l] at 257 10
\pinlabel {$\tilde{X}$} [l] at 809 15
\endlabellist
\includegraphics[scale=0.15]{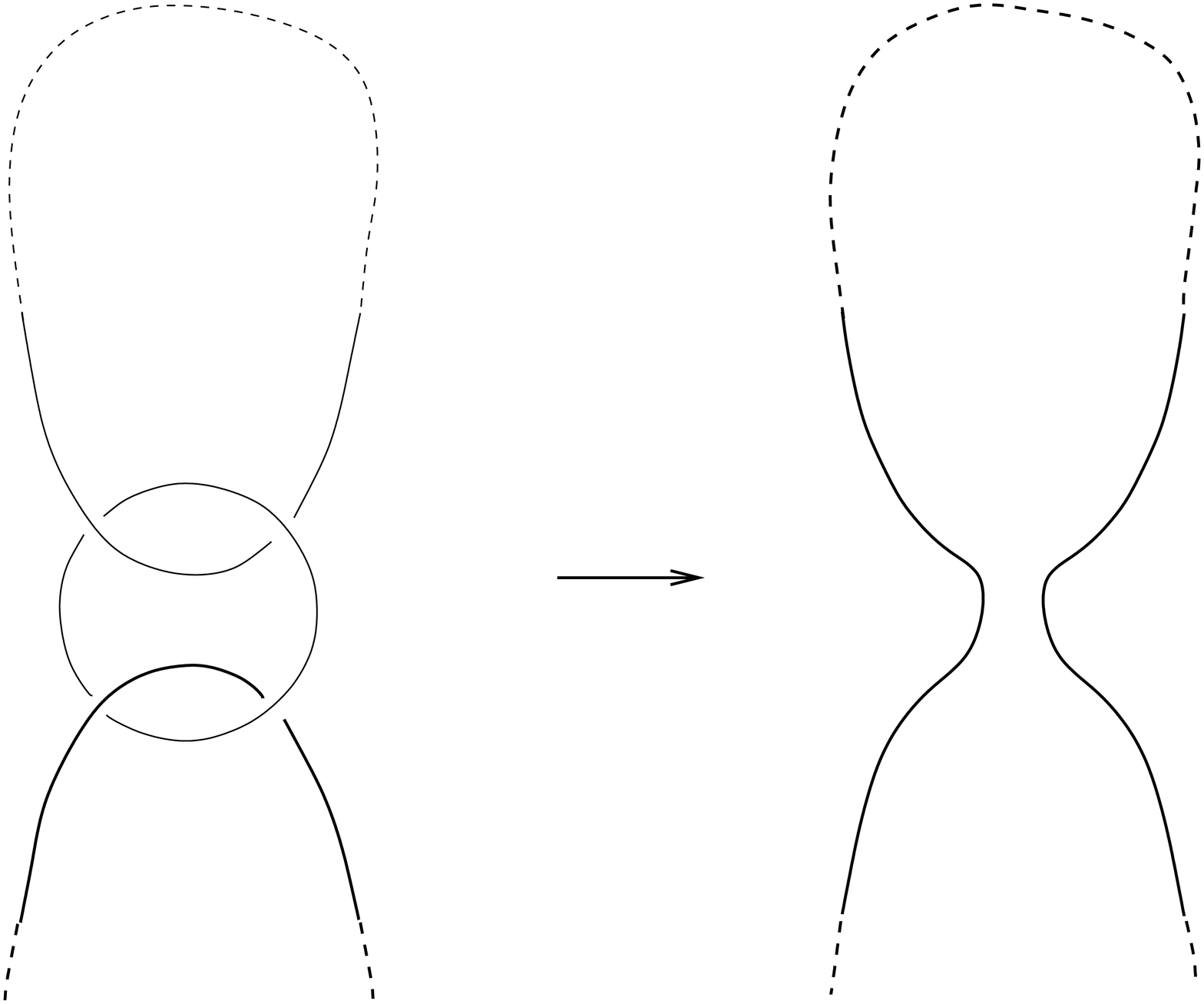}
\end{equation}

\noindent
Here, surgery is performed along the 2-component framed link $K_0 \cup K_1$;
it produces a homeomorphic manifold and the corresponding homeomorphism changes
the framed string-knot $X$ to $\tilde{X}$. Therefore, $M'$ is (up to homeomorphism)
the result of the surgery in $M$ along $K_0 \cup K_1$.
\end{proof}

\section{Johnson homomorphisms and Morita homomorphisms}

\label{sec:Johnson_Morita}

We review the Johnson homomorphisms and their extensions by Morita.
Originally defined for the mapping class group of a surface, those homomorphisms
have natural extensions to the monoid of homology cobordisms. 
In this section, we restrict ourselves to the bordered surface $\Sigma_{g,1}$ and to the closed surface $\Sigma_g$. 
We also refer to Morita's survey in Volume I of this handbook \cite{Morita_chapter}.

\subsection{The Dehn--Nielsen representation}

\label{subsec:Dehn-Nielsen}

The definitions of Johnson's and Morita's homomorphisms are based
on the action of the mapping class group on the fundamental group, which we shall first review.
We start with the case of the bordered surface $\Sigma_{g,1}$.
We denote
$$
\pi:=\pi_1(\Sigma_{g,1},\star)
$$
where $\star \in \partial \Sigma_{g,1}$ is a fixed base point.  
The homotopy class of the oriented boundary curve defines a special  element in the group $\pi$,
namely $\zeta := \left[\partial \Sigma_{g,1}\right]$.

\begin{definition}
The \emph{Dehn--Nielsen representation}\index{Dehn--Nielsen representation}\index{representation!Dehn--Nielsen}
is the group homomorphism
$$
\rho: \mcg_{g,1} \longrightarrow \Aut(\pi), \ f \longmapsto f_*.
$$
\end{definition}

\noindent
A classical result by Dehn, Nielsen and Baer asserts that $\rho$ is injective,
and that its image consists of the automorphisms that fix $\zeta$ \cite{ZVC}. 

There is no obvious way to extend the map $\rho$ to the monoid $\cob_{g,1}$.
Nonetheless, the ``nilpotent'' versions of $\rho$ can be extended from $\mcg_{g,1}$ to $\cob_{g,1}$.
For this, we consider the lower central series\index{lower central series} of $\pi$
$$
\Gamma_1 \pi \supset \Gamma_2\pi \supset
\cdots \supset \Gamma_k \pi \supset \Gamma_{k+1} \pi \supset \cdots
$$
which is inductively defined by $\Gamma_1 \pi := \pi$ and $\Gamma_{i+1} \pi := \left[\Gamma_i \pi, \pi\right]$. 
For each $k \geq 0$, we  define a monoid homomorphism 
$$
\rho_k: \cob_{g,1} \longrightarrow \Aut\left(\pi/\Gamma_{k+1} \pi\right), 
\ M \longmapsto  {m_{-,* }}^{-1} \circ m_{+,*}.
$$
Here the fundamental group of $M$ is based at $\star_0 := m(\star,0)$,
which is identified (homotopically) with $m_+(\star)=m(\star,+1)$ and to $m_-(\star)=m(\star,-1)$ 
by the ``vertical'' segments $m(\star \times [0,1])$ and $m(\star \times [-1,0])$. 
The map $m_{\pm,*}:\pi/\Gamma_{k+1} \pi \to \pi_{1}(M,\star_0)/\Gamma_{k+1}\pi_1(M,\star_0)$
is a group isomorphism according  to Stallings' theorem \cite{Stallings}. 
(The hypothesis that $m_{\pm}$ is an isomorphism in homology is needed here.)

\begin{example}[Degree $1$]
The abelian group $\pi/\Gamma_2 \pi$ is $H:= H_1\left(\Sigma_{g,1}\right)$.
The homomorphism $\rho_1: \cob_{g,1} \to \Aut(H)$ gives the homology type of homology cobordisms. 
So, the kernel of $\rho_1$ is the monoid of homology cylinders $\cyl_{g,1}$.
The image of $\rho_1$ is the subgroup $\Sp(H)$ of $\Aut(H)$ consisting
of the elements preserving 
the intersection pairing $\omega: H \times H \to \Z$.
\end{example}

The following filtration is introduced in \cite{Habiro,GL}.

\begin{definition} 
The \emph{Johnson filtration}\index{Johnson filtration}
of $\cob_{g,1}$ is the  sequence of submonoids
$$
\cob_{g,1} = \cob_{g,1}[0] \supset \cob_{g,1}[1] \supset  \cdots \supset \cob_{g,1}[k] \supset \cob_{g,1}[k+1] \supset \cdots
$$
where $\cob_{g,1}[k]$ is the kernel of  $\rho_k$ for all $k\geq 1$. 
\end{definition}

This filtration restricts to a decreasing sequence of subgroups of the mapping class group, 
which was introduced by Johnson in \cite{Johnson_survey} and studied by Morita in \cite{Morita_Abelian}. We denote it by
$$
\mcg_{g,1} = \mcg_{g,1}[0] \supset \mcg_{g,1}[1] \supset  \cdots \supset \mcg_{g,1}[k] \supset \mcg_{g,1}[k+1] \supset \cdots
$$
This series of subgroups is an \emph{$N$-series}, in the sense that
the commutator subgroup of $\mcg_{g,1}[k]$ and $\mcg_{g,1}[l]$ is contained in  $\mcg_{g,1}[k+l]$ for all $k,l\geq 0$.
It is also \emph{separating}, in the sense that its intersection $\bigcap_{k\geq 0} \mcg_{g,1}[k]$ is trivial.
This fact follows directly from the injectivity of the Dehn--Nielsen representation,
and from the fact that $\pi$ is residually nilpotent.
Note that the Johnson filtration is far from being separating in the case of homology cobordisms.
(For instance, every $\rho_k$ is trivial for $g=0$.
This fact, according to Example \ref{ex:genus_0_cobordisms}, contrasts with the richness of the monoid of homology $3$-spheres.)\\

Let us now consider the case of a closed surface $\Sigma_g$. We denote 
$$
\clocase{\pi} := \pi_1(\Sigma_g).
$$
We have implicitly chosen a base point $\star \in \Sigma_g$,
but we do not include it in the notation since the definitions below will not depend on it.

\begin{definition}
The \emph{Dehn--Nielsen representation}\index{Dehn--Nielsen representation}\index{representation!Dehn--Nielsen}
is the group homomorphism
$$
\rho: \mcg_{g} \longrightarrow \Out(\clocase{\pi} ), \ f \longmapsto f_*.
$$
\end{definition}

\noindent
In the closed case, the map $\rho$ is injective and
its image consists of classes of automorphisms that fix
$[\Sigma_g] \in H_2(\Sigma_g) \simeq H_2(\clocase{\pi})$ \cite{ZVC}.

Similarly to the bordered case, the ``nilpotent'' versions of $\rho$ can be extended to the monoid $\cob_g$.
Thus, we define for each $k \geq 0$ a monoid homomorphism 
$$
\rho_k: \cob_{g} \longrightarrow \Out\left(\clocase{\pi} /\Gamma_{k+1} \clocase{\pi} \right), 
\ M \longmapsto  {m_{-,* }}^{-1} \circ m_{+,*}.
$$
To define $\rho_k(M)$, we need to choose a base point in $M$ and 
some paths joining it to $m_+(\star)$ and to $m_-(\star)$ respectively,
but the resulting outer automorphism of $\clocase{\pi} /\Gamma_{k+1} \clocase{\pi} $ is independent of these choices.

\begin{definition}
The \emph{Johnson filtration}\index{Johnson filtration} of $\cob_{g}$ is the sequence of submonoids
$$
\cob_{g} = \cob_{g}[0] \supset \cob_{g}[1] \supset  \cdots \supset \cob_{g}[k] \supset \cob_{g}[k+1] \supset \cdots
$$
where $\cob_{g}[k]$ is the kernel of  $\rho_k$ for all $k\geq 1$. 
\end{definition}

This filtration restricts to a decreasing sequence of subgroups of the mapping class group
$$
\mcg_{g} = \mcg_{g}[0] \supset \mcg_{g}[1] \supset  \cdots \supset \mcg_{g}[k] \supset \mcg_{g}[k+1] \supset \cdots
$$
which, again, is an $N$-series.
By using the Dehn--Nielsen representation and more delicate arguments than in the bordered case \cite{BL},
one can prove that the Johnson filtration is also separating in the closed case (at least for $g\neq 2$).

\subsection{Johnson homomorphisms}

\label{subsec:Johnson}

We start by defining the Johnson homomorphisms in the bordered case.
For this, we fix a few identifications between groups, which will be implicit in the sequel. First of all, we denote
\begin{equation}
\label{eq:H_pi}
H:= H_1(\Sigma_{g,1}) \simeq \pi/\Gamma_2 \pi.
\end{equation}
The left-adjoint of the  intersection pairing $\omega$ of the surface $\Sigma_{g,1}$ 
defines an isomorphism between $H$ and its dual $H^* := \Hom(H,\Z)$:
$$
H \stackrel{\simeq}{\longrightarrow} H^*,\ h \longmapsto \omega(h,-).
$$
(In the literature, the right-adjoint of $\omega$ is also used to identify 
$H$ with $H^*$. This seems, for instance, to be the case in Johnson's and Morita's papers.)
Let $\Lie:=\Lie(H)$ denote the Lie ring freely generated by $H$: 
this is a graded Lie ring whose degree is given by the lengths of brackets.
Recall from \cite{Bourbaki,MKS} that there is a canonical isomorphism 
\begin{equation}
\label{eq:iso_free}
\Gr \pi = \bigoplus_{k\geq 1} \frac{\Gamma_k \pi}{\Gamma_{k+1} \pi} 
\stackrel{\simeq}{\longrightarrow} \bigoplus_{k\geq 1} \Lie_k = \Lie
\end{equation}
between the graded Lie ring associated with the lower central series of $\pi$ and $\Lie$. 
Both $\Gr \pi$ and $\Lie$ are generated by their degree $1$ parts,
and the isomorphism (\ref{eq:iso_free}) is simply given in degree $1$ by (\ref{eq:H_pi}).

\begin{definition}
For all $k\geq 1$, the \emph{$k$-th Johnson homomorphism}\index{Johnson homomorphism}\index{homomorphism!Johnson}
is the monoid map 
$$
\tau_k: \cob_{g,1}[k] \longrightarrow \Hom\left(H,\Gamma_{k+1} \pi / \Gamma_{k+2} \pi \right)
= H^* \otimes \Gamma_{k+1} \pi / \Gamma_{k+2} \pi \simeq H \otimes \Lie_{k+1}
$$
which sends an $M \in \cob_{g,1}[k]$ to the map defined by 
$\{x\} \mapsto \rho_{k+1}(M)(\{x\})\cdot \{x\}^{-1}$ for all $\{x\} \in \pi/\Gamma_2 \pi$.
\end{definition}

The Johnson homomorphisms are defined for homology cobordisms in \cite{GL}
as a natural generalization of the homomorphisms introduced in \cite{Johnson_survey,Morita_Abelian} for mapping class groups.
Recall that $\partial \Sigma_{g,1}$ defines an element $\zeta \in \pi$ and, clearly,
the automorphism $\rho_{k+1}(M)$ of $\pi/\Gamma_{k+2}\pi$ lifts 
to an endomorphism of $\pi/\Gamma_{k+3}\pi$ which fixes $\{\zeta\}$ (namely the automorphism $\rho_{k+2}(M)$).
It can be checked that such a ``boundary condition'' implies that $\tau_k(M)$ belongs to the kernel of the bracket map:
$$
\operatorname{D}_{k+2}(H) := 
\Ker\left([\centereddot,\centereddot]: H \otimes \Lie_{k+1}(H) \longrightarrow \Lie_{k+2}(H)\right).
$$
Thus, the $k$-th Johnson homomorphism is a monoid homomorphism
$$
\tau_k: \cob_{g,1}[k] \longrightarrow \operatorname{D}_{k+2}(H)
$$
whose kernel is the submonoid $\cob_{g,1}[k+1]$.

The first Johnson homomorphism $\tau_1$ on the Torelli group was introduced by Johnson himself in \cite{Johnson_first_homomorphism}
and is a key ingredient in the abelianization of the Torelli group \cite{Johnson_abelianization} --- see \S \ref{subsec:degree_one}. 
Its target can be identified with the third exterior power of $H$ by the map
\begin{equation}
\label{eq:trivectors}
\Lambda^3 H \stackrel{\simeq}{\longrightarrow} \operatorname{D}_{3}(H), \  
a \wedge b \wedge c \longmapsto a\otimes [b,c]+ c\otimes [a,b] + b \otimes [c,a].
\end{equation}
Thus, the first Johnson homomorphism is a monoid homomorphism
$$
\tau_1: \cyl_{g,1} \longrightarrow \Lambda^3 H.
$$
The resulting representation of the Torelli group $\Torelli_{g,1}$ 
already appears in the background of a paper by Sullivan \cite{Sullivan} 
in connection with the cohomology rings of closed oriented $3$-manifolds.
To understand this relation,  
recall from \S \ref{subsec:closure} that an $M \in \cyl_{g,1}$ can be ``closed'' 
to a $3$-manifold $\overline{M}$ without boundary. Then, the inclusion $m_{\pm}: \Sigma_{g,1} \to M \subset \overline{M}$
induces an isomorphism in homology so that $H$ can be identified with $H_1(\overline{M})$.

\begin{proposition}[Johnson \cite{Johnson_survey}]
\label{prop:triple-cup}
For all $M \in \cyl_{g,1}$, $\tau_1(M)$ coincides
with the triple-cup product form of the closure $\overline{M}$, namely the form
$$
H^1(\overline{M}) \times H^1(\overline{M}) \times H^1(\overline{M}) \longrightarrow \Z, 
\ (x,y,z) \longmapsto \left\langle x\cup y \cup z, \left[\overline{M}\right] \right\rangle
$$
which we regard as an element of 
$\Hom\left(\Lambda^3 H^1(\overline{M}), \Z\right) \simeq \Lambda^3 H_1(\overline{M}) \simeq \Lambda^3 H$.
\end{proposition}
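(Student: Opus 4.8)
The plan is to compare two descriptions of the same homological datum attached to $\overline{M}$: one via the Johnson homomorphism $\tau_1$ and the mapping-cylinder-style presentation of $\overline{M}$, the other via the cup product structure. The starting point is the standard surgery/Heegaard-style presentation of the closure $\overline{M}$ of a homology cylinder. Recall from \S\ref{subsec:closure} that $\overline{M}$ is obtained from $M/\!\!\sim$ by gluing one solid torus, and from Example~\ref{ex:genus_0_cobordisms} and the surrounding discussion that such a closed $3$-manifold admits an open-book / Heegaard-type description controlled by the boundary parametrization $m$. Concretely, I would present $\overline{M}$ by gluing two copies of a handlebody $H_g$ (a regular neighbourhood of $m_+(\Sigma_{g,1})$, respectively $m_-(\Sigma_{g,1})$, thickened), with gluing map recording $\rho_2(M) \in \Aut(\pi/\Gamma_3\pi)$; equivalently, one obtains $\overline{M}$ from $\Sigma_g \times S^1$ (the closure of the trivial cobordism in the bordered sense, capped off) by surgery along a link whose monodromy data is $\tau_1(M) \in \Lambda^3 H$. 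In either model, $H_1(\overline{M}) \cong H$ canonically via $m_\pm$, and $H^1(\overline{M}) \cong H^* \cong H$ via Poincaré duality and the intersection form $\omega$.

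Next I would set up the cup product computation. Pick a symplectic basis $a_1,\dots,a_g,b_1,\dots,b_g$ of $H = H_1(\Sigma_{g,1})$, with dual classes $\alpha_1,\dots,\alpha_g,\beta_1,\dots,\beta_g \in H^1(\overline{M})$. The triple-cup form is determined by its values $\langle \alpha_i \cup \alpha_j \cup \alpha_k,[\overline{M}]\rangle$, etc., on triples of basis elements, i.e.\ by an element of $\Lambda^3 H^1(\overline{M})^* \cong \Lambda^3 H_1(\overline{M}) \cong \Lambda^3 H$. The content of the proposition is that under the identifications above this element equals $\tau_1(M)$. The mechanism linking the two sides is Poincaré–Lefschetz duality together with the fact that the first Johnson homomorphism measures exactly the failure of the attaching curves / the monodromy to be ``homologically trivial'' at second order: $\tau_1(M)$ records, for each $x \in H = \pi/\Gamma_2\pi$, the class $\rho_2(M)(\{x\})\{x\}^{-1} \in \Gamma_2\pi/\Gamma_3\pi \cong \Lambda^2 H / (\text{image of }\omega)$, and this $\Lambda^2 H$-valued quantity is precisely what governs how $2$-cycles bound in $\overline{M}$, hence how cup products of $1$-classes evaluate against the fundamental class. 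I would make this precise by writing down an explicit cell or handle decomposition of $\overline{M}$ coming from the presentation in the first paragraph, identifying the dual $2$-cycles to the $\alpha_i,\beta_i$ (these are the obvious product $2$-tori $a_i \times S^1$ type surfaces, modified by the monodromy), and computing their triple intersections — which unwinds, after a finite diagram chase with the intersection pairing, into exactly the alternating sum $a\otimes[b,c] + c\otimes[a,b] + b\otimes[c,a]$ appearing in \eqref{eq:trivectors}.

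An alternative and perhaps cleaner route, which I would keep in reserve, is to verify the identity first for mapping cylinders $M = \mcyl(f)$ with $f \in \Torelli_{g,1}$, where $\overline{M}$ is a surface bundle over $S^1$ (after capping) and the cup-product structure of a mapping torus is classically computable from $f_*$ on $H_1(\Sigma_{g,1})$ and its action at the level of $\pi/\Gamma_3\pi$ — this is essentially Sullivan's original observation cited just before the proposition. One then extends from mapping cylinders to all of $\cyl_{g,1}$ by invoking Stallings' theorem (as in \S\ref{subsec:Dehn-Nielsen}): both $\tau_1(M)$ and the triple-cup form of $\overline{M}$ depend only on $\rho_2(M)$, i.e.\ only on the image of $M$ in $\Aut(\pi/\Gamma_3\pi)$, and every such image is realized by a mapping cylinder; hence agreement on mapping cylinders forces agreement in general.

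The main obstacle, I expect, is the bookkeeping in the duality step: getting the signs and the cyclic-permutation structure in \eqref{eq:trivectors} to come out correctly requires a careful choice of orientation conventions for $\overline{M}$, for the dual $2$-cycles, and for the identification $H^1 \cong H$ via $\omega$ (note the paper's own remark that Johnson and Morita use the opposite adjoint of $\omega$). The algebra is routine once the conventions are pinned down, but reconciling ``$x \mapsto \rho_2(M)(\{x\})\{x\}^{-1}$'' with ``$(x,y,z) \mapsto \langle x\cup y\cup z,[\overline{M}]\rangle$'' through Poincaré duality is where all the care must go; everything else is a standard Mayer–Vietoris / handle-calculus computation on a $3$-manifold whose $H_1$ is free abelian of rank $2g$.
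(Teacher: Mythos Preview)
Your proposal takes a different route from the paper. The paper does not give a direct proof at the point of the statement; instead it indicates (in \S\ref{subsec:Y_cylinders}, just after Proposition~\ref{prop:Y_1_cylinders}) an argument by clasper surgery: every $M\in\cyl_{g,1}$ is $Y_1$-equivalent to the trivial cylinder, so one verifies the identity for $\Sigma_{g,1}\times[-1,1]$ (where both sides vanish, since $\overline{\Sigma_{g,1}\times[-1,1]}=\sharp^{2g}(S^2\times S^1)$ has trivial triple cup products) and then computes how each side varies under a single $Y_1$-surgery. This trades the global structure of an arbitrary homology cylinder for a local, explicit modification, and that is what makes the extension from mapping cylinders to all of $\cyl_{g,1}$ painless.

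Your first approach --- explicit dual $2$-cycles and their triple intersections --- is fine for mapping cylinders, where the surfaces $a_i\times S^1$ are literally there; but for a general $M\in\cyl_{g,1}$ the cobordism is not a product, and there are no canonical representative $2$-cycles beyond what Poincar\'e duality abstractly promises. The phrase ``modified by the monodromy'' is hiding exactly the step that needs work.

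Your reserve approach has a real gap. You assert that the triple-cup form of $\overline{M}$ depends only on $\rho_2(M)$, but you do not justify this, and it is essentially equivalent to what you are trying to prove: a posteriori the cup form equals $\tau_1(M)$ and hence depends only on $\rho_2(M)$, but using that as input is circular. One honest way to establish it independently is to recognise the triple-cup form as the image of $[\overline{M}]$ in $H_3(H)\cong\Lambda^3 H$, i.e.\ as $M_1(M)$ in the sense of \S\ref{subsec:Morita}, and then invoke Theorem~\ref{th:Morita_to_Johnson} for $k=1$ together with the isomorphism~(\ref{eq:trivectors}); but that is a third proof, not the reduction-to-mapping-cylinders argument you outlined.
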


\noindent
As claimed by Johnson \cite{Johnson_survey}, 
this relationship can be generalized to the higher Johnson homomorphisms. 
Thus, the $k$-th Johnson homomorphism  of an $M \in \cob_{g,1}[k]$
can be computed from the length $k+1$ Massey products of the closure $\overline{M}$.
This has been proved by Kitano in the case of the mapping class group \cite{Kitano}
and by Garoufalidis--Levine in the case of homology cobordisms \cite{GL}.

Johnson proved that  $\tau_1: \Torelli_{g,1} \to \Lambda^3 H$ is surjective (for $g\geq 2$)
by computing its values on some explicit elements of the Torelli group \cite{Johnson_first_homomorphism}.
In general, the image of $\tau_k: \mcg_{g,1}[k] \to \operatorname{D}_{k+2}(H)$ 
is not known (except for small values of $k$) --- see \cite{Morita_prospect} for an account of this important problem.
Nevertheless, we have the following result for homology cobordisms.

\begin{theorem}[Garoufalidis--Levine \cite{GL}]
\label{th:surjectivity_Johnson_homomorphisms}
For all $k\geq 1$, the monoid map $\tau_{k}: \cob_{g,1}[k] \to \operatorname{D}_{k+2}(H)$ is surjective.
\end{theorem}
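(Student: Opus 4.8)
The plan is to construct explicit homology cobordisms realizing any prescribed element of $\operatorname{D}_{k+2}(H)$, using the fact that we have much more freedom than in the mapping class group case. First I would recall that $\operatorname{D}_{k+2}(H)$ is generated, as an abelian group, by elements of the form $h \otimes \ell$ with $h \in H$ and $\ell \in \Lie_{k+1}(H)$ lying in the kernel of the bracket; in fact one can reduce to studying how $\rho_{k+1}(M)$ moves a single generator $\{x_i\}$ of $\pi/\Gamma_2\pi$ by a specified element of $\Gamma_{k+1}\pi/\Gamma_{k+2}\pi$ while fixing the other generators modulo $\Gamma_{k+2}\pi$. So the task reduces to: given a basis element $x_i$ of $\pi$ and a word $w \in \Gamma_{k+1}\pi$, produce $M \in \cob_{g,1}[k]$ with $\rho_{k+1}(M)(\{x_i\}) = \{w x_i\}$ and $\rho_{k+1}(M)(\{x_j\}) = \{x_j\}$ for $j \neq i$, subject to the boundary constraint that forces the result into $\operatorname{D}_{k+2}(H)$.

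The key construction is surgery on the trivial cylinder $\Sigma_{g,1}\times[-1,1]$ along a suitable framed link, or equivalently a clasper/Borromean-type surgery, chosen so that the complement is still a homology cobordism (one arranges the linking matrix to be unimodular, or uses null-homologous surgery curves, so that the homology condition on $m_\pm$ is preserved). Concretely: take an embedded arc in $\Sigma_{g,1}\times 1$ representing $x_i$, and modify it by "pushing a finger" guided by a loop representing the word $w$; realize this modification by surgery on a link $L \subset \Sigma_{g,1}\times[-1,1]$ built from a $0$-framed parallel of (a loop representing) $w$ together with a meridian, in the style of the "slam dunk"/handle-slide moves already used in the proof of Lemma~\ref{lem:closing}. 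The effect on $\pi_1$ of the top surface, read through Stallings' isomorphism, is precisely multiplication of $x_i$ by $w$; since $w \in \Gamma_{k+1}\pi$, the resulting cobordism lies in $\cob_{g,1}[k]$, and its $k$-th Johnson homomorphism is the class of $x_i \otimes \{w\}$. Because Johnson homomorphisms are monoid homomorphisms (they are additive under composition on $\cob_{g,1}[k]$), one then obtains any sum of such generators by stacking the corresponding cobordisms, so surjectivity onto all of $\operatorname{D}_{k+2}(H)$ follows.

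The main obstacle is verifying, with sufficient care, that the surgered manifold really is a \emph{homology cobordism} (both inclusions inducing homology isomorphisms) and simultaneously that the induced automorphism of $\pi/\Gamma_{k+2}\pi$ is exactly the desired transvection and nothing more --- the surgery curves must be chosen null-homologous (or the framing matrix unimodular) and their images in $\pi_1$ must be controlled well enough that the only nontrivial effect modulo $\Gamma_{k+2}\pi$ is the intended one on $x_i$. A clean way to do this is to phrase the whole thing in terms of clasper surgery: a single $Y$-graph of "degree $k$" whose leaves are mapped to loops spelling out the relevant commutator pattern produces exactly a homology cylinder in $\cob_{g,1}[k]$ with prescribed $\tau_k$, and the homology-cobordism (indeed homology-cylinder) property is automatic from the theory of claspers recalled in Section~\ref{sec:Y}. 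I would therefore expect the author's proof to proceed via an explicit family of claspers (or of surgery links) realizing a generating set of $\operatorname{D}_{k+2}(H)$, followed by the additivity remark; the bordered case is easier than the closed case here precisely because there is no $H_2$ constraint and one has the free group $\pi$ rather than a surface group to manipulate.
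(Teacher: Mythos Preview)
Your reduction in the first paragraph has a genuine gap. You propose to realize, for each basis element $x_i$ and each $w \in \Gamma_{k+1}\pi$, a cobordism $M$ with $\rho_{k+1}(M)(\{x_i\}) = \{wx_i\}$ and $\rho_{k+1}(M)(\{x_j\}) = \{x_j\}$ for $j\neq i$, and then stack. But such an automorphism of $\pi/\Gamma_{k+2}\pi$ fixes $\{\zeta\}$ only when $[x_i]\otimes\{w\}$ already lies in $\operatorname{D}_{k+2}(H)$, which it generally does not; and elements of $\operatorname{D}_{k+2}(H)$ supported on a single basis vector do not generate $\operatorname{D}_{k+2}(H)$. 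So there is nothing to stack: the individual pieces are not values of $\tau_k$ on any homology cobordism, and the additive reduction collapses.

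Your clasper alternative is much better and is a genuinely different route from the paper's. A $Y_k$-tree clasper surgery on $\Sigma_{g,1}\times[-1,1]$ produces a homology cylinder lying in $Y_k\cyl_{g,1}\subset\cob_{g,1}[k]$, and its $\tau_k$ is exactly $\eta_k$ applied to the underlying $H$-colored tree; this is precisely the computation the paper carries out later (see the commutative triangles in the proof of Theorem~\ref{th:groups_rational}). Surjectivity of $\tau_k$ then reduces to surjectivity of $\eta_k\colon\A^{t,c}_k(H)\to\operatorname{D}_{k+2}(H)$, which is a purely algebraic statement about trees and free Lie algebras due to Levine. The paper, by contrast, proceeds more globally: it first proves (via cobordism theory and surgery, \cite[Theorem~3]{GL}) that $\rho_k\colon\cob_{g,1}\to\Aut_\zeta(\pi/\Gamma_{k+1}\pi)$ is surjective for every $k$, and then reads off the surjectivity of $\tau_k$ from the commutative ladder~(\ref{eq:D_Aut_Aut}) whose bottom row is a short exact sequence of groups. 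A second sketch invokes Habegger's Milnor--Johnson correspondence with string-links. Your clasper argument is more hands-on and stays inside the theory of Section~\ref{sec:Y}; the paper's argument buys the stronger statement that \emph{every} boundary-fixing automorphism of each nilpotent quotient is realized, not just those in the kernel of the truncation map.
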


\begin{proof}[Sketches of the proof]
Let $\Aut_\zeta\left(\pi/\Gamma_{k+1} \pi\right)$ 
be the group of those automorphisms $\Psi$ of $\pi/\Gamma_{k+1} \pi$ 
which lift to an endomorphism $\widetilde{\Psi}$ of $\pi/\Gamma_{k+2} \pi$ 
such that $\widetilde{\Psi}(\{\zeta\})= \{\zeta\}$.  
By using cobordism theory and surgery techniques, Garoufalidis and Levine first prove that the monoid homomorphism
$$
\rho_k: \cob_{g,1} \longrightarrow \Aut_\zeta\left(\pi/\Gamma_{k+1} \pi\right)
$$
is surjective \cite[Theorem 3]{GL}. 
Similar techniques are used by Turaev in \cite{Turaev_nilpotent} to prove a realization result
for nilpotent homotopy types of closed oriented $3$-manifolds --- see \S \ref{subsec:Morita} in this connection.
Next, they consider the following commutative diagram:
\begin{equation}
\label{eq:D_Aut_Aut}
\xymatrix{
1 \ar[r] & \cob_{g,1}[k] \ar[r] \ar[d]_-{\tau_k} & \cob_{g,1} \ar[r]^-{\rho_k} \ar[d]^-{\rho_{k+1}}  & 
\Aut_\zeta\left(\pi/\Gamma_{k+1} \pi\right) \ar[r] \ar@{=}[d]& 1  \\
1 \ar[r] & \operatorname{D}_{k+2}(H) \ar[r] &  \Aut_\zeta\left(\pi/\Gamma_{k+2} \pi\right) \ar[r] &
\Aut_\zeta\left(\pi/\Gamma_{k+1} \pi\right)  \ar[r] &1.
}
\end{equation}
Clearly, the first row is exact.
In the second row, the right-hand side map is induced by the canonical projection 
$\End(\pi/\Gamma_{k+2} \pi) \to \End(\pi/\Gamma_{k+1} \pi)$, 
and the surjectivity of $\rho_k$ implies that it is surjective. 
The left-hand side map in the second row sends any 
$f \in \operatorname{D}_{k+2}(H) \subset H \otimes \Lie_{k+1} 
\simeq \Hom(H, \Gamma_{k+1} \pi / \Gamma_{k+2} \pi)$ to the automorphism $\Psi$ of 
$\pi/\Gamma_{k+2} \pi$ defined by $\Psi(\{x\}) = f(\{x\})\cdot \{x\}$ for all $x\in \pi$.
It can be checked that the second row is exact \cite[Proposition 2.5]{GL},
and the surjectivity of $\rho_{k+1}$ then implies that $\tau_k$ is surjective.

Habegger gives in \cite{Habegger} a different proof of Theorem \ref{th:surjectivity_Johnson_homomorphisms}.
For this, he defines a correspondence between the set $\cob_{g,1}[1]=\cyl(\Sigma_{g,1})$ 
and the set $\cyl(\Sigma_{0,2g+1})$ of framed string-links in homology $3$-balls on $2g$ strands with trivial linking matrix.
(See Example \ref{ex:genus_0_cobordisms}.)
Although not multiplicative, this bijection makes the Johnson filtration correspond to the Milnor filtration,
and the Johnson homomorphisms correspond to the Milnor invariants. 
Thus, Habegger calls it the ``Milnor--Johnson correspondence'' and the surjectivity of $\tau_k$ 
then follows from the surjectivity of the Milnor invariants for string-links in homology $3$-balls \cite{HL_concordance}.
\end{proof}

In the closed case, the Johnson homomorphisms are defined for the mapping class group by Morita in \cite{Morita_linear}.
The definition is similar to the bordered case, but it is also more technical.
Since any embedding $\Sigma_{g,1} \to \Sigma_g$ induces an isomorphism in homology, 
we use the same notation as in the bordered case:
$$
H := H_1(\Sigma_g) \simeq \clocase{\pi}/\Gamma_2  \clocase{\pi}.
$$
The intersection pairing $\omega: H \times H \to \Z$ defines an isomorphism
$H \simeq H^*$ and, so, is dual to a bivector $\omega \in \Lambda^2 H$.
By a theorem of Labute \cite{Labute}, the graded Lie ring 
associated with the lower central series of $\clocase{\pi}$ is canonically isomorphic to the quotient
$\clocase{\Lie} := \Lie  /  \langle \omega \rangle_{\operatorname{ideal}}$,
where $\omega$ is regarded as an element of $\Lie_2$.

Each $\{y\} \in \Gamma_k \clocase{\pi}/ \Gamma_{k+1} \clocase{\pi}$
induces  a group homomorphism $H \to \Gamma_{k+1} \clocase{\pi}/ \Gamma_{k+2} \clocase{\pi}$ 
defined by $\{x\} \mapsto \{[y,x]\}$. 
In that way, $\Gamma_k \clocase{\pi}/ \Gamma_{k+1} \clocase{\pi}$ is mapped to a subgroup
of $\Hom\left(H,\Gamma_{k+1} \clocase{\pi} / \Gamma_{k+2} \clocase{\pi} \right)$
denoted by $\left[ \Gamma_k \clocase{\pi}/ \Gamma_{k+1} \clocase{\pi}, \centereddot\right]$.
Besides, each element $y \in \clocase{\Lie}_k$ defines by antisymmetrization 
of $\omega \in \Lambda^2 H $ in $H \otimes H$ 
an element $\omega \otimes y$ of $H \otimes H \otimes  \clocase{\Lie}_k$
and, so, by applying the bracket $H \otimes  \clocase{\Lie}_k \to \clocase{\Lie}_{k+1}$,
we get an element $\left(\Id \otimes [\centereddot,\centereddot]\right)(\omega \otimes y)$ 
of $H \otimes  \clocase{\Lie}_{k+1}$. 
Thus, $\clocase{\Lie}_k$ is mapped to a subgroup of $H \otimes  \clocase{\Lie}_{k+1}$
denoted by $\left(\Id \otimes [\centereddot,\centereddot]\right)(\omega \otimes \clocase{\Lie}_k)$.
It is easily checked that the canonical isomorphism
$$
\Hom\left(H,\Gamma_{k+1} \clocase{\pi} / \Gamma_{k+2} \clocase{\pi} \right)
= H^* \otimes \Gamma_{k+1} \clocase{\pi} / \Gamma_{k+2} \clocase{\pi}
\simeq H \otimes  \clocase{\Lie}_{k+1}
$$
makes those two subgroups correspond.

\begin{definition}
For all $k\geq 1$, the \emph{$k$-th Johnson homomorphism}\index{Johnson homomorphism}\index{homomorphism!Johnson}  
is the monoid map 
$$
\tau_k: \cob_{g}[k] \longrightarrow 
\frac{\Hom\left(H,\Gamma_{k+1} \clocase{\pi} / \Gamma_{k+2} \clocase{\pi} \right)}
{\left[ \Gamma_k \clocase{\pi}/ \Gamma_{k+1} \clocase{\pi}, \centereddot\right]}
\simeq \frac{H \otimes \clocase{\Lie}_{k+1}}{\left(\Id \otimes [\centereddot,\centereddot]\right)(\omega \otimes \clocase{\Lie}_k)}
$$
which sends an $M \in \cob_{g}[k]$ to the map defined by 
$\{x\} \mapsto \rho_{k+1}(M)(\{x\})\cdot \{x\}^{-1}$ for all $\{x\} \in \clocase{\pi}/\Gamma_2 \clocase{\pi}$.
\end{definition}

\noindent
Recall that, in the closed case, $\rho_{k+1}(M)$ is an automorphism of $\clocase{\pi}/\Gamma_{k+2} \clocase{\pi}$
which is only defined up to some inner automorphism. This ambiguity disappears by taking
the quotient with $\left[ \Gamma_k \clocase{\pi}/ \Gamma_{k+1} \clocase{\pi}, \centereddot\right]$.
For the same reason, the choice of the base point  $\star\in \Sigma_g$ has no incidence on the definition of $\tau_k(M)$.

As explained in \S \ref{subsec:bordered_to_closed}, we may think of the closed surface $\Sigma_g$ as the union of $\Sigma_{g,1}$ with a  disk $D$. 
We can also choose a system of meridians and parallels $(\alpha,\beta)$ on $\Sigma_{g,1}$ as shown in Figure \ref{fig:surface}. 
Then, for a base point $\star \in \partial \Sigma_{g,1}$ and some appropriate basings of those meridians and parallels, we have
$$
\zeta = \left[\partial \Sigma_{g,1}\right]
=  \left[ \alpha_{g},\beta_{g}^{-1}\right]\cdots   \left[ \alpha_{1},\beta_{1}^{-1}\right] \ \in \pi.
$$
Since $\zeta=[\partial D]$ is trivial in $\clocase{\pi}$,
any representative $r \in \Aut\left(\clocase{\pi}/\Gamma_{k+3} \clocase{\pi}\right)$ of $\rho_{k+1}(M)$
satisfies the identity 
$$
\left[ r(\alpha_{g}),r(\beta_{g}^{-1})\right] \cdots \left[ r(\alpha_{1}),r(\beta_{1}^{-1})\right] =1 \ \in \clocase{\pi}/\Gamma_{k+3} \clocase{\pi}.
$$
This implies that a representative of $\tau_k(M)$ in $H \otimes \clocase{\Lie}_{k+1}$ actually belongs to the subgroup
$$
\clocase{\operatorname{D}}_{k+2}(H)
:= \Ker\left([\centereddot,\centereddot]: H \otimes \clocase{\Lie}_{k+1} \longrightarrow \clocase{\Lie}_{k+2}\right).
$$
Thus, the $k$-th Johnson homomorphism is a monoid homomorphism
$$
\tau_k: \cob_{g}[k] \longrightarrow 
\frac{\clocase{\operatorname{D}}_{k+2}(H)}{\left(\Id \otimes [\centereddot,\centereddot]\right)(\omega \otimes \clocase{\Lie}_k)}
$$
whose kernel is the submonoid $\cob_{g}[k+1]$.

\begin{figure}[h]
\begin{center}
{\labellist \small \hair 0pt 
\pinlabel {\large $\star$} at 567 3
\pinlabel {$\alpha_1$} [l] at 310 105
\pinlabel {$\alpha_g$} [l] at 568 65
\pinlabel {$\beta_1$} [b] at 216 80
\pinlabel {$\beta_g$} [b] at 472 69
\endlabellist}
\includegraphics[scale=0.52]{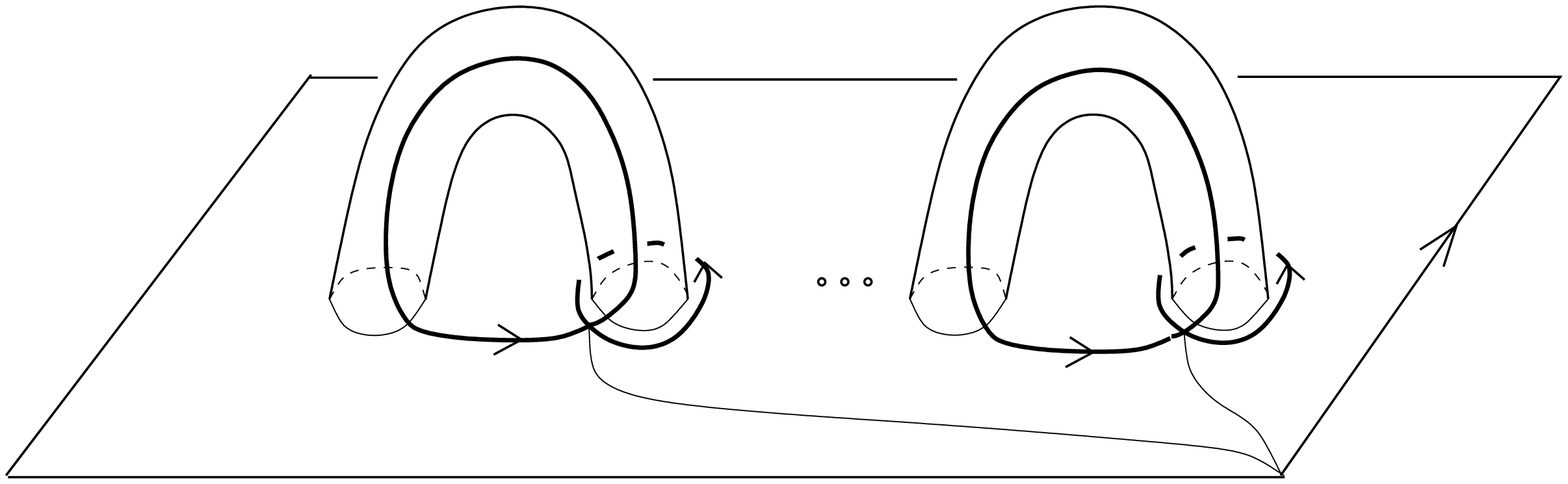}
\end{center}
\caption{The surface $\Sigma_{g,1}$ and a system of meridians and parallels $(\alpha,\beta)$.}
\label{fig:surface}
\end{figure}

Similarly to the bordered case, $\tau_1$ was introduced by Johnson himself \cite{Johnson_first_homomorphism},
and it plays a key role in the abelianization of $\Torelli_g$ --- see \S \ref{subsec:degree_one}.
Note that  we have an isomorphism 
$$
 \Lambda^3 H \stackrel{\simeq}{\longrightarrow} \operatorname{D}_3(H) \simeq \clocase{\operatorname{D}}_3(H)
 $$
through which the subgroup $\omega \wedge H$ goes to $\left(\Id \otimes [\centereddot,\centereddot]\right)(\omega \otimes H)$.
Thus, in the closed case, the first Johnson homomorphism is a monoid map
$$
\tau_1: \cyl_{g} \longrightarrow \frac{\Lambda^3 H}{\omega \wedge H}.
$$
As in the bordered case, $\tau_1$ can be interpreted in terms of the cohomology rings of closed oriented $3$-manifolds. 
Indeed, let $\overline{M}$ be the closure of an $M \in \cyl_g$ as defined in \S \ref{subsec:closure}.
We have a short exact sequence of abelian groups
\begin{equation}
\label{eq:ses_homology}
\xymatrix{
0 \ar[r] & H \ar[r]^-{m_*} &  H_1(\overline{M}) \ar[r]^-p & \Z \ar[r] & 0 
}
\end{equation}
where $m_*$ is induced by the inclusion $m_\pm: \Sigma_g \to M \subset \overline{M}$
and $p$ takes the homological intersection with $\left[m_\pm(\Sigma_g)\right] \in H_2(\overline{M})$.

\begin{proposition}
\label{prop:triple-cup_closed}
For all $M \in \cyl_{g}$, $\tau_1(M)$ is the image
of the triple-cup product form of the closure $\overline{M}$ under the projection
$$
\xymatrix{
\Lambda^3 H_1(\overline{M}) \ar@{->>}[r]^-{\Lambda^3 s\ } &
\Lambda^3 H \ar@{->>}[r] & \Lambda^3 H/ \omega \wedge H
}
$$
where $s$ is a left inverse of $m_*$ in (\ref{eq:ses_homology}).
\end{proposition}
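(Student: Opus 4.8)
The plan is to deduce the statement from the bordered case, i.e.\ from Proposition \ref{prop:triple-cup}, by ``opening up'' $\Sigma_g$ along the disk $D$. Write $\Sigma_g = \Sigma_{g,1}\cup D$ and pick, using Lemma \ref{lem:closing}, an $M'\in\cyl_{g,1}$ with $\clocase{M'}=M$; this is possible because a Mayer--Vietoris argument shows that $\clocase{\,\cdot\,}$ maps $\cyl_{g,1}$ onto $\cyl_g$ (the condition $H_*(M)\cong H_*(\Sigma_g\times[-1,1])$ forces $H_*(M')\cong H_*(\Sigma_{g,1}\times[-1,1])$ compatibly). It then suffices to prove two things: (i) $\tau_1(M)\in\Lambda^3 H/\omega\wedge H$ is the reduction of $\tau_1(M')\in\Lambda^3 H$; and (ii) this reduction equals the class of $\Lambda^3 s(u_{\overline M})$, where I write $u_X\in\Lambda^3 H_1(X)$ for the triple-cup product form of a closed oriented $3$-manifold $X$.

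For (i) I would argue group-theoretically. Since $\zeta=[\partial D]$ becomes trivial in $\clocase\pi$ and lies in $\Gamma_2\pi$, one has $\clocase\pi/\Gamma_3\clocase\pi = (\pi/\Gamma_3\pi)/\langle\{\zeta\}\rangle$, and under the isomorphism $\Gamma_2\pi/\Gamma_3\pi\cong\Lambda^2 H$ the element $\{\zeta\}=\{\prod_i[\alpha_i,\beta_i^{-1}]\}$ corresponds to $\pm\omega$. Because $M'$ is a homology cylinder, $\rho_1(M')=\Id$, so $\rho_2(M')$ acts trivially on $\Gamma_2\pi/\Gamma_3\pi$ and in particular fixes $\{\zeta\}$; hence $\rho_2(M')$ descends through the above quotient, and the resulting (outer) automorphism of $\clocase\pi/\Gamma_3\clocase\pi$ is $\rho_2(\clocase{M'})=\rho_2(M)$. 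Running the definition $\tau_1(\,\cdot\,)(\{x\})=\rho_2(\,\cdot\,)(\{x\})\{x\}^{-1}$ through this quotient then identifies $\tau_1(M)$ with the image of $\tau_1(M')$ under the induced map on targets, which under \eqref{eq:trivectors} is precisely the projection $\Lambda^3 H\to\Lambda^3 H/\omega\wedge H$.

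By Proposition \ref{prop:triple-cup} we have $\tau_1(M')=u_{\overline{M'}}$ inside $\Lambda^3 H_1(\overline{M'})=\Lambda^3 H$, so (ii) reduces to a comparison of the cup product rings of $\overline{M'}$ and $\overline M$. Both closed manifolds are built from the common piece $N:=M'/\!\sim$ (a $3$-manifold with torus boundary, with $H_1(N)\cong H\oplus\Z v$) by gluing a solid torus along $\partial N$ with two different slopes: $\overline{M'}$ kills the vertical circle $v$, while $\overline M$ fills in the horizontal circle $h=[\partial\Sigma_{g,1}\times 0]$, which already vanishes in $H_1(N)$. Equivalently, $\overline M$ is $0$-surgery on the null-homologous knot $k\subset\overline{M'}$ obtained from this horizontal circle, which bounds the Seifert surface $F=\Sigma_{g,1}\times 0$; the surgery creates the new summand $\Z v=H_1(\overline M)/m_*(H)$ generated by the meridian of $k$ (with $p(v)=1$), and caps $F$ off to a closed surface $\hat F\cong\Sigma_g$ with $[\hat F]=[m_\pm(\Sigma_g)]$, so that $v$ is Poincar\'e dual to $[m_\pm(\Sigma_g)]$. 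A Mayer--Vietoris computation across $\partial N$ (or the surgery-cobordism computation) then gives, in $\Lambda^3 H_1(\overline M)=\Lambda^3(H\oplus\Z v)$, the identity $u_{\overline M}=u_{\overline{M'}}+\omega\wedge v$: the purely horizontal part $u_{\overline{M'}}\in\Lambda^3 H$ is unchanged because the surgery is supported away from $N$; the part linear in $v$ is $\langle x\cup y,[\hat F]\rangle=\omega$ (the intersection form of $\Sigma_g$); and the part quadratic in $v$ vanishes because the self-intersection of $\hat F$ is zero, i.e.\ $v^*\cup v^*=0$. Applying $\Lambda^3 s$ and using $s|_{m_*(H)}=\Id$, we get $\Lambda^3 s(u_{\overline M})=u_{\overline{M'}}+\omega\wedge s(v)\equiv u_{\overline{M'}}\pmod{\omega\wedge H}$; combining with (i) and Proposition \ref{prop:triple-cup} finishes the proof, and also re-derives the independence of the right-hand side from the choice of $s$ in \eqref{eq:ses_homology}.

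The step I expect to be the main obstacle is this last cup product comparison: carefully identifying $v$ with $\mathrm{PD}[m_\pm(\Sigma_g)]$, verifying $v^*\cup v^*=0$, and --- most of all --- tracking with correct signs and basings that the horizontal triple-cup form is genuinely unchanged by the surgery, so that the only new term is $\omega\wedge v$. The naturality step (i) is routine once one notes that in the $2$-step nilpotent quotient $\clocase\pi/\Gamma_3\clocase\pi$ inner automorphisms act trivially on $\Gamma_2/\Gamma_3$, so the outer-automorphism ambiguity in the closed case causes no trouble.
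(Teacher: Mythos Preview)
Your approach is essentially the one the paper indicates: the paper does not write out a proof of this proposition but remarks, just after the commutative square \eqref{eq:Johnson_bordered_to_closed}, that ``(\ref{eq:Johnson_bordered_to_closed}) is also useful to deduce Proposition \ref{prop:triple-cup_closed} from Proposition \ref{prop:triple-cup}'', and later (after Proposition \ref{prop:Y_1_cylinders}) notes that a direct proof via $Y_1$-surgery is also possible. Your step (i) is exactly the content of \eqref{eq:Johnson_bordered_to_closed} for $k=1$, and your step (ii) supplies the comparison of closures that the paper leaves to the reader.

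Two small comments on the execution of (ii). First, your justification that the purely horizontal part of $u_{\overline{M}}$ agrees with $u_{\overline{M'}}$ (``the surgery is supported away from $N$'') is a bit loose as stated; a clean way to nail it down is to use the trace $W^4$ of the $0$-surgery: since $H^1(W)\cong H$ restricts isomorphically to $H^1(\overline{M'})$ and injects as the $H$-summand of $H^1(\overline{M})$, any triple of horizontal classes extends over $W$, so Stokes gives $\langle x\cup y\cup z,[\overline{M'}]\rangle=\langle x\cup y\cup z,[\overline{M}]\rangle$. Second, for $v^*\cup v^*=0$ you do not need the self-intersection of $\hat F$: graded commutativity gives $2(v^*\cup v^*)=0$, and $H^2(\overline{M})$ is torsion-free because $M$ is a homology cylinder over $\Sigma_g$. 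With these points tightened, your identity $u_{\overline{M}}=u_{\overline{M'}}+\omega\wedge v$ and the conclusion follow.
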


The map $\clocase{\ \centereddot\ }:\cob_{g,1} \to \cob_g$ defined in Lemma \ref{lem:closing} 
(when $\Sigma_g$ is decomposed into a closed disk and a copy of $\Sigma_{g,1}$) 
sends the Johnson filtration to the Johnson filtration. Moreover, we have a commutative square:
\begin{equation}
\label{eq:Johnson_bordered_to_closed}
\xymatrix{
\cob_{g,1}[k] \ar[r]^-{\tau_k} \ar[d]_-{\clocase{\ \centereddot\ }} & \operatorname{D}_{k+2}(H) \ar@{->>}[d] \\
\cob_{g}[k] \ar[r]_-{\tau_k} &
\frac{\clocase{\operatorname{D}}_{k+2}(H)}{\left(\Id \otimes [\centereddot,\centereddot]\right)(\omega \otimes \clocase{\Lie}_k)}
}
\end{equation}
We deduce from Theorem \ref{th:surjectivity_Johnson_homomorphisms} that $\tau_k$ is surjective also in the closed case.
However, computing the image of $\mcg_g[k]$ by $\tau_k$ is still an unsolved and hard problem \cite{Morita_prospect}.
Besides, (\ref{eq:Johnson_bordered_to_closed}) is also useful to deduce  Proposition \ref{prop:triple-cup_closed} from Proposition \ref{prop:triple-cup}.

\subsection{Morita homomorphisms}

\label{subsec:Morita}

In the case of the bordered surface $\Sigma_{g,1}$,
Morita introduced in \cite{Morita_Abelian} some refinements of the Johnson homomorphisms. 
From the point of view of $3$-dimensional topology, 
Morita's homomorphisms correspond to Turaev's nilpotent homotopy types of closed $3$-manifolds \cite{Turaev_nilpotent},
which we shall now recall.

Let $k \geq 1$ be an integer and let $G$ be a group of nilpotency class $k$ (\ie the subgroup $\Gamma_{k+1} G$ is trivial).
Let $N$ be a closed connected oriented $3$-manifold 
whose $k$-th nilpotent quotient of the fundamental group is parameterized by the group $G$:
$$
n: G \stackrel{\simeq}{\longrightarrow} \pi_1(N) / \Gamma_{k+1} \pi_1(N).
$$
Then, Turaev defines the \emph{$k$-th nilpotent homotopy type} of the pair $(N,n)$ to be 
$$
\mu_k(N,n) := f_{n,*}\left([N]\right) \in H_3(G)
$$
where $f_n: N \to \operatorname{K}(G,1)$ gives the composition
$\pi_1(N) \to \pi_1(N)/\Gamma_{k+1} \pi_1(N) \stackrel{n^{-1}}{\to} G$ at the level of fundamental groups.
This terminology is justified by the fact due to Thomas \cite{Thomas} and Swarup \cite{Swarup} that
the oriented homotopy type of $N$ is determined by the group $\pi_1(N)$ and the image of $[N]$ in $H_3\left(\pi_1(N)\right)$.

\begin{remark}
\label{rem:cobordism}
It is well-known that the cobordism group $\Omega_3^{\operatorname{SO}}( \operatorname{K}(G,1))$ 
is isomorphic to $H_3(G)$ by the map $\{(L,l)\} \mapsto l_*([L])$,
for any closed connected oriented $3$-manifold $L$ and any map $l:L \to  \operatorname{K}(G,1)$.
Thus, an equivalent definition for the $k$-th nilpotent homotopy type is
$\mu_k(N,n) := \left\{(N,f_n)\right\} \ \in \Omega_3^{\operatorname{SO}}( \operatorname{K}(G,1))$.
\end{remark}

Let us now consider the closure $\overline{C}$ of a cobordism $C \in \cob_{g,1}[k]$ defined in \S \ref{subsec:closure}. 
Observe that the map $\Sigma_{g,1} \to \overline{C}$, which is
obtained by composing $c_\pm: \Sigma_{g,1} \to  C$ with the inclusion $C \subset \overline{C}$,
induces an isomorphism $c_{*}: \pi / \Gamma_{k+1} \pi \to  \pi_1(\overline{C})/ \Gamma_{k+1}  \pi_1(\overline{C})$.

\begin{definition}
For all $k\geq 1$, the {\it $k$-th Morita homomorphism}\index{Morita homomorphism}\index{homomorphism!Morita} 
is the map
$$
M_k: \cob_{g,1}[k] \longrightarrow H_3\left( \pi / \Gamma_{k+1} \pi \right)
$$
that sends a $C \in \cob_{g,1}[k]$ to $\mu_k\left(\overline{C},c_{*}\right)$.
\end{definition}

\noindent
This map is studied by Heap in \cite{Heap} at the mapping class group level,
and by Sakasai in \cite{Sakasai} for homology cobordisms. 
It also appears implicitly in \cite{GL}. By considering the simplicial model of $\operatorname{K}\left(\pi/\Gamma_{k+1} \pi,1\right)$,
Heap shows that $M_k$ is equivalent to Morita's refinement of the $k$-th Johnson homomorphism  \cite{Morita_Abelian}.
Morita's original definition is more algebraic: for any $f \in \mcg_{g,1}[k]$, 
the homology class $M_k(\mcyl(f))$ is defined in \cite{Morita_Abelian}
from the Dehn--Nielsen representation $\rho(f) \in \Aut(\pi)$ using the bar resolution of the group $\pi$.
Similarly, one can define $M_k(C)$ from $\rho_{2k}(C) \in \Aut(\pi/\Gamma_{2k+1} \pi)$
using the fact  (proved by Igusa and Orr \cite{IO}) that the canonical homomorphism $H_3(\pi/\Gamma_{2k+1}\pi) \to H_3(\pi/\Gamma_{k+1}\pi)$ is trivial.

However, it is not clear from the above definition that $M_k$ is a monoid homomorphism.
The additivity of $M_k$ is a consequence of the following ``variation formula''
for the $k$-th nilpotent homotopy type.

\begin{proposition}
\label{prop:variation_Morita}
Let $N$ be a closed connected oriented $3$-manifold.
Given a cobordism $C \in \cob_{g,1}[k]$ and an embedding $j:\Sigma_{g,1} \to N$,
consider the $3$-manifold $N'$ obtained by ``cutting'' $N$ along $j(\Sigma_{g,1})$ and by ``inserting'' $C$. 
Then, any isomorphism $n: G \to \pi_1(N) / \Gamma_{k+1} \pi_1(N)$ induces, in a canonical way,
 an isomorphism $n': G \to \pi_1(N') / \Gamma_{k+1} \pi_1(N')$ such that
\begin{equation}
\label{eq:variation}
\mu_k(N',n') - \mu_k(N, n) =  n_*^{-1} j_{*}\left(M_k(C)\right) \ \in H_3(G).
\end{equation}
\end{proposition}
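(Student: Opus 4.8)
The plan is to reduce the statement to the cobordism-theoretic reformulation of the nilpotent homotopy type given in Remark~\ref{rem:cobordism}, where $\mu_k$ becomes a class in $\Omega_3^{\operatorname{SO}}(\operatorname{K}(G,1))$, and then to exhibit $N'$ together with its classifying map as an explicit modification of $(N,f_n)$ that differs from it, in the cobordism group, by exactly the class coming from $C$. First I would make precise the "cutting and inserting" construction: since $C \in \cob_{g,1}[k]$, both inclusions $c_\pm$ induce isomorphisms on $\pi/\Gamma_{k+1}\pi$ (Stallings' theorem), and $j(\Sigma_{g,1}) \subset N$ is a separating-or-nonseparating properly embedded surface with a bicollar $j(\Sigma_{g,1}) \times [-1,1]$. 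One removes the interior of this bicollar and glues in $C$ via the boundary parameterization $c$. I would then define $n'$ by the composite $G \xrightarrow{\ n\ } \pi_1(N)/\Gamma_{k+1} \xrightarrow{\ \cong\ } \pi_1(N')/\Gamma_{k+1}$, where the second isomorphism comes from the van Kampen / Mayer--Vietoris comparison: both $N$ and $N'$ are built from the common piece $N \setminus \operatorname{int}(j(\Sigma_{g,1})\times[-1,1])$ by gluing on a homology-cylinder-like piece ($\Sigma_{g,1}\times[-1,1]$ in one case, $C$ in the other) that induces the same map on nilpotent quotients, so the inclusion of that common piece induces compatible isomorphisms on $\pi/\Gamma_{k+1}$ for both. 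This is where the hypothesis $C \in \cob_{g,1}[k]$ — not merely a homology cylinder — is essential, and checking it carefully is the first substantive step.

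Next I would set up a $4$-dimensional cobordism. Let $W$ be the four-manifold obtained by attaching to $N \times [0,1]$ (along $N \times 1$) a piece built from the mapping-cylinder-type structure interpolating between the bicollar $\Sigma_{g,1}\times[-1,1]$ in $N$ and the cobordism $C$; concretely, one can take $W := (N\times[0,1]) \cup (\text{some explicit } 4\text{-manifold } V)$, where $\partial V = -(\Sigma_{g,1}\times[-1,1]) \cup C \cup (\Sigma_{g,1}\times[-1,1]\text{-side collar})$ suitably, so that $\partial W = (-N) \sqcup N' \sqcup (\text{a closed piece }\overline{C}\text{-like term})$. The cleanest implementation is: $V$ is the trace of inserting $C$, whose "closed part" of the boundary is precisely the closure $\overline{C}$ of $C$ as in \S\ref{subsec:closure}. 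The classifying maps $f_n: N \to \operatorname{K}(G,1)$, $f_{n'}: N' \to \operatorname{K}(G,1)$, and $f_C : \overline{C} \to \operatorname{K}(G,1)$ (the latter giving $M_k(C) = \{(\overline{C}, f_C)\} = (f_C)_*[\overline{C}]$, pulled back via $j$ and $n$) all extend over $W$ because $\pi_1(W)$ maps to $G$ compatibly — this again uses that all the gluing pieces agree on $\pi/\Gamma_{k+1}$ and that $\operatorname{K}(G,1)$ is aspherical, so extension over the $4$-complex $W$ is obstructed only by $\pi_1$. Then $\{(N,f_n)\} - \{(N',f_{n'})\} + \{(\overline{C}, f_C\circ(\text{twist}))\} = 0$ in $\Omega_3^{\operatorname{SO}}(\operatorname{K}(G,1)) \cong H_3(G)$, and reading this through the isomorphism of Remark~\ref{rem:cobordism} gives~(\ref{eq:variation}) up to the sign and the identification $M_k(C) \leftrightarrow n_*^{-1}j_*(M_k(C))$.

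The main obstacle I expect is bookkeeping the basepoints, collars, and the precise identification $n \leftrightarrow n'$ so that the "canonical" isomorphism is genuinely canonical (independent of the bicollar and of paths to the basepoint) and so that the induced map $H_3(G) \to H_3(G)$ is the identity rather than some automorphism — this is what lets the right-hand side be written intrinsically as $n_*^{-1}j_*(M_k(C))$. A secondary technical point is verifying that the four-dimensional cobordism $W$ can indeed be taken with the asserted boundary and that the classifying maps extend; here it is cleanest to build $W$ by hand from $C \times [0,1]$ glued to a collar of $j(\Sigma_{g,1})$ in $N \times [0,1]$, rather than to invoke abstract surgery. Once the geometry is pinned down, the computation in $\Omega_3^{\operatorname{SO}}$ is a formal additivity argument, and the additivity of $M_k$ claimed before the proposition follows by taking $N = \overline{C_1}$ (the closure of a first cobordism) and $j$ the canonical embedding, so that inserting $C_2$ yields $\overline{C_1 \circ C_2}$ and~(\ref{eq:variation}) becomes $M_k(C_1\circ C_2) - M_k(C_1) = M_k(C_2)$.
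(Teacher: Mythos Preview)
Your proposal is correct in outline and would yield a complete proof, but it takes a genuinely different route from the paper's argument. You work in the cobordism group $\Omega_3^{\operatorname{SO}}(\operatorname{K}(G,1))$ via a $4$-dimensional bordism $W$ with boundary $(-N)\sqcup N'\sqcup \overline{C}$, extending the classifying maps over $W$ and reading off the relation in $H_3(G)$. This is precisely the approach the paper attributes to Garoufalidis--Levine and Heap in the sentence immediately preceding the proof. The paper itself instead stays in dimension three: it assembles $N$, $N'$ and $\overline{C}$ inside a single \emph{singular} $3$-complex $\widetilde{N}$ (obtained by gluing both $\Sigma_{g,1}\times[-1,1]$ and $C$ to the exterior $N\setminus\interior(J\times[-1,1])$), builds one map $\widetilde{f}:\widetilde{N}\to\operatorname{K}(G,1)$, and observes directly that $i'_*[N']-i_*[N]=\ell_*[\overline{C}]$ in $H_3(\widetilde{N})$, then pushes forward by $\widetilde{f}_*$.

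The paper's approach is lighter: there is no $4$-manifold to build and no extension problem to solve, since the singular space $\widetilde{N}$ is a $3$-dimensional CW complex and the classifying map exists simply because $\operatorname{K}(G,1)$ is aspherical. Your approach makes the cobordism interpretation of Remark~\ref{rem:cobordism} do the work and is arguably more geometric, but the construction of $W$ (which you leave somewhat imprecise) and the verification that the three boundary pieces really assemble to $(-N)\sqcup N'\sqcup\overline{C}$, including the solid-torus piece in the closure, is exactly the ``secondary technical point'' you flag---and is where most of the labor would go. One small correction: the hypothesis $C\in\cob_{g,1}[k]$ is not that $c_\pm$ induce isomorphisms on $\pi/\Gamma_{k+1}\pi$ (that holds for any homology cobordism by Stallings) but that these two isomorphisms \emph{coincide}; this is what makes the van~Kampen comparison yield a canonical isomorphism rather than one twisted by $\rho_k(C)$.
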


\noindent
Similar formulas are shown in \cite[Theorem 2]{GL} and \cite[Theorem 5.2]{Heap} by cobordism arguments --- 
see Remark \ref{rem:cobordism}.

\begin{proof}[Proof of Proposition \ref{prop:variation_Morita}]
Denote by  $J$ the surface $j(\Sigma_{g,1})$ in $N$. The $3$-manifold $N'$ is defined by
$
N' := \left(N \setminus \interior\left(J \times [-1,1]\right)\right) \cup_{j' \circ c^{-1}} C
$
where $J \times [-1,1]$ denotes a closed regular neighborhood of $J$ in $N$
and $j'$ is the restriction to the boundary of the homeomorphism 
$j \times \Id: \Sigma_{g,1} \times [-1,1] \to J \times [-1,1]$.
The van Kampen theorem shows the existence of a canonical isomorphism
between $\pi_1(N)/\Gamma_{k+1} \pi_1(N)$ and $\pi_1(N')/\Gamma_{k+1} \pi_1(N')$ 
which is defined by the following commutative diagram:
$$
\xymatrix{
&{\frac{\pi_1\left(N \setminus \interior( J \times [-1,1])\right)}
{\Gamma_{k+1}\pi_1\left(N \setminus \interior(J \times [-1,1])\right)}} 
\ar@{->>}[ld] \ar@{->>}[rd] & \\
{\frac{\pi_1(N)}{\Gamma_{k+1} \pi_1(N)}} \ar@{-->}[rr]_-\simeq^-{\exists !}
& & {\frac{\pi_1(N')}{\Gamma_{k+1} \pi_1(N')}}.
}
$$
By composing it with $n$, we obtain an isomorphism $n': G \to \pi_1(N') / \Gamma_{k+1} \pi_1(N')$.

Let us now prove formula (\ref{eq:variation}).
The closed $3$-manifolds $N$, $N'$ and $\overline{C}$ can be seen all together inside the following singular $3$-manifold:
$$
\widetilde{N} :=  \big(N \setminus \interior\left(J \times [-1,1]\right)\big) \cup_{j' \sqcup j' \circ c^{-1}} 
\left( \Sigma_{g,1} \times [-1,1] \bigsqcup C\right).
$$
Denote the corresponding inclusions by $i:N \to \widetilde{N}$,
$i':N' \to \widetilde{N}$ and $\ell:\overline{C} \to \widetilde{N}$.
The homomorphism $i_*:\pi_1(N)/\Gamma_{k+1} \pi_1(N) \to \pi_1(\widetilde{N})/\Gamma_{k+1} \pi_1(\widetilde{N})$
is an isomorphism, and we denote by  $\widetilde{n}$ its pre-composition with $n$. 
Let $\widetilde{f}: \widetilde{N} \to \operatorname{K}(G,1)$ be a map which 
induces the composition
$\pi_1(\widetilde{N}) \to \pi_1(\widetilde{N})/\Gamma_{k+1} \pi_1(\widetilde{N}) 
\stackrel{\widetilde{n}^{-1}}{\to} G$ at the level of fundamental groups.
The restrictions of $\widetilde{f}$ to $N$ and $N'$ are denoted by $f$ and $f'$ respectively,
and they induce $n^{-1}$ and $(n')^{-1}$ at the level of fundamental groups. So, we have
$$
\mu_k(N',n') - \mu_k(N, n)  =  f'_*([N']) - f_*([N])
=  \widetilde{f}_*\left(i'_*([N'])-i_*([N])\right)
= \widetilde{f}_* \ell_*\left(\left[\overline{C}\right]\right).
$$
But, we have the following commutative square in the category of groups:
$$
\xymatrix{
\pi_1\left(\overline{C}\right) \ar@{->>}[r] \ar[dd]_-{\ell_*} & 
\frac{\pi_1(\overline{C})}{\Gamma_{k+1} \pi_1(\overline{C})} \ar[rr]^-{c_*^{-1}}_-\simeq \ar@{-->}[d]_-{\ell_*} & &
\frac{\pi}{\Gamma_{k+1} \pi} \ar[d]^-{j_*} \ar@{=}[r] & 
\pi_1\left(\operatorname{K}(\pi/\Gamma_{k+1} \pi,1)\right)\\
& \frac{\pi_1(\widetilde{N})}{\Gamma_{k+1} \pi_1(\widetilde{N})}  \ar@{-->}[rrd]^-{\widetilde{n}^{-1}}_-\simeq& &
\frac{\pi_1(N)}{\Gamma_{k+1} \pi_1(N)} \ar[d]^-{n^{-1}}_-\simeq \ar@{-->}[ll]_-\simeq^-{i_*}  \\
\pi_1\left(\widetilde{N}\right) \ar[rrr]_-{\widetilde{f}_*} \ar@{-->>}[ur]
& & & G \ar@{=}[r] &  \pi_1(\operatorname{K}(G,1))
}
$$
(The dashed arrows are shown here to check the commutativity.)
Choose some maps $a: \overline{C} \to \operatorname{K}(\pi/\Gamma_{k+1} \pi,1)$
and $b:\operatorname{K}(\pi/\Gamma_{k+1} \pi,1) \to \operatorname{K}(G,1)$ that induce 
the top homomorphism and the right-hand side homomorphism, respectively, at the level of fundamental groups.
Thus,  $b \circ a$ is homotopic to $\widetilde{f} \circ \ell$ so that
$$
\mu_k(N',n') - \mu_k(N, n)  
=  (\widetilde{f}\circ \ell)_*\left(\left[\overline{C}\right]\right)
=  (b \circ a)_*\left(\left[\overline{C}\right]\right)\\
=  n^{-1}_* j_* \left(  \mu_k\left(\overline{C}, c_*\right)\right).
$$
We conclude from the definition of $M_k$.
\end{proof}

The $k$-th Morita homomorphism is a refinement of the $k$-th Johnson homomorphism in the following sense.
Consider the (central) extension of groups
$$
\xymatrix{
0 \ar[r] & \Lie_{k+1} \ar[r] & \pi/\Gamma_{k+2} \pi \ar[r] & \pi/\Gamma_{k+1} \pi \ar[r] &1
}
$$
and the corresponding Hochschild--Serre spectral sequence:
$$
E^2_{p,q} = H_p\left(\pi/\Gamma_{k+1} \pi; H_q\left(\Lie_{k+1}\right) \right)
\Longrightarrow H_{p+q}\left(\pi/\Gamma_{k+2} \pi\right).
$$
We need the differential $d^2 := d^2_{3,0}: E^2_{3,0} \to E^2_{1,1}$,
where $E^2_{3,0}\simeq H_3\left(\pi/\Gamma_{k+1} \pi\right)$ 
and $E^2_{1,1} \simeq H\otimes \Lie_{k+1}$.
Morita uses this spectral sequence in \cite{Morita_Abelian} to prove
that the image of $d^2$ is precisely the subgroup $\operatorname{D}_{k+2}(H)$.

\begin{theorem}[Morita \cite{Morita_Abelian}]
\label{th:Morita_to_Johnson}
For all integer $k\geq 1$, we have the following commutative triangle:
$$
\xymatrix{
\cob_{g,1}[k] \ar[r]^-{M_k} \ar[rd]_-{-\tau_k} & H_3\left(\pi/\Gamma_{k+1} \pi\right) \ar[d]^-{d^2} \\
& \operatorname{D}_{k+2}(H).
}
$$
\end{theorem}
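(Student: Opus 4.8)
The plan is to compare the two constructions through the topological model provided by the closure operation, rather than through Morita's original algebraic definition. First I would recall from \S\ref{subsec:Morita} that $M_k(C) = \mu_k(\overline{C}, c_*)$ lives in $H_3(\pi/\Gamma_{k+1}\pi)$ and that $\tau_k(C)$ lives in $\operatorname{D}_{k+2}(H) \subset H \otimes \Lie_{k+1}$, which by Morita's spectral-sequence computation is exactly the image of $d^2 = d^2_{3,0}$. So the content of the theorem is the identity $d^2(\mu_k(\overline{C}, c_*)) = -\tau_k(C)$. The natural strategy is to unwind both sides in terms of the Dehn--Nielsen data $\rho_{k+1}(C) \in \Aut_\zeta(\pi/\Gamma_{k+2}\pi)$, using the fact that $\pi/\Gamma_{k+2}\pi$ is a central extension of $\pi/\Gamma_{k+1}\pi$ by $\Lie_{k+1}$.

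The key steps, in order: (1) Observe that the fundamental class $[\overline{C}] \in H_3(\pi_1(\overline{C}))$, pushed through $c_*^{-1}$, becomes a well-defined class in $H_3(\pi/\Gamma_{k+1}\pi)$, and that by the Thomas--Swarup rigidity result this class, together with the group, recovers the nilpotent homotopy type --- this is already packaged in the definition of $\mu_k$. (2) Use the naturality of the Hochschild--Serre spectral sequence: the cobordism $C$ equipped with its two boundary parametrizations $c_\pm$ gives two ways of pushing $\pi_1(\Sigma_{g,1})/\Gamma_{k+2}$ into $\pi_1(\overline{C})/\Gamma_{k+2}$, and $\tau_k(C)$ measures precisely the ``discrepancy'' $\{x\} \mapsto \rho_{k+1}(C)(\{x\}) \cdot \{x\}^{-1}$ landing in the $\Lie_{k+1}$-kernel. (3) Identify this discrepancy with the transgression: the differential $d^2_{3,0}$ is, by the standard description of the transgression in the Hochschild--Serre spectral sequence of a central extension, given by capping $[\overline{C}]$ against the extension class (an element of $H^2(\pi/\Gamma_{k+1}\pi; \Lie_{k+1})$), and one shows that evaluating this against the homology class coming from $\overline{C}$ reproduces exactly $\tau_k(C)$ up to sign. (4) Track the sign carefully: the sign in $-\tau_k$ comes from the conventional direction of the transgression differential versus the convention $\{x\} \mapsto \rho_{k+1}(M)(\{x\})\cdot\{x\}^{-1}$ (as opposed to $\{x\}^{-1}\cdot\rho_{k+1}(M)(\{x\})$) used to define $\tau_k$. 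Alternatively, and perhaps more cleanly, I would invoke Heap's simplicial-model computation \cite{Heap}, which shows $M_k$ agrees with Morita's bar-resolution refinement, for which the relation to $\tau_k$ via $d^2$ is essentially built into Morita's original argument in \cite{Morita_Abelian} --- then the theorem reduces to checking that our topological $M_k$ and Morita's algebraic one are the same map, which is Heap's result.

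The main obstacle I expect is Step (3)--(4): making precise the assertion that the spectral-sequence differential $d^2_{3,0}$, applied to the closure's fundamental class, computes the Johnson homomorphism, and getting the sign and the identification $E^2_{1,1} \simeq H \otimes \Lie_{k+1}$ exactly right. This is where the ``boundary condition'' ($\rho_{k+1}(C)$ lifting to an endomorphism fixing $\{\zeta\}$) must be used to ensure the transgressed class actually lands in $\operatorname{D}_{k+2}(H)$ rather than merely in $H \otimes \Lie_{k+1}$, consistent with the refined target of $\tau_k$. A secondary subtlety is that $\mu_k$ is defined using a classifying map $f_n : \overline{C} \to \operatorname{K}(\pi/\Gamma_{k+1}\pi, 1)$, so one must check the spectral-sequence comparison is natural with respect to the choice of such a map and independent of it --- but this follows from the functoriality already exploited in the proof of Proposition \ref{prop:variation_Morita}. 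Given these ingredients, the commutative triangle follows by chasing $[\overline{C}]$ through the two routes.
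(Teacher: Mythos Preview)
The paper does not actually prove this theorem: it states the result, attributes it to Morita's original algebraic proof in \cite{Morita_Abelian}, notes that the argument adapts from mapping class groups to homology cobordisms, and refers the reader to Heap \cite{Heap} for a topological proof (with similar arguments in \cite{GL}). So there is no ``paper's own proof'' to compare against beyond those citations.

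Your proposal is in line with the topological route the paper points to. The core idea --- push $[\overline{C}]$ into $H_3(\pi/\Gamma_{k+1}\pi)$, identify $d^2_{3,0}$ with a transgression/cap-product against the extension class of $0 \to \Lie_{k+1} \to \pi/\Gamma_{k+2}\pi \to \pi/\Gamma_{k+1}\pi \to 1$, and check this recovers $\tau_k$ up to sign --- is exactly the mechanism behind Heap's argument and the Massey-product interpretation the paper alludes to. Your alternative of invoking Heap's identification of the topological $M_k$ with Morita's bar-resolution version, then quoting \cite{Morita_Abelian} directly, is also legitimate and is essentially what the paper itself does by citation. You correctly flag the delicate points (the sign, and the use of the $\zeta$-fixing condition to land in $\operatorname{D}_{k+2}(H)$); these are real but standard once one writes out the transgression explicitly in a bar-resolution model. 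Nothing in your outline is wrong, though Step~(3) as written is still a sketch rather than a computation --- you would need to exhibit the explicit cochain-level identity, which is where the work in \cite{Morita_Abelian} and \cite{Heap} lies.
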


\noindent
This result is stated and proved in \cite{Morita_Abelian} for mapping class groups, but the proof can be adapted to homology cobordisms.
From a $3$-dimensional viewpoint, this statement translates 
the fact that the $k$-th nilpotent homotopy type of a closed connected oriented $3$-manifold 
determines its Massey products of length $k+1$.
See \cite{Heap} for a topological proof of  Theorem \ref{th:Morita_to_Johnson}. Similar arguments appear in \cite{GL}.

\begin{example}[Degree $1$]
The Pontrjagin product for the homology of the abelian group $H$ defines  an isomorphism:
$$
\Lambda^3 H \stackrel{\simeq}{\longrightarrow}   H_3(H)  \simeq H_3(\pi/\Gamma_2 \pi).
$$
Through that isomorphism, the differential $d^2: H_3(\pi/\Gamma_2 \pi) \to \operatorname{D}_{3}(H)$ 
coincides with the isomorphism (\ref{eq:trivectors}). Thus, $M_1$ is equivalent to $-\tau_1$.
\end{example}

For all integers $l\geq k \geq 1$, the following square is commutative:
$$
\newdir{ >}{{}*!/-5pt/\dir{>}}
\xymatrix{
{\cob_{g,1}[l]} \ar@{ >->}[d] \ar[r]^-{M_l} & H_3\left(\pi/\Gamma_{l+1} \pi\right) \ar[d] \\
\cob_{g,1}[k]  \ar[r]_-{M_k} & H_3\left(\pi/\Gamma_{k+1} \pi\right).
}
$$
The right-hand side map is induced by the canonical map $\pi/\Gamma_{l+1} \pi \to \pi/\Gamma_{k+1} \pi$ and, as already mentioned,
Igusa and Orr proved  it  to be trivial for $l \geq 2k$ \cite{IO}. So, $M_k$ vanishes on $\cob_{g,1}[2k]$.
In fact, the computation of $H_3\left(\pi/\Gamma_{k+1} \pi\right)$ performed  in \cite{IO}
and the surjectivity of the Johnson homomorphisms (Theorem \ref{th:surjectivity_Johnson_homomorphisms}) 
can be used to prove the following stronger statement.

\begin{theorem}[Heap \cite{Heap}, Sakasai \cite{Sakasai}]
\label{th:kernel_Morita}
For all integer $k \geq 1$, we have the following short exact sequence of monoids:
$$
\xymatrix{
1 \ar[r] & \cob_{g,1}[2k] \ar[r] & \cob_{g,1}[k]  \ar[r]^-{M_k} & H_3\left(\pi/\Gamma_{k+1} \pi\right) \ar[r]  & 1.
}
$$
\end{theorem}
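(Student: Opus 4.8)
The plan is to establish three things: that the map $\cob_{g,1}[2k]\to\cob_{g,1}[k]$ is injective, which is trivial since it is an inclusion; that $\Ker(M_k)=\cob_{g,1}[2k]$; and that $M_k$ is surjective. The inclusion $\cob_{g,1}[2k]\subseteq\Ker(M_k)$ is immediate from the commutative square relating $M_{l}$ and $M_k$ (with $l=2k$): for $C\in\cob_{g,1}[2k]$ the class $M_k(C)$ is the image of $M_{2k}(C)\in H_3(\pi/\Gamma_{2k+1}\pi)$ under the map $p_{2k,k}\colon H_3(\pi/\Gamma_{2k+1}\pi)\to H_3(\pi/\Gamma_{k+1}\pi)$ induced by the canonical projection, and this map is zero by Igusa--Orr \cite{IO}. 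For the two remaining (harder) assertions, the decisive input is the full computation of $H_3(\pi/\Gamma_{k+1}\pi)$ for the free group $\pi$ carried out in \cite{IO}: it shows that this torsion-free group carries a natural finite filtration
$$H_3(\pi/\Gamma_{k+1}\pi)=F_k\supseteq F_{k+1}\supseteq\cdots\supseteq F_{2k}=0,\qquad F_l:=\operatorname{im}\bigl(p_{l,k}\colon H_3(\pi/\Gamma_{l+1}\pi)\to H_3(\pi/\Gamma_{k+1}\pi)\bigr),$$
whose $l$-th graded quotient is identified with $\operatorname{D}_{l+2}(H)$ in such a way that the composite $H_3(\pi/\Gamma_{l+1}\pi)\xrightarrow{p_{l,k}}F_l\twoheadrightarrow F_l/F_{l+1}=\operatorname{D}_{l+2}(H)$ is the differential $d^2$ of Theorem \ref{th:Morita_to_Johnson} at level $l$; moreover this filtration splits, and the maps $p_{l,k}$ are filtered. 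Throughout I will also use that $M_k$ is a monoid homomorphism (Proposition \ref{prop:variation_Morita}), that $d^2\circ M_l=-\tau_l$ (Theorem \ref{th:Morita_to_Johnson}), and that $\tau_l\colon\cob_{g,1}[l]\to\operatorname{D}_{l+2}(H)$ is surjective (Theorem \ref{th:surjectivity_Johnson_homomorphisms}).

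To prove $\Ker(M_k)\subseteq\cob_{g,1}[2k]$, suppose $M_k(C)=0$ with $C\in\cob_{g,1}[k]$. Then $\tau_k(C)=-d^2(M_k(C))=0$, so $C\in\cob_{g,1}[k+1]$. I proceed by induction on $l$, $k\le l\le 2k-1$: assuming $C\in\cob_{g,1}[l]$, the class $M_l(C)\in H_3(\pi/\Gamma_{l+1}\pi)$ maps to $M_k(C)=0$ under $p_{l,k}$, hence lies in $\Ker(p_{l,k})$; since $l\le 2k-1$, this kernel is contained in the part of the filtration of $H_3(\pi/\Gamma_{l+1}\pi)$ strictly above its bottom graded piece $\operatorname{D}_{l+2}(H)$ (the surviving components of $p_{l,k}$ are those of index in $[l,2k-1]$, so the killed ones have index $\ge 2k>l$), and $d^2$ at level $l$ vanishes there. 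Thus $\tau_l(C)=-d^2(M_l(C))=0$, i.e. $C\in\cob_{g,1}[l+1]$. After $l=2k-1$ we obtain $C\in\bigcap_{l=k}^{2k-1}\Ker\bigl(\tau_l|_{\cob_{g,1}[l]}\bigr)=\cob_{g,1}[2k]$, since the kernel of the $l$-th Johnson homomorphism on $\cob_{g,1}[l]$ is exactly $\cob_{g,1}[l+1]$.

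For surjectivity, given $\gamma\in H_3(\pi/\Gamma_{k+1}\pi)$, use the splitting to write its components as $\gamma_k,\dots,\gamma_{2k-1}$ with $\gamma_l$ in the $l$-th piece $\operatorname{D}_{l+2}(H)$, and realize $\gamma$ by a composite $C=C_k\circ\cdots\circ C_{2k-1}$ built layer by layer. Suppose $C_k,\dots,C_{l-1}$ have been chosen with $C_j\in\cob_{g,1}[j]$, and set $\gamma^{(l)}:=\gamma-\sum_{j<l}M_k(C_j)$; inductively $\gamma^{(l)}$ has vanishing components of index $<l$ (using that $M_k(C_j)=p_{j,k}(M_j(C_j))$ has components only of index $\ge j$). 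Let $\gamma^{(l)}_l\in\operatorname{D}_{l+2}(H)$ be its $l$-th component, and use the surjectivity of $\tau_l$ to choose $C_l\in\cob_{g,1}[l]$ with $\tau_l(C_l)=-\gamma^{(l)}_l$. Then $M_k(C_l)=p_{l,k}(M_l(C_l))$ has vanishing components of index $<l$ and $l$-th component $-\tau_l(C_l)=\gamma^{(l)}_l$, so $\gamma^{(l+1)}=\gamma^{(l)}-M_k(C_l)$ has vanishing components of index $\le l$. After $l=2k-1$ we reach $\gamma^{(2k)}=0$, and multiplicativity of $M_k$ gives $M_k(C)=\sum_l M_k(C_l)=\gamma$.

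The hard part — in fact the only genuinely hard step — is the first paragraph: the passage from Igusa and Orr's computation of $H_3(\pi/\Gamma_{k+1}\pi)$ to the explicit filtration with graded pieces $\operatorname{D}_{l+2}(H)$, and in particular its compatibility with the differentials $d^2$ and with the projections $p_{l,k}$ between successive nilpotent quotients. This rests on their analysis of the homology of free nilpotent groups (the ``pictures'' of \cite{IO}) and is a point of delicate homological algebra. By contrast, the $3$-dimensional content used here — that $M_k$ is a well-defined monoid homomorphism, its relation $d^2\circ M_k=-\tau_k$ to the $k$-th Johnson homomorphism, and the surjectivity of the Johnson homomorphisms of homology cobordisms — has already been established in the preceding sections, so only this homological-algebra input remains to be supplied.
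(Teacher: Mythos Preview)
Your proposal is correct and follows precisely the route the paper indicates (but does not spell out): it says the theorem ``can be used to prove'' from the Igusa--Orr computation of $H_3(\pi/\Gamma_{k+1}\pi)$ together with the surjectivity of the Johnson homomorphisms (Theorem~\ref{th:surjectivity_Johnson_homomorphisms}), and you have filled in the details along exactly those lines, using in addition the relation $d^2\circ M_l=-\tau_l$ from Theorem~\ref{th:Morita_to_Johnson} and the commutative square relating $M_l$ and $M_k$. Your identification of the Igusa--Orr structural result (the filtration on $H_3(\pi/\Gamma_{k+1}\pi)$ with graded pieces $\operatorname{D}_{l+2}(H)$ and its compatibility with the projections $p_{l,k}$) as the one genuinely hard external input is exactly right.
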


\noindent
More generally, two cobordisms $C,C' \in \cob_{g,1}[k]$ satisfy $M_k(C)=M_{k}(C')$ if and only if we have $\rho_{2k}(C)= \rho_{2k}(C')$.
(This can be deduced from  Theorem \ref{th:kernel_Morita} 
by considering group quotients of the monoid $\cob$: see \S \ref{subsec:Y_cylinders} and \S \ref{subsec:definition_group}.)

\subsection{Infinitesimal versions of the Dehn--Nielsen representation}

\label{subsec:total_Johnson}

We now  give  ``infinitesimal'' versions of the Dehn--Nielsen representation,
which are defined on the monoid of homology cobordisms and contain all
the Johnson homomorphisms.
Here, the word ``infinitesimal'' means that we are going to replace fundamental groups by their Malcev Lie algebras.
The passage from groups to Lie algebras will be useful in \S \ref{subsec:LMO_tree}
to connect the Dehn--Nielsen representation to finite-type invariants.

Let $G$ be a group and let $\Q[G]$ be the group algebra of $G$, with augmentation ideal $I$.
The $I$-adic completion of $\Q[G]$
$$
\widehat{\Q[G]} := \varprojlim_{k} \Q[G]/ I^k
$$
is a  Hopf algebra equipped with a complete filtration. If $G$ is residually torsion-free nilpotent 
(which will always be the case in our situation), then the canonical map $G \to \widehat{\Q[G]}$ is injective, 
so that we can write $G \subset \widehat{\Q[G]}$.
Following Quillen \cite{Quillen} and Jennings \cite{Jennings},
we define the \emph{Malcev Lie algebra}\index{Malcev Lie algebra}\index{Lie algebra!Malcev}
of $G$ as the primitive part of $\widehat{\Q[G]}$:
$$
\MalcevLie(G) := \Prim\big(\widehat{\Q[G]}\big).
$$
This is a filtered Lie algebra into which $G$ embeds by the logarithmic series:
$$
\log: G \longrightarrow \MalcevLie(G), \ 
x \longmapsto \sum_{n \geq 1} \frac{(-1)^{n+1}}{n} \cdot \left( x - 1 \right)^n.
$$
By a theorem of Quillen \cite{Quillen_graded}, this map induces an isomorphism
$$
(\Gr \log) \otimes \Q: \Gr G \otimes \Q \stackrel{\simeq}{\longrightarrow} \Gr \MalcevLie(G)
$$
between the graded Lie algebras associated with the lower central series of $G$
and with the filtration of $\MalcevLie(G)$ respectively.
This way of defining the Malcev Lie algebra is clearly functorial and,
if $G$ is residually torsion-free nilpotent, then the  map
$\MalcevLie: \Aut(G) \to \Aut(\MalcevLie(G))$ is injective.

We now consider the case of the free group $\pi = \pi_1(\Sigma_{g,1},\star)$.
We abbreviate $H_\Q := H \otimes \Q = (\pi/\Gamma_2 \pi)\otimes \Q$,
and $\T(H_\Q)$ denotes the tensor algebra over $H_\Q$. Following our convention,
the same notation is used for the degree completion of the tensor algebra.

\begin{definition}
An \emph{expansion}\index{expansion} of the free group $\pi$ is a map $\theta: \pi \to \T(H_\Q)$ 
which is multiplicative and satisfies
$$
\forall x\in\pi, \ \theta(x) = 1 + [x] + (\deg \geq 2) \ \in \T(H_\Q).
$$
The expansion is \emph{group-like}\index{group-like expansion}\index{expansion!group-like} if it takes group-like values.
\end{definition}

\noindent
Expansions of free groups have been considered by Lin \cite{Lin} and Kawazumi \cite{Kawazumi} 
in connection with Milnor's invariants and Johnson's homomorphisms, respectively. 
An expansion  $\theta$ of $\pi$ extends in a unique way to a filtered  algebra isomorphism
$\theta: \widehat{\Q[\pi]} \to \T(H_\Q)$, which induces the canonical isomorphism
\begin{equation}
\label{eq:graded_iso}
\Gr \widehat{\Q[\pi]} = \bigoplus_{k \geq 0} \frac{I^k}{I^{k+1}}
\stackrel{\simeq}{\longrightarrow} \T(H_\Q), \ 
\frac{I}{I^2} \ni \{x-1\} \longmapsto [x] \in H_\Q
\end{equation}
at the graded level \cite{Bourbaki,MKS}. 
Group-like expansions $\theta$ correspond to Hopf algebra isomorphisms $\theta: \widehat{\Q[\pi]} \to \T(H_\Q)$
which induce the canonical isomorphism (\ref{eq:graded_iso}) at the graded level.
Equivalently, by restricting to primitive elements,
group-like expansions $\theta$ of $\pi$ correspond to filtered Lie algebra isomorphisms
$\theta: \MalcevLie(\pi) \to \Lie_\Q$
which induce the canonical isomorphism
$$
\xymatrix{
\Gr \MalcevLie(\pi) \ar[rr]_-\simeq^{(\log \otimes \Q)^{-1}} & &
\Gr \pi \otimes \Q \ar[r]_-\simeq^-{(\ref{eq:iso_free})} & \Lie_\Q
}
$$
at the graded level. 
Here $\Lie_\Q := \Lie \otimes \Q$ denotes the Lie algebra freely generated by $H_\Q$ or, depending on the context, its degree completion.

\begin{example}
If a basis of $\pi$ is specified --- for instance, the basis $(\alpha,\beta)$
defined by a system of meridians and parallels as in Figure \ref{fig:surface} ---
then there is a unique  expansion $\theta_{(\alpha,\beta)}$ of $\pi$ defined by
\begin{equation}
\label{eq:exponential_expansion}
\forall i\in \{1,\dots,g\}, \
\theta_{(\alpha,\beta)}(\alpha_i) := \exp_\otimes\left([\alpha_i]\right)
\ \hbox{and} \ 
\theta_{(\alpha,\beta)}(\beta_i) := \exp_\otimes\left([\beta_i]\right).
\end{equation}
By construction, the expansion  $\theta_{(\alpha,\beta)}$ is group-like.
\end{example}

Let $\theta$ be a group-like expansion of $\pi$.
The following composition can be regarded as an \emph{infinitesimal} version of the Dehn--Nielsen representation,
which depends on $\theta$:
\begin{equation}
\label{eq:infinitesimal_DN}
\xymatrix{
\mcg_{g,1}\ \ar@{>->}[r]^-\rho \ar@/_2pc/@{>-->}[rrrr]^-{\varrho^\theta} & \Aut(\pi)\ \ar@{>->}[r]^-\MalcevLie & 
\Aut\left(\MalcevLie(\pi)\right) \ar[rr]^-{ \theta \circ \centereddot \circ \theta^{-1} }_-\simeq & &\Aut(\Lie_{\Q}).
}
\end{equation}
We denote it by $\varrho_\theta$.
Since any automorphism of the complete free Lie algebra $\Lie_\Q$ is determined 
by its restriction to $H_\Q$, we can replace $\varrho^\theta$  by the map
$$
\tau^\theta: \mcg_{g,1} \longrightarrow \Hom(H_\Q,\Lie_\Q), \
f \longmapsto \varrho^\theta(f)|_{H_\Q} - \Id_{H_\Q} = \theta\circ \MalcevLie(f_*)\circ \theta^{-1}|_{H_\Q} - \Id_{H_\Q}
$$
without loss of information.
This is essentially the map introduced in \cite{Kawazumi} 
under the name of the\index{total Johnson map}\index{map!total Johnson} \emph{total Johnson map}. 
Indeed, it can be checked from its definition that $\tau^\theta$
contains all the Johnson homomorphisms.

\begin{theorem}[Kawazumi \cite{Kawazumi}]
\label{th:total_Johnson_to_Johnson}
The degree $k$ part of the total Johnson map, restricted to the $k$-th term of the Johnson filtration,
coincides with the $k$-th Johnson homomorphism (with rational coefficients):
$$
\tau^\theta_k = \tau_k : \mcg_{g,1}[k] \longrightarrow  \Hom(H_\Q,\Lie_{\Q,k+1}) 
\simeq H_\Q\otimes \Lie_{\Q,k+1}.
$$
\end{theorem}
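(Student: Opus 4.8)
The plan is to compare the two maps by unwinding their definitions in terms of the $I$-adic filtration on $\Q[\pi]$, and to show that the leading term of $\tau^\theta$ in degree $k$ agrees with the mod-$\Gamma_{k+2}$ information recorded by $\tau_k$. Fix $f\in\mcg_{g,1}[k]$, so that $f_*$ acts trivially on $\pi/\Gamma_{k+1}\pi$; equivalently, for every $x\in\pi$ the element $f_*(x)\cdot x^{-1}$ lies in $\Gamma_{k+1}\pi$. First I would translate this into the statement that, in $\widehat{\Q[\pi]}$, the automorphism $\MalcevLie(f_*)$ acts as the identity on the quotient by $I^{k+1}$ (using that $\Gamma_{k+1}\pi-1\subset I^{k+1}$, a consequence of the identification (\ref{eq:graded_iso})). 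Transporting along the group-like expansion $\theta$, the automorphism $\varrho^\theta(f)$ of $\T(H_\Q)$ is the identity modulo terms of degree $\geq k+1$, so $\tau^\theta(f)=\varrho^\theta(f)|_{H_\Q}-\Id_{H_\Q}$ has vanishing components in degrees $2,\dots,k$, and its degree-$(k+1)$ component $\tau^\theta_k(f)$ is a well-defined element of $\Hom(H_\Q,\Lie_{\Q,k+1})$.

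Next I would compute that degree-$(k+1)$ component explicitly. For $\{x\}\in\pi/\Gamma_2\pi=H$, write $w:=f_*(x)\cdot x^{-1}\in\Gamma_{k+1}\pi$; by the very definition of $\rho_{k+1}$ and $\tau_k$ in the excerpt, the class of $w$ in $\Gamma_{k+1}\pi/\Gamma_{k+2}\pi\simeq\Lie_{k+1}$ is exactly $\tau_k(\mcyl(f))(\{x\})$. On the other hand, applying $\theta$ and using multiplicativity, $\theta(f_*(x))=\theta(w)\cdot\theta(x)$, and since $\theta(x)=1+[x]+(\deg\geq 2)$ while $\theta(w)=1+(\text{element of }\Lie_{\Q,k+1})+(\deg\geq k+2)$ (because $\theta$ is group-like and induces (\ref{eq:graded_iso})), one reads off that the degree-$(k+1)$ part of $\varrho^\theta(f)([x])-[x]$ equals the image of $[w]$ under $\Gr\log\otimes\Q$ composed with (\ref{eq:iso_free}), i.e.\ precisely the image of $\tau_k(\mcyl(f))(\{x\})$ under the canonical identification $\Gamma_{k+1}\pi/\Gamma_{k+2}\pi\otimes\Q\simeq\Lie_{\Q,k+1}$. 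Here I would invoke Quillen's theorem (already quoted in the excerpt) that $\Gr\log\otimes\Q$ is the canonical graded isomorphism, so that these bookkeeping identifications are mutually compatible and no correction factor appears.

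The routine but slightly delicate part is bookkeeping the identifications: one must check that the three isomorphisms involved --- $(\log\otimes\Q)^{-1}$, (\ref{eq:iso_free}), and the degree-$(k+1)$ piece of $\theta$ --- fit into the commuting square displayed just before the definition of $\tau^\theta$ in a way that makes ``degree $k$ part of the total Johnson map'' literally equal to ``$\tau_k$ tensored with $\Q$''. The main obstacle, such as it is, is therefore not any hard theorem but the verification that the lowest-order term of a group-like algebra automorphism conjugate to $\MalcevLie(f_*)$ sees only the class of $f_*(x)x^{-1}$ modulo $\Gamma_{k+2}\pi$ and nothing more --- this is where the hypothesis $f\in\mcg_{g,1}[k]$ (so that $f_*(x)x^{-1}\in\Gamma_{k+1}\pi$, forcing all intermediate degrees to vanish) is used essentially. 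Once this is in place, the identity $\tau^\theta_k=\tau_k$ on $\mcg_{g,1}[k]$ follows, and since both sides are additive and the comparison is natural, nothing further is needed; I would also remark that the same argument applies verbatim with $\mcg_{g,1}$ replaced by $\cob_{g,1}$, using Stallings' theorem (as in the definition of $\rho_k$) to make sense of $\MalcevLie$ of the relevant automorphism, so the statement in fact extends from the mapping class group to homology cobordisms.
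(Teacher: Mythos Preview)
Your proposal is correct and is precisely the ``check from the definitions'' that the paper alludes to just before stating the theorem; the paper itself does not spell out a proof but simply cites Kawazumi. Your argument --- showing that $f\in\mcg_{g,1}[k]$ forces $\varrho^\theta(f)$ to be the identity modulo degree $\geq k+1$, and then reading off the degree $k+1$ term via $\theta(f_*(x))=\theta(w)\theta(x)$ with $w=f_*(x)x^{-1}\in\Gamma_{k+1}\pi$ --- is exactly the intended computation, including the observation that the result extends verbatim to $\cob_{g,1}[k]$.
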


\noindent
Using Fox's free differential calculus, 
Perron defines another map on $\mcg_{g,1}$ which contains all the Johnson homomorphisms \cite{Perron}.

The extension of $\varrho^\theta$ to the monoid of homology cobordisms is straightforward.
Indeed, a group-like expansion $\theta$ induces a filtered Lie algebra isomorphism
$\theta: \MalcevLie(\pi/\Gamma_{k+1} \pi) \to \Lie_\Q/\Lie_{\Q,\geq k+1}$
for each integer $k\geq 1$. So, we can consider the monoid homomorphism $\varrho_k^\theta$ defined by the composition
$$
\xymatrix{
\cob_{g,1} \ar@{->}[r]^-{\rho_k} \ar@/_2pc/@{-->}[rrr]^-{\varrho_k^\theta} 
& \Aut(\pi/\Gamma_{k+1} \pi)\ \ar@{>->}[r]^-\MalcevLie & 
\Aut\left(\MalcevLie(\pi/\Gamma_{k+1} \pi)\right) \ar[r]^-{ \ \theta \circ \centereddot \circ \theta^{-1}  }_-\simeq 
 & \Aut\left(\Lie_{\Q}/\Lie_{\Q,\geq k+1}\right)  
}
$$
or, equivalently, we can consider the map
$$
\tau^\theta_{\leq k}: \cob_{g,1} \longrightarrow \Hom\left(H_\Q,\Lie_\Q/\Lie_{\Q,\geq k+1}\right),\ 
f \longmapsto \varrho^\theta_k(f)|_{H_\Q} - \Id_{H_\Q}.
$$
Passing to the limit $k\to +\infty$, we get a map 
$$
\tau^\theta: \cob_{g,1} \longrightarrow \Hom(H_\Q,\Lie_\Q) \simeq H_\Q \otimes \Lie_\Q
$$
whose restriction to $\mcg_{g,1}$ is compatible with the preceding definition. 
Theorem \ref{th:total_Johnson_to_Johnson} works with $\mcg_{g,1}[k]$ replaced by $\cob_{g,1}[k]$.

\begin{definition}
The \emph{infinitesimal Dehn--Nielsen representation}\index{Dehn--Nielsen representation!infinitesimal}\index{representation!infinitesimal Dehn--Nielsen}
(induced by the group-like expansion $\theta$ of $\pi$)
is the monoid homomorphism
$$
\varrho^\theta: \cob_{g,1} \longrightarrow \Aut(\Lie_\Q)
$$
which sends a homology cobordism $C$ to the unique filtered automorphism
of $\Lie_\Q$ whose restriction to $H_\Q$ is $\Id_{H_\Q} + \tau^\theta(C)$.
\end{definition}

The following statement can be proved from the definitions.

\begin{proposition}[See \cite{Massuyeau_tree}]
\label{prop:truncated_total_Johnson}
The degree $[k,2k[$ truncation of the total Johnson map $\tau^\theta$,
restricted to the $k$-th term of the Johnson filtration,
$$
\tau^\theta_{[k,2k[} := \sum_{m=k}^{2k-1} \tau^\theta_m:
\cob_{g,1}[k] \longrightarrow \bigoplus_{m=k}^{2k-1} H_\Q \otimes \Lie_{\Q,m+1} 
$$
is a monoid homomorphism, and its kernel is $\cob_{g,1}[2k]$.
\end{proposition}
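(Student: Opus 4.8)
The plan is to reduce everything to an elementary manipulation of filtered automorphisms of $\Lie_\Q$. First I would record the shape of $\varrho^\theta(C)$ for $C\in\cob_{g,1}[k]$: since $C$ lies in $\Ker\rho_k$, and $\MalcevLie$ is injective while $\theta$-conjugation is an isomorphism, the filtered Lie algebra automorphism $\varphi_C:=\varrho^\theta(C)$ of $\Lie_\Q$ satisfies $\varphi_C(x)-x\in\Lie_{\Q,\geq n+k}$ for every $x\in\Lie_{\Q,n}$. Thus $\varphi_C=\Id+D_C$ with $D_C=\sum_{m\geq k}D_{C,m}$, where $D_{C,m}$ raises degree by exactly $m$ (that is $D_{C,m}(\Lie_{\Q,n})\subseteq\Lie_{\Q,n+m}$), and by the definition of the total Johnson map the restriction $D_{C,m}|_{H_\Q}\colon H_\Q\to\Lie_{\Q,m+1}$ is precisely $\tau^\theta_m(C)$.

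Next, the additivity. For $C,C'\in\cob_{g,1}[k]$ (a submonoid, being the kernel of the monoid homomorphism $\rho_k$), the fact that $\varrho^\theta$ is a monoid homomorphism gives $\varphi_{CC'}=\varphi_C\varphi_{C'}$, hence $D_{CC'}=D_C+D_{C'}+D_CD_{C'}$. As $D_C$ and $D_{C'}$ each raise degree by at least $k$, the composite $D_CD_{C'}$ raises degree by at least $2k$ (the order of composition being immaterial here), so for $k\leq m\leq 2k-1$ the homogeneous component of degree $m$ of this identity reads $D_{CC',m}=D_{C,m}+D_{C',m}$; restricting to $H_\Q$ yields $\tau^\theta_m(CC')=\tau^\theta_m(C)+\tau^\theta_m(C')$ for all such $m$. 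Summing over $m\in[k,2k[$ shows that $\tau^\theta_{[k,2k[}$ is a monoid homomorphism on $\cob_{g,1}[k]$.

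It remains to compute the kernel. If $C\in\cob_{g,1}[2k]$ then $D_C$ raises degree by $\geq 2k$, so $\tau^\theta_m(C)=0$ for all $m<2k$; thus $\cob_{g,1}[2k]\subseteq\Ker\tau^\theta_{[k,2k[}$. Conversely, let $C\in\cob_{g,1}[k]$ with $\tau^\theta_m(C)=0$ for $k\leq m<2k$; since $\tau^\theta_m(C)$ also vanishes for $m<k$, we obtain $\varphi_C(h)-h\in\Lie_{\Q,\geq 2k+1}$ for all $h\in H_\Q$. The crux is to propagate this from $H_\Q$ to all of $\Lie_\Q$: since $\varphi_C$ is a Lie algebra homomorphism and $\Lie_\Q$ is generated in degree $1$ by $H_\Q$, I would write an arbitrary homogeneous element as an iterated bracket of elements of $H_\Q$ and expand $\varphi_C$ through the brackets; every resulting summand other than the original one contains a factor $\varphi_C(h)-h\in\Lie_{\Q,\geq 2k+1}$ and so raises the degree by at least $2k$. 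Hence $\varphi_C=\Id+(\deg\geq 2k)$, i.e.\ $\varrho^\theta_{2k}(C)=\Id$, which by injectivity of $\MalcevLie$ gives $\rho_{2k}(C)=\Id$, that is $C\in\cob_{g,1}[2k]$. I expect this last propagation step to be the only real obstacle: all of $\tau^\theta$ is encoded in the restriction of $\varrho^\theta(C)$ to $H_\Q$, and one genuinely needs that $\varrho^\theta(C)$ is a Lie algebra automorphism determined by that restriction, together with the fact that $\Lie_\Q$ is generated in degree $1$, to upgrade vanishing of the Johnson data below degree $2k$ to triviality of $\varrho^\theta(C)$ modulo $\Lie_{\Q,\geq 2k+1}$. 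The additivity, by contrast, is a purely formal consequence of the estimate $\varphi_C=\Id+(\deg\geq k)$ and requires no computation.
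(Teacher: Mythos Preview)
Your argument is correct and is exactly what the paper means by ``can be proved from the definitions'' (no proof is given in the paper itself). The only cosmetic remark is that in your first paragraph the claim $\varphi_C(x)-x\in\Lie_{\Q,\geq n+k}$ for $x\in\Lie_{\Q,n}$ already uses the Lie-algebra-automorphism propagation you spell out later for the kernel computation, not merely the injectivity of $\MalcevLie$; you clearly understand this, but it would read more cleanly if you established that propagation once at the outset and then invoked it in both places.
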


Among group-like expansions, we prefer those which have the following property.
Recall that $\omega \in \Lambda^2 H \subset H\otimes H$ is the dual of the intersection pairing.

\begin{definition}
An expansion $\theta: \pi \to \T(H_\Q)$ is \emph{symplectic}\index{symplectic expansion}\index{expansion!symplectic}
if it is group-like and if it sends $\zeta$ to $\exp_\otimes(-\omega)$.
\end{definition}

\noindent
It is not difficult, using the Baker--Campbell--Hausdorff formula
and starting from the expansion (\ref{eq:exponential_expansion}),   
to construct degree-by-degree a symplectic expansion \cite{Massuyeau_tree}. 
If $\theta$ is symplectic, the infinitesimal Dehn--Nielsen representation $\varrho^\theta$ has values in
$$
\Aut_\omega\left(\Lie_\Q\right) := 
\left\{a \in \Aut\left(\Lie_\Q\right) : a(\omega) = \omega \right\}
$$
and it can be checked that $\tau^\theta_{[k,2k[}$ then takes values in the kernel of the bracket map:
$$
\operatorname{D}(H_\Q) := \operatorname{D}(H) \otimes \Q =
\Ker \left([\centereddot,\centereddot]: H_\Q \otimes \Lie_\Q \longrightarrow \Lie_\Q \right).
$$

\begin{theorem}[See \cite{Massuyeau_tree}]
\label{th:total_Johnson_to_Morita}
If $\theta$ is a symplectic expansion of $\pi$,
then there is a commutative diagram of the following form:
$$
\xymatrix @R=1em {
& H_3\left(\pi/\Gamma_{k+1}\pi\right)\ \ar@{>->}[r] & H_3\left(\pi/\Gamma_{k+1}\pi;\Q\right) \ar[dr]_-\simeq &\\
\cob_{g,1}[k] \ar[ru]^-{-M_k} \ar[dr]_-{\tau^\theta_{[k,2k[}} & & & H_3\left(\MalcevLie(\pi/\Gamma_{k+1}\pi);\Q\right). \\ 
&\displaystyle{\bigoplus_{m=k}^{2k-1} \operatorname{D}_{m+2}(H_\Q)} \ar[r]^-\simeq & 
H_3\left(\frac{\Lie_\Q}{\Lie_{\Q,\geq k+1}};\Q\right) \ar[ru]_-{\theta_*^{-1}}^\simeq & 
}
$$
Thus, the degree $[k,2k[$ truncation of the total Johnson map $\tau^\theta$
is equivalent to the $k$-th Morita homomorphism.
\end{theorem}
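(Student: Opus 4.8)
The plan is to identify each of the four ``degree-three homology of a nilpotent object'' groups in the diagram by classical means, and then reduce the commutativity to a single chain-level comparison between the bar complex of $\pi/\Gamma_{k+1}\pi$ and the Chevalley--Eilenberg complex of its Malcev Lie algebra. First recall that for a finitely generated torsion-free nilpotent group $N$ the natural comparison between the bar complex of $N$ and the Chevalley--Eilenberg complex of $\MalcevLie(N)$ becomes a quasi-isomorphism after tensoring with $\Q$, yielding a functorial isomorphism $H_*(N;\Q)\xrightarrow{\simeq}H_*(\MalcevLie(N);\Q)$; applied to $N=\pi/\Gamma_{k+1}\pi$ this is the unnamed right-hand isomorphism of the diagram. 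The symplectic expansion $\theta$ then supplies the filtered Lie algebra isomorphism $\theta:\MalcevLie(\pi/\Gamma_{k+1}\pi)\to\Lie_\Q/\Lie_{\Q,\geq k+1}$ recalled before Proposition~\ref{prop:truncated_total_Johnson}, hence the map $\theta_*$ on $H_3$. Finally, the bottom isomorphism $\bigoplus_{m=k}^{2k-1}\operatorname{D}_{m+2}(H_\Q)\xrightarrow{\simeq}H_3(\Lie_\Q/\Lie_{\Q,\geq k+1};\Q)$ is purely Lie-theoretic: one computes the Chevalley--Eilenberg homology of the free $k$-step nilpotent Lie algebra $\Lie_\Q/\Lie_{\Q,\geq k+1}$ in the internal weight grading inherited from $\Lie_\Q=\bigoplus_m\Lie_{\Q,m}$, and finds that $H_3$ is concentrated in weights $k+2$ through $2k+1$, the weight-$(m+2)$ summand being exactly $\operatorname{D}_{m+2}(H_\Q)$. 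This is the infinitesimal analogue of the Igusa--Orr computation of $H_3(\pi/\Gamma_{k+1}\pi)$ \cite{IO}, and can be carried out directly.

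For the commutativity the key move is to express $M_k(C)$ through the automorphism $\rho_{2k}(C)$ rather than through the closed $3$-manifold $\overline{C}$. Since the canonical map $H_3(\pi/\Gamma_{2k+1}\pi)\to H_3(\pi/\Gamma_{k+1}\pi)$ vanishes \cite{IO}, the variation formula of Proposition~\ref{prop:variation_Morita}, applied with $N$ the closure of the trivial cobordism of $\Sigma_{g,1}$, identifies $M_k(C)$ with the ``mapping-torus class'' of $\rho_{2k}(C)$, namely the image in the bar complex of the standard formula for the fundamental class of the mapping torus of a self-map inducing $\rho_{2k}(C)$ on $\pi_1$. Transporting along $\theta$, which conjugates $\MalcevLie(\rho_{2k}(C))$ to $\varrho^\theta_{2k}(C)$, one is reduced to a statement about a filtered Lie algebra automorphism $\Psi$ of the free $2k$-step nilpotent Lie algebra with $\Psi\equiv\Id$ modulo $\Lie_{\Q,\geq k+1}$: its Chevalley--Eilenberg mapping-torus class must equal $-\tau^\theta_{[k,2k[}(C)$ under the identifications above. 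Here the hypothesis $C\in\cob_{g,1}[k]$ forces the components of $\Psi|_{H_\Q}-\Id$ in degrees below $k$ to vanish, leaving exactly $\tau^\theta_{[k,2k[}(C)\in\bigoplus_{m=k}^{2k-1}\operatorname{D}_{m+2}(H_\Q)$. The comparison is then a direct chain-level computation: the suspension formula expresses the weight-$(m+2)$ component of the mapping-torus class as the image of the degree-$m$ component of $\Psi|_{H_\Q}-\Id$ under a bracketing map into $\operatorname{D}_{m+2}(H_\Q)$, and summing over $k\leq m\leq 2k-1$ gives the claim; the overall sign and the lowest-degree normalisation are pinned down by Morita's Theorem~\ref{th:Morita_to_Johnson}, whose relation $d^2\circ M_k=-\tau_k$ already identifies the $m=k$ part. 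The last assertion, that $\tau^\theta_{[k,2k[}$ is equivalent to $M_k$, then follows at once, both being monoid homomorphisms with kernel $\cob_{g,1}[2k]$ by Theorem~\ref{th:kernel_Morita} and Proposition~\ref{prop:truncated_total_Johnson}.

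\textbf{The main obstacle} is the chain-level comparison just sketched: one must simultaneously (i) justify that the topologically defined class $\mu_k(\overline{C},c_*)$ is genuinely computed by a mapping-torus formula, even though $\overline{C}$ is only the closure of a homology cobordism and not literally a mapping torus --- this is precisely where the variation formula and the vanishing of $H_3(\pi/\Gamma_{2k+1}\pi)\to H_3(\pi/\Gamma_{k+1}\pi)$ do the work, reducing the computation to the case of the trivial cobordism --- and (ii) perform the bar-versus-Chevalley--Eilenberg bookkeeping in the completed, truncated setting while keeping track of signs. The remaining ingredients, namely the rational comparison of group and Lie algebra homology and the weight-graded computation of $H_3$ of the free nilpotent Lie algebra, are classical.
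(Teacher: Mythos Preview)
Your outline is correct in substance, and the identifications of the four homology groups are exactly as the paper indicates (Igusa--Orr torsion-freeness for the top injection, Pickel's theorem for the group-to-Lie comparison, and a weight-graded Chevalley--Eilenberg computation for the bottom isomorphism). Where you diverge is in the proof of commutativity. The paper proceeds by introducing an \emph{infinitesimal} version of $M_k$ --- a map into $H_3(\MalcevLie(\pi/\Gamma_{k+1}\pi);\Q)$ defined directly by the Lie-algebraic analogue of Turaev's nilpotent homotopy type --- and then verifies two separate compatibilities: with $-M_k$ via Pickel's isomorphism, and with $\tau^\theta_{[k,2k[}$ via diagrammatic descriptions of both $\operatorname{D}(H_\Q)$ and $H_3(\Lie_\Q/\Lie_{\Q,\geq k+1};\Q)$. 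This keeps the main computation entirely on the Lie side. Your route instead expresses $M_k(C)$ algebraically through $\rho_{2k}(C)$ on the group side and only then transports along $\theta$; this works, but it couples the bar/Chevalley--Eilenberg comparison with the actual chain-level calculation, which is precisely the bookkeeping you correctly flag as the main obstacle. The paper's decoupling makes that bookkeeping lighter.

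One minor correction: the variation formula (Proposition~\ref{prop:variation_Morita}) applied with $N$ the closure of the trivial cobordism does not itself produce a bar-complex ``mapping-torus formula'' for $M_k(C)$. What the Igusa--Orr vanishing together with Theorem~\ref{th:kernel_Morita} gives you is that $M_k$ \emph{factors through} $\rho_{2k}$; the explicit bar-resolution cycle is Morita's original construction \cite{Morita_Abelian}, shown by Heap \cite{Heap} to agree with the topological definition. You should invoke that equivalence directly rather than attribute it to Proposition~\ref{prop:variation_Morita}.
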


\noindent
The top map is injective because $H_3(\pi/\Gamma_{k+1}\pi)$ is torsion-free \cite{IO}.
The right-hand side top map is an application of the following result by Pickel:
with rational coefficients, the homology of a torsion-free finitely-generated nilpotent group is isomorphic
to the homology of its Malcev Lie algebra \cite{Pickel}. 
The bottom map is defined using diagrammatic descriptions of the spaces $\operatorname{D}(H_\Q)$
and $H_3(\Lie_\Q/\Lie_{\Q,\geq k+1};\Q)$. (See \S \ref{subsec:LMO_tree} for the diagrammatic description of $\operatorname{D}(H_\Q)$.) 
The diagram is shown to be commutative using an ``infinitesimal'' version of $M_k$.\\

We now sketch how to use symplectic expansions to deal with the case of a closed surface. 
If we fix a closed disk $D \subset \Sigma_g$ (as we did in \S \ref{subsec:bordered_to_closed}),
then we have a decomposition $\Sigma_{g} = \Sigma_{g,1} \cup D$ so that
$$
\clocase{\pi} = \pi_1(\Sigma_g,\star) \simeq \pi/\langle \zeta \rangle.
$$
Thus, a symplectic expansion $\theta$ induces a filtered Lie algebra isomorphism
$$
\clocase{\theta}: \MalcevLie(\clocase{\pi}) \stackrel{\simeq}{\longrightarrow} \clocase{\Lie}_\Q
$$
between the Malcev Lie algebra of $\clocase{\pi}$ and the complete Lie algebra
$\clocase{\Lie}_\Q := \Lie_\Q/ \langle \omega \rangle_{\operatorname{ideal}}$.
Then, the following composition gives an \emph{infinitesimal} 
version\index{representation!infinitesimal Dehn--Nielsen}\index{Dehn--Nielsen representation!infinitesimal} of the Dehn--Nielsen representation:
\begin{equation}
\label{eq:infinitesimal_DN_closed}
\xymatrix{
\mcg_{g}\ \ar@{>->}[r]^-\rho \ar@/_2pc/@{>-->}[rrrr]^-{\varrho^{\theta}} 
& \Out\left(\clocase{\pi}\right)\ \ar@{>->}[r]^-\MalcevLie & 
\Out\left(\MalcevLie\left(\clocase{\pi}\right)\right) \ar[rr]^-{\clocase{\theta}\circ \centereddot \circ \clocase{\theta}^{-1}}_-\simeq && \Out\left(\clocase{\Lie}_{\Q}\right).
}
\end{equation}
We denote it by $\varrho^\theta$.
Here, for a Lie algebra $\mathfrak{g}$ equipped with a complete filtration,
$\Out(\mathfrak{g})$ denotes the group of filtered automorphisms of $\mathfrak{g}$
modulo \emph{inner automorphisms}, \ie exponentials of inner derivations of $\mathfrak{g}$.
Equivalently, we can consider the \emph{total Johnson map}\index{total Johnson map}\index{map!total Johnson} defined by 
$$
\tau^{\theta}: \mcg_{g} \longrightarrow 
\frac{\Hom\left(H_\Q,\clocase{\Lie}_\Q\right)}{\exp_\circ(\ad \clocase{\Lie}_\Q)(\centereddot) - (\centereddot)}, \
f \longmapsto \left\{ \clocase{\theta} \circ \MalcevLie(f_*) \circ \clocase{\theta}^{-1}\vert_{H_\Q}- \Id_{H_\Q} \right\}
$$
where the space $\Hom(H_\Q,\clocase{\Lie}_\Q)$ is quotiented by the subspace
of homomorphisms of the form $\left(h \mapsto \exp_\circ(\ad u)(h) - h\right)$ where $u \in \clocase{\Lie}_\Q$.

As in the bordered case, we can extend the map $\tau^{\theta}$ 
and the homomorphism $\varrho^{\theta}$ to the monoid $\cob_g$ 
by considering, first, the nilpotent quotients $\clocase{\Lie}_\Q/\clocase{\Lie}_{\Q,\geq k+1}$
and by passing, next, to the limit $k\to +\infty$. 
There are analogues of Theorem \ref{th:total_Johnson_to_Johnson} 
and Proposition \ref{prop:truncated_total_Johnson} for the monoid $\cob_g$.
Clearly, the following square is commutative:
\begin{equation}
\label{eq:rho_rho}
\xymatrix{
\cob_{g,1} \ar[r]^-{\varrho^{\theta}} \ar@{->>}[d]_-{\clocase{\ \centereddot\ }} &  \ar@{->>}[d] \Aut_\omega\left(\Lie_\Q\right)\\
\cob_{g} \ar[r]_-{\varrho^{\theta}} & \Out\left(\clocase{\Lie}_\Q\right) .
}
\end{equation}

\section{The LMO homomorphism}

\label{sec:LMO}

We present the LMO homomorphism, which is a diagrammatic representation of the monoid of homology cylinders.
It is derived from the Le--Murakami--Ohtsuki invariant of closed oriented $3$-manifolds.
The LMO homomorphism dominates all the Johnson/Morita homomorphisms, 
and it will play a key role in Section \ref{sec:Lie_ring_homology_cylinders}.
In this section, we restrict ourselves to the surfaces $\Sigma_{g,1}$ and $\Sigma_g$.

\subsection{The algebra of symplectic Jacobi diagrams}

\label{subsec:symplectic_Jacobi_diagrams}

We start by defining the target of the LMO homomorphism.
For this, we need to define some kind of Feynman diagrams 
which  appear in the theory of finite-type invariants \cite{Bar-Natan,Ohtsuki}.

A \emph{Jacobi diagram}\index{Jacobi diagram} is a finite graph 
whose vertices have valence $1$ (\emph{external} vertices) or $3$ (\emph{internal}
vertices).  Each internal vertex is \emph{oriented}, in the sense that its incident edges are cyclically ordered.  
A Jacobi diagram is \emph{colored} by a set $S$ if a map from the set of its external vertices to $S$ is specified.
A \emph{strut} is a Jacobi diagram with only two external vertices and no internal vertex.  
Examples of Jacobi diagrams are shown in Figure \ref{fig:diagrams_examples}:
the custom is to draw such diagrams with dashed lines 
and the vertex orientations are given by the counter-clockwise orientation.
\begin{figure}[h]
\begin{center}
\includegraphics[scale=0.45]{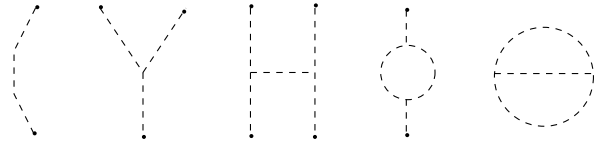}
\end{center}
\caption{Some examples of Jacobi diagrams: the strut, the Y graph, the H graph, the Phi graph and the Theta graph.}
\label{fig:diagrams_examples}
\end{figure}

As in the previous section, we start with the bordered case.
We denote $H:=H_1(\Sigma_{g,1})$ and $H_\Q := H\otimes \Q$.
We define the $\Q$-vector space\index{symplectic Jacobi diagram}\index{Jacobi diagram!symplectic}
$$
\A(H_\Q) := 
\frac{\Q\cdot \left\{ \begin{array}{c} \hbox{Jacobi diagrams without strut component}\\
\hbox{and with external vertices colored by } H_\Q  \end{array} \right\}}
{\hbox{AS, IHX, multilinearity}}.
$$
The ``AS'' and ``IHX'' relations are diagrammatic analogues of the antisymmetry and Jacobi identities  in Lie algebras:\\

\begin{center}
\labellist \small \hair 2pt
\pinlabel {AS} [t] at 102 -5
\pinlabel {IHX} [t] at 543 -5
\pinlabel {$= \ -$}  at 102 46
\pinlabel {$-$} at 484 46
\pinlabel {$+$} at 606 46
\pinlabel {$=0$} at 721 46 
\endlabellist
\centering
\includegraphics[scale=0.4]{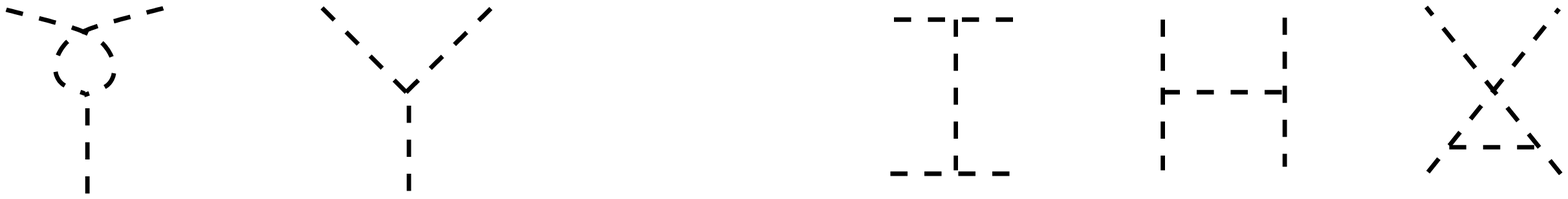}
\end{center}
\vspace{0.5cm}

\noindent
The ``multilinearity'' relation simply states that a Jacobi diagram $D$ with one external vertex $v$ colored
by $q_1\cdot h_1 + q_2\cdot h_2$ (with $q_1,q_2 \in \Q$ and $h_1,h_2 \in H_\Q$)
is equivalent to the  linear combination $q_1\cdot D_1 + q_2 \cdot D_2$
where $D_i$ is the Jacobi diagram $D$ with the vertex $v$ colored by $h_i$. 
The {\em degree} of a Jacobi diagram is the number of its internal vertices.
Thus, $\A(H_\Q)$ is  a graded vector space
$$
\A(H_\Q) = \bigoplus_{d=0}^\infty \A_d(H_\Q)
$$
where $\A_0(H_\Q)\simeq \Q$ is spanned by the empty diagram $\varnothing$.
Following our convention, the degree completion of $\A(H_\Q)$ is denoted in the same way.

\begin{example}[Degree $1$]
\label{ex:degree_one}
The spaces $\A_1(H_\Q)$ and $\Lambda^3 H_\Q$ are isomorphic by
$$
\Ygraphbottoptop{x_1}{x_2}{x_3} \mapsto x_1 \wedge x_2 \wedge x_3.
$$
\end{example}

As in the previous section, $\omega: H_\Q \otimes H_\Q \to \Q$ denotes the intersection pairing of $\Sigma_{g,1}$.
The group of the automorphisms of $H_\Q$ that preserve $\omega$,
namely the \emph{symplectic group}\index{symplectic group}\index{group!symplectic}
of $(H_\Q,\omega)$, is denoted by $\Sp(H_\Q)$.
It acts on $\A(H_\Q)$ in the obvious way. We shall now define on this space an $\Sp(H_\Q)$-equivariant structure of Hopf algebra. 
If we think of $H_\Q$-colored Jacobi diagrams as a kind of tensors, 
the multiplication $\star$ in  $\A(H_\Q)$ is defined as the contraction by the pairing $\omega/2$.
In more detail, let $D$ and $E$ be $H_\Q$-colored Jacobi diagrams, 
whose sets of external vertices  are denoted by $V$ and $W$ respectively. 
Then, we define
$$
D \star E := \sum_{\substack{V' \subset V,\ W' \subset W\\ 
\beta\ :\ V' \stackrel{\simeq}{\longrightarrow} W'}}\
\frac{1}{2^{|V'|}} \cdot \prod_{v\in V'} \omega\Big(\hbox{color}\big(v\big),\hbox{color}\big(\beta(v)\big)\Big)\ \cdot (D \cup_\beta E).
$$
Here, the sum is taken over all ways of identifying a subset $V'$ of $V$ with a subset $W'$ of $W$,
and $D \cup_\beta E$ is obtained from $D \sqcup E$ by gluing each vertex $v \in V'$ to $\beta(v) \in W'$. 
The comultiplication is given on an  $H_\Q$-colored  Jacobi diagram $D$ by
\begin{gather*}
\Delta(D) :=\sum_{D=D'\sqcup D''} D'\otimes D'',
\end{gather*}
where the sum runs over all ways of dividing the connected components of $D$ into two parts. 
Thus, the primitive part of $\A(H_\Q)$ is the subspace $\A^c(H_\Q)$
spanned by connected Jacobi diagrams.
The counit is given by $\varepsilon(D):=\delta_{D,\emptyset}$, and the
antipode is the unique algebra anti-automorphism satisfying 
$S(D) = -D$ if $D$ is connected and non-empty.

\begin{definition}
The Hopf algebra of \emph{symplectic Jacobi diagrams}\index{symplectic Jacobi diagram!algebra of} is the graded vector space
$\A(H_\Q)$ equipped with the multiplication $\star$ (with unit $\varnothing$),
the comultiplication $\Delta$ (with counit $\varepsilon$) and the antipode $S$.
The Lie algebra of \emph{symplectic Jacobi diagrams}\index{symplectic Jacobi diagram!Lie algebra of}
is $\A^c(H_\Q)$ equipped with the Lie bracket $[\centereddot,\centereddot]_\star$.
\end{definition}

\noindent
This Hopf algebra is introduced in \cite{HMass}.
Observe that all its operations  respect the action of $\Sp(H_\Q)$.
In particular, the Lie bracket $[\centereddot,\centereddot]_\star$ of $\A^c(H_\Q)$ is $\Sp(H_\Q)$-equivariant,
and this will play an important role in Section \ref{sec:Lie_ring_homology_cylinders}.
Another Lie bracket is defined in \cite{GL}, but this one is not $\Sp(H_\Q)$-equivariant.\\

We now define the algebra which will serve as a target for the LMO homomorphism in the closed case.
Let $I$ be the subspace of $\A(H_\Q)$ spanned by elements of the following form:

$$
\begin{array}{c}
\labellist \small \hair 2pt
\pinlabel {$\omega$} [t] at 12 0
\pinlabel {\scriptsize $x_1$} [t] at 42 0 
\pinlabel {\scriptsize $x_e$} [t] at 114 0
\pinlabel {\scriptsize $x_1$} [t] at 342 0
\pinlabel {\scriptsize $x_e$} [t] at 420 0
\pinlabel {\scriptsize $\cdots \cdots$} [t] at 78 0
\pinlabel {${\displaystyle -\frac{1}{4}\sum_{1\leq j <k \leq e} \omega(x_j,x_k)}$} at 221 54
\pinlabel {$D$} at 69 95
\pinlabel {$D$} at 373 95
\pinlabel {\scriptsize $j$} [tl] at 366 65
\pinlabel {\scriptsize $k$} [l] at 396 62
\pinlabel {\scriptsize $\cdots \widehat{x_j}\cdots\widehat{x_k}\cdots $} [t] at 381 4
\endlabellist
\centering
\includegraphics[scale=0.8]{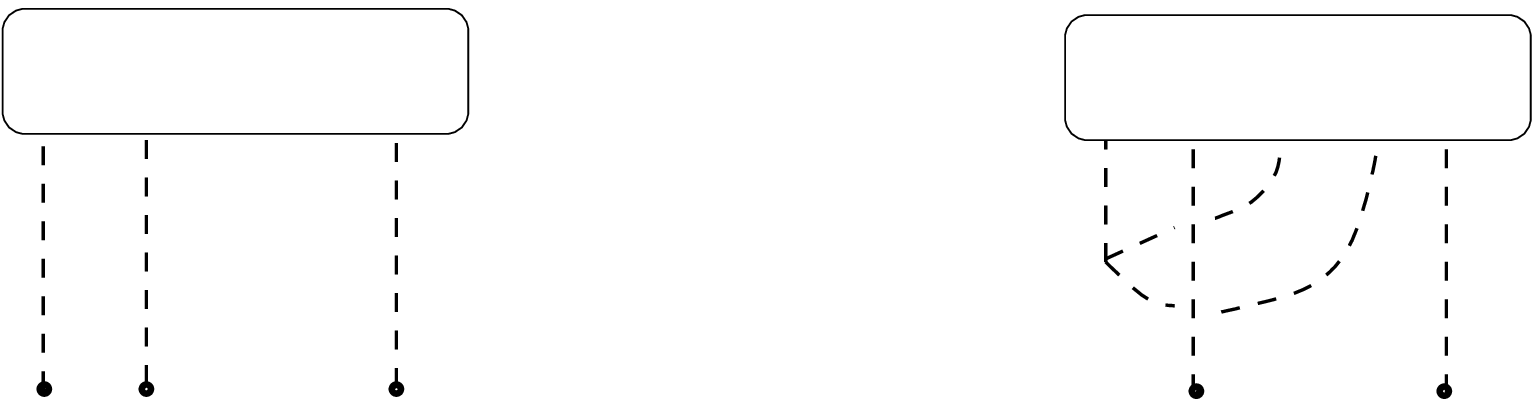}
\end{array}
$$
Here $x_1\dots,x_e$ are colors in $H_\Q$ and the $\omega$-colored vertex means
\begin{equation}
\label{eq:split_omega}
\begin{array}{c}
\labellist \small \hair 2pt
\pinlabel {${\displaystyle = \quad \sum_{i=1}^g}$} at 98 31 
\pinlabel {$\omega$} [t] at 3 0
\pinlabel {$\alpha_i$} [t] at 166 0 
\pinlabel {$\beta_i$} [t] at 202 0
\endlabellist
\centering
\includegraphics[scale=0.5]{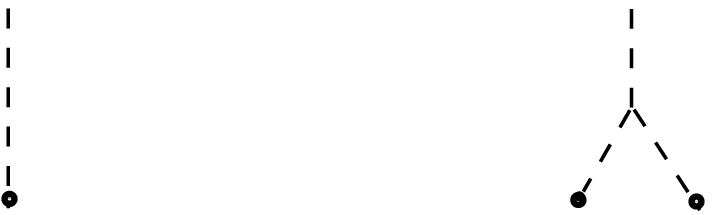}
\end{array}
\end{equation}
where $(\alpha_1,\beta_1,\dots,\alpha_g,\beta_g)$ is a symplectic basis of $H_\Q$ 
(defined, for instance, by a system of meridians and parallels as in Figure \ref{fig:surface}).
We also consider the subspace $I^c$ of $\A^c(H_\Q)$ 
spanned by elements of the above form  where $D$ is assumed to be connected.
It can be checked that $I$ is an ideal of the Hopf algebra $\A(H_\Q)$ 
and that $I^c$ is an ideal of the Lie algebra $\A^c(H_\Q)$. 
So, we can define the following quotient algebras:
$$
\clocase{\A}(H_\Q) := \A(H_\Q)/I
\quad \hbox{and} \quad  
\clocase{\A}^c(H_\Q) := \A^c(H_\Q)/I^c.
$$ 

\begin{example}[Degree $1$]
We have an isomorphism between $\clocase{\A}_1(H_\Q)= \clocase{\A}^c_1(H_\Q)$
and the quotient space $\Lambda^3 H_\Q/ \omega \wedge H_\Q$.
\end{example}

\subsection{Definition of the LMO homomorphism}

\label{subsec:definition_LMO}

We now sketch the construction of the LMO homomorphism \cite{HMass} starting from the LMO invariant.
The latter\index{LMO invariant} is an invariant of closed oriented $3$-manifolds $M$ introduced by Le, Murakami and Ohtsuki in \cite{LMO}.
Denoted by $\Omega(M)$, it takes values in the space of trivalent Jacobi diagrams:
$$
\A(\varnothing) :=
\frac{\Q\cdot \left\{ \hbox{Jacobi diagrams without external vertices}\right\}}{\hbox{AS, IHX}}.
$$
It can be regarded as a far-reaching generalization of the Casson--Walker--Lescop invariant $\lambda(M)$ \cite{Lescop}, since we have
$$
\Omega(M) = \varnothing + (-1)^{\beta_1(M)} \cdot
\frac{\lambda(M)}{2}\cdot \thetagraph + (\deg >2) \ \in \A(\varnothing).
$$
The reader is referred to Ohtsuki's book \cite{Ohtsuki} for an introduction to this invariant.

The LMO invariant can be extended to $3$-manifolds with boundary in several ways \cite{MO,CL,ABMP}.
Here, we need the \emph{LMO functor}\index{LMO functor}\index{functor!LMO} introduced in \cite{CHM}. 
Its source is the category of ``Lagrangian cobordisms'' whose objects are integers $g\geq 0$
and whose morphisms $g\to h$ are cobordisms (with corners) between $\Sigma_{g,1}$ and $\Sigma_{h,1}$,
which are required to satisfy certain homological conditions. The target of the LMO functor is a certain category of Jacobi diagrams. 
We refer to \cite{CHM} for the construction. It should be emphasized that the definition of the LMO functor requires two preliminary choices:
\begin{enumerate}
\item A Drinfeld associator must be specified. 
(Just like the LMO invariant, the LMO functor is constructed from the Kontsevich integral of tangles.)
\item For each $g\ge 0$, a system of meridians and parallels should be fixed on $\Sigma_{g,1}$ (as in Figure \ref{fig:surface}).
\end{enumerate}
Homology cylinders are ``Lagrangian'' in the sense of \cite{CHM}. Thus, the LMO functor restricts to a monoid homomorphism
$$
\widetilde{Z}^Y: \cyl_{g,1} \longrightarrow \A^Y\left(\set{g}^+ \cup \set{g}^-\right).
$$
Here, the target space is the space of Jacobi diagrams without strut component and
whose external vertices are colored by the finite set
$$
\set{g}^+ \cup \set{g}^- := \{1^+,\dots, g^+\} \cup \{1^-,\dots, g^-\}.
$$
The multiplication in this space is defined by
$$
D \star E :=
\left(\! \!  \begin{array}{c}
\hbox{sum of all ways of gluing \emph{some} of the $i^+$-colored vertices of $D$}\\
\hbox{to \emph{some} of the $i^-$-colored vertices of $E$, for all $i=1,\dots,g$}
\end{array}\! \!  \right) .
$$ 
This product was discovered in \cite{GL}. (See Section \ref{sec:Y}.)

Note that, in the definition of the LMO functor, the colors $1^+, \dots, g^+$ 
refer to the curves $\beta_1,\dots, \beta_g$ (in the top surface of the cobordism)
while the colors $1^-, \dots, g^-$ refer to the curves $\alpha_1,\dots, \alpha_g$ (in the bottom surface of the cobordism).
So, it is natural to  identify $\A^Y\left(\set{g}^+ \cup \set{g}^-\right)$ with $\A(H_\Q)$
by simply changing the colors with the rules $j^+\mapsto [\beta_j]$ and $j^- \mapsto [\alpha_j]$.
Unfortunately, the multiplication on $\A(H_\Q)$ corresponding to the multiplication $\star$ on $\A^Y\left(\set{g}^+ \cup \set{g}^-\right)$
by that identification is not $\Sp(H_\Q)$-equivariant. Instead of that identification, we consider the map
$$
\kappa: \A^Y\left(\set{g}^+ \cup \set{g}^-\right) \longrightarrow \A(H_\Q)
$$
defined by 
$$
\kappa(D) := (-1)^{\chi(D)}\cdot 
\left( \! \! \!  \left. \begin{array}{c} \hbox{sum of all ways of $(\times 1/2)$-gluing some $i^-$-colored}\\
\hbox{vertices of $D$ with some of its $i^+$-colored vertices} \end{array}
\! \right| \! \begin{array}{l} j^+ \mapsto \beta_j \\ j^- \mapsto \alpha_j \end{array}  \! \! \! 
\right).
$$
Here, $\chi(D)$ is the Euler characteristic of a Jacobi diagram $D$,
and a ``$(\times 1/2)$-gluing'' means the gluing of two vertices 
and the multiplication of the resulting diagram by $1/2$.
It can be proved that  $\kappa$ is an isomorphism and that it sends the multiplication 
$\star$ of $\A^Y\left(\set{g}^+ \cup \set{g}^-\right)$ to the multiplication $\star$ of $\A(H_\Q)$.

\begin{definition}
The \emph{LMO homomorphism}\index{LMO homomorphism}\index{homomorphism!LMO} is the monoid map 
$$
Z := \kappa \circ \widetilde{Z}^Y : \cyl_{g,1} \longrightarrow \A(H_\Q).
$$
\end{definition}

\noindent
This invariant of homology cylinders is universal among $\Q$-valued finite-type invariants \cite{CHM,HMass}
(see Section \ref{sec:Lie_ring_homology_cylinders} in this connection), and it takes group-like values.
Habegger defines from the LMO invariant another map $\cyl_{g,1} \to \A(H_\Q)$ with the same properties \cite{Habegger},
but he does not address the multiplicativity issue.\\

We now consider the case of a closed surface.
We think of $\Sigma_g$ as the union of $\Sigma_{g,1}$ with a closed disk $D$
as we did in \S \ref{subsec:bordered_to_closed}.
It can be shown from Lemma \ref{lem:closing} that, if the target of the LMO functor is quotiented
by some appropriate relations, then it induces a functor on the category of cobordisms between \emph{closed} surfaces.
Those relations in $\A^Y\left(\set{g}^+ \cup \set{g}^-\right)$ are sent by the isomorphism $\kappa$
to the Hopf ideal $I$ introduced at the end of \S \ref{subsec:symplectic_Jacobi_diagrams}.

\begin{definition}
The \emph{LMO homomorphism}\index{LMO homomorphism}\index{homomorphism!LMO} is the monoid map 
$$
Z := \kappa \circ \widetilde{Z}^Y : \cyl_{g} \longrightarrow \clocase{\A}(H_\Q).
$$
\end{definition}

\noindent
So, by construction, the following square is commutative:
\begin{equation}
\label{eq:Z_Z}
\xymatrix{
\cyl_{g,1} \ar[r]^-{Z} \ar@{->>}[d]_-{\clocase{\ \centereddot\ }} &  \ar@{->>}[d] \A(H_\Q)\\
\cyl_{g} \ar[r]_-{Z} &\clocase{\A}(H_\Q).
}
\end{equation}
Of course, the invariant of homology cylinders $Z: \cyl_g \to \clocase{\A}(H_\Q)$
depends on the way $\Sigma_{g,1}$ is embedded on $\Sigma_g$
(as well as on the choices of a Drinfeld associator and a system of meridians and parallels on $\Sigma_{g,1}$). 

\begin{example}[Genus $0$]
We saw in Example \ref{ex:genus_0_cobordisms}
that  the monoids $\cyl_{0,1}$ and $\cyl_0$ can be identified with the monoid of  homology $3$-spheres.
We have $\A(H_\Q)= \clocase{\A}(H_\Q)=\A(\varnothing)$ for $g=0$
and, by construction,  the LMO homomorphism $Z$ coincides in this case with the LMO invariant $\Omega$.
\end{example}

\subsection{The tree-reduction of the LMO homomorphism}

\label{subsec:LMO_tree}

A Jacobi diagram is \emph{looped} if at least one of its connected components is not contractible. 
The subspace of $\A(H_\Q)$ generated by looped Jacobi diagrams is an ideal, so that we can consider the quotient Hopf algebra
$$
\A^t(H_\Q) := \A(H_\Q)/\langle \hbox{looped Jacobi diagrams} \rangle.
$$
As a $\Q$-vector space, $\A^t(H_\Q)$ can be identified with the subspace of $\A(H_\Q)$ generated by tree-shaped Jacobi diagrams.
Thus, the composition
$$
\xymatrix{
\cyl_{g,1} \ar[r]^-Z  \ar@/_1.5pc/@{-->}[rr]_-{Z^t}  & \GLike \A(H_\Q) \ar@{->}[r]& \GLike \A^t(H_\Q)
}
$$
is called the \emph{tree-reduction} of the LMO homomorphism and is denoted by $Z^t$.
It takes values in the group-like part of the Hopf algebra $\A^t(H_\Q)$.

Besides, provided we are given a symplectic expansion $\theta$ of $\pi$,
we have the infinitesimal Dehn--Nielsen representation defined in \S \ref{subsec:total_Johnson}:
$$
\varrho^\theta: \cyl_{g,1} \longrightarrow \IAut_\omega(\Lie_\Q).
$$
Here, we have restricted $\varrho^\theta$ to the monoid of homology cylinders, so that
values are taken in the group $\IAut_\omega(\Lie_\Q)$ of automorphisms of the complete free Lie algebra
$\Lie_\Q$ that induce the identity at the graded level and fix $\omega$.
The logarithmic series defines a bijection
$$
\log_\circ:
\IAut_\omega(\Lie_\Q) \stackrel{\simeq}{\longrightarrow} \Der_\omega \left(\Lie_\Q, \Lie_{\Q,\geq 2}\right),
\ a \longmapsto \sum_{n \geq 1} \frac{(-1)^{n+1}}{n} \cdot \left( a - \Id \right)^n
$$
between $\IAut_\omega(\Lie_\Q)$ and  the Lie algebra $\Der_\omega \left(\Lie_\Q, \Lie_{\Q,\geq 2}\right)$
of derivations of $\Lie_\Q$ that vanish on $\omega$ and take values in $\Lie_{\Q,\geq 2}$.
This Lie algebra appears in Kontsevich's work \cite{Kontsevich_Gelfand,Kontsevich_ECM},
where it is implicitly identified with the Lie algebra
$$
\A^{t,c}(H_\Q) := \A^c(H_\Q)/\langle \hbox{connected looped Jacobi diagrams} \rangle.
$$
To recall that identification, let us observe that a derivation of $\Lie_\Q$ is determined by its restriction to $H_\Q$.
Hence we have an isomorphism
$$
\Der\left(\Lie_\Q, \Lie_{\Q,\geq 2}\right) \simeq \Hom(H_\Q,\Lie_{\Q,\geq 2}) \simeq H_\Q \otimes \Lie_{\Q,\geq 2}
$$ 
which restricts to an isomorphism
$$
\Der_\omega\left(\Lie_\Q, \Lie_{\Q,\geq 2}\right) \simeq
\operatorname{D}_{\geq 3}(H_\Q) = \Ker\left([\centereddot,\centereddot]: 
H_\Q \otimes \Lie_{\Q,\geq 2} \longrightarrow \Lie_{\Q,\geq 3} \right).
$$
The corresponding Lie algebra structure of $\operatorname{D}(H_\Q)$
appears (with integral coefficients) in Morita's work on the Johnson homomorphisms \cite{Morita_ICM,Morita_Abelian}.
It can be proved \cite{Levine_addendum} that the map
$$
\eta_k: \A^{t,c}_k(H_\Q) {\longrightarrow} \operatorname{D}_{k+2}(H_\Q),
\ T \longmapsto \sum_v \operatorname{color}(v) \otimes \operatorname{comm}(T_v)
$$
is an isomorphism for all $k\geq 1$. Here, the sum is over all external vertices $v$ of $T$
and $\operatorname{comm}(T_v)$ is the iterated Lie bracket encoded by $T$ ``rooted'' at $v$: we have, for instance,

$$
\operatorname{comm}\Bigg(\begin{array}{c}
\labellist \small \hair 2pt
\pinlabel {$h_1$} [b] at 2 162
\pinlabel {$h_2$} [b] at 111 162
\pinlabel {$h_3$} [b] at 167 162
\pinlabel {$h_4$} [b] at 222 162
\pinlabel {$v$} [t] at 108 0
\endlabellist
\includegraphics[scale=0.25]{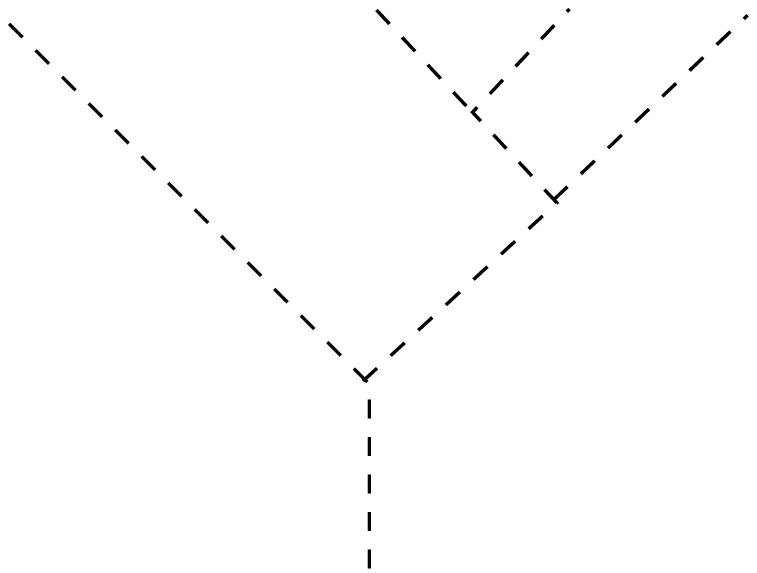}
\end{array}
\Bigg)= [h_1,[[h_2,h_3],h_4]].
$$
It is easily checked that the isomorphism
$$
\eta: \A^{t,c}(H_\Q) \stackrel{\simeq}{\longrightarrow} \operatorname{D}_{\geq 3}(H_\Q) 
\simeq \Der_\omega\left(\Lie_\Q, \Lie_{\Q,\geq 2}\right)
$$
is a Lie algebra map (which shifts the degree by $2$).

We can now state an algebraico-topological description of the tree-reduction of the LMO homomorphism.

\begin{theorem}[See \cite{Massuyeau_tree}]
\label{th:tree-reduction_LMO}
The LMO functor defines a symplectic expansion $\theta$ of $\pi$ 
such that the following diagram is commutative:
$$
\xymatrix @R=1em {
 & \GLike \A^t(H_\Q) \ar[r]^-{\log_\star}_-\simeq &  \A^{t,c}(H_\Q) \ar[dd]_-\simeq^-{\eta} \\
\cyl_{g,1} \ar[ru]^-{Z^t} \ar[rd]_-{\varrho^\theta} & & \\
& \IAut_\omega(\Lie_\Q) \ar[r]_-{\log_\circ}^-\simeq  & \Der_\omega \left(\Lie_\Q, \Lie_{\Q,\geq 2}\right).
}
$$
\end{theorem}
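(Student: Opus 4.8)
The plan is to (i) read off a symplectic expansion $\theta$ of $\pi$ from the LMO functor, (ii) reduce commutativity of the diagram to a single identity between two monoid homomorphisms $\cyl_{g,1}\to\IAut_\omega(\Lie_\Q)$, and (iii) prove that identity from the bottom-top tangle description of homology cylinders together with the functoriality of the LMO functor. For step (i): the LMO functor of \cite{CHM} is defined on a category of Lagrangian cobordisms much larger than $\cyl_{g,1}$ and can be fed cobordisms carrying extra arc components, so I would probe $\pi=\pi_1(\Sigma_{g,1},\star)$ by representing $x\in\pi$ by an arc in $\Sigma_{g,1}\times[-1,1]$ from $m(\star,-1)$ to $m(\star,1)$, applying $\widetilde{Z}^Y$, and carrying the result through $\kappa$ to obtain a series $\theta(x)\in\T(H_\Q)$. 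That $x\mapsto\theta(x)$ is multiplicative follows from functoriality of the LMO functor; the normalization $\theta(x)=1+[x]+(\deg\ge 2)$ comes from the degree-one part of the Kontsevich integral recording homological data; group-likeness is inherited from the Kontsevich integral; and the symplectic condition $\theta(\zeta)=\exp_\otimes(-\omega)$ is extracted from the value of $\widetilde{Z}^Y$ on the standard arc carrying $\partial\Sigma_{g,1}$. Matching that last normalization with $-\omega$ is the delicate point of this step; it should reflect the fact that the LMO functor descends, modulo the Hopf ideal $I$ of \S\ref{subsec:symplectic_Jacobi_diagrams}, to the closed-surface setting --- a compatibility itself governed by $\omega$.

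\textbf{Reduction.} With $\theta$ in hand, $\varrho^\theta\colon\cyl_{g,1}\to\IAut_\omega(\Lie_\Q)$ is the map of \S\ref{subsec:total_Johnson}. Because $\eta$ is an isomorphism of complete filtered Lie algebras (shifting degree by $2$), the composite $\exp_\circ\circ\,\eta\circ\log_\star$ is a group isomorphism $\GLike\A^t(H_\Q)\stackrel{\simeq}{\longrightarrow}\IAut_\omega(\Lie_\Q)$; and since a filtered automorphism of $\Lie_\Q$ is determined by its restriction to $H_\Q$, the theorem reduces to the equality of monoid homomorphisms
$$
\exp_\circ\bigl(\eta(\log_\star Z^t(C))\bigr)=\varrho^\theta(C)\qquad(C\in\cyl_{g,1}),
$$
that is, to identifying the total Johnson map read off from $Z^t$ with the map $\tau^\theta$ attached to $\theta$.

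\textbf{The main step.} To prove this I would present $C\in\cyl_{g,1}$ by a bottom-top tangle $\gamma$ in a homology cube, so that $\widetilde{Z}^Y(C)$ is computed from the Kontsevich integral of $\gamma$ by the Aarhus-type (formal Gaussian integration) procedure of \cite{CHM}. The key observation is that the two inclusions $m_\pm\colon\Sigma_{g,1}\to C$, followed by the $\theta$-type expansions that the Kontsevich integral of $\gamma$ attaches to the two boundary surfaces, realize the Stallings isomorphisms $m_{\pm,*}$ on the nilpotent quotients of $\pi_1(C)$, so that the transition automorphism $m_{-,*}^{-1}m_{+,*}$ defining $\varrho^\theta(C)$ is computed by the tree part of $\widetilde{Z}^Y(C)$ itself --- this rests on the principle that the tree-level Kontsevich integral is given by iterated integrals and so computes exactly Magnus-type expansions of fundamental groups and of their automorphisms. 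To organize this rigorously I would verify the displayed identity modulo each term of the $Y$-filtration (equivalently, along the Johnson filtration): on $\cyl_{g,1}[k]$ the degree-$k$ leading term of $\log_\star Z^t$ is, by universality of $Z$ among $\Q$-valued finite-type invariants and the clasper computation of the first non-vanishing term, the $k$-th rational Johnson homomorphism, which by the $\cyl_{g,1}$-analogue of Theorem \ref{th:total_Johnson_to_Johnson} is also the leading term of $\tau^\theta$; since both sides are monoid homomorphisms compatible with the filtration and $\log_\star$, $\log_\circ$ convert the group laws to their Lie algebras compatibly with $\eta$, an induction on $k$ upgrades agreement of leading terms to full agreement.

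\textbf{Main obstacle.} The hardest point will be precisely this exact --- not merely leading-order --- comparison: one must show that all the higher corrections produced by the Aarhus normalization of the LMO functor reassemble into conjugation by $\theta$, with no looped diagrams leaking into the tree part. Here the detailed properties of the LMO functor from \cite{CHM} (its behaviour under composition, the explicit $\kappa$-twist, the compatibility of the fixed Drinfeld associator and the chosen system of meridians and parallels with the group-like normalization of $\theta$) are needed in full, and the symplectic normalization $\theta(\zeta)=\exp_\otimes(-\omega)$ from the first step re-enters to guarantee that the output lands in $\IAut_\omega(\Lie_\Q)$ rather than in $\IAut(\Lie_\Q)$.
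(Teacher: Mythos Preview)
The paper is a survey and does not prove this theorem: it simply attributes the result to \cite{Massuyeau_tree} and moves on to its consequences. So there is no ``paper's own proof'' to compare against here, and your proposal should be assessed on its own terms as a plausible outline of the argument in \cite{Massuyeau_tree}.

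Your steps (i) and (ii) are reasonable and close to what one expects: the expansion $\theta$ is indeed extracted from the LMO functor by evaluating on cylinders carrying an extra arc, and the reduction to a single identity of monoid maps into $\IAut_\omega(\Lie_\Q)$ is the right reformulation. The acknowledgement that the normalization $\theta(\zeta)=\exp_\otimes(-\omega)$ is the delicate point of step (i) is accurate.

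There is, however, a genuine gap in step (iii). You propose to prove the identity $\exp_\circ\bigl(\eta(\log_\star Z^t(C))\bigr)=\varrho^\theta(C)$ by induction on the Johnson filtration, using that on $\cob_{g,1}[k]$ the degree-$k$ parts of both sides coincide (both being $\tau_k$). But agreement of leading terms on each $\cob_{g,1}[k]$ does \emph{not} upgrade to full agreement by induction: for $C\in\cob_{g,1}[k]$ the ``error'' $\varrho^\theta(C)^{-1}\cdot\exp_\circ\bigl(\eta(\log_\star Z^t(C))\bigr)$ lies in the $(k{+}1)$-st filtration level, but the map $C\mapsto(\hbox{error})$ is not a homomorphism (the target is non-abelian), so you cannot feed it back into the inductive hypothesis. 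Two filtered monoid homomorphisms into a complete pro-nilpotent group can induce the same map on every graded piece without being equal. You flag this as ``the hardest point'', but as written the induction scheme itself is circular, not merely technically demanding.

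What the actual proof in \cite{Massuyeau_tree} does instead is a direct comparison: one writes down explicitly the symplectic expansion $\theta$ produced by the LMO functor, and then computes $\varrho^\theta(C)$ and $Z^t(C)$ on the same bottom-top tangle presentation, tracking the formal Gaussian integration step by step so that the tree part of the output is identified \emph{exactly} (not just to leading order) with the Magnus-type expansion of $m_{-,*}^{-1}m_{+,*}$. The Milnor--Johnson correspondence and the Habegger--Masbaum formula for string-links serve as the model computation. Your ``Main obstacle'' paragraph gestures at the right ingredients, but the proof does not proceed by the filtration induction you sketch; it proceeds by an explicit identification at the level of the LMO integrand.
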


\noindent
Thus, the tree-reduction of the LMO homomorphism encodes the action of $\cyl_{g,1}$ on the Malcev Lie algebra $\MalcevLie(\pi)$.
Since the infinitesimal Dehn--Nielsen representation (\ref{eq:infinitesimal_DN}) is injective on the mapping class group, we deduce the following.

\begin{corollary}[See \cite{CHM}]
The LMO homomorphism is injective on  $\Torelli_{g,1}$.
\end{corollary}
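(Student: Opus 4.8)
The plan is to reduce everything to Theorem~\ref{th:tree-reduction_LMO}, which identifies the tree-reduction $Z^t$ of the LMO homomorphism with the infinitesimal Dehn--Nielsen representation $\varrho^\theta$ (for the symplectic expansion $\theta$ produced by the LMO functor), through the bijections $\log_\star$, $\eta$ and $\log_\circ$. First I would observe that $\A^t(H_\Q)$ is a quotient Hopf algebra of $\A(H_\Q)$, so $Z$ determines $Z^t$; since $\eta$, $\log_\star$ and $\log_\circ$ are all isomorphisms, the commutative triangle of Theorem~\ref{th:tree-reduction_LMO} shows that $Z^t$ and $\varrho^\theta$ determine each other on the whole monoid $\cyl_{g,1}$. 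Consequently, if $f,f'\in\Torelli_{g,1}=\mcyl^{-1}\!\left(\cyl_{g,1}\right)$ satisfy $Z(\mcyl(f))=Z(\mcyl(f'))$, then $Z^t(\mcyl(f))=Z^t(\mcyl(f'))$, hence $\varrho^\theta(\mcyl(f))=\varrho^\theta(\mcyl(f'))$.

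It then remains to check that $\varrho^\theta$ is injective on the image $\mcyl(\mcg_{g,1})$, a fortiori on $\mcyl(\Torelli_{g,1})$. For this I would unwind the definition~(\ref{eq:infinitesimal_DN}): on the mapping class group, $\varrho^\theta$ is the composition of the Dehn--Nielsen representation $\rho:\mcg_{g,1}\to\Aut(\pi)$, which is injective by the classical Dehn--Nielsen--Baer theorem recalled in \S\ref{subsec:Dehn-Nielsen}; of the map $\MalcevLie:\Aut(\pi)\to\Aut\!\left(\MalcevLie(\pi)\right)$, which is injective because $\pi$ is free, hence residually torsion-free nilpotent, so that $\pi$ embeds into $\widehat{\Q[\pi]}$ and any automorphism of $\pi$ is recovered from the induced automorphism of $\MalcevLie(\pi)$; and of the conjugation by $\theta$, which is an isomorphism. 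Combining, $Z(\mcyl(f))=Z(\mcyl(f'))$ forces $f=f'$, which is the assertion.

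I do not expect a genuine obstacle at this level: the whole substance is carried by Theorem~\ref{th:tree-reduction_LMO} (that the LMO functor gives a \emph{symplectic} expansion and that its tree part is exactly $\varrho^\theta$), which we are permitted to invoke, while the injectivity of $\varrho^\theta$ on $\mcg_{g,1}$ is elementary. The only point worth flagging is that the argument uses solely the tree-reduction of $Z$, and therefore does \emph{not} give injectivity of $Z$ on all of $\cyl_{g,1}$: the homomorphism $\varrho^\theta$ is far from injective on the monoid of homology cylinders, so the looped part of $Z$ is indispensable there.
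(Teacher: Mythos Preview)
Your proposal is correct and follows exactly the paper's approach: the corollary is deduced from Theorem~\ref{th:tree-reduction_LMO} by observing that $Z$ determines $Z^t$, which by that theorem is equivalent to $\varrho^\theta$, and that the infinitesimal Dehn--Nielsen representation~(\ref{eq:infinitesimal_DN}) is injective on $\mcg_{g,1}$ as a composite of injective maps. Your write-up simply spells out in more detail why each factor in~(\ref{eq:infinitesimal_DN}) is injective, which the paper states without elaboration.
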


\noindent
Theorem \ref{th:tree-reduction_LMO} is inspired by the work of Habegger and Masbaum \cite{HMasb} 
on the Kontsevich integral of string-links in $D^2 \times [-1,1]$.
With the following correspondence in mind,
one sees that Theorem \ref{th:tree-reduction_LMO} is very close in spirit to their ``global formula for Milnor's invariants'':

\begin{center}
\begin{tabular}{|r|l|}
\hline monoid of string-links & monoid of homology cylinders\\
\hline
pure braid group & Torelli group \\
Kontsevich integral & LMO homomorphism \\
Milnor's invariants & total Johnson map\\ \hline
\end{tabular}
\end{center}

We can also deduce the following from Theorem \ref{th:tree-reduction_LMO} and Theorem \ref{th:total_Johnson_to_Johnson}. 
A similar result was proved by Habegger in  \cite{Habegger} for his LMO-type map $\cyl_{g,1} \to \A(H_\Q)$.

\begin{corollary}[See \cite{CHM}]
\label{cor:LMO_to_Johnson}
Let $C \in \cyl_{g,1}$. The lowest degree non-trivial term of $Z^t(C)\in \A^t(H_\Q)$
coincides,  via the isomorphism $\eta$, with the first non-trivial Johnson homomorphism of $C$.
\end{corollary}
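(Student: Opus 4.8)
The plan is to deduce the statement from Theorem \ref{th:tree-reduction_LMO} together with the homology-cobordism version of Theorem \ref{th:total_Johnson_to_Johnson}, the only additional ingredient being an elementary observation about how the $\star$-logarithm and the $\circ$-logarithm interact with the degree filtrations.

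First I would fix the symplectic expansion $\theta$ of $\pi$ produced by the LMO functor, so that Theorem \ref{th:tree-reduction_LMO} gives the identity $\eta \circ \log_\star \circ Z^t = \log_\circ \circ \varrho^\theta$ of maps $\cyl_{g,1} \to \Der_\omega(\Lie_\Q,\Lie_{\Q,\geq 2})$. One may assume $Z^t(C) \neq \varnothing$, equivalently $\varrho^\theta(C) \neq \Id$; by the $\cob_{g,1}$-version of Theorem \ref{th:total_Johnson_to_Johnson} this is the case precisely when $C$ has a non-trivial Johnson homomorphism, and if $\tau_k(C)$ is the first one then $C \in \cob_{g,1}[k] \setminus \cob_{g,1}[k+1]$.

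Next I would establish the key observation: \emph{the logarithm preserves the lowest non-vanishing homogeneous term}. On the automorphism side, set $a := \varrho^\theta(C)$. Since $C \in \cob_{g,1}[k]$, the linear endomorphism $a - \Id$ of $\Lie_\Q$ raises degree by at least $k$ (this propagates from $H_\Q$ to all of $\Lie_\Q$ because $a$ is a Lie algebra automorphism), so $(a-\Id)^2$ raises degree by at least $2k$; hence, restricting $\log_\circ a = (a-\Id) - \tfrac12 (a-\Id)^2 + \cdots$ to $H_\Q$, its homogeneous parts of degree $< k$ vanish and its degree-$k$ part equals $\tau^\theta_k(C)$. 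By the $\cob_{g,1}$-version of Theorem \ref{th:total_Johnson_to_Johnson}, $\tau^\theta_m(C) = \tau_m(C) = 0$ for $m < k$ and $\tau^\theta_k(C) = \tau_k(C) \neq 0$; thus the lowest non-vanishing term of $\log_\circ \varrho^\theta(C)$ is its degree-$k$ part, equal to $\tau_k(C) \in \operatorname{D}_{k+2}(H_\Q)$. The same argument on the diagram side --- using that $Z^t(C)$ is group-like, so $Z^t(C) = \exp_\star(\log_\star Z^t(C))$ with $\log_\star Z^t(C)$ primitive, hence a sum of connected tree diagrams, and that the $\star$-product on $\A^t(H_\Q)$ is graded --- shows that the lowest non-vanishing term of $Z^t(C)$ agrees with that of $\log_\star Z^t(C)$ and lies in $\A^{t,c}_k(H_\Q)$. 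Finally, since $\eta$ is a graded isomorphism shifting degree by $2$, the commutative diagram of Theorem \ref{th:tree-reduction_LMO} forces $\eta_k$ to carry this lowest term of $Z^t(C)$ onto the lowest term of $\log_\circ \varrho^\theta(C)$, that is, onto $\tau_k(C)$ --- which is the assertion.

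The computations involved are all routine given the results already proved, so the main (mild) obstacle is organizational: keeping the three grading conventions --- number of internal vertices of a Jacobi diagram, degree of a derivation of $\Lie_\Q$, and index of the Johnson homomorphism --- consistently aligned through the shift by $2$ built into $\eta$, and invoking Theorem \ref{th:total_Johnson_to_Johnson} in the homology-cobordism form announced after its statement rather than in the mapping-class-group form in which it is written.
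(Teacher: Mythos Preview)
Your proof is correct and follows precisely the route the paper indicates: the sentence immediately preceding the corollary says it is deduced ``from Theorem \ref{th:tree-reduction_LMO} and Theorem \ref{th:total_Johnson_to_Johnson}'', with no further argument given. Your write-up supplies exactly the details one would expect --- the two logarithm computations showing that lowest-degree terms survive, and the appeal to the $\cob_{g,1}$-version of Theorem \ref{th:total_Johnson_to_Johnson} (which the paper explicitly allows right after that theorem's statement).
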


\noindent
More generally, it follows from Theorem \ref{th:tree-reduction_LMO}  and Theorem \ref{th:total_Johnson_to_Morita} that
the restriction to  $\cob_{g,1}[k]$ of the degree $[k,2k[$ truncation of $Z^t$ 
corresponds (by an explicit isomorphism) to the $k$-th Morita homomorphism \cite{Massuyeau_tree}.\\

We now outline the case of a closed surface. 
Let $I^{t}$ and $I^{t,c}$ be the images of the ideals $I$ and $I^c$ (introduced at the end of \S \ref{subsec:symplectic_Jacobi_diagrams})
in the Hopf algebra $\A^{t}(H_\Q)$ and in the Lie algebra $\A^{t,c}(H_\Q)$, respectively. 
Thus, if we think of $\A^{t,c}(H_\Q)$ as the subspace of $\A(H_\Q)$ generated by connected tree-shaped Jacobi diagrams,
then the subspace $I^{t,c}$ is generated by those diagrams having an $\omega$-colored vertex, as in (\ref{eq:split_omega}).
We denote
$$
\clocase{\A}^{t}(H_\Q) := \A^{t}(H_\Q)/I^t
\quad \hbox{and} \quad
\clocase{\A}^{t,c}(H_\Q) := \A^{t,c}(H_\Q)/I^{t,c}.
$$
The isomorphism $\eta$ sends $I^{t,c}$ to the ideal 
$$
\big(\Der(\Lie_\Q,\langle \omega \rangle_{\operatorname{ideal}}) + 
(\hbox{inner derivations of } \Lie_\Q)\big) \cap \Der_\omega(\Lie_\Q,\Lie_{\Q,\geq 2}).
$$
So, $\eta$ induces an isomorphism from $\clocase{\A}^{t,c}(H_\Q)$
to the Lie algebra $\ODer \left(\clocase{\Lie}_\Q, \clocase{\Lie}_{\Q,\geq 2}\right)$ of derivations
of $\clocase{\Lie}_\Q$ with values in $\clocase{\Lie}_{\Q,\geq 2}$, modulo inner derivations.
Let $\IOut\left(\clocase{\Lie}_\Q\right)$ be the group
of filtered automorphisms of $\clocase{\Lie}_\Q$ that induce the identity at the graded level, modulo inner automorphisms.
The following is an application of Theorem \ref{th:tree-reduction_LMO}
using the commutative squares (\ref{eq:Z_Z}) and (\ref{eq:rho_rho}).

\begin{theorem}
\label{th:tree-reduction_LMO_closed}
Let $\theta$ be the symplectic expansion of $\pi$ defined from the LMO functor in \cite{Massuyeau_tree}.
Then, the following diagram is commutative:
$$
\xymatrix  @R=1em {
 & \GLike\left(\clocase{\A}^t(H_\Q)\right) \ar[r]^-{\log_\star}_-\simeq &  \clocase{\A}^{t,c}(H_\Q) \ar[dd]_-\simeq^-\eta \\
\cyl_{g} \ar[ru]^-{Z^t} \ar[rd]_-{\varrho^\theta} & & \\
& \IOut\left(\clocase{\Lie}_\Q\right) \ar[r]_-{\log_\circ}^-\simeq  & \ODer \left(\clocase{\Lie}_\Q, \clocase{\Lie}_{\Q,\geq 2}\right).
}
$$
\end{theorem}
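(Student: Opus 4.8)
The plan is to deduce this closed-surface statement from its bordered-surface counterpart, Theorem \ref{th:tree-reduction_LMO}, by pushing the bordered diagram through the (surjective) closing map $\clocase{\ \centereddot\ }: \cyl_{g,1} \to \cyl_g$ appearing in the squares (\ref{eq:Z_Z}) and (\ref{eq:rho_rho}), together with the various quotient maps. First I would observe that each of the four corners of the closed diagram is a canonical quotient of the corresponding corner of the bordered diagram: $\GLike \A^t(H_\Q) \to \GLike \clocase{\A}^t(H_\Q)$ is induced by $\A^t(H_\Q) \to \A^t(H_\Q)/I^t$; $\A^{t,c}(H_\Q) \to \clocase{\A}^{t,c}(H_\Q)$ by $\A^{t,c}(H_\Q) \to \A^{t,c}(H_\Q)/I^{t,c}$; $\IAut_\omega(\Lie_\Q) \to \IOut(\clocase{\Lie}_\Q)$ by reducing modulo $\langle \omega \rangle_{\operatorname{ideal}}$ and passing to outer automorphisms; and $\Der_\omega(\Lie_\Q,\Lie_{\Q,\geq 2}) \to \ODer(\clocase{\Lie}_\Q,\clocase{\Lie}_{\Q,\geq 2})$ correspondingly. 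That the horizontal maps $\log_\star$, $\eta$, $\log_\circ$ of the bordered diagram descend to these quotients is precisely what has been recorded in \S\ref{subsec:LMO_tree}: the logarithmic series is natural, and $\eta$ carries $I^{t,c}$ onto the ideal $\big(\Der(\Lie_\Q,\langle \omega \rangle_{\operatorname{ideal}}) + (\hbox{inner derivations of } \Lie_\Q)\big) \cap \Der_\omega(\Lie_\Q,\Lie_{\Q,\geq 2})$, so it induces the isomorphism $\clocase{\A}^{t,c}(H_\Q) \simeq \ODer(\clocase{\Lie}_\Q,\clocase{\Lie}_{\Q,\geq 2})$ used as the right-hand vertical map. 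Also, the symplectic expansion $\theta$ of $\pi$ furnished by the LMO functor induces $\clocase{\theta}$, and this is exactly the datum used to define $\varrho^\theta$ on $\cyl_g$, consistently with (\ref{eq:rho_rho}).

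Next I would establish the two compatibility squares that relate the bordered data to the closed data at the level of $Z^t$ and $\varrho^\theta$. Square (\ref{eq:rho_rho}), restricted to homology cylinders, already says that $\varrho^\theta(\clocase{C}) \in \IOut(\clocase{\Lie}_\Q)$ is the image of $\varrho^\theta(\widetilde{C}) \in \IAut_\omega(\Lie_\Q)$ for any lift $\widetilde{C} \in \cyl_{g,1}$ of $C \in \cyl_g$. For the LMO side, square (\ref{eq:Z_Z}) gives that $Z(\clocase{C})$ is the image of $Z(\widetilde{C})$ under $\A(H_\Q) \to \clocase{\A}(H_\Q)$; composing with the tree-reduction quotient and using that $I$ maps to $I^t$ under $\A(H_\Q) \to \A^t(H_\Q)$ yields the tree-level square
$$
\xymatrix{
\cyl_{g,1} \ar[r]^-{Z^t} \ar@{->>}[d]_-{\clocase{\ \centereddot\ }} & \GLike \A^t(H_\Q) \ar@{->>}[d] \\
\cyl_{g} \ar[r]_-{Z^t} & \GLike \clocase{\A}^t(H_\Q).
}
$$

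The final step is then a diagram chase. Given $C \in \cyl_g$, pick $\widetilde{C} \in \cyl_{g,1}$ with $\clocase{\widetilde{C}} = C$. Applying the bordered Theorem \ref{th:tree-reduction_LMO} to $\widetilde{C}$ gives the identity $\eta\big(\log_\star Z^t(\widetilde{C})\big) = \log_\circ\big(\varrho^\theta(\widetilde{C})\big)$ in $\Der_\omega(\Lie_\Q,\Lie_{\Q,\geq 2})$. Projecting this equality along the quotient maps of the first paragraph and invoking the two compatibility squares of the second paragraph, one obtains $\eta\big(\log_\star Z^t(C)\big) = \log_\circ\big(\varrho^\theta(C)\big)$ in $\ODer(\clocase{\Lie}_\Q,\clocase{\Lie}_{\Q,\geq 2})$, which is exactly the asserted commutativity. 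The choice of lift $\widetilde{C}$ is immaterial, since $Z^t$, $\varrho^\theta$ and all horizontal maps are already well-defined on the closed objects; the lift only serves to transport the identity already proved in the bordered case.

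I expect the genuine content to lie entirely in the bordered Theorem \ref{th:tree-reduction_LMO}; the only work here is bookkeeping, namely checking that the passages $\A \to \clocase{\A}$, $\A \to \A^t$, and $\Aut \to \IAut \to \IOut$ are mutually compatible (so that $I \mapsto I^t$ and reducing a filtered automorphism modulo $\langle \omega \rangle_{\operatorname{ideal}}$ and modulo inner automorphisms commutes with $\log_\circ$). All of these facts are recorded, or are immediate from the definitions, in \S\ref{subsec:symplectic_Jacobi_diagrams}, \S\ref{subsec:total_Johnson} and \S\ref{subsec:LMO_tree}, so none of them is a true obstacle; this is why the statement is presented as an application of the bordered theorem rather than as an independent result.
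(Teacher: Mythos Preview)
Your proposal is correct and matches the paper's approach exactly: the paper simply states that the theorem is ``an application of Theorem \ref{th:tree-reduction_LMO} using the commutative squares (\ref{eq:Z_Z}) and (\ref{eq:rho_rho}),'' which is precisely the diagram chase you describe through the closing map and the quotient maps.
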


\noindent
Since the infinitesimal Dehn--Nielsen representation (\ref{eq:infinitesimal_DN_closed}) is injective on the mapping class group,
we have the following application of Theorem \ref{th:tree-reduction_LMO_closed}.

\begin{corollary}
The LMO homomorphism is injective on  $\Torelli_{g}$.
\end{corollary}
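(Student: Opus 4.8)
The plan is to deduce this corollary from Theorem \ref{th:tree-reduction_LMO_closed} in essentially the same way that the bordered case follows from Theorem \ref{th:tree-reduction_LMO}. First I would recall that the infinitesimal Dehn--Nielsen representation \eqref{eq:infinitesimal_DN_closed}, restricted to $\mcg_g$, is injective: this is exactly the assertion made in \S\ref{subsec:total_Johnson}, which in turn rests on the classical injectivity of $\rho: \mcg_g \to \Out(\clocase{\pi})$ (Dehn--Nielsen--Baer, \cite{ZVC}), on the fact that $\clocase{\pi}$ is residually torsion-free nilpotent, and on the functorial injectivity of the passage $\MalcevLie: \Aut(G) \to \Aut(\MalcevLie(G))$ for such $G$. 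Since $\clocase{\theta} \circ \centereddot \circ \clocase{\theta}^{-1}$ is an isomorphism, the composite $\varrho^\theta$ is injective on $\mcg_g$, hence on the subgroup $\Torelli_g \subset \mcg_g$.

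Next I would invoke the commutative diagram of Theorem \ref{th:tree-reduction_LMO_closed}. Along the top edge, $\log_\star: \GLike(\clocase{\A}^t(H_\Q)) \to \clocase{\A}^{t,c}(H_\Q)$ is a bijection (exponential/logarithm of group-like elements in a complete Hopf algebra), and $\eta: \clocase{\A}^{t,c}(H_\Q) \to \ODer(\clocase{\Lie}_\Q,\clocase{\Lie}_{\Q,\geq 2})$ is an isomorphism by the computation recalled just before the theorem; along the bottom edge $\log_\circ$ is a bijection. Commutativity of the diagram then gives, on $\cyl_g$ and a fortiori on $\mcyl(\Torelli_g) \subset \cyl_g$, the factorization
$$
\varrho^\theta = \log_\circ^{-1} \circ \eta \circ \log_\star \circ Z^t,
$$
so $\varrho^\theta$ factors through $Z^t$, which itself is the composite of $Z$ with the projection $\GLike \clocase{\A}(H_\Q) \to \GLike \clocase{\A}^t(H_\Q)$. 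Consequently, if $f \in \Torelli_g$ satisfies $Z(\mcyl(f)) = Z(\mcyl(\Id)) = \varnothing$, then $Z^t(\mcyl(f))$ is trivial, hence $\varrho^\theta(\mcyl(f)) = \Id$, hence by injectivity of $\varrho^\theta$ on $\mcg_g$ and injectivity of $\mcyl$ we get $f = \Id$. This shows $Z \circ \mcyl$ is injective on $\Torelli_g$, which is the claim (identifying $\Torelli_g$ with $\mcyl(\Torelli_g) \subset \cyl_g$ via the embedding $\mcyl$).

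There is essentially no hard analytic content left once Theorem \ref{th:tree-reduction_LMO_closed} is in hand; the only points requiring care are bookkeeping ones. The main thing to be careful about is that $\IOut(\clocase{\Lie}_\Q)$ involves a quotient by inner automorphisms, so one must check that the diagram's bottom-left corner really receives $\varrho^\theta$ of the \emph{closed} surface (not the bordered one) and that injectivity of the closed infinitesimal Dehn--Nielsen map indeed holds at the level of outer automorphisms --- this is where the residual torsion-free nilpotence of $\clocase{\pi}$ and the injectivity of $\rho$ into $\Out(\clocase{\pi})$ are both used, and it is the one step that is genuinely subtler than in the bordered case. A secondary check is that $Z(\mcyl(\Id_{\Sigma_g})) = \varnothing$, i.e.\ that the LMO homomorphism sends the trivial cylinder to the empty diagram; this is immediate from $Z$ being a monoid homomorphism into a Hopf algebra with unit $\varnothing$. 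With these verifications the corollary follows.
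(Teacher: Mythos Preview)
Your proposal is correct and follows essentially the same approach as the paper: the paper's own justification is the single sentence ``Since the infinitesimal Dehn--Nielsen representation (\ref{eq:infinitesimal_DN_closed}) is injective on the mapping class group, we have the following application of Theorem \ref{th:tree-reduction_LMO_closed},'' and you have simply unpacked this by reading off from the commutative diagram that $\varrho^\theta$ factors through $Z^t$ (hence through $Z$) via isomorphisms. Your added remarks about the outer-automorphism subtlety and the value of $Z$ on the trivial cylinder are accurate elaborations of points the paper leaves implicit.
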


\noindent
One can also deduce from Theorem \ref{th:tree-reduction_LMO_closed}
the analogue of Corollary \ref{cor:LMO_to_Johnson} for $\Sigma_g$.

\section{The $Y$-filtration on the monoid of homology cylinders}

\label{sec:Y}

In this section, we consider the relation of $Y_k$-equivalence among homology cylinders, which is defined for every
$k\geq 1$ by surgery techniques.  The $Y$-filtration is the decreasing sequence of submonoids of $\cyl(\Sigma_{g,b})$
obtained by considering, for all $k\geq 1$, homology cylinders that are $Y_k$-equivalent to $\Sigma_{g,b} \times [-1,1]$.  
In some respects, this filtration is similar to the lower central series of the Torelli group $\Torelli (\Sigma_{g,b})$.

\subsection{The $Y_k$-equivalence relation}

\label{subsec:Y}

The lower central series is a fundamental tool in the study of groups.  Much of the structure of a residually nilpotent
group is contained in the associated graded Lie ring of the lower central series.  The family of $Y_k$-equivalence
relations  of $3$-manifolds ($k\ge 1$) plays the same role in $3$-dimensional topology as lower central series.
Those equivalence relations, which we shall now recall, 
have been defined and studied by Goussarov \cite{Goussarov,Goussarov_clovers} and the first author \cite{Habiro}.
We shall follow the terminology of \cite{Habiro}.

Let $M$ be a compact oriented $3$-manifold.
A {\em graph clasper}\index{graph clasper} in $M$ is a compact, connected surface $C$ embedded in the interior of $M$, 
which is equipped with a decomposition into the following types of subsurfaces: {\em leaves}, {\em nodes} and {\em edges}.  A
leaf is an annulus, a node is a disc, and an edge is a band.  Leaves and nodes are called {\em constituents}.  Each edge
connects either two distinct constituents or connects a node with itself.  Any two distinct constituents are disjoint
from each other.  Each leaf is incident with exactly one edge by one arc in the boundary.  Each node is either incident
with three distinct edges or incident with two distinct edges, one of which connects the node with itself.  
(Thus, in each case, the node is attached to the edges along exactly three arcs in the boundary circle.)  
See Figure \ref{fig:graph_clasper} for an example of a graph clasper.

\begin{figure}[h]
\begin{center}
{\labellist \small \hair 0pt 
\pinlabel {an edge} [l] at 94 21
\pinlabel {a node} [l] at 190 333
\pinlabel {a leaf} [l] at 608 165
\pinlabel {$=$} at 809 90
\endlabellist}
\includegraphics[scale=0.3]{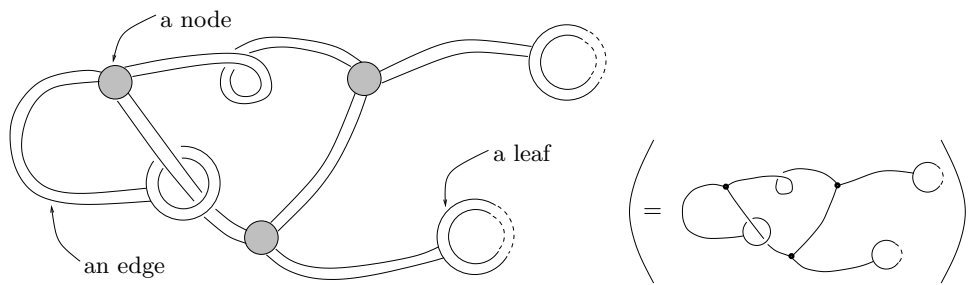}
\end{center}
\caption{An example of graph clasper $C \subset M$ with $3$ nodes, $3$ leaves and $6$ edges.
(And how it is drawn with the blackboard framing convention.)}
\label{fig:graph_clasper}
\end{figure}

A {\em tree clasper}\index{tree clasper} is a graph clasper $C$ such that $C\setminus \text{(the leaves of $C$)}$ is simply-connected.
The {\em degree} of a graph clasper $C$ is defined to be the number of nodes contained in $C$.  
A graph (respectively a tree) clasper of degree $k$ is called a {\em $Y_k$-graph} (respectively a {\em $Y_k$-tree}).
For instance, $Y_0$-graphs (which are also called ``basic claspers'' or ``I-claspers'' in the literature) consist of only one edge and two leaves:
$$
\includegraphics[scale=0.3]{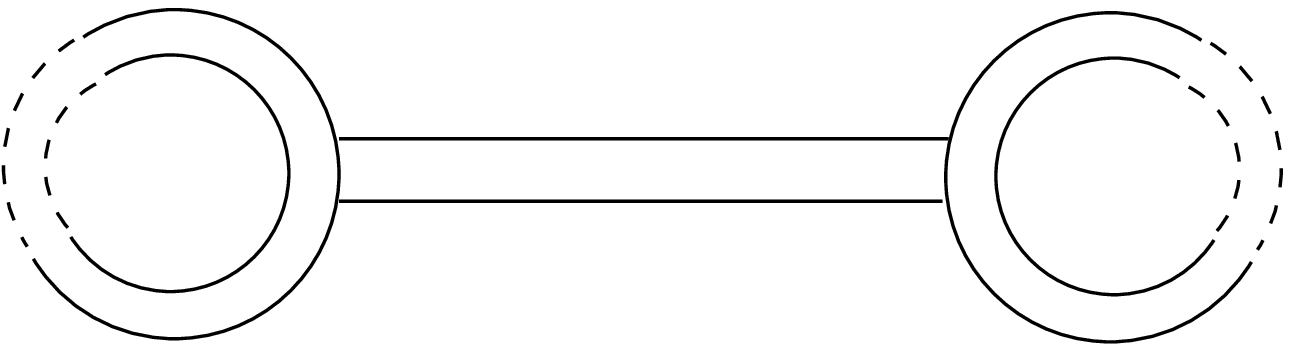}
$$

Surgery along a graph clasper $C\subset M$ is defined as follows.  
We first replace each node with three leaves linking like the Borromean rings in the following way: 
\begin{center}
{\labellist \small \hair 0pt 
\pinlabel {$\longrightarrow$} at 409 143
\endlabellist}
\includegraphics[scale=0.3]{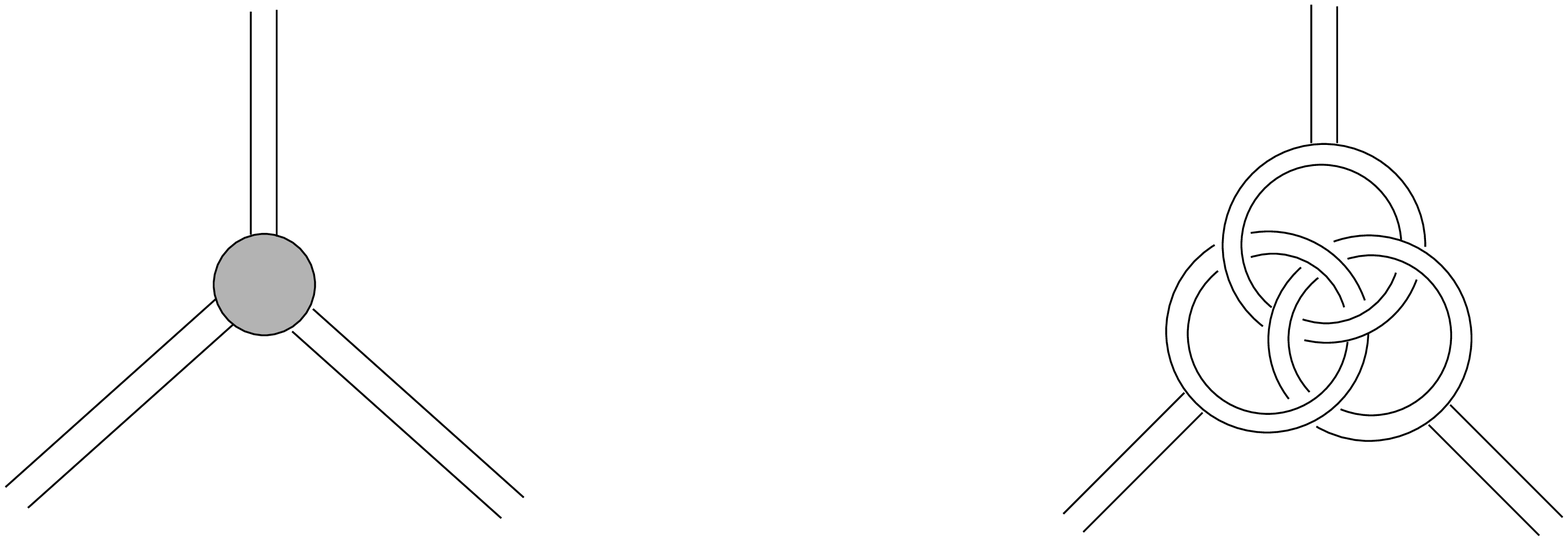}
\end{center}
Thus, we obtain a disjoint union of $Y_0$-trees.    
Next, we replace each $Y_0$-tree with a $2$-component framed link as follows:
\begin{center}
{\labellist \small \hair 0pt 
\pinlabel {$\longrightarrow$} at 460 50
\endlabellist}
\includegraphics[scale=0.3]{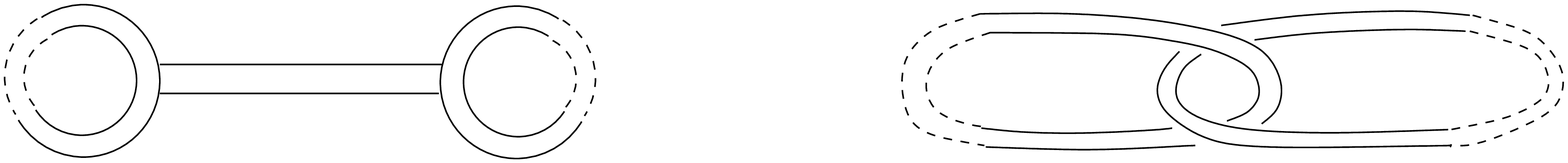}
\end{center}
See Figure \ref{fig:Y_to_framed_link} for an example.
Then, \emph{surgery} along the graph clasper $C$ is defined to be the surgery along the framed link thus obtained in $M$.
The resulting $3$-manifold is denoted by $M_C$.

\begin{figure}[h]
\begin{center}
{\labellist \small \hair 0pt 
\pinlabel {$\longrightarrow$}  at 384 142
\pinlabel {$\longrightarrow$}  at 820 142
\endlabellist}
\includegraphics[scale=0.25]{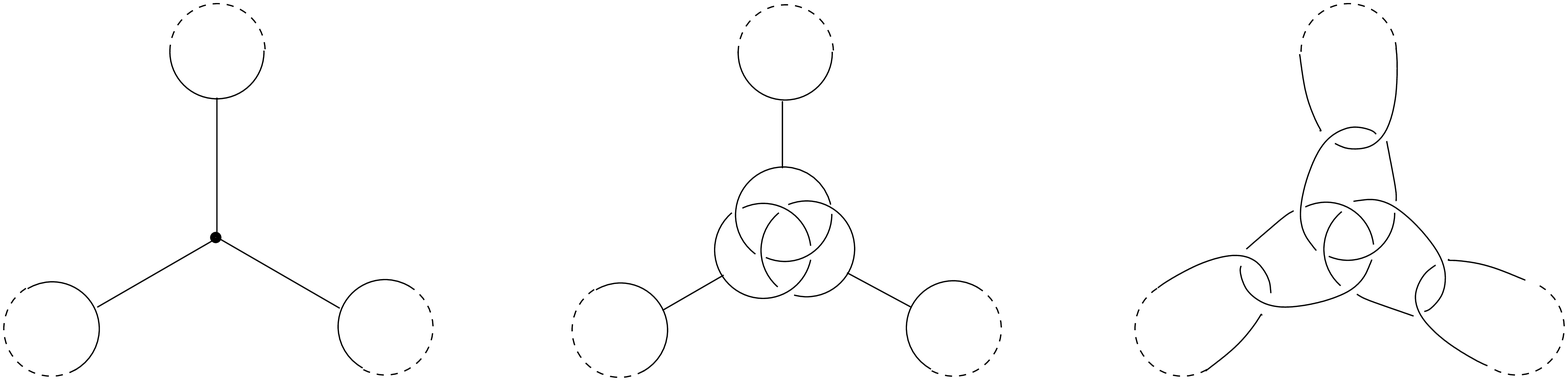}
\end{center}
\caption{The framed link associated with a $Y_1$-tree. (Blackboard framing convention is used here.)}
\label{fig:Y_to_framed_link}
\end{figure}

\index{calculus of claspers}
\emph{Calculus of claspers} is developed in \cite{Goussarov_clovers,Habiro,GGP},
in the sense that some specific moves between graph claspers are shown to produce  by surgery homeomorphic $3$-manifolds.
In fact, one can regard a node in a graph clasper as a commutator morphism for a 
Hopf algebra object in the category $\mathbf{Cob}$ of $3$-dimensional cobordisms introduced in \cite{CY,Kerler}. 
More general definition of claspers  involves another kind of constituents 
called ``boxes'' which corresponds to multiplication and comultiplication for that Hopf algebra object.
In this way, calculus of claspers can be regarded as a topological commutator calculus in $3$-manifolds.
We refer to \cite{Habiro} for precise statements.

\begin{example}
\label{ex:move_2}
One of the simplest moves in calculus of claspers consists in cutting an edge of a graph clasper
and inserting a Hopf link of two leaves:
\begin{equation}
\label{eq:move_2}
{\labellist \small \hair 0pt 
\pinlabel {$\longleftrightarrow$}  at 333 26
\pinlabel {\scriptsize edge}  [b] at 124 34
\endlabellist}
\includegraphics[scale=0.4]{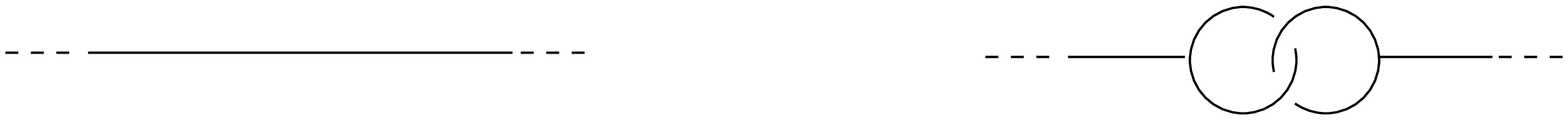}
\end{equation}
This (called ``Move 2'' in \cite{Habiro}) can be deduced from the ``slam dunk'' move (\ref{eq:slam_dunk}).
\end{example}

A {\em $Y_k$-surgery}\index{clasper surgery}
on a compact oriented $3$-manifold $M$ is defined to be the surgery along some $Y_k$-graph in $M$.  
The {\em   $Y_k$-equivalence}\index{Yk-equivalence relation@$Y_k$-equivalence relation}\index{relation!$Y_k$-equivalence}
is the equivalence relation generated 
by $Y_k$-surgeries and orientation-preserving homeomorphisms  preserving the boundary parameterization.
Considering $3$-manifolds up to $Y_k$-equivalence is something like considering elements in a group up to multiplication
by iterated commutators of class $k$. 

\begin{example}[Degree $1$]
\label{ex:Y_1}
A $Y_1$-surgery is equivalent to the ``Borromean surgery'' introduced by Matveev in \cite{Matveev}. 
It is proved there that two closed connected oriented $3$-manifolds are Borromean surgery equivalent, 
or $Y_1$-equivalent, if and only if there is an isomorphism of first homology groups 
for these two $3$-manifolds which induces an isomorphism of torsion linking pairings.  
The $Y_1$-surgeries are used in the definition of finite-type invariants of
$3$-manifolds in the sense of Goussarov and the first author \cite{Goussarov,Habiro}.
\end{example}

Calculus of claspers can be used to prove general properties for the $Y_k$-surgery and the $Y_k$-equivalence.
For instance, we deduce from  (\ref{eq:move_2}) that surgeries along $Y_k$-trees suffice to generate the $Y_k$-equivalence.
Let us give two more examples.

\begin{proposition}
\label{prop:Y_Y}
If $1\le k\le l$, then $Y_l$-equivalence implies $Y_k$-equivalence.
\end{proposition}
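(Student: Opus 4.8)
The plan is to reduce the statement, by transitivity and a short induction, to a single model situation and then to invoke the calculus of claspers of Goussarov and the first author. Since $Y_l$-equivalence is by definition generated by $Y_l$-surgeries together with boundary-preserving homeomorphisms, and since (as deduced just above from the move~(\ref{eq:move_2})) surgeries along $Y_l$-\emph{trees} already suffice to generate $Y_l$-equivalence, it is enough to prove that $M_T$ is $Y_k$-equivalent to $M$ for every compact oriented $3$-manifold $M$ and every $Y_l$-tree $T\subset M$. Furthermore, it suffices to establish the following single claim for each $m\ge 1$: surgery along any $Y_{m+1}$-tree can be realized by a finite sequence of $Y_m$-surgeries. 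Indeed, granting this, a $Y_l$-surgery is a composition of $Y_{l-1}$-surgeries, hence of $Y_{l-2}$-surgeries, and so on down to $Y_k$-surgeries.

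To prove the claim, I would fix a $Y_{m+1}$-tree $T\subset M$ and choose a node $v$ of $T$ that is incident to exactly two leaves $f_1,f_2$ and to one edge $e$; such a node exists because $T$, being a tree clasper with $m+1\ge 2$ nodes, must have a node adjacent to two leaves. Deleting $v$, the leaves $f_1,f_2$ and the edges attaching them, and attaching a single new leaf at the free end of $e$, produces a $Y_m$-tree $T'$, which sits inside $M$ as a ``core'' of $T$. The relation ``$M_T$ is $Y_m$-equivalent to $M$'' then follows from the topological commutator calculus of \cite{Habiro,Goussarov_clovers,GGP}: splitting the node $v$ into a Borromean triple of leaves and applying the commutator-type moves of that calculus, one rewrites the surgery along $T$ as a composition of surgeries along graph claspers each of which still contains the degree-$m$ core $T'$, hence is of degree $\ge m$; applying the claim again for higher degrees (equivalently, running the induction on the degree) converts each such surgery into $Y_m$-surgeries, and the claim follows.

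The main obstacle is the bookkeeping in the second paragraph: one has to carry out the clasper manipulation carefully and check that every correction term produced along the way is a clasper of degree $\ge m$ --- i.e.\ that splitting off the two leaves $f_1,f_2$ never ``uses up'' more than the single node $v$. This is precisely the sort of estimate that the formalism of \cite{Habiro}, in which a node is interpreted as a commutator morphism for a Hopf algebra object in the cobordism category $\mathbf{Cob}$, is designed to make routine; I would therefore phrase the argument within that formalism rather than manipulate pictures by hand.
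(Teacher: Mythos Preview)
Your overall plan is correct and matches the paper's approach: the paper's proof is the single sentence ``This fact follows from `Move 9' in \cite{Habiro}.'' Your reduction by induction to the step $Y_{m+1}\Rightarrow Y_m$ is sound, and the choice of a terminal node of the tree is exactly where the relevant clasper move is applied. So at the level of strategy there is nothing to criticize.

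Where your write-up is weaker than the paper's is in the second paragraph. The assertion that after splitting the node $v$ one obtains ``a composition of surgeries along graph claspers each of which still contains the degree-$m$ core $T'$'' is not what the clasper manipulation actually yields, and you seem to sense this, since your third paragraph immediately retreats to the formalism of \cite{Habiro}. A cleaner way to phrase the inductive step is: apply Move~2 to the edge $e$ to split $T$ as $T'\cup Y$ with $T'$ of degree $m$ and $Y$ of degree $1$, the two new leaves being Hopf-linked; then the specific move in \cite{Habiro} (Move 9) handles precisely this Hopf-linked configuration and shows that $M_T=M_{T'\cup Y}$ is obtained from $M$ by a single $Y_m$-surgery. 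The commutator heuristic you describe is the right intuition for \emph{why} Move 9 holds, but it does not replace the move itself. In short: you have rediscovered the outline, but the paper is more economical in simply naming the move rather than sketching an ad hoc version of it.
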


\noindent
This fact follows from ``Move 9'' in \cite{Habiro}.
Thus, the $Y_k$-equivalence gets finer and finer as $k$ increases.
It is conjectured that two $3$-manifolds are orientation-preserving homeomorphic 
if and only if they are \emph{$Y_\infty$-equivalent} (\ie $Y_k$-equivalent for all $k\ge 1$).

As mentioned in Example \ref{ex:Y_1},
a $Y_1$-surgery preserves the homology   (hence so do $Y_k$-surgeries for $k\ge 1$).
More generally, we have the following important property.

\begin{proposition}
\label{prop:nilpotent_quotients}
If two $3$-manifolds $M$ and $M'$ are related by a $Y_k$-surgery, then there is a canonical isomorphism
\begin{gather*}
\pi _1(M)/\Gamma _{k+1}\pi _1(M)\simeq \pi _1(M')/\Gamma _{k+1}\pi _1(M')
\end{gather*}
between the $k$-th nilpotent quotient of $\pi_1(M)$ and that of $\pi_1(M')$.
\end{proposition}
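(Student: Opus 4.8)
The plan is to localise the surgery, prove a statement about the effect of the surgery on the fundamental group of a handlebody neighbourhood of the graph clasper, and then reassemble with van Kampen's theorem. So let $C\subset M$ be the $Y_k$-graph along which $M'$ is obtained, and let $V=V(C)\subset\interior(M)$ be a regular neighbourhood of $C$; since $C$ deformation retracts onto a graph, $V$ is a handlebody. The framed link $L_C$ produced from $C$ by the two-step recipe of \S\ref{subsec:Y} (first blow up the $k$ nodes into Borromean triples of leaves, then replace the resulting $Y_0$-trees by $2$-component framed links) lies in $\interior(V)$, and by definition of clasper surgery $M'=M_C=(M\setminus\interior V)\cup_{\partial V}V'$, where $V':=V_{L_C}$ is the result of surgery on $V$ along $L_C$, so that $\partial V'=\partial V$ and the piece $M\setminus\interior V$ is identical in $M$ and in $M'$.

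The crux is the following \emph{local claim}: there is an isomorphism $\psi:\pi_1(V)/\Gamma_{k+1}\pi_1(V)\to\pi_1(V')/\Gamma_{k+1}\pi_1(V')$ which intertwines the homomorphisms induced by the two inclusions $\partial V\hookrightarrow V$ and $\partial V\hookrightarrow V'$, and both of these homomorphisms are surjective. Granting this, write $P:=\pi_1(M\setminus\interior V)$ and $\alpha:\pi_1(\partial V)\to P$ for the inclusion-induced map; van Kampen's theorem gives $\pi_1(M)=P\ast_{\pi_1(\partial V)}\pi_1(V)$ and $\pi_1(M')=P\ast_{\pi_1(\partial V)}\pi_1(V')$. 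Because $\pi_1(\partial V)$ surjects onto $\pi_1(V)$ (a handlebody) and, by the claim, onto $\pi_1(V')/\Gamma_{k+1}$ (a homology handlebody factor), in each case the image of that factor inside $\pi_1(M)/\Gamma_{k+1}$, respectively $\pi_1(M')/\Gamma_{k+1}$, is contained in the image of $P$; a short computation with normal closures then identifies both $\pi_1(M)/\Gamma_{k+1}$ and $\pi_1(M')/\Gamma_{k+1}$ with the \emph{same} quotient of $P/\Gamma_{k+1}P$, namely the one by the normal closure of $\alpha(K)$, where $K$ is the kernel of $\pi_1(\partial V)\to\pi_1(V)/\Gamma_{k+1}\overset{\psi}{\cong}\pi_1(V')/\Gamma_{k+1}$. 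The resulting isomorphism is canonical because $M\setminus\interior V=M'\setminus\interior V'$ literally, because $\psi$ is uniquely determined once its existence is known (it must intertwine the two surjections from $\pi_1(\partial V)$), and because a different choice of regular neighbourhood of $C$ is ambiently isotopic.

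The local claim is where the hypothesis that $C$ has degree $k$ enters, and proving it is the main obstacle. The point is the commutator interpretation of claspers recalled in \S\ref{subsec:Y}: a node behaves like a binary commutator, so surgery along a degree-$k$ clasper is a $k$-fold iterated commutator operation and should affect $\pi_1$ only modulo $\Gamma_{k+1}$. Concretely, one fixes in $\pi_1(V)$ a system of generators adapted to the leaves of $C$ and tracks a meridian and a longitude of each leaf through the conversion of $C$ into $L_C$ and through the surgery on $L_C$, using the moves of calculus of claspers (for instance (\ref{eq:move_2})); one checks that, in these generators, each surgery relation of $\pi_1(V')$ asserts that a longitude equals the corresponding leaf curve times an element of $\Gamma_{k+1}\pi_1$, so these relations become trivial modulo $\Gamma_{k+1}$, yielding both $\psi$ and the surjectivity of $\partial V\to\pi_1(V')/\Gamma_{k+1}$. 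Alternatively one can phrase this step inside the category $\mathbf{Cob}$ of \cite{CY,Kerler}, where $V'$ arises from $V$ by inserting the morphism represented by $C$: being a degree-$k$ clasper, this morphism induces the identity on the functor $G\mapsto G/\Gamma_{k+1}G$. Either way, the real work is to verify that the correction terms genuinely lie in $\Gamma_{k+1}$ and not merely in $\Gamma_2$ — the latter being all that Proposition \ref{prop:Y_Y} and the degree-$1$ case give — and this is exactly what the clasper calculus of \cite{Habiro,Goussarov_clovers,GGP} delivers.
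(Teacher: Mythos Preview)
The paper does not actually give a proof of this proposition: it is stated and followed only by the remark ``This is another illustration of the fact that a $Y_k$-graph can be interpreted as an iterated commutator of length $k+1$,'' with an implicit deferral to the clasper calculus of \cite{Habiro,Goussarov_clovers,GGP}. Your proposal thus supplies strictly more than the paper does.

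Your argument is the standard one and is structurally sound. The reduction to the local claim via van Kampen is correct; perhaps the cleanest way to phrase your ``short computation with normal closures'' is to note that the $k$-step nilpotent quotient functor is a left adjoint and hence preserves pushouts, so that $\pi_1(M)/\Gamma_{k+1}$ and $\pi_1(M')/\Gamma_{k+1}$ are the pushouts, in the category of $k$-step nilpotent groups, of the diagrams
\[
P/\Gamma_{k+1}P \;\longleftarrow\; \pi_1(\partial V)/\Gamma_{k+1}\;\longrightarrow\; \pi_1(V)/\Gamma_{k+1}
\quad\text{and}\quad
P/\Gamma_{k+1}P \;\longleftarrow\; \pi_1(\partial V)/\Gamma_{k+1}\;\longrightarrow\; \pi_1(V')/\Gamma_{k+1},
\]
which your local claim identifies. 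This sidesteps the worry that $\pi_1(\partial V)\to\pi_1(V')$ need not be surjective on the nose (only modulo $\Gamma_{k+1}$). Note also that the surjectivity of $\pi_1(\partial V)\to\pi_1(V')/\Gamma_{k+1}$ follows automatically once one knows the $k=1$ case ($Y_1$-surgery preserves homology) together with the elementary fact that a homomorphism to a nilpotent group is surjective as soon as it is so on abelianizations.

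As you yourself say, the genuine content is the local claim, and you are right that this is exactly the ``node $=$ commutator'' heuristic made precise. You defer the verification to \cite{Habiro,Goussarov_clovers,GGP}, which is fair --- and is precisely what the paper does. If you want a self-contained argument, the cleanest route is to use Move~2 to reduce to the case of a $Y_k$-\emph{tree}, embed a regular neighbourhood of it standardly in a product $\Sigma_{h,1}\times[-1,1]$, and observe (by induction on $k$ using the ``zip'' or commutator moves of \cite{Habiro}) that surgery on the standard $Y_k$-tree in a handlebody realizes the mapping cylinder of an element of $\Gamma_k\Torelli_{h,1}$ up to $Y_{k+1}$-equivalence; this gives the local isomorphism explicitly. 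But as written, your outline is correct and matches the level of detail in the survey.
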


\noindent
This is another illustration of the fact that a $Y_k$-graph can be interpreted 
as an iterated commutator of length $k+1$.

\subsection{$Y_k$-equivalence and homology cylinders}

\label{subsec:Y_cylinders}

Let us now specialize the $Y_k$-equivalence relations to the class of homology cylinders.
The following statement gives a characterization of homology cylinders in terms of $Y_1$-equivalence.

\begin{proposition}[See \cite{Habiro}]
\label{prop:Y_1_cylinders}
A cobordism $M$ of the surface $\Sigma_{g,b}$ is $Y_1$-equivalent to $\Sigma_{g,b}\times [-1,1]$ 
if and only if $M$ is a homology cylinder.
\end{proposition}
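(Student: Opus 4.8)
The plan is to prove the two implications separately; the non-trivial one is that every homology cylinder is $Y_1$-equivalent to the trivial cylinder. The easy direction---that a cobordism $Y_1$-equivalent to $\Sigma_{g,b}\times[-1,1]$ is a homology cylinder---I would get for free from the material above. A $Y_1$-surgery takes place in the interior of a manifold, so it leaves the boundary parameterization untouched; by Example~\ref{ex:Y_1} (Matveev) it preserves the first homology group together with the torsion linking form, and the resulting isomorphism on $H_1$ is canonical (Proposition~\ref{prop:nilpotent_quotients} with $k=1$), hence compatible with the inclusions of the two copies of $\Sigma_{g,b}$. Since the homology-cobordism and homology-cylinder conditions for a cobordism of $\Sigma_{g,b}$ are detected by $H_1$ together with the fixed boundary---the remaining degrees being pinned down by Poincar\'e--Lefschetz duality and the exact sequence of the pair---the commuting square in the definition of a homology cylinder is inherited under $Y_1$-surgery. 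As $\Sigma_{g,b}\times[-1,1]$ is a homology cylinder, so is anything $Y_1$-equivalent to it.

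For the converse, suppose $M$ is a homology cylinder. First I would realize $M$ as surgery on a framed link in the trivial cylinder: since $M$ and $\Sigma_{g,b}\times[-1,1]$ are compact oriented $3$-manifolds with the same parameterized boundary, they are cobordant relative to it, and a standard handle-trading argument (Kirby calculus for $3$-manifolds with boundary) produces a framed link $L\subset\operatorname{int}\bigl(\Sigma_{g,b}\times[-1,1]\bigr)$ with $\bigl(\Sigma_{g,b}\times[-1,1]\bigr)_{L}\cong M$. The goal is then to show that the surgery along $L$ is realizable by a finite sequence of $Y_1$-surgeries. Here the hypothesis enters: $M$ being a homology cylinder, it has, relative to the boundary, the same first homology as $\Sigma_{g,b}\times[-1,1]$ and (vacuously) the same torsion linking form, so what is required is a version of Matveev's characterization of $Y_1$-equivalence (Example~\ref{ex:Y_1}) relative to the parameterized boundary. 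Following Matveev's strategy, I would use Kirby moves---handle slides and $\pm1$-framed blow-ups and blow-downs---to bring $L$ into a normal form from which the surgery visibly factors through Borromean surgeries, the homology hypothesis guaranteeing that no obstruction survives; and I would pass freely between framed-link pictures and clasper pictures using the ``slam dunk'' move~(\ref{eq:slam_dunk}) and Move~2~(\ref{eq:move_2}), so that the Borromean surgeries obtained are realized by surgeries along $Y_1$-trees as in Figure~\ref{fig:Y_to_framed_link}. Peeling off one $Y_1$-tree at a time---with Proposition~\ref{prop:nilpotent_quotients} ($k=1$) ensuring that at each step the homology type relative to the boundary is unchanged---turns $\Sigma_{g,b}\times[-1,1]$ into $M$, which is the desired conclusion.

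The hard part will be exactly this normalization: making Matveev's closed-manifold argument work relatively, so that every intermediate $3$-manifold remains a cobordism of $\Sigma_{g,b}$ with the correct boundary parameterization, while keeping the homology bookkeeping under control and ensuring that the inductive ``peeling'' of $Y_1$-trees terminates at the trivial cylinder. All the topological ingredients---surgery presentations, clasper calculus, and Matveev's theorem for closed manifolds---are available; organizing them into a terminating relative argument is where the real work lies.
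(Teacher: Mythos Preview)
The paper does not give a proof of this proposition at all: immediately after the statement it simply says ``This surgery characterization of $\cyl(\Sigma_{g,b})$ is proved in \cite{Habegger,MM}, and it is deduced in \cite{Massuyeau_DSP} from a result by Matveev \cite{Matveev}.'' So there is no proof to compare against line by line; the relevant question is whether your outline is a faithful plan for one of those arguments.

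It is. Your easy direction is fine and matches what the paper supplies (Example~\ref{ex:Y_1} and Proposition~\ref{prop:nilpotent_quotients} for $k=1$). Your hard direction---present $M$ as surgery on a framed link in the trivial cylinder and then normalize that link, via Kirby moves, into a union of Borromean-type pieces---is exactly the ``deduce it from Matveev'' route that the paper attributes to \cite{Massuyeau_DSP}. One remark: for homology cylinders the torsion linking pairing plays no role (the first homology is free), so the Matveev obstruction reduces purely to matching $H_1$ relative to the boundary, which is precisely what the homology-cylinder hypothesis provides; you have this implicitly, but it is worth making explicit since it is what makes the relative normalization terminate. There is also a shortcut you might consider in place of redoing Matveev's Kirby-move argument relatively: double or close up the cobordism, apply Matveev's closed-manifold theorem verbatim, and then argue that the resulting $Y_1$-claspers can be isotoped off the gluing surface into the interior of $M$. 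Either way your plan is sound and in line with the references the paper cites.
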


\noindent
This surgery characterization of $\cyl(\Sigma_{g,b})$ is proved in \cite{Habegger,MM},
and it is deduced in \cite{Massuyeau_DSP} from a result by Matveev \cite{Matveev}. 
(See Example \ref{ex:Y_1} in this connection.)
It can be used to prove results about homology cylinders 
by considering, first, the case of the trivial cylinder  $\Sigma_{g,b}\times [-1,1]$ 
and by studying, next, how things change under $Y_1$-surgery.
For instance, Proposition \ref{prop:triple-cup} and Proposition \ref{prop:triple-cup_closed} can be proved in that way.

For all $k\ge 1$, we define a submonoid of $\cyl(\Sigma_{g,b})$ by
\begin{gather*}
 Y_k \cyl(\Sigma_{g,b}) :=\left\{M:\text{$M$ is $Y_k$-equivalent to $\Sigma_{g,b}\times [-1,1]$}\right\}.
\end{gather*}
Thus, by Proposition \ref{prop:Y_Y}, we get a decreasing sequence of monoids
\begin{gather*}
 \cyl(\Sigma_{g,b})=Y_1\cyl(\Sigma_{g,b})\supset Y_2\cyl(\Sigma_{g,b}) \supset Y_3\cyl(\Sigma_{g,b}) \supset \cdots 
\end{gather*}
which is called\index{Y-filtration@$Y$-filtration} the \emph{$Y$-filtration}. 
For $b=0$ and $b=1$, it is finer than the Johnson filtration:
\begin{equation}
\label{eq:Y_to_Johnson}
\forall k\geq 1, \ Y_k \cyl_{g,1} \subset \cob_{g,1}[k]
\quad \hbox{and} \quad
\forall k\geq 1, \ Y_k \cyl_{g} \subset \cob_{g}[k],
\end{equation}
as follows readily from Proposition \ref{prop:nilpotent_quotients}.

It is easy to see that, for all $k\geq 1$, the set $\cyl(\Sigma_{g,b})/Y_k$ forms a monoid,
with submonoid $Y_i \cyl(\Sigma_{g,b})/Y_k$ for all $1 \leq i \leq k$.
In fact we have the following.

\begin{theorem}[See  \cite{Goussarov,Habiro}]
\label{th:Y_groups}
For all $k\ge 1$, the monoid $\cyl(\Sigma_{g,b})/Y_k$ is a finitely-generated, nilpotent group.  
Moreover, the submonoids $Y_i\cyl(\Sigma_{g,b})/Y_k$ for $1\le i\le k$ are subgroups and  satisfy
\begin{gather*}
\big[Y_i\cyl(\Sigma_{g,b})/Y_k\thinspace,\thinspace Y_j\cyl(\Sigma_{g,b})/Y_k\big] \subset Y_{\min(i+j,k)}\cyl(\Sigma_{g,b})/Y_k.
\end{gather*}
In particular, $Y_i\cyl(\Sigma_{g,b})/Y_k$ is abelian if $k/2\le i\le k$.
\end{theorem}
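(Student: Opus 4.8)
The plan is to run an induction on $k$, presenting $\cyl(\Sigma_{g,b})/Y_k$ as an iterated extension of finitely generated abelian groups in which the $Y$-filtration plays the role of a central series. Write $G_k:=\cyl(\Sigma_{g,b})/Y_k$ and, for $1\le i\le k$, $F_i^{(k)}:=Y_i\cyl(\Sigma_{g,b})/Y_k$; the base case $k=1$ is Proposition~\ref{prop:Y_1_cylinders}, which says $G_1$ is trivial. Two facts from calculus of claspers \cite{Goussarov,Habiro,GGP} will be used, and it is here that the genuine three-dimensional content lies:
\begin{enumerate}
\item[(a)] for every $i\ge1$, the monoid $Y_i\cyl(\Sigma_{g,b})/Y_{i+1}$ is a finitely generated abelian group, generated by the classes of the cobordisms $(\Sigma_{g,b}\times[-1,1])_T$ for $T$ a connected $Y_i$-tree (finitely many such $T$ suffice, and reversing a leaf of $T$ produces an inverse clasper);
\item[(b)] a homology cylinder $Y_i$-equivalent to $\Sigma_{g,b}\times[-1,1]$ is, modulo $Y_k$, the trivial cylinder surgered along a disjoint family of connected $Y_i$-trees lying inside it, and surgeries along a $Y_i$-tree and a $Y_j$-tree contained in disjoint sub-cylinders commute up to $Y_{i+j}$-equivalence; that is, exchanging their heights alters the cobordism only by a $Y_{i+j}$-equivalence.
\end{enumerate}

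For the inductive step, assume $G_{k-1}$ is a finitely generated nilpotent group, the $F_i^{(k-1)}$ are subgroups, and the commutator inclusion holds in $G_{k-1}$. By Proposition~\ref{prop:Y_Y} there is a surjective monoid homomorphism $p:G_k\to G_{k-1}$ with $p^{-1}(1)=F_{k-1}^{(k)}$, which is a finitely generated abelian group by (a). A monoid that admits a surjective monoid homomorphism onto a group with $p^{-1}(1)$ a group is itself a group: lifting the image-inverse of $s$ to $t$, the product $st$ lies in $p^{-1}(1)$ and is therefore invertible, so $s$ has a right inverse, and a monoid in which every element is right-invertible is a group. Hence $G_k$ is a group; it is finitely generated (take preimages of generators of $G_{k-1}$ together with generators of $F_{k-1}^{(k)}$), and each $F_i^{(k)}$ is a subgroup, being — for $i<k-1$ — a monoid extension of the group $F_i^{(k-1)}$ by the group $F_{k-1}^{(k)}$, or else $F_{k-1}^{(k)}$ itself, or $\{1\}$.

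I would then prove the commutator inclusion $[F_i^{(k)},F_j^{(k)}]\subset F_{\min(i+j,k)}^{(k)}$ in $G_k$. For $M\in Y_i\cyl(\Sigma_{g,b})$ and $M'\in Y_j\cyl(\Sigma_{g,b})$, choose cobordisms $\bar M,\bar M'$ representing the inverses of $[M],[M']$ in $G_k$; projecting to the groups $G_i,G_j$ shows $\bar M\in Y_i\cyl(\Sigma_{g,b})$ and $\bar M'\in Y_j\cyl(\Sigma_{g,b})$. Using the first half of (b) to model $M,M',\bar M,\bar M'$ by the trivial cylinder surgered along disjoint blocks of $Y_i$- or $Y_j$-trees, the commutator $M\circ M'\circ\bar M\circ\bar M'$ is $\Sigma_{g,b}\times[-1,1]$ surgered along four such blocks stacked in this order. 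By the second half of (b), interchanging the $M'$-block and the $\bar M$-block is a $Y_{i+j}$-equivalence, so $M\circ M'\circ\bar M\circ\bar M'$ is $Y_{i+j}$-equivalent to $M\circ\bar M\circ M'\circ\bar M'$, which is $Y_k$-equivalent to $\Sigma_{g,b}\times[-1,1]$ because $M\circ\bar M$ and $M'\circ\bar M'$ represent the trivial class in $G_k$. Comparing these two relations — the finer of $Y_{i+j}$- and $Y_k$-equivalence implying the coarser, by Proposition~\ref{prop:Y_Y} — gives $M\circ M'\circ\bar M\circ\bar M'\in Y_{\min(i+j,k)}\cyl(\Sigma_{g,b})$, as desired. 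Specializing to $i=k-1$, $j=1$ shows $F_{k-1}^{(k)}$ is central in $G_k$, so $G_k\supset F_2^{(k)}\supset\cdots\supset F_{k-1}^{(k)}\supset\{1\}$ is a central series and $G_k$ is nilpotent of class at most $k-1$. Finally, if $k/2\le i\le k$ then $2i\ge k$, whence $[F_i^{(k)},F_i^{(k)}]\subset F_{\min(2i,k)}^{(k)}=F_k^{(k)}=\{1\}$ and $F_i^{(k)}$ is abelian.

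The main obstacle is input (b): both the unnesting statement, that iterated local clasper surgeries can be replaced modulo $Y_k$ by disjoint surgeries inside the trivial cylinder, and the statement that commuting a $Y_i$-clasper past a $Y_j$-clasper costs only a $Y_{i+j}$-equivalence — the precise incarnation of the slogan ``a node is a length-raising commutator'' — are substantive theorems of clasper calculus \cite{Habiro,GGP}, not formalities. Everything else is the central-extension bookkeeping sketched above.
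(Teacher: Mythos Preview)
Your proposal is correct and, in fact, considerably more detailed than what the paper offers: this being a survey, the paper gives no proof beyond the single sentence ``We use claspers in this proof'' and a reference to \cite{Goussarov,Habiro}. Your inductive scheme --- building $G_k$ as a central extension of $G_{k-1}$ by the finitely generated abelian group $Y_{k-1}\cyl/Y_k$, then deducing the commutator inclusion from the clasper-commutation input (b) --- is exactly the argument one extracts from those references, and you have correctly isolated (a) and (b) as the substantive clasper-calculus facts on which everything rests.

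One minor remark on presentation: your inputs (a) and (b) are not logically independent, since the abelian-ness in (a) is the special case $i=j$ of (b), and the existence of inverses in (a) (reversing a leaf) is itself a clasper move of the same flavour. You might streamline by taking as inputs only ``$Y_i\cyl/Y_{i+1}$ is finitely generated as a monoid by tree-clasper surgeries'', ``every $Y_i$-clasper has an inverse modulo $Y_{i+1}$'', and the commutation statement in (b); the rest of (a) then follows. But this is cosmetic --- the logic is sound as written.
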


\noindent
We use claspers in this proof.
The structure of the finitely-generated abelian group $Y_i\cyl(\Sigma_{g,b})/Y_{i+1}$
is discussed in Section \ref{sec:Lie_ring_homology_cylinders} for $b=0$ and $b=1$.

Theorem \ref{th:Y_groups} suggests  to complete the monoid $\cyl(\Sigma_{g,b})$ as follows:
\begin{gather*}
  \widehat{\cyl}(\Sigma_{g,b}):=\varprojlim_{k\rightarrow\infty }\cyl(\Sigma_{g,b})/Y_k.
\end{gather*}
This is a group by Theorem \ref{th:Y_groups}, 
which is called\index{homology cylinder!group of} the {\em group of homology cylinders}.  
For all $i\geq 1$, we also set
\begin{gather*}
  Y_i\widehat{\cyl}(\Sigma_{g,b}):=\varprojlim_{k\rightarrow\infty }Y_i\cyl(\Sigma_{g,b})/Y_k,
\end{gather*}
to obtain a decreasing sequence of subgroups 
\begin{gather*}
 \widehat{\cyl}(\Sigma_{g,b})=Y_1\widehat{\cyl}(\Sigma_{g,b})\supset 
Y_2\widehat{\cyl}(\Sigma_{g,b})\supset Y_3\widehat{\cyl}(\Sigma_{g,b})\supset \cdots 
\end{gather*}
This is an $N$-series by Theorem \ref{th:Y_groups}, \ie we have
$$
\forall i,j \geq 1, \quad
\left[Y_i\widehat{\cyl}(\Sigma_{g,b}), Y_j\widehat{\cyl}(\Sigma_{g,b})\right] \subset 
Y_{i+j}\widehat{\cyl}(\Sigma_{g,b}).
$$
It is conjectured that the canonical map $\cyl(\Sigma_{g,b}) \to \widehat{\cyl}(\Sigma_{g,b})$ is injective,
which would imply that the $Y$-filtration on $\cyl(\Sigma_{g,b})$ is separating.
According to \cite{Massuyeau_DSP}, this injectivity is equivalent to the fact that finite-type invariants
(in the sense of Goussarov and the first author) distinguish homology cylinders.

\subsection{$Y_k$-equivalence and the Torelli group}

\label{subsec:Y_Torelli}

The $Y_k$-equivalence can be defined also in terms of cut-and-paste operations.
More precisely, we call a {\em Torelli surgery of class $k$}\index{Torelli surgery} the operation which consists
in cutting a $3$-manifold $M$ along a compact, connected, oriented surface $\Sigma \subset M$,  
and in re-gluing with an element of the $k$-th lower central series subgroup $\Gamma _k\Torelli (\Sigma)$ of the Torelli group.
It is easily seen that we can restrict in that definition either to closed surfaces $\Sigma$ 
which bound handlebodies, or to surfaces  $\Sigma$ with a single boundary component.

\begin{theorem}[See \cite{Habiro}]
\label{th:Y_Torelli}
Let $k\ge 1$.  Two compact oriented $3$-manifolds are $Y_k$-equivalent 
if and only if they are related by a Torelli surgery of class $k$.
\end{theorem}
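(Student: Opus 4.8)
The plan is to prove the two implications separately, using the mapping cylinder construction and the group structure of Theorem~\ref{th:Y_groups} as the common bridge. Throughout, ``Torelli surgery of class $k$'' will mean a single such surgery, and the equivalence relation it generates is what the statement refers to; likewise the $Y_k$-equivalence is generated by surgeries along single $Y_k$-graphs, and in fact along single $Y_k$-trees (as noted after Proposition~\ref{prop:Y_Y}, using move~(\ref{eq:move_2})).

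First I would establish that a Torelli surgery of class $k$ produces a $Y_k$-equivalent manifold. The key lemma is: for any $\phi\in\Gamma_k\Torelli(\Sigma)$ (with $\Sigma$ one of the surfaces allowed in the definition), the mapping cylinder $\mcyl(\phi)$ is $Y_k$-equivalent to the trivial cylinder $\Sigma\times[-1,1]$. Indeed, compose the group homomorphism $\mcyl:\Torelli(\Sigma)\to\cyl(\Sigma)$ with the projection onto $\cyl(\Sigma)/Y_k\cyl(\Sigma)$, which is a group by Theorem~\ref{th:Y_groups}; the inclusion $[Y_i\cyl(\Sigma),Y_j\cyl(\Sigma)]\subset Y_{\min(i+j,k)}\cyl(\Sigma)$ of that theorem, applied inductively starting from $Y_1\cyl(\Sigma)=\cyl(\Sigma)$, shows that the $k$-th term of the lower central series of $\cyl(\Sigma)/Y_k\cyl(\Sigma)$ is contained in $Y_k\cyl(\Sigma)/Y_k\cyl(\Sigma)$, hence is trivial. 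Thus $\mcyl(\Gamma_k\Torelli(\Sigma))\subset Y_k\cyl(\Sigma)$, which is the lemma. Now a Torelli surgery of class $k$ on $M$ along an embedded $\Sigma$ with regluing map $\phi\in\Gamma_k\Torelli(\Sigma)$ amounts to replacing a bicollar $\Sigma\times[-1,1]\subset M$ by $\mcyl(\phi)$; since $\mcyl(\phi)$ is obtained from $\Sigma\times[-1,1]$ by $Y_k$-surgeries along claspers in its interior, performing those surgeries inside the bicollar carries $M$ to the result $M'$, so $M$ and $M'$ are $Y_k$-equivalent.

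For the converse it suffices to show that if $M'=M_C$ for one $Y_k$-tree $C\subset M$, then $M'$ is obtained from $M$ by a Torelli surgery of class $k$. Let $V$ be a closed regular neighborhood of $C$ in $M$; since $C$ is connected, $V$ is a handlebody, its boundary $\Sigma:=\partial V$ bounds the handlebody $V$, and the surgery is supported in $V$, so $M_C=(M\setminus\interior V)\cup V_C$. From the local model of clasper surgery — which for $k=1$ is Matveev's Borromean surgery, Example~\ref{ex:Y_1} — one checks that $V_C$ is again a handlebody homeomorphic to $V$; fixing a homeomorphism $h:V_C\to V$ and letting $\phi\in\mcg(\Sigma)$ be the class of $h|_{\partial V_C}$ (using $\partial V_C=\Sigma$), one obtains that $M_C$ is the result of cutting $M$ along $\Sigma$ and regluing the handlebody $V$ by $\phi$. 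Since a $Y_k$-surgery with $k\geq 1$ preserves homology (Proposition~\ref{prop:nilpotent_quotients}), the map $\phi$ acts trivially on $H_1(\Sigma)$, hence $\phi\in\Torelli(\Sigma)$.

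The remaining, and main, point is to upgrade this to $\phi\in\Gamma_k\Torelli(\Sigma)$. Here the plan is to exploit the commutator interpretation of graph claspers from the calculus of claspers \cite{Goussarov_clovers,Habiro,GGP}: a $Y_k$-tree can be assembled out of $Y_1$-trees by a nested commutator construction in which each of its $k$ nodes contributes one bracketing, i.e. to depth $k$ in the lower central series. Proceeding by induction on $k$, with base case $k=1$ given by the explicit ``Borromean'' regluing homeomorphism, which lies in $\Torelli(\Sigma)=\Gamma_1\Torelli(\Sigma)$, and translating each node-bracketing into a commutator of regluing homeomorphisms of the relevant handlebodies — using that cutting and regluing, i.e. the $\mcyl$ construction, carries commutators to commutators — one concludes that $\phi$ is an iterated commutator of Torelli elements of the right depth, so $\phi\in\Gamma_k\Torelli(\Sigma)$. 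I expect this last step to be the main obstacle: making the passage from the degree of the clasper to the commutator depth of the regluing homeomorphism precise, uniformly in the ambient handlebody and without accumulating lower-degree error terms, is exactly where one needs the full commutator calculus of claspers rather than only the homological input used above.
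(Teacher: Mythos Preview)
The paper does not give its own proof of this theorem; it simply refers to \cite{Massuyeau_DSP} and illustrates the case $k=1$ via Figure~\ref{fig:BP}. So there is no detailed argument to compare against, and your outline should be judged on its own merits.

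Your forward direction is correct and is the standard argument: Theorem~\ref{th:Y_groups} gives $\Gamma_k(\cyl(\Sigma)/Y_k)=1$, so $\mcyl(\Gamma_k\Torelli(\Sigma))\subset Y_k\cyl(\Sigma)$, and a Torelli surgery of class $k$ then replaces a bicollar by something $Y_k$-equivalent to the trivial cylinder. This is exactly how Corollary~\ref{cor:Y_lcs} is obtained in practice.

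For the converse there is one genuine gap and one honest sketch. The gap is the sentence ``Since a $Y_k$-surgery preserves homology, $\phi$ acts trivially on $H_1(\Sigma)$.'' Proposition~\ref{prop:nilpotent_quotients} concerns the homology (or nilpotent quotients) of the ambient $3$-manifold, not the action of the regluing map on $H_1(\Sigma)$; preserving $H_1(M)$ for a given $M$ does not force $\phi_*=\Id$ on $H_1(\Sigma)$ (think of $M=S^3$). Fortunately this step is redundant: once your induction yields $\phi\in\Gamma_k\Torelli(\Sigma)$, membership in $\Torelli(\Sigma)$ is automatic. The base case $k=1$ is exactly the explicit identification of $Y_1$-surgery with a genus~$1$ BP map (Figure~\ref{fig:BP}), not a homological argument.

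The honest sketch is the inductive step, which you correctly flag as the crux. The argument in \cite{Massuyeau_DSP} proceeds precisely along the lines you indicate: one cuts an internal edge of a $Y_k$-tree to produce a $Y_i$-tree and a $Y_j$-tree with $i+j=k$ sharing a Hopf-linked pair of leaves (the inverse of move~(\ref{eq:move_2})), and clasper calculus identifies the original surgery, up to $Y_{k+1}$, with the \emph{commutator} of the two smaller surgeries inside a fixed handlebody neighborhood. The regluing homeomorphism is then a commutator of elements that, by induction, lie in $\Gamma_i\Torelli$ and $\Gamma_j\Torelli$, hence lies in $\Gamma_k\Torelli$. Making this precise requires controlling the ambient handlebody throughout the induction (so that all the reglue maps live on the \emph{same} boundary surface) and checking that the ``up to $Y_{k+1}$'' error terms are themselves absorbed by further Torelli surgeries of class $\geq k$; this bookkeeping is what \cite{Habiro,Massuyeau_DSP} carry out.
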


\noindent
We refer to \cite{Massuyeau_DSP} for a proof. Since the Torelli group of $\Sigma_{g,b}$ 
is residually nilpotent (at least for $b=0$ and $b=1$), 
Theorem \ref{th:Y_Torelli} somehow supports the conjecture that the $Y_\infty$-equivalence could separate $3$-manifolds.

\begin{example}[Degree $1$]
An element of $\Torelli (\Sigma_{g,b})$  is called a {\em BP (bounding pair) map}\index{bounding pair map}\index{map!bounding pair}
if it is represented by a pair of Dehn
twists along two co-bounding simple closed curves in $\Sigma_{g,b}$, where the direction of the Dehn twists are opposite to
each other.  Johnson  proved that if $g\ge3$ then $\Torelli (\Sigma _{g})$ is generated by genus $1$ BP maps \cite{Johnson_generation}.  
Besides, the Torelli surgery defined by a genus $1$ BP map is equivalent to a $Y_1$-surgery as shown in Figure \ref{fig:BP}.
(See \cite{Massuyeau_DSP} for instance.)
These two facts prove Theorem \ref{th:Y_Torelli} in the case $k=1$.
\end{example}

\begin{figure}[h]
\begin{center}
{\labellist \small \hair 0pt
\pinlabel {$\longleftrightarrow$} at 329 80
\pinlabel {\scriptsize right } [l] at 157 63
\pinlabel {\scriptsize Dehn twist} [l] at 163 50
\pinlabel {\scriptsize left} [l] at 12 30
\pinlabel {\scriptsize Dehn twist} [bl] at 12 12
\endlabellist}
\includegraphics[scale=0.5]{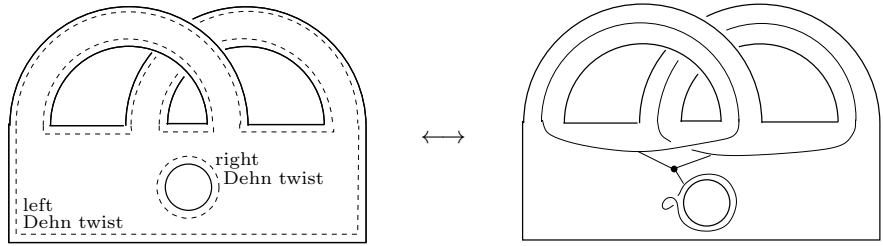}
\end{center}
\caption{The mapping cylinder of the BP map  defined by the boundary curves of $\Sigma_{1,2}$
can be obtained from $\Sigma_{1,2}\times [-1,1]$ by a $Y_1$-surgery.}
\label{fig:BP}
\end{figure}

By Theorem \ref{th:Y_Torelli}, we have the following.

\begin{corollary}
\label{cor:Y_lcs}
For all $k\ge 1$ and for all $f \in \Gamma _k\Torelli (\Sigma_{g,b})$, the mapping cylinder $\mcyl (f)$
is $Y_k$-equivalent to the trivial cylinder $\Sigma_{g,b} \times [-1,1]$.  
Therefore, the map $\mcyl \colon\thinspace\Torelli (\Sigma_{g,b})\to \cyl(\Sigma_{g,b})$ is a morphism of filtered monoids, i.e.,we have 
$$
\mcyl (\Gamma _k\Torelli (\Sigma_{g,b}))\subset Y_k\cyl(\Sigma_{g,b})
$$ 
for all integer $k\ge 1$.
\end{corollary}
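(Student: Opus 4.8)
The plan is to exhibit $\mcyl(f)$, for $f\in\Gamma_k\Torelli(\Sigma_{g,b})$, as the result of a single Torelli surgery of class $k$ performed on the trivial cylinder, and then to quote Theorem \ref{th:Y_Torelli}. First I would fix the two-sided surface $S:=\Sigma_{g,b}\times\{0\}$ inside the trivial cobordism $\Sigma_{g,b}\times[-1,1]$; it is a compact connected oriented copy of $\Sigma_{g,b}$, with $\partial S$ lying on the ``vertical'' part $m(\partial\Sigma_{g,b}\times[-1,1])$ of the boundary. Cutting $\Sigma_{g,b}\times[-1,1]$ along $S$ yields $\Sigma_{g,b}\times[-1,0]\sqcup\Sigma_{g,b}\times[0,1]$, and re-gluing the two pieces along their new boundary copies of $\Sigma_{g,b}$ by an appropriate homeomorphism lying in $\Gamma_k\Torelli(\Sigma_{g,b})=\Gamma_k\Torelli(S)$ (namely $f$ or $f^{-1}$, depending on the orientation conventions and on which of the two ``vertical'' segments is used for basing) produces, after an ambient isotopy absorbing the twist, a cobordism homeomorphic to $\mcyl(f)$ with its boundary parameterization. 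Since $\Gamma_k\Torelli(\Sigma_{g,b})$ is a group, this is in either case a Torelli surgery of class $k$ relating $\mcyl(f)$ to $\Sigma_{g,b}\times[-1,1]$; note that the notion of Torelli surgery in Theorem \ref{th:Y_Torelli} allows cutting along an arbitrary compact connected oriented surface, so it applies even when $b=0$, where $S=\Sigma_g$ is closed but does not bound a handlebody in $\Sigma_g\times[-1,1]$.

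Theorem \ref{th:Y_Torelli} then gives that $\mcyl(f)$ is $Y_k$-equivalent to $\Sigma_{g,b}\times[-1,1]$; for $k=1$ this shows in addition, via Proposition \ref{prop:Y_1_cylinders}, that $\mcyl(f)$ is a homology cylinder, and for $k\ge 2$ one reduces to the case $k=1$ by Proposition \ref{prop:Y_Y}. Hence $\mcyl(f)\in Y_k\cyl(\Sigma_{g,b})$, that is, $\mcyl(\Gamma_k\Torelli(\Sigma_{g,b}))\subset Y_k\cyl(\Sigma_{g,b})$. Combined with the fact recorded in \S \ref{subsec:groups} that $\mcyl$ is an (injective) monoid homomorphism, and with the conventions $\Gamma_1\Torelli(\Sigma_{g,b})=\Torelli(\Sigma_{g,b})$ and $Y_1\cyl(\Sigma_{g,b})=\cyl(\Sigma_{g,b})$, this is precisely the statement that $\mcyl$ is a morphism of filtered monoids from $\Torelli(\Sigma_{g,b})$, filtered by its lower central series, to $\cyl(\Sigma_{g,b})$, filtered by the $Y$-filtration.

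I expect the only genuine work to be in the first step: carefully checking that cutting along $\Sigma_{g,b}\times\{0\}$ and re-gluing by $f$ reproduces $\mcyl(f)$ (and not, say, $\mcyl(f^{-1})$ or a cobordism differing from it by a homeomorphism that does not respect the boundary parameterization), which amounts to honest book-keeping of orientations and of the vertical basing segments. An alternative that bypasses this, if one prefers to stay algebraic, uses Theorem \ref{th:Y_groups} in place of Theorem \ref{th:Y_Torelli}: since $\cyl(\Sigma_{g,b})/Y_k$ is a group satisfying $[\,Y_i\cyl(\Sigma_{g,b})/Y_k\,,\,Y_j\cyl(\Sigma_{g,b})/Y_k\,]\subset Y_{i+j}\cyl(\Sigma_{g,b})/Y_k$, and $\mcyl$ induces a group homomorphism $\Torelli(\Sigma_{g,b})\to\cyl(\Sigma_{g,b})/Y_k$, one proves $\mcyl(\Gamma_k\Torelli(\Sigma_{g,b}))\subset Y_k\cyl(\Sigma_{g,b})$ by induction on $k$, the base case $k=1$ being the definition $\Torelli(\Sigma_{g,b})=\mcyl^{-1}(\cyl(\Sigma_{g,b}))$ and the inductive step following from $\Gamma_{k+1}\Torelli(\Sigma_{g,b})=[\Gamma_k\Torelli(\Sigma_{g,b}),\Torelli(\Sigma_{g,b})]$ together with the preservation of commutators.
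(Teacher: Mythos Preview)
Your primary argument is correct and is precisely the paper's intended one: the corollary is stated immediately after Theorem~\ref{th:Y_Torelli} with the words ``By Theorem~\ref{th:Y_Torelli}, we have the following,'' and your cut-and-reglue description of $\mcyl(f)$ as a Torelli surgery of class $k$ on the trivial cylinder is exactly how one unpacks that reference. One minor simplification: the detour through Propositions~\ref{prop:Y_1_cylinders} and~\ref{prop:Y_Y} to verify $\mcyl(f)\in\cyl(\Sigma_{g,b})$ is unnecessary, since $f\in\Torelli(\Sigma_{g,b})$ already means $\mcyl(f)\in\cyl(\Sigma_{g,b})$ by the paper's definition $\Torelli(\Sigma_{g,b}):=\mcyl^{-1}(\cyl(\Sigma_{g,b}))$. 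Your alternative inductive argument via Theorem~\ref{th:Y_groups} is also valid and is a genuinely different route that avoids Theorem~\ref{th:Y_Torelli} altogether.
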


Let us now assume that $b=0$ or $b=1$.
Then, we have three filtrations on the Torelli group: the lower central series, 
the restriction of the $Y$-filtration and the Johnson filtration. 
They are related in the following way:
\begin{equation}
\label{eq:hierarchy}
\forall k\geq 1, \quad
\Gamma_k \Torelli(\Sigma_{g,b}) \subset  \mcyl ^{-1}\left( Y_k\cyl(\Sigma_{g,b}) \right)
\subset \mcg(\Sigma_{g,b})[k].
\end{equation}
In the case $b=1$, a ``stable'' version of the lower central series can also be defined.
For this, we fix surface inclusions
$\Sigma_{0,1} \subset \Sigma_{1,1} \subset \Sigma_{2,1}\subset \cdots$, 
which induce a sequence of group injections
$$
\xymatrix{
\Torelli_{0,1}\ \ar@{>->}[r] & \Torelli_{1,1}\ \ar@{>->}[r] & \Torelli_{2,1}\ \ar@{>->}[r] & \cdots
}
$$
Thus, the Torelli group $\Torelli_{g,1}$ can be regarded as a subgroup of the direct limit
of groups $\varinjlim_{h} \Torelli_{h,1}$. Then, we define
$$
\forall k\geq 1, \quad \Gamma_k^{\operatorname{stab}} \Torelli_{g,1} 
:= \Torelli_{g,1} \cap \Gamma_k\!\!\! \varinjlim_{h\rightarrow\infty } \Torelli_{h,1}
= \Torelli_{g,1}   \cap \varinjlim_{h\rightarrow\infty } \Gamma_k \Torelli_{h,1}.
$$
We can similarly define a ``stable'' version of the $Y$-filtration on $\cyl_{g,1}$ by
$$
\forall k\geq 1, \quad Y_k^{\operatorname{stab}} \cyl_{g,1} 
:= \cyl_{g,1} \cap \varinjlim_{h\rightarrow\infty } Y_k \cyl_{h,1}
$$
where $\cyl_{g,1}$ is regarded as a submonoid of the monoid $\varinjlim_{h} \cyl_{h,1}$.
However, it can be proved that $Y_k^{\operatorname{stab}} \cyl_{g,1} = Y_k \cyl_{g,1}$ for all $k\geq 1$ \cite{HMass}.
Therefore, the stable lower central series of $\Torelli_{g,1}$ sits between the lower central series
and the restriction of the $Y$-filtration in the hierarchy (\ref{eq:hierarchy}).

\begin{conjecture}
The lower central series of the Torelli group $\Torelli_{g,1}$ stably coincides
with the restriction of the $Y$-filtration, \ie we have
$$
\forall k\geq 1, \quad \Gamma_k^{\operatorname{stab}} \Torelli_{g,1} =  \mcyl ^{-1}\left( Y_k\cyl_{g,1} \right).
$$
\end{conjecture}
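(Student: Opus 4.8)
The plan is to prove the two inclusions separately and to reduce the hard one to a statement about associated graded Lie rings. The inclusion $\Gamma_k^{\operatorname{stab}}\Torelli_{g,1}\subseteq\mcyl^{-1}(Y_k\cyl_{g,1})$ is already at hand: Corollary \ref{cor:Y_lcs}, applied after including $\Sigma_{g,1}$ into a surface of higher genus, together with the identity $Y_k^{\operatorname{stab}}\cyl_{g,1}=Y_k\cyl_{g,1}$, gives $\mcyl\bigl(\Gamma_k^{\operatorname{stab}}\Torelli_{g,1}\bigr)\subseteq Y_k\cyl_{g,1}$. For the reverse inclusion, note that a $Y_k$-equivalence is realized by surgeries supported in balls, so $\mcyl(f)\in Y_k\cyl_{g,1}$ forces $\mcyl(f)\in Y_k\cyl_{h,1}$ for all $h\geq g$; it is therefore enough to prove the non-stable identity $\mcyl^{-1}(Y_k\cyl_{g,1})=\Gamma_k\Torelli_{g,1}$ in the range where $g$ is large relative to $k$. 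I would argue by induction on $k$, the case $k=1$ being Proposition \ref{prop:Y_1_cylinders}; the inductive step, carried out through the homomorphism $\Gamma_k\Torelli_{g,1}/\Gamma_{k+1}\Torelli_{g,1}\to Y_k\cyl_{g,1}/Y_{k+1}\cyl_{g,1}$ induced by $\mcyl$ (well-defined by Theorem \ref{th:Y_groups} and Corollary \ref{cor:Y_lcs}), reduces the conjecture to showing that, for $g$ large relative to $k$, the graded map $\mcyl_*:\Gr_k\Torelli_{g,1}\to\Gr^Y_k\cyl_{g,1}$ is \emph{injective}, where $\Gr_k$ denotes the $k$-th graded piece of the lower central series on $\Torelli_{g,1}$, respectively of the $Y$-filtration on $\cyl_{g,1}$.

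Next I would establish this injectivity rationally. By Hain's infinitesimal presentation of the Torelli group (recalled in Section \ref{sec:Lie_ring_homology_cylinders}), the stable Lie algebra $\bigoplus_k\Gr_k\Torelli_{g,1}\otimes\Q$ is generated in degree $1$ by $\Lambda^3 H_\Q$ with explicit quadratic relations, while $\bigoplus_k\Gr^Y_k\cyl_{g,1}\otimes\Q$ admits the diagrammatic description in terms of connected symplectic Jacobi diagrams also given in Section \ref{sec:Lie_ring_homology_cylinders}; in degree $1$ both spaces equal $\Lambda^3 H_\Q$ and $\mcyl_*$ is the identity. Hence $\mcyl_*\otimes\Q$ maps onto the subalgebra of $\Gr^Y\cyl_{g,1}\otimes\Q$ generated in degree $1$, and injectivity means precisely that this subalgebra carries no relations beyond the consequences of Hain's quadratic ones. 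To bound the image from below, I would post-compose with the tree-reduction $Z^t$ of the LMO homomorphism: by Theorems \ref{th:tree-reduction_LMO} and \ref{th:total_Johnson_to_Johnson}, the associated graded of $Z^t$ realizes the Johnson homomorphisms $\tau_k$, so it would suffice to prove that the Johnson homomorphisms are injective on $\bigoplus_k\Gr_k\Torelli_{g,1}\otimes\Q$ in the stable range --- equivalently, that the Johnson filtration and the lower central series of $\Torelli_{g,1}$ have the same rational associated graded, stably. The remaining torsion, which is $2$-primary in the cases that have been computed, would be treated separately: for $k=1$ --- which gives the case $k=2$ of the conjecture --- by comparing Johnson's computation of the abelianization of $\Torelli_{g,1}$ with the description of $\cyl_{g,1}/Y_2$ via the first Johnson homomorphism and the Birman--Craggs homomorphism (Section \ref{sec:Lie_ring_homology_cylinders}), both of which extend to $\cyl_{g,1}$; and for higher $k$ by using that the $2$-primary part of $\Gr^Y_k\cyl_{g,1}$ is controlled by Birman--Craggs-type spin invariants, which likewise extend to homology cylinders.

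The main obstacle is the structural input feeding the rational step: one must show that the Johnson homomorphisms detect the stable rational Torelli Lie algebra, or equivalently that Hain's quadratic presentation is also a presentation of the degree-$1$-generated subalgebra of the diagrammatic Lie algebra $\Gr^Y\cyl_{g,1}\otimes\Q$. This is open, and closely related to the conjecture discussed in \S\ref{subsec:Y_cylinders} that the canonical map $\cyl(\Sigma_{g,b})\to\widehat{\cyl}(\Sigma_{g,b})$ is injective, \ie that finite-type invariants separate homology cylinders. Absent such a result, the method above yields the conjecture only in those degrees where the graded pieces on both sides can be computed explicitly and matched dimension-by-dimension, together with the directly accessible cases $k=1,2$. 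An alternative attack, via Habegger's Milnor--Johnson correspondence, would transfer the question to framed string-links in homology balls, where it becomes a comparison of the Milnor filtration with the lower central series of the relevant monoid --- but this reformulation seems no easier.
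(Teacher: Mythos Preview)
The statement you are attempting to prove is labelled as a \emph{Conjecture} in the paper, and the paper offers no proof --- it is explicitly presented as open, with a pointer to Problem~\ref{pb:gr_mapping_cylinder} (the injectivity of $\Gr\mcyl$) for further discussion. So there is no ``paper's own proof'' to compare your proposal against; the relevant question is whether you have actually proved the conjecture.

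You have not, and to your credit you say so yourself. Your reduction is correct as far as it goes: the easy inclusion follows from Corollary~\ref{cor:Y_lcs} and the stability of the $Y$-filtration, and the hard inclusion does reduce, by the induction you describe, to the injectivity of $\Gr_k\mcyl:\Gamma_k\Torelli_{g,1}/\Gamma_{k+1}\Torelli_{g,1}\to Y_k\cyl_{g,1}/Y_{k+1}$ in the stable range. But this injectivity is precisely Problem~\ref{pb:gr_mapping_cylinder}, which the paper poses as open. Your rational step then further reduces to the claim that the Johnson homomorphisms are injective on $\Gr^\Gamma\Torelli_{g,1}\otimes\Q$ stably --- equivalently, that the Johnson filtration and the lower central series of $\Torelli_{g,1}$ agree rationally in the stable range --- and this too is open (the paper only establishes the degree~$2$ case, via Propositions~\ref{prop:Hain} and~\ref{prop:Y_2}). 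The torsion step is even more speculative: Birman--Craggs-type invariants are only known to control the $2$-torsion in degree~$1$, and there is no established analogue in higher degree.

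In short, your proposal is a reasonable sketch of what a proof \emph{might} look like and correctly identifies the obstructions, but it reduces the conjecture to other open problems rather than resolving it. That is consistent with the paper's treatment: the statement remains a conjecture.
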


\noindent
(See  also Problem \ref{pb:gr_mapping_cylinder} in this connection.)

\section{The Lie ring of homology cylinders}

\label{sec:Lie_ring_homology_cylinders}

In this section, we consider the graded Lie ring induced by the $Y$-filtration of $\cyl(\Sigma_{g,b})$,
and we discuss its relation with the graded Lie ring induced by the lower central series of $\Torelli(\Sigma_{g,b})$.
When $b=0$ or $b=1$, 
this graded Lie ring can be computed with rational coefficients,
using the LMO homomorphism (Section \ref{sec:LMO}) and claspers (Section \ref{sec:Y}).
Besides, the degree $1$ part  can be completely described in terms of a few invariants.

\subsection{Definition of the Lie ring of homology cylinders}

\label{subsec:Lie_ring_homology_cylinders}

Recall that an \emph{$N$-series}\index{N-series@$N$-series} $F$ on a group $G$ is a decreasing sequence
$G=F_1 G \supset F_2 G \supset F_3 G \supset \cdots$  of subgroups of $G$ such that
$\left[F_i G,F_j G\right] \subset F_{i+j} G$ for all $i,j \geq 1$.
Following Lazard \cite{Lazard}, we define the graded Lie ring \emph{induced} by $F$ as the graded abelian group
$$
\Gr^F G := \bigoplus_{i\geq 1} \frac{F_iG}{F_{i+1} G}
$$
with the Lie bracket defined by
$$
\big[\{g_i\},\{g_j\}\big] := \{\left[g_i,g_j\right]\} \in F_{i+j} G/ F_{i+j+1}G
$$
on homogeneous elements $\{g_i\} \in F_{i}G/F_{i+1} G$ and $\{g_j\} \in F_{j} G/F_{j+1} G$.
(One can check that the pairing is well-defined and satisfies the axioms of a Lie bracket using the Hall--Witt identities.)
For instance, this construction is well-known when $F=\Gamma$ is the lower central series of $G$,
and we have already met it in the previous sections for the free group.

Let us come back to the compact connected oriented surface $\Sigma_{g,b}$.
According to \S \ref{subsec:Y_cylinders}, we can apply the previous construction, for all $k\geq 1$, 
to the group $\cyl(\Sigma_{g,b})/Y_{k}$ equipped with the $Y$-filtration.
We then obtain the following construction  \cite{Habiro}.

\begin{definition}
The \emph{Lie ring of homology cylinders}\index{homology cylinder!Lie ring of}
is the graded Lie ring
$$
\Gr^Y \cyl(\Sigma_{g,b}) := \bigoplus_{i\geq 1} Y_i \cyl(\Sigma_{g,b})/Y_{i+1}.
$$
\end{definition}

Since the mapping cylinder construction sends the lower central series to the $Y$-filtration (Corollary \ref{cor:Y_lcs}),
it induces a map at the graded level:
$$
\Gr \mcyl: \Gr^\Gamma \Torelli(\Sigma_{g,b}) \longrightarrow \Gr^Y \cyl(\Sigma_{g,b}).
$$
The map $\Gr \mcyl$ is at the heart of the interactions between mapping class groups and finite-type invariants of $3$-manifolds.

\begin{problem}
\label{pb:gr_mapping_cylinder}
Determine whether $\Gr \mcyl$ is injective, and characterize its image. 
\end{problem}

\noindent
This problem is difficult to solve in general.
We shall review, in the following subsections, some pieces of answer.
As a preliminary remark,  let us observe that the representation theory of the symplectic group may help.
Indeed, we have the following lemma, which will be used in the sequel.

\begin{lemma}
\label{lem:Sp_action}
For $b=0$ and $b=1$, 
the conjugation action of $\mcg(\Sigma_{g,b})$ on itself and on $\cob(\Sigma_{g,b})$ 
induces an action of the symplectic group $\Sp(H)$ on  $\Gr^\Gamma \Torelli(\Sigma_{g,b})$
and on $\Gr^Y \cyl(\Sigma_{g,b})$, respectively. Moreover, the map $\Gr \mcyl$ is $\Sp(H)$-equivariant.
\end{lemma}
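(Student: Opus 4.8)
The plan is to deduce everything from the conjugation action of $\mcg(\Sigma_{g,b})$ on itself and on $\cob(\Sigma_{g,b})$, keeping in mind that for $b=0$ and $b=1$ the Torelli group is precisely the kernel of the (surjective) symplectic representation $\mcg(\Sigma_{g,b}) \to \Sp(H)$, $f \mapsto f_*$, so that $\mcg(\Sigma_{g,b})/\Torelli(\Sigma_{g,b}) \cong \Sp(H)$. First I would produce the two actions at the graded level. On the group side, $\mcg(\Sigma_{g,b})$ acts by conjugation on its normal subgroup $\Torelli(\Sigma_{g,b})$, and since the lower central series is characteristic this preserves each $\Gamma_k\Torelli(\Sigma_{g,b})$ and hence induces an action on $\Gr^\Gamma\Torelli(\Sigma_{g,b})$. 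On the cobordism side, $f\in\mcg(\Sigma_{g,b})$ acts on $\cob(\Sigma_{g,b})$ by $M\mapsto\mcyl(f)\circ M\circ\mcyl(f)^{-1}$, where $\mcyl(f)^{-1}=\mcyl(f^{-1})$ because $\mcyl$ is a monoid homomorphism and $f$ is invertible; this is a monoid automorphism of $\cob(\Sigma_{g,b})$ fixing the trivial cylinder $\Sigma_{g,b}\times[-1,1]$.

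Next I would check that this conjugation preserves the $Y$-filtration. Topologically, $\mcyl(f)\circ M\circ\mcyl(f)^{-1}$ is obtained from $M$ by reparametrizing a collar of $\partial M$ while leaving the interior untouched, so a graph clasper in the interior of $M$ yields a graph clasper in the interior of $\mcyl(f)\circ M\circ\mcyl(f)^{-1}$, and clasper surgery commutes with this conjugation operation. Hence the operation preserves $Y_k$-equivalence for every $k$, and therefore each submonoid $Y_k\cyl(\Sigma_{g,b})$; taking $k=1$ also shows it preserves $\cyl(\Sigma_{g,b})$ itself. This gives an action of $\mcg(\Sigma_{g,b})$ on $\Gr^Y\cyl(\Sigma_{g,b})$.

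Then I would show that both actions factor through $\Sp(H)$, i.e.\ that $\Torelli(\Sigma_{g,b})$ acts trivially. For $t\in\Torelli(\Sigma_{g,b})$ and $x\in\Gamma_k\Torelli(\Sigma_{g,b})$ one has $txt^{-1}=[t,x]\,x$ with $[t,x]\in[\Torelli(\Sigma_{g,b}),\Gamma_k\Torelli(\Sigma_{g,b})]=\Gamma_{k+1}\Torelli(\Sigma_{g,b})$, so $txt^{-1}\equiv x$ modulo $\Gamma_{k+1}\Torelli(\Sigma_{g,b})$. For the $Y$-filtration I would work inside the nilpotent group $\cyl(\Sigma_{g,b})/Y_{i+1}$ provided by Theorem \ref{th:Y_groups}: for $M\in Y_i\cyl(\Sigma_{g,b})/Y_{i+1}$ and any $u\in\cyl(\Sigma_{g,b})/Y_{i+1}=Y_1\cyl(\Sigma_{g,b})/Y_{i+1}$, the commutator $[u,M]$ lies in $[Y_1\cyl(\Sigma_{g,b})/Y_{i+1},\,Y_i\cyl(\Sigma_{g,b})/Y_{i+1}]\subset Y_{\min(1+i,i+1)}\cyl(\Sigma_{g,b})/Y_{i+1}$, which is trivial, so $uMu^{-1}=M$; applied to $u=\mcyl(t)$ this shows $\Torelli(\Sigma_{g,b})$ acts trivially on $\Gr^Y\cyl(\Sigma_{g,b})$. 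Thus both actions descend to actions of $\Sp(H)$.

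Finally, $\mcyl$ is a monoid homomorphism restricting to a group homomorphism $\Torelli(\Sigma_{g,b})\to\cyl(\Sigma_{g,b})$, hence $\mcyl(ftf^{-1})=\mcyl(f)\circ\mcyl(t)\circ\mcyl(f)^{-1}$; that is, $\mcyl$ intertwines the two conjugation actions of $\mcg(\Sigma_{g,b})$. By Corollary \ref{cor:Y_lcs} this passes to the graded map $\Gr\mcyl$, which is therefore $\mcg(\Sigma_{g,b})$-equivariant, and $\Sp(H)$-equivariant since both actions factor through $\Sp(H)$. I expect the only step requiring genuine care to be the verification that conjugation is compatible with clasper surgery, and hence preserves the $Y$-filtration; everything else is formal, the key algebraic input being the $N$-series bound of Theorem \ref{th:Y_groups} with $\min(1+i,i+1)=i+1$.
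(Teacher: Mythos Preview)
Your proof is correct and follows essentially the same approach as the paper's: define the conjugation action, observe that it preserves each $Y_i\cyl(\Sigma_{g,b})$ because clasper surgeries in the interior are unaffected by composing with mapping cylinders on either side, and then use the $N$-series inequality of Theorem~\ref{th:Y_groups} (with $i+j=i+1$) to see that $\Torelli(\Sigma_{g,b})$ acts trivially on the graded pieces. You have simply spelled out a few details (the commutator identity on $\Gr^\Gamma$, the explicit use of $\min(1+i,i+1)=i+1$) that the paper leaves implicit.
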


\begin{proof}
The conjugation action of $\mcg(\Sigma_{g,b})$ on $\cob(\Sigma_{g,b})$ is defined by
$$
\mcg(\Sigma_{g,b}) \times \cob(\Sigma_{g,b}) \ni \
(f,M) \longmapsto \mcyl(f) \circ M \circ \mcyl(f^{-1}) \ \in \cob(\Sigma_{g,b})
$$
and, clearly, it preserves the submonoid $\cyl_{g,b}$.
The $Y_i$-equivalence being generated by surgeries along $Y_i$-graphs,
this action also preserves the submonoid $Y_i \cyl(\Sigma_{g,b})$ of $\cyl(\Sigma_{g,b})$. 
Therefore, the group $\mcg(\Sigma_{g,b})$ acts on $\Gr^Y \cob(\Sigma_{g,b})$.
But, we also deduce from Theorem \ref{th:Y_groups} that 
$$
\forall f \in \Torelli(\Sigma_{g,b}), \ \forall M \in Y_i  \cyl(\Sigma_{g,b}),\ \mcyl(f) \circ M \circ \mcyl(f^{-1}) \sim_{Y_{i+1}} M. 
$$
So, the action of $\mcg(\Sigma_{g,b})$ on $\Gr^Y \cob(\Sigma_{g,b})$ factorizes to $\mcg(\Sigma_{g,b})/\Torelli(\Sigma_{g,b}) \simeq \Sp(H)$. 
The action of $\Sp(H)$ on $\Gr^\Gamma \Torelli(\Sigma_{g,b})$ is similarly defined, 
and the $\Sp(H)$-equivariance of $\Gr \mcyl$ is then obvious.
\end{proof}

One way to simplify the study of the Lie ring of homology cylinders 
is to consider it with rational coefficients:
$$
\Gr^Y \cyl(\Sigma_{g,b}) \otimes \Q = \bigoplus_{i\geq 1} \frac{Y_i \cyl(\Sigma_{g,b})}{Y_{i+1}} \otimes \Q.
$$ 
This Lie algebra is computed in the next subsection for $b=0$ and $b=1$.

\subsection{The Lie algebra of homology cylinders}

\label{subsec:Lie_algebra_homology_cylinders}

\index{homology cylinder!Lie algebra of}
We start with the case of a bordered surface $\Sigma_{g,1}$.
The following $\Q$-vector space was introduced in \cite{Habiro}. 
$$
\A^{<}(H_\Q) := 
\frac{\Q\cdot \left\{ \begin{array}{c} \hbox{Jacobi diagrams without strut component, and with}\\
\hbox{external vertices colored by } H_\Q \hbox{ and totally ordered}  \end{array} \right\}}
{\hbox{AS, IHX, multilinearity, STU-like}}.
$$
The AS, IHX and multilinearity relations are as defined in \S \ref{subsec:symplectic_Jacobi_diagrams}.
The ``STU-like'' relation is defined as follows:
$$
\begin{array}{c}
\labellist \small \hair 2pt
\pinlabel {$=\quad \quad \omega(x,y)\cdot$} at 395 50
\pinlabel {$x$} [t] at 2 0
\pinlabel {$y$} [t] at 58 0
\pinlabel {$y$} [t] at 203 0
\pinlabel {$x$} [t] at 256 0
\pinlabel {$-$} at 130 45
\pinlabel {$<$} at 30 4
\pinlabel {$<$} at 229 4
\pinlabel {$\cdots<$} [r] at 0 4
\pinlabel {$\cdots<$} [r] at 198 4
\pinlabel {$< \cdots$} [l] at 61 4
\pinlabel {$< \cdots$} [l] at 261 4
\endlabellist
\centering
\includegraphics[scale=0.5]{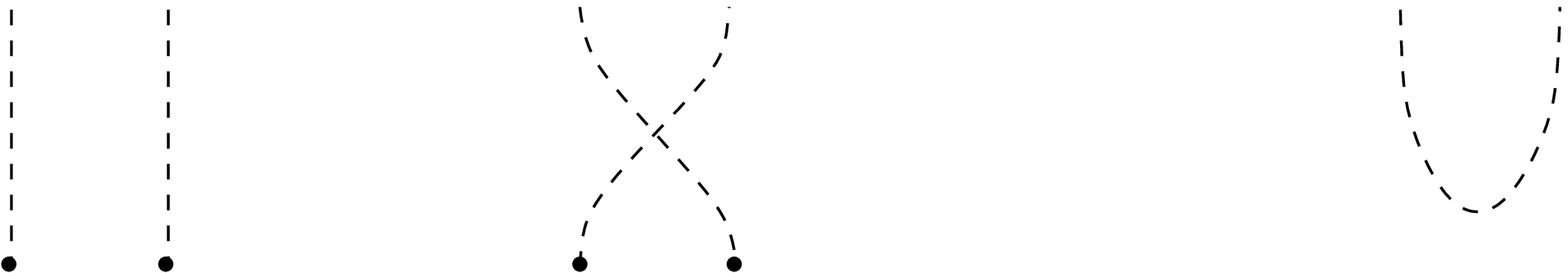}
\end{array}
$$
(Recall that $\omega:H_\Q \times H_\Q \to \Q$ denotes the intersection pairing.)
The space $\A^{<}(H_\Q)$ can be equipped with a multiplication $\osqcup$:
the product $D \osqcup E$ of two Jacobi diagrams $D$ and $E$ is defined as their disjoint union $D \sqcup E$,
the external vertices of $E$ being considered as ``higher'' than those of $D$. The identity element is the empty diagram $\varnothing$.
Similarly to  $\A(H_\Q)$, the space $\A^{<}(H_\Q)$ can also be given a comultiplication, a counit and an antipode
so that $\A^{<}(H_\Q)$ is a Hopf algebra. The obvious action of $\Sp(H_\Q)$ on $\A^{<}(H_\Q)$ preserves all this structure.
The Hopf algebra $A^{<}(H_\Q)$ can be regarded as a ``non-symmetric'' version of $\A(H_\Q)$.

\begin{proposition}[See \cite{HMass}]
The ``symmetrization'' map $\chi: \A(H_\Q) \to \A^{<}(H_\Q)$ defined, 
for all Jacobi diagrams $D$ with $e$ external vertices, by 
$$
\chi(D):= \frac{1}{e!} \cdot \left(\hbox{sum of all ways of ordering the $e$ external vertices of $D$} \right)
$$
is a Hopf algebra isomorphism and is $\Sp(H_\Q)$-equivariant.
\end{proposition}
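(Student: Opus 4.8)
The plan is to read $\chi$ as the diagrammatic avatar of the Poincar\'e--Birkhoff--Witt / Moyal--Weyl symmetrization isomorphism, and to check, in turn, that $\chi$ is well defined, bijective, a coalgebra morphism, and an algebra morphism; the antipode compatibility and the $\Sp(H_\Q)$-equivariance will then follow formally. First I would verify well-definedness: define $\chi$ on the free vector space on strut-free, $H_\Q$-colored Jacobi diagrams with \emph{unordered} external vertices, with values in the free vector space on \emph{ordered} such diagrams, and compose with the projection onto $\A^{<}(H_\Q)$. Since the AS, IHX and multilinearity relators are supported at a single internal vertex, at a triple of edges, or at a single external vertex, and carry no information about the ordering of external vertices, averaging such a relator over all orderings again yields a combination of AS, IHX and multilinearity relators in $\A^{<}(H_\Q)$; hence $\chi$ descends to a linear map $\A(H_\Q)\to\A^{<}(H_\Q)$. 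The $\Sp(H_\Q)$-equivariance is then immediate, since the $\Sp(H_\Q)$-action only transforms the $H_\Q$-colors, which commutes with the permutations of external-vertex positions out of which $\chi$ is built.

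For bijectivity I would use the filtration $F_e\A^{<}(H_\Q):=$ image of the span of diagrams with at most $e$ external vertices. The AS, IHX and multilinearity relations preserve the number of external vertices, and the STU-like relation decreases it by two, so $F_\bullet$ is a well-defined exhaustive filtration with $F_{-1}=0$, finite in each internal degree (a strut-free diagram of internal degree $d$ has at most $3d$ external vertices, since each connected strut-free component has at least one internal vertex). On the associated graded the STU-like relation becomes the statement that transposing two adjacent external vertices acts trivially, so every reordering acts trivially and $F_e/F_{e-1}$ is canonically the external-degree-$e$ part of $\A(H_\Q)$. The map $\chi$ is filtered, and on the associated graded it sends a diagram $D$ with $e$ external vertices to $\frac1{e!}\sum_\sigma D^\sigma$, which equals $D$; thus $\chi$ induces the identity on the associated graded, and being a degreewise filtered map between degreewise-finite filtered spaces, it is an isomorphism.

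Compatibility with the comultiplication, counit and antipode is formal. The comultiplications on $\A(H_\Q)$ and $\A^{<}(H_\Q)$ depend only on the partition of a diagram into connected components, which $\chi$ respects, and the identity $\Delta\chi=(\chi\otimes\chi)\Delta$ reduces, after expanding both sides, to the binomial identity $\frac1{e!}\binom{e}{e'}=\frac1{e'!\,e''!}$ counting the orderings of $e=e'+e''$ external vertices that restrict to a prescribed pair of orderings along a prescribed splitting into components. The counit is $\delta_{(\cdot),\varnothing}$, and $\chi(\varnothing)=\varnothing$ while $\chi$ sends nonempty diagrams to sums of nonempty diagrams, so $\varepsilon\chi=\varepsilon$. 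Once $\chi$ is known to be a bialgebra morphism between Hopf algebras, $\chi S=S\chi$ holds automatically, both sides being the convolution inverse of $\chi$.

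The remaining clause, and by far the main obstacle, is that $\chi$ carries the Moyal-type product $\star$ to the ordered-disjoint-union product $\osqcup$, i.e.\ $\chi(D\star E)=\chi(D)\osqcup\chi(E)$. This is the diagrammatic incarnation of the coincidence of the Moyal star product with the symmetrized Weyl product, the factor $\omega/2$ in the definition of $\star$ being exactly what compensates the factor $1/e!$ built into $\chi$. Using associativity of both products, one reduces by a standard induction (on the internal degree, then on the number of external vertices) to the case where the first factor is a connected diagram $C$. One then starts from $\chi(C)\osqcup\chi(E)$, with the external vertices of $C$ placed below those of $E$, applies the STU-like relation repeatedly to interleave the external vertices of $E$ among those of $C$, and checks that the sum over all interleavings, together with the $\omega$-contraction terms contributed at each application of STU-like, reproduces $\chi(C\star E)$ with its weights $2^{-|V'|}\prod_{v\in V'}\omega(\mathrm{color}(v),\mathrm{color}(\beta(v)))$. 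Organizing this ``Wick's theorem'' count of coefficients is where the real work lies; everything else in the argument is formal.
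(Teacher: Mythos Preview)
The paper itself does not prove this proposition: it is a survey, and the statement is accompanied only by the remark that $\chi$ is the diagrammatic analogue of the symmetrization isomorphism between the symmetric algebra $S(V)$ and the Weyl algebra of a symplectic vector space $(V,s)$, with the full argument deferred to \cite{HMass}. Your proposal follows precisely this Moyal--Weyl analogy, so it is aligned with the paper's perspective.

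Your outline is correct in its broad strokes. The filtration argument for bijectivity is sound: the STU-like relation drops the number of external vertices by two, so on the associated graded it reduces to the statement that adjacent transpositions act trivially, and $\chi$ induces the identity there; your finiteness bound $e\le 3d$ (each strut-free connected component has at least one trivalent vertex) is accurate. The coalgebra compatibility is exactly the shuffle count $\tfrac{1}{e!}\binom{e}{e'}=\tfrac{1}{e'!e''!}$ you give. The $\Sp(H_\Q)$-equivariance is indeed immediate, and the antipode follows formally once $\chi$ is a bialgebra map.

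The one place where your sketch remains a sketch is the multiplicativity $\chi(D\star E)=\chi(D)\osqcup\chi(E)$, which you correctly flag as the substantive step. Your plan---reduce to $D$ connected by associativity, start from $\chi(D)\osqcup\chi(E)$ with the legs of $D$ below those of $E$, and apply STU-like to interleave---is the right strategy, but the bookkeeping (matching the $2^{-|V'|}$ weights in $\star$ against the contraction terms generated by successive STU-like moves, summed over all shuffles) is genuinely delicate and is not spelled out here. This is the diagrammatic Wick expansion carried out in \cite{HMass}; absent that computation, the argument is incomplete but the plan is correct.
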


\noindent
This is similar to the fact that, for a symplectic vector space $(V,s)$, the ``symmetrization'' map
defines a linear isomorphism between the symmetric algebra generated by $V$,
and the Weyl algebra defined by $(V,s)$. Thus, the multiplication $\star$ on $\A(H_\Q)$ defined in \S \ref{subsec:symplectic_Jacobi_diagrams}
can be regarded as a diagrammatic analogue of the Moyal--Weyl product on $S(V)$. 
This analogy can be made explicit by considering ``weight systems'' associated to quadratic Lie algebras \cite{HMass}.

The primitive part of $\A^<(H_\Q)$ is the subspace spanned by connected Jacobi diagrams, 
which we denote by $\A^{<,c}(H_\Q)$. Equipped with the Lie bracket $[\centereddot,\centereddot]_{\osqcup}$, this is a Lie algebra.
Of course, $\A^{<,c}(H_\Q)$ is isomorphic to $\A^{c}(H_\Q)$ but, from the point of view of calculus of claspers,
the former is more natural than the latter.

\begin{theorem}[See \cite{Habiro}]
\label{th:surgery}
\index{surgery map}\index{map!surgery}
Surgery along graph claspers defines a canonical map
$$
\psi: \A^{<,c}(H_\Q)  \longrightarrow \Gr^Y \cyl_{g,1} \otimes \Q
$$
of graded Lie algebras, which is surjective and $\Sp(H)$-equivariant.
\end{theorem}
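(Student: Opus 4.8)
The plan is to construct the map $\psi$ explicitly from surgery along graph claspers, verify it respects the algebraic structures, and prove surjectivity by relating graph claspers to iterated commutators in the $Y$-filtration.

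\emph{Construction of the map.} Given a connected tree- or graph-shaped Jacobi diagram $T$ of degree $k$ with external vertices coloured by $H_\Q$, one realizes it topologically by a $Y_k$-graph $G_T$ in the trivial cylinder $\Sigma_{g,1} \times [-1,1]$: each internal trivalent vertex of $T$ becomes a node, each edge becomes an edge (a band), and each external vertex coloured by $h \in H$ becomes a leaf whose core is a simple closed curve representing $h$ in $H_1(\Sigma_{g,1})$. Performing surgery along $G_T$ yields a homology cylinder $(\Sigma_{g,1} \times [-1,1])_{G_T}$ which, by Proposition \ref{prop:Y_1_cylinders} and the definition of $Y_k$-equivalence, lies in $Y_k \cyl_{g,1}$; its class modulo $Y_{k+1}$ defines $\psi(T) \in \Gr^Y_k \cyl_{g,1} \otimes \Q$. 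First I would check this is well-defined: the multilinearity relation holds because a leaf representing $h_1 + h_2$ can be split (up to $Y_{k+1}$) into two leaves by a standard clasper move, and the AS and IHX relations hold modulo $Y_{k+1}$ by the fundamental clasper identities of \cite{Habiro} (the "IHX" and "AS" moves, which change the result of surgery only by higher-degree claspers). The STU-like relation is absent here because $\A^{<,c}$ has it built in, and it corresponds precisely to the relation coming from sliding a leaf across the surface and using the intersection pairing $\omega$.

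\emph{Lie algebra and $\Sp$-equivariance.} That $\psi$ is a Lie algebra map amounts to showing that the bracket $[\cdot,\cdot]_\osqcup$ on $\A^{<,c}(H_\Q)$, which glues two diagrams along a pair of external vertices weighted by $\omega$, corresponds to the commutator bracket on $\Gr^Y \cyl_{g,1}$. This follows from calculus of claspers: the commutator of two clasper surgeries of degrees $i$ and $j$ is, modulo $Y_{i+j+1}$, a surgery along the clasper obtained by fusing a leaf of one with a leaf of the other, and the coefficient $\omega(h,h')$ records the algebraic intersection number of the two leaf curves (Theorem \ref{th:Y_groups} guarantees this lands in $Y_{i+j}/Y_{i+j+1}$). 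The $\Sp(H)$-equivariance is immediate from the construction: an element of $\Sp(H) \simeq \mcg_{g,1}/\Torelli_{g,1}$ acts by conjugation by a mapping cylinder, which carries the clasper $G_T$ to a clasper whose leaves are pushed forward, i.e.\ whose colours are transformed by the symplectic action, while leaving the combinatorial shape of $T$ intact; this matches the $\Sp(H_\Q)$-action on $\A^{<,c}(H_\Q)$ described in \S\ref{subsec:Lie_algebra_homology_cylinders}.

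\emph{Surjectivity.} The key point is that $Y_k \cyl_{g,1}/Y_{k+1}$ is generated (rationally) by classes of surgeries along $Y_k$-\emph{trees}: by the clasper move of Example \ref{ex:move_2} (Move 2), graph claspers of degree $k$ can be traded, modulo $Y_{k+1}$, for trees, and any $Y_k$-tree can be isotoped so that its leaves lie in a neighbourhood of $\Sigma_{g,1} \times 0$ and thus represent homology classes in $H$; hence it is in the image of $\psi$. More carefully, one uses that every element of $Y_k \cyl_{g,1}$ is $Y_{k+1}$-equivalent to a product of surgeries along $Y_k$-trees whose leaves represent elements of a basis of $H$ (leaves can be "combed" onto the surface using that the inclusions $\Sigma_{g,1} \hookrightarrow \Sigma_{g,1}\times[-1,1]$ are $\pi_1$-isomorphisms, and framing/homotopy corrections on the leaves produce only higher-degree claspers). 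The main obstacle I anticipate is exactly this leaf-combing step: controlling how moving the leaves of a clasper into the prescribed position changes the surgery result only up to $Y_{k+1}$, which requires a careful application of calculus of claspers (the handling of the framings of leaves, of how a leaf that is null-homotopic in $\Sigma_{g,1}\times[-1,1]$ contributes, and of the boundary condition near $\partial\Sigma_{g,1}\times[-1,1]$). Once surjectivity onto each graded piece is established degree by degree, the theorem follows; injectivity is \emph{not} claimed here and is the subject of the subsequent discussion using the LMO homomorphism.
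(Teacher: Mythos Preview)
Your approach follows the same overall strategy as the paper's sketch, but there is a genuine gap in your construction of $\psi$: you omit the role of the \emph{total ordering} on the external vertices of $D\in\A^{<,c}(H_\Q)$. In the paper, the clasper $C(D)$ is built so that each leaf sits in its own horizontal slice of $\Sigma_{g,1}\times[-1,1]$, and the vertical heights of the leaves along $[-1,1]$ respect the given total order on external vertices. Without this specification, your clasper $G_T$ is only defined up to the relative heights of its leaves, and different choices are \emph{not} $Y_{k+1}$-equivalent in general: swapping the heights of two adjacent leaves whose cores intersect algebraically changes the result by a $Y_k$-surgery, not a $Y_{k+1}$-surgery. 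This height-swap is precisely what produces the STU-like relation (the right-hand side is the diagram obtained by fusing the two leaves, with coefficient $\omega(x,y)$ coming from the algebraic intersection of the leaf curves), so your description of STU-like as ``sliding a leaf across the surface'' misses the point: it is a crossing change in the $[-1,1]$ direction, not a move on the surface.

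Once the ordering is incorporated, the rest of your outline is essentially the paper's argument: well-definedness (AS, IHX, multilinearity, STU-like) and the Lie algebra property follow from clasper calculus, and your proof of $\Sp(H)$-equivariance by conjugation with a mapping cylinder of $f$ with $f_*=F$ matches the paper exactly. Your surjectivity argument is slightly muddled: Move~2 does reduce graph claspers to tree claspers, but the resulting tree claspers have \emph{linked} leaves, and when you comb these leaves into horizontal slices the linking is recorded as a looped Jacobi diagram on the source side; this is how non-tree diagrams in $\A^{<,c}(H_\Q)$ enter the picture, so you should not expect only tree diagrams to suffice.
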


\begin{proof}[Sketch of the proof]
The map $\psi$ sends each connected Jacobi diagram $D\in  \A^{<,c}(H_\Q)$ 
to the $3$-manifold obtained from the trivial cylinder $\Sigma_{g,1} \times [-1,1]$
by surgery along a graph clasper $C(D)$ obtained from $D$ as follows:
\begin{itemize}
\item Thicken $D$ to an oriented surface using the vertex-orientation of $D$
(vertices are thickened to disks, and edges to bands). 
Cut a smaller disk in the interior of each disk
that has been produced from an external vertex of $D$.
This leads to an oriented compact surface $S(D)$,
decomposed into disks, bands and annuli (corresponding to internal
vertices, edges and external vertices of $D$ respectively).
Use the induced orientation on $\partial S(D)$ to orient the cores of the annuli.
\item Next, embed $S(D)$ into the interior of $\Sigma_{g,1} \times [-1,1]$ in such a way that
each annulus of $S(D)$ represents in $H_\Q$ the color of the corresponding external vertex of $D$.
Moreover, the annuli should be in disjoint ``horizontal slices'' of $\Sigma_{g,1} \times [-1,1]$
and their ``vertical height'' along $[-1,1]$  should respect the total ordering of the external vertices of $D$.
Such an embedding defines a graph clasper $C(D)$ in $\Sigma_{g,1} \times [-1,1]$.
\end{itemize}
That $\psi$ is well-defined and surjective can be shown by using claspers \cite{Habiro,Goussarov_clovers,GGP}. 
By using the same techniques, one can also check that $\psi$ is a Lie algebra homomorphism.
See also \cite{GL}, \cite{Garoufalidis} and \cite{Habegger} for similar constructions.

To check the $\Sp(H)$-equivariance of $\psi$, consider an $F\in \Sp(H)$ and an $f \in \mcg_{g,1}$ such that $f_*=F$.
Then, the image of $C(D)$ by the homeomorphism $f\times \Id$ of $\Sigma_{g,1} \times [-1,1]$ 
can play the role of $C(F\cdot D)$. So, the class $\psi(F\cdot D)$ is represented by the cobordism
$$
 (\Sigma_{g,1} \times [-1,1])_{(f\times \Id)(C(D))} 
= \mcyl(f) \circ  (\Sigma_{g,1} \times [-1,1])_{C(D)} \circ \mcyl(f^{-1})
$$
and we conclude that $\psi(F\cdot D) = F \cdot \psi(D)$.
\end{proof}

\begin{remark}
\label{rem:integral_coefficients}
The algebra $\A^{<}(H_\Q)$ can also be defined with integral coefficients, which results in a ring $\A^{<}(H)$.
In degree $>1$, the surgery map $\psi$ also exists with integral coefficients, 
and the resulting map $\psi: \A^{<,c}_{\geq 2}(H) \to  \Gr^Y_{\geq 2} \cyl_{g,1}$ is surjective.
The degree $1$ case is special, and this is the subject of \S \ref{subsec:degree_one}.
\end{remark}

The Lie algebra of homology cylinders can now be described in a diagrammatic way.
A similar result is proved by Habegger in \cite{Habegger}, 
but just for vector spaces (disregarding Lie algebra structures).

\begin{theorem}[See \cite{CHM, HMass}]
\label{th:LMO_iso}
The LMO homomorphism $Z: \cyl_{g,1} \to \A(H_\Q)$ induces a map
$\Gr Z: \Gr^Y \cyl_{g,1} \otimes \Q \to  \A^c(H_\Q)$ at the graded level. 
Moreover, we have the following commutative triangle 
in the category  of graded Lie algebras with $\Sp(H_\Q)$-actions:
$$
\xymatrix{
\Gr^Y \cyl_{g,1} \otimes \Q \ar[r]^-{\Gr Z}_-\simeq & \A^c(H_\Q) \ar[d]^-\chi_-\simeq \\
& \A^{<,c}(H_\Q) \ar[lu]^-\psi_-\simeq 
}
$$
\end{theorem}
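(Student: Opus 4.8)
The plan is to build the graded map $\Gr Z$ directly from the LMO homomorphism, to identify the round trip $\chi\circ\Gr Z\circ\psi$ around the triangle with the identity by an explicit clasper surgery computation, and then to deduce the remaining assertions formally. For the construction of $\Gr Z$: since $Z$ is a monoid homomorphism with group-like values, for each $M\in\cyl_{g,1}$ the element $z(M):=\log_\star Z(M)$ lies in the primitive part $\A^c(H_\Q)$. The key preliminary point is that $Z$ is compatible with the $Y$-filtration --- a surgery along a $Y_j$-graph alters the LMO invariant only in degrees $\geq j$ --- which follows from $Z$ being a universal finite-type invariant of homology cylinders (or can be checked from the behaviour of the Kontsevich integral under the band moves defining a $Y_j$-graph, as in \cite{CHM}). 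Consequently $z(M)\in\A_{\geq k}^c(H_\Q)$ whenever $M\in Y_k\cyl_{g,1}$, and the degree $k$ part of $z$ depends only on the $Y_{k+1}$-equivalence class of $M$; moreover $z$ turns products into Baker--Campbell--Hausdorff combinations whose correction terms sit in degrees $\geq 2k>k$, so that this degree $k$ part descends to a group homomorphism $Y_k\cyl_{g,1}/Y_{k+1}\to\A_k^c(H_\Q)$. Assembling over all $k$ gives the map $\Gr Z\colon\Gr^Y\cyl_{g,1}\otimes\Q\to\A^c(H_\Q)$.

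The heart of the argument, and the step I expect to be the main obstacle, is the commutativity of the triangle, i.e. the identity $\chi\circ\Gr Z\circ\psi=\Id$. One must compute, for a connected Jacobi diagram $D$ of degree $k$ with totally ordered external vertices, the degree $k$ term of $\log_\star Z$ of the homology cylinder $\psi(D)=(\Sigma_{g,1}\times[-1,1])_{C(D)}$, where $C(D)$ is the graph clasper built from $D$ in the proof of Theorem \ref{th:surgery}. Expanding the Kontsevich integral of the framed link associated with $C(D)$, retaining only the leading diagrammatic contribution, and matching the various normalizations --- the factor $1/2$ in the contraction defining the product $\star$, the Euler-characteristic sign and the $(\times 1/2)$-gluings entering the isomorphism $\kappa$, and the factor $1/e!$ in the symmetrization $\chi$ --- one finds $\Gr Z(\psi(D))=\chi^{-1}(D)$. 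The ordering of the external vertices of $D$, recorded geometrically in the vertical heights of the annuli of $C(D)$ inside $\Sigma_{g,1}\times[-1,1]$, is exactly what makes the non-symmetric diagram $D$ rather than its symmetrization appear on the clasper side, which is why $\chi$ intervenes. This computation is carried out in \cite{CHM,HMass}, and I would follow it.

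The remaining conclusions are then formal. The symmetrization map $\chi$ is an isomorphism (proved earlier) and $\psi$ is surjective by Theorem \ref{th:surgery}, so the identity $\chi\circ\Gr Z\circ\psi=\Id$ forces $\psi$ to be injective as well, hence an isomorphism, and $\Gr Z=\chi^{-1}\circ\psi^{-1}$ is an isomorphism of graded vector spaces. Since $\psi$ and $\chi$ are morphisms of graded Lie algebras (for the brackets $[\centereddot,\centereddot]_{\osqcup}$, $[\centereddot,\centereddot]_\star$ and the bracket on $\Gr^Y$), so is $\Gr Z$; and since $\psi$ is $\Sp(H)$-equivariant by Theorem \ref{th:surgery} --- with the action on $\Gr^Y\cyl_{g,1}$ coming from conjugation by the mapping class group, as in Lemma \ref{lem:Sp_action} --- and $\chi$ is $\Sp(H_\Q)$-equivariant, $\Gr Z$ inherits $\Sp(H_\Q)$-equivariance. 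This establishes the commutative triangle of the statement, all three of whose arrows are isomorphisms.
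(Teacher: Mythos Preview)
Your proposal is correct and follows essentially the same approach as the paper. The paper's sketch works with $\widetilde{Z}^Y$ rather than $Z$ directly, using an explicit decomposition $\kappa=\chi^{-1}\circ s\circ\varphi$ (where $s$ is the Euler-characteristic sign and $\varphi$ changes colors $i^\pm\mapsto[\alpha_i],[\beta_i]$) and the formula $\widetilde{Z}^Y(\Sigma_{g,1}\times[-1,1])-\widetilde{Z}^Y((\Sigma_{g,1}\times[-1,1])_{C(E)})=(-1)^{v+e+1}E+(\deg>v)$ from \cite{CHM}, but the logic---define $\Gr Z$ from filtration-compatibility, verify $\Gr Z\circ\psi=\chi^{-1}$ by a leading-term clasper computation, and conclude formally from the surjectivity of $\psi$ and the bijectivity of $\chi$---is identical to yours.
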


\noindent
Since the definition of the surgery map $\psi$ has not required prior choices,
it follows from this diagram that the map $\Gr Z$ is canonical.
In particular, it does not depend on the choice of the Drinfeld associator,
nor on the system of meridians and parallels $(\alpha, \beta)$ 
which has been fixed on the surface $\Sigma_{g,1}$ (see Figure \ref{fig:surface}).

\begin{proof}[Sketch of the proof]
Recall from \S \ref{subsec:definition_LMO} that, from the viewpoint of the LMO homomorphism $Z$,
the natural space to work with is $\A^Y\left(\set{g}^+ \cup \set{g}^-\right)$
since $Z$ was defined as $Z = \kappa \circ \widetilde{Z}^Y$ 
and $ \widetilde{Z}^Y$ takes values in this space.
An explicit formula for $\chi^{-1}$ (see \cite{HMass}) shows that
the isomorphism $\kappa: \A^Y\left(\set{g}^+ \cup \set{g}^-\right) \to \A(H_\Q)$ can be decomposed as
$$
\xymatrix{\A^Y\left(\set{g}^+ \cup \set{g}^-\right)  \ar[r]^-\varphi_-\simeq &
\A^<(-H_\Q) \ar[r]^-s_-\simeq & 
\A^<(H_\Q)  \ar[r]^-{\chi^{-1}}_-\simeq & 
\A(H_\Q).
}
$$
Here, the space $\A^<(-H_\Q)$ is defined exactly as $\A^<(H_\Q)$ except that the symplectic form $-\omega$
replaces $\omega$ in the STU-like relation; the map $\varphi$ sends a Jacobi diagram $E$ to the diagram $E'$
obtained by changing the colors with the rules $i^-\mapsto [\alpha_i]$ and $i^+ \mapsto [\beta_i]$
and by  declaring that every $i^-$-colored vertex is lower than any $i^+$-colored vertex;
finally the map $s$ is simply defined by $s(D)=(-1)^{\chi(D)}\cdot D$ for any Jacobi diagram $D$
with Euler characteristic $\chi(D)$.

Let $M\in \cyl_{g,1}$ and let $C$ be a graph clasper in $M$ of degree $v\geq 1$. 
A computation of the LMO invariants \cite{CHM}
shows  that the infinite sum of Jacobi diagrams $\widetilde{Z}^Y(M)-\widetilde{Z}^Y(M_C)$ starts in degree $v$. 
Therefore, $\widetilde{Z}^Y$ induces a map
$$
\Gr \widetilde{Z}^Y: \Gr^Y \cyl_{g,1} \otimes \Q \longrightarrow \A^{Y}\left(\set{g}^+ \cup \set{g}^-\right)
$$
which sends $\{M\} \in Y_v \cyl_{g,1}/Y_{v+1}$ to the degree $v$ part of $\widetilde{Z}^Y(M)$.
Thus, $Z$ induces a map $\Gr Z: \Gr^Y \cyl_{g,1} \otimes \Q \to \A(H_\Q)$ in the same way.

Let $E\in \A^Y\left(\set{g}^+ \cup \set{g}^-\right)$ 
be a connected Jacobi diagram having $v$ internal vertices, and $e$ internal edges.
This  diagram $E$ has a topological realization $C(E)$, 
which is the topological realization $C(E')$ of $E'$ defined in the proof of Theorem \ref{th:surgery}.
A more accurate computation of the LMO invariants \cite{CHM} shows that
$$
\widetilde{Z}^Y\left(\Sigma_{g,1}\times [-1,1]\right) - \widetilde{Z}^Y\left((\Sigma_{g,1}\times [-1,1])_{C(E)}\right)
= (-1)^{v+e+1} \cdot E + (\deg >v).
$$
Since $v-e$ is the Euler characteristic of $E$, this identity is equivalent to
$$
s\varphi \widetilde{Z}^Y\left( (\Sigma_{g,1}\times [-1,1])_{C(E')}\right)- \varnothing = E' + (\deg >v).
$$
We deduce that $\Gr Z \circ \psi = \chi^{-1}$. It follows from the surjectivity of $\psi$ and the bijectivity of $\chi$ 
that both $\psi$ and $\Gr Z$ are isomorphisms, and that $\Gr Z$ takes values in $\A^c(H_\Q)$.

Finally, since $\psi$ is $\Sp(H)$-equivariant,
the natural action of $\Sp(H_\Q)$ on $\A^{<,c}(H_\Q)$ can be transported by the isomorphism $\psi$
to an action on $\Gr^Y \cyl_{g,1} \otimes \Q$, which is compatible with the action of $\Sp(H)$ given by Lemma \ref{lem:Sp_action}.
The maps $\psi$ and $\chi$ are then $\Sp(H_\Q)$-equivariant, and so is $\Gr Z$.
\end{proof}

We now consider the mapping cylinder construction at the graded level and with rational coefficients:
$$
\Gr \mcyl \otimes \Q: \Gr^\Gamma \Torelli_{g,1} \otimes \Q \longrightarrow \Gr^Y \cyl_{g,1} \otimes \Q.
$$
On one hand, the target has the diagrammatic description given by Theorem \ref{th:LMO_iso}.
On the other hand, the source has been computed by Hain as we shall now recall.
For $g\geq 3$, the degree $1$ part of $\Gr^\Gamma \Torelli_{g,1} \otimes \Q$ has been computed by Johnson \cite{Johnson_abelianization}:
the first Johnson homomorphism $\tau_1$ induces an isomorphism between $\left(\Torelli_{g,1}/\Gamma_2 \Torelli_{g,1}\right) \otimes \Q$
and $\Lambda^3 H_\Q$. Since $\Gr^\Gamma \Torelli_{g,1} \otimes \Q$ is generated by its degree $1$ part, we get a surjective Lie algebra map
$$
J: \Lie\left(\Lambda^3 H_\Q\right) \longrightarrow \Gr^\Gamma \Torelli_{g,1} \otimes \Q
$$
defined on the free Lie algebra generated by $\Lambda^3 H_\Q$.
This map provides a quadratic or a cubic presentation of the Lie algebra  $\Gr^\Gamma \Torelli_{g,1} \otimes \Q$.

\begin{theorem}[Hain \cite{Hain}]
\label{th:Hain}
The ideal of relations
$$
\operatorname{R}(\Torelli_{g,1}) = \bigoplus_{i\geq 1} \operatorname{R}_i(\Torelli_{g,1}) := \Ker J
$$
is generated by $\operatorname{R}_2(\Torelli_{g,1})$ if $g\geq 6$, 
and by  $\operatorname{R}_2(\Torelli_{g,1}) + \operatorname{R}_3(\Torelli_{g,1})$ if $g=3,4,5$.
\end{theorem}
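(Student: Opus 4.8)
The plan is to follow Hain \cite{Hain} and translate the statement, by means of mixed Hodge theory on the relative Malcev completion of the mapping class group, into a computation of low-degree cohomology of $\mcg_{g,1}$ with symplectic coefficients. First I would replace $\Gr^\Gamma \Torelli_{g,1} \otimes \Q$ by $\Gr \MalcevLie(\Torelli_{g,1})$ via the Quillen isomorphism already recalled in \S \ref{subsec:total_Johnson}, and then realise $\MalcevLie(\Torelli_{g,1})$ as the Lie algebra $\mathfrak{u}_{g,1}$ of the prounipotent radical of the completion $\mathcal{G}_{g,1}$ of $\mcg_{g,1}$ relative to the surjection $\rho_1: \mcg_{g,1} \to \Sp(H)$. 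That $\Torelli_{g,1}$ has this ``expected'' relative completion is part of Hain's work, and uses that $H_1(\Torelli_{g,1};\Q) \cong \Lambda^3 H_\Q$ (Johnson \cite{Johnson_abelianization}) is a finite-dimensional algebraic $\Sp(H)$-module. Since $\Gr^\Gamma$ of any group is generated by its degree $1$ part, the surjection $J$ of the statement becomes the minimal presentation map $\Lie\left(H_1(\mathfrak{u}_{g,1})\right) \twoheadrightarrow \Gr \mathfrak{u}_{g,1}$, so that $\operatorname{R}(\Torelli_{g,1}) = \Ker J$ and the graded vector space of its minimal generators is $H_2(\mathfrak{u}_{g,1})$. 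The theorem thus amounts to the claim that $H_2(\mathfrak{u}_{g,1})$ is concentrated in degrees $2$ and $3$, and in degree $2$ alone once $g \geq 6$.

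Next I would invoke the mixed Hodge structure. As $\mcg_{g,1}$ is the fundamental group of a smooth variety (a moduli space of genus $g$ curves carrying a marked point with a nonzero tangent vector there), Hain's machinery endows $\mathcal{G}_{g,1}$, hence $\mathfrak{u}_{g,1}$ and its homology, with a natural mixed Hodge structure compatible with the $\Sp(H)$-action. Two facts of this structure are decisive: $H_1(\mathfrak{u}_{g,1}) \cong \Lambda^3 H_\Q$ is pure of a single weight, which we normalise to $-1$; and the weight filtration of $\mathfrak{u}_{g,1}$ coincides with its lower central series, so that the degree-$k$ piece of $\Gr \mathfrak{u}_{g,1}$ — and hence of $H_2(\mathfrak{u}_{g,1})$ — is pure of weight $-k$. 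Proving the theorem therefore reduces to showing that $H_2(\mathfrak{u}_{g,1})$ carries no sub-quotient of weight $\leq -4$, and, when $g \geq 6$, none of weight $-3$ either.

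To pin down $H_2(\mathfrak{u}_{g,1})$ I would use the Hochschild--Serre spectral sequence of $1 \to \mathcal{U}_{g,1} \to \mathcal{G}_{g,1} \to \Sp(H) \to 1$. Since $\Sp(H)$ is reductive, this spectral sequence degenerates and gives, for every algebraic irreducible $\Sp(H)$-representation $V_\lambda$, an identification $H^i(\mathcal{G}_{g,1};V_\lambda) \cong \left(H^i(\mathfrak{u}_{g,1}) \otimes V_\lambda\right)^{\Sp(H)}$; letting $\lambda$ vary recovers $H^i(\mathfrak{u}_{g,1})$, and dually $H_i(\mathfrak{u}_{g,1})$, as an $\Sp(H)$-module. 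Hain's comparison theorem identifies $H^i(\mathcal{G}_{g,1};V_\lambda)$ with $H^i(\mcg_{g,1};V_\lambda)$ for $i\leq 1$ and, in a genus range depending on $|\lambda|$, for $i=2$ as well. The inputs I then need are the low-degree twisted cohomology of $\mcg_{g,1}$: the vanishing $H^1(\mcg_{g,1};\Q)=0$, the computation $H^2(\mcg_{g,1};\Q)\cong\Q$ (spanned by the first Mumford--Morita--Miller class, in the range relevant here), and the values of $H^1$ and $H^2$ of $\mcg_{g,1}$ with coefficients in the finitely many small $V_\lambda$ that can contribute — all available, in that range, from Harer's homological stability together with the known stable twisted cohomology of mapping class groups. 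Feeding these into the spectral sequence determines $H_2(\mathfrak{u}_{g,1})$ explicitly: it lives only in weights $-2$ and $-3$; the weight $-2$ part describes $\operatorname{R}_2(\Torelli_{g,1})$ as a specific submodule of $\Lambda^2\left(\Lambda^3 H_\Q\right)$, and the weight $-3$ part is a representation that is forced to vanish precisely when $g\geq 6$ and that, for $g=3,4,5$, survives and yields $\operatorname{R}_3(\Torelli_{g,1})$.

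I expect the crux to lie in this cohomological input, on two counts. First, ruling out relations of degree $\geq 4$ is not formal: a priori $\Ker J$ could have minimal generators in arbitrarily high degree, and it is the Hodge-theoretic bound on the weights occurring in $H^2(\mathcal{G}_{g,1})$ — itself resting on the structure of $H^2$ of the moduli space with twisted coefficients — that excludes this. Second, the sharp thresholds $g\geq 6$ versus $g=3,4,5$ come out only of the exact vanishing ranges of the $H^1(\mcg_{g,1};V_\lambda)$ responsible for cubic relations, so the computation must be carried out somewhat below the fully stable range. By contrast, the passage from $\Torelli_{g,1}$ to $\mathfrak{u}_{g,1}$, the identification of the degree-$1$ part with $\Lambda^3 H_\Q$ through $\tau_1$, and the purity statements are essentially formal once the relative completion and its mixed Hodge structure are available. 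The closed surface $\Sigma_g$ is treated by the same scheme, using the completion of $\mcg_g$ relative to $\Sp(H)$ and $\Lambda^3 H_\Q/\omega\wedge H$ in place of $\Lambda^3 H_\Q$.
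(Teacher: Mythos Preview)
The paper does not prove this theorem at all: it is a survey, and Theorem~\ref{th:Hain} is simply quoted from Hain~\cite{Hain} with no argument given. So there is no ``paper's own proof'' to compare against.

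That said, your sketch is a faithful outline of Hain's actual strategy in~\cite{Hain}. You correctly identify the three structural steps: (i) replacing $\Gr^\Gamma\Torelli_{g,1}\otimes\Q$ by the graded of the pronilpotent Lie algebra $\mathfrak{u}_{g,1}$ coming from the relative Malcev completion of $\mcg_{g,1}\to\Sp(H)$, so that the minimal relation module is $H_2(\mathfrak{u}_{g,1})$; (ii) using the mixed Hodge structure on $\mathfrak{u}_{g,1}$, and the fact that the weight filtration agrees with the lower central series, to reduce the question to bounding the weights appearing in $H_2(\mathfrak{u}_{g,1})$; (iii) computing those weights via the identification of $H^*(\mathfrak{u}_{g,1})^{\Sp}$-isotypic pieces with $H^*(\mcg_{g,1};V_\lambda)$, together with the known low-degree twisted cohomology of $\mcg_{g,1}$. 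You are also right that the delicate content lies entirely in step~(iii): the exclusion of relations in degree $\geq 4$ and the genus threshold $g\geq 6$ versus $g=3,4,5$ both come from explicit vanishing ranges for $H^2(\mcg_{g,1};V_\lambda)$ with small $\lambda$, and this is where Hain's paper does the real work.

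Two small cautions if you want to flesh this out. First, the comparison $H^i(\mathcal{G}_{g,1};V_\lambda)\cong H^i(\mcg_{g,1};V_\lambda)$ is not automatic for $i=2$; Hain proves a comparison theorem with an explicit genus range, and one must check it covers the $V_\lambda$'s that actually occur. Second, the assertion that the weight~$-3$ piece of $H_2(\mathfrak{u}_{g,1})$ vanishes precisely for $g\geq 6$ is the output of a concrete representation-theoretic computation in~\cite{Hain}, not a formal consequence of the setup; your sketch correctly flags this as the crux but does not indicate how it goes.
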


\noindent
Let $Y: \Lie(\Lambda^3 H_\Q) \to \A^c(H_\Q)$ be the Lie algebra map defined, in degree $1$, by 
$$
Y:\ x_1 \wedge x_2 \wedge x_3 \longmapsto \Ygraphbottoptop{x_1}{x_2}{x_3}.
$$ 
According to Corollary \ref{cor:LMO_to_Johnson}, the degree $1$ part of $\Gr Z$ 
coincides with $\tau_1$ by the identification $\Lambda^3 H_\Q \simeq \A_1^c(H_\Q)= \A_1^{t,c}(H_\Q)$ observed in Example \ref{ex:degree_one}.
Therefore, the following diagram commutes in degree $1$ and, so, commutes in any degree:
\begin{equation}
\label{eq:Hain_to_LMO}
\xymatrix{
\Gr^\Gamma \Torelli_{g,1} \otimes \Q \ar[r]^-{\Gr \mcyl \otimes \Q} & \Gr^Y \cyl_{g,1} \otimes \Q \ar[d]_-{\Gr Z} \\
\Lie(\Lambda^3 H_\Q)/ \operatorname{R}(\Torelli_{g,1}) \ar[u]^-{\overline{J}}_-\simeq \ar[r]_-{\overline{Y}} & 
\A^c(H_\Q) \ar@/_1pc/[u]_-{\psi \circ \chi}^-\simeq
}
\end{equation}
Here, the arrows $\overline{J}$ and $\overline{Y}$ denote quotients of the maps $J$ and $Y$ respectively.
All maps are Lie algebra homomorphisms and are $\Sp(H_\Q)$-equivariant.
Together with Theorem \ref{th:Hain}, the commutative square (\ref{eq:Hain_to_LMO}) 
gives an algebraic description of $\Gr \mcyl \otimes \Q$. This is a way of tackling Problem \ref{pb:gr_mapping_cylinder}.

For instance, Problem \ref{pb:gr_mapping_cylinder} can be solved in degree $2$ and with rational coefficients. 
On one hand, the quadratic relations of the Lie algebra  $\Gr^\Gamma \Torelli_{g,1} \otimes \Q$ can
be computed using the representation theory of the symplectic group.

\begin{proposition}[Hain \cite{Hain}, Habegger--Sorger \cite{HS}]
\label{prop:Hain}
If $g \geq 3$, then the $\Sp(H_\Q)$-module $\operatorname{R}_2(\Torelli_{g,1})$
is spanned by the following elements $r_1,r_2$ of $\Lie_2(\Lambda^3 H_\Q)$:
$$
\begin{array}{l}
r_1 :=\begin{cases}
\left[ \alpha_1 \wedge \alpha_2 \wedge \beta_2 , \alpha_3 \wedge
  \alpha_4 \wedge \beta_4  \right] 
&\text{if $g\ge4$},\\
0&\text{if $g=3$},
\end{cases}\\
r_2 := \left[ \alpha_1 \wedge \alpha_2 \wedge \beta_2 , \alpha_g \wedge \omega \right]
\quad \quad \quad \quad \;\;\text{if $g\ge3$}.
\end{array}
$$
\end{proposition}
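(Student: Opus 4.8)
The plan is to identify $\operatorname{R}_2(\Torelli_{g,1})$ as an $\Sp(H_\Q)$-module by a multiplicity comparison in the category of rational $\Sp(H_\Q)$-representations, and then to recognize $r_1$ and $r_2$ as explicit generators. Write $\mathfrak{t}:=\Gr^\Gamma\Torelli_{g,1}\otimes\Q$ for Hain's Torelli Lie algebra and $\mathfrak{t}_k$ for its degree $k$ piece. Since $g\geq 3$, Johnson's computation of the abelianization \cite{Johnson_abelianization} gives $\tau_1:\mathfrak{t}_1\xrightarrow{\simeq}\Lambda^3H_\Q$, so the degree $2$ part of the free Lie algebra in the presentation map $J$ is $\Lambda^2(\Lambda^3H_\Q)$, and $\operatorname{R}_2(\Torelli_{g,1})$ is precisely the kernel of the bracket map
$$
[\centereddot,\centereddot]:\Lambda^2\left(\Lambda^3H_\Q\right)\longrightarrow \mathfrak{t}_2.
$$
All three spaces are finite-dimensional rational $\Sp(H_\Q)$-modules, the map being $\Sp(H_\Q)$-equivariant by Lemma \ref{lem:Sp_action} (compare \eqref{eq:Hain_to_LMO}), hence semisimple; thus it suffices to (i) decompose $\Lambda^2(\Lambda^3H_\Q)$ into irreducibles, (ii) decompose $\mathfrak{t}_2$ into irreducibles, and (iii) subtract multiplicities.

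First I would carry out (i), which is a formal plethysm / Littlewood--Richardson computation: in the stable range the decomposition of $\Lambda^2(\Lambda^3H_\Q)$ is independent of $g$, while for small $g$ some summands drop out because the $\Sp(H_\Q)$-irreducibles involved would have more than $g$ rows. The irreducibles relevant to $\operatorname{R}_2$ turn out to need at most four, resp. three, rows, which is exactly the source of the $g=3$ versus $g\geq 4$ dichotomy in the statement. Step (ii) is the substantive input: one must compute $\mathfrak{t}_2$, i.e.\ the degree $2$ part of the Torelli Lie algebra. Here I would invoke Hain's theorem, whose proof proceeds via the relative Malcev completion of $\mcg_{g,1}$ and the Hodge theory of the universal curve; alternatively one may use the explicit Torelli relators (the route of Habegger--Sorger), or combine the known behaviour of the second Johnson homomorphism $\tau_2$ with the Birman--Craggs-type quadratic data. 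Comparing (i) and (ii) then pins down $\operatorname{R}_2(\Torelli_{g,1})$ abstractly as a direct sum of two irreducible $\Sp(H_\Q)$-modules, the first of which is absent when $g=3$.

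It then remains to check that $r_1,r_2$ actually lie in $\Ker J$ and generate these two summands. For $J(r_1)=0$ I would realize $\alpha_1\wedge\alpha_2\wedge\beta_2$ and $\alpha_3\wedge\alpha_4\wedge\beta_4$ as $\tau_1$ of genus-one bounding pair maps supported in disjoint subsurfaces of $\Sigma_{g,1}$ (possible precisely for $g\geq 4$); disjointness of supports forces these two elements of $\Torelli_{g,1}$ to commute, so their commutator is trivial and $[\alpha_1\wedge\alpha_2\wedge\beta_2,\alpha_3\wedge\alpha_4\wedge\beta_4]$ dies in $\mathfrak{t}_2$. For $J(r_2)=0$ I would use a relation among Dehn twists/bounding pair maps involving the class $\alpha_g\wedge\omega\in\omega\wedge H_\Q$ — for instance one coming from a disjointly supported genus-one separating twist, or from the lantern relation — valid for all $g\geq 3$; alternatively, once (ii) is in hand one simply notes by an $\Sp(H_\Q)$-weight argument that $r_2$ cannot survive in $\mathfrak{t}_2$. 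Finally a weight computation shows that $r_1$ and $r_2$ are, up to the $\Sp(H_\Q)$-action, highest-weight vectors of the two relevant irreducibles, so the submodule they generate has the same multiplicities as $\operatorname{R}_2(\Torelli_{g,1})$; semisimplicity gives equality.

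\textbf{Main obstacle.} The crux is step (ii): deciding which $\Sp(H_\Q)$-constituents of $\Lambda^2(\Lambda^3H_\Q)$ survive the bracket into $\mathfrak{t}_2$. This genuinely requires Hain's Hodge-theoretic machinery (or a careful direct analysis of relators in $\Torelli_{g,1}$) and cannot be obtained by formal representation theory alone. By contrast, the plethysm in (i) is routine, and verifying $J(r_1)=J(r_2)=0$ is elementary once the disjoint-support and lantern-type relations are set up.
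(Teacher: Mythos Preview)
The paper does not prove this proposition: it is stated with attribution to Hain \cite{Hain} and Habegger--Sorger \cite{HS} and used as input, without any argument given. So there is no ``paper's own proof'' to compare against.

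That said, your outline is a faithful sketch of how the cited references proceed: decompose $\Lambda^2(\Lambda^3 H_\Q)$ into $\Sp$-irreducibles, determine $\mathfrak{t}_2$ (this is the deep step, requiring either Hain's mixed Hodge theory on relative Malcev completion or the explicit relator analysis of Habegger--Sorger), and subtract multiplicities. You have correctly located the genuine obstacle in step (ii). One point where your sketch is looser than it should be is the verification that $J(r_2)=0$: the ``lantern-type'' or ``disjoint separating twist'' suggestions are not quite the right mechanism for producing this specific relation, and in practice one either reads it off the known $\Sp$-decomposition of $\mathfrak{t}_2$ (once step (ii) is done) or checks it via the image of $\tau_2$ together with Morita's description of $\Gamma_2\Torelli_{g,1}/\Gamma_3\Torelli_{g,1}\otimes\Q$ in terms of $\tau_2$ and the Casson invariant. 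Your disjoint-support argument for $J(r_1)=0$ is correct.
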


\noindent
On the other hand, we have the following description of the Lie bracket of $\A^c(H_\Q)$ in degree $1+1$,
which is also obtained using the representation theory of $\Sp(H_\Q)$.

\begin{proposition}[See \cite{HMass}]
\label{prop:Y_2}
Let $g\geq 3$.
The image of $Y_2: \Lie_2(\Lambda^3 H_\Q)  \to \A_2^c(H_\Q)$
is the subspace spanned by the Theta graph and by the H graphs. 
Moreover, its kernel is $\Sp(H_\Q)$-spanned by $r_1,r_2$.
\end{proposition}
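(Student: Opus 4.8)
The plan is to combine an explicit description of $\A_2^c(H_\Q)$, a direct computation of the Lie bracket $[\centereddot,\centereddot]_\star$ on Y-graphs, and the representation theory of $\Sp(H_\Q)$. First I would identify $\A_2^c(H_\Q)$. A connected Jacobi diagram of degree $2$ without strut component has exactly two trivalent internal vertices, and a short enumeration of the ways these can be joined leaves, up to the AS, IHX and multilinearity relations, only: the Theta graph $\Theta$, the H-graphs, a ``bigon with two legs'', and the two diagrams carrying a self-loop at a vertex. The self-loop diagrams vanish by AS, and applying IHX to one edge of the bigon (together with AS) shows the bigon vanishes as well. Hence $\A_2^c(H_\Q)$ is spanned by $\Theta$ and the H-graphs; through the isomorphism $\eta$ this reads $\A_2^c(H_\Q)\simeq\operatorname{D}_4(H_\Q)\oplus\Q\cdot\Theta$, where $\operatorname{D}_4(H_\Q)$ is the span of the H-graphs (the tree part). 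Next, for $u,v\in\Lambda^3 H_\Q\simeq\A_1^c(H_\Q)$ (Example \ref{ex:degree_one}), one computes $Y_2([u,v])=[Y(u),Y(v)]_\star$ from the formula for $\star$: the degree-$0$ (disjoint-union) term cancels in the commutator, a single contraction of a leg of $Y(u)$ with a leg of $Y(v)$ produces H-graphs weighted by values of $\omega$, a double contraction produces bigons (hence $0$), and the full contraction produces a multiple of $\Theta$. It follows that $\operatorname{im} Y_2$ is contained in the span of $\Theta$ and the H-graphs, which we have just seen is all of the codomain $\A_2^c(H_\Q)$. For the reverse inclusion one checks that $Y_2$ is surjective: for instance $[Y(\alpha_1\wedge\alpha_2\wedge\beta_2),Y(\beta_1\wedge\alpha_3\wedge\beta_3)]_\star$ is a single nonzero H-graph, $[Y(\alpha_1\wedge\alpha_2\wedge\alpha_3),Y(\beta_1\wedge\beta_2\wedge\beta_3)]_\star$ has a nonzero $\Theta$-component, and by the $\Sp(H_\Q)$-equivariance of $Y_2$ and multilinearity the $\Sp(H_\Q)$-submodule generated by such elements exhausts $\A_2^c(H_\Q)$. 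This proves the first assertion.

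For the kernel, the inclusion of the submodule $\Sp(H_\Q)$-spanned by $r_1,r_2$ into $\Ker Y_2$ is immediate from the commutative square (\ref{eq:Hain_to_LMO}) together with Proposition \ref{prop:Hain}: in that square $\overline{Y}$ is a quotient of $Y$, so $Y$ vanishes on $\operatorname{R}(\Torelli_{g,1})$, and by Proposition \ref{prop:Hain} the degree-$2$ part of $\operatorname{R}(\Torelli_{g,1})$ is exactly the $\Sp(H_\Q)$-submodule generated by $r_1$ and $r_2$. (One can also see $Y_2(r_1)=Y_2(r_2)=0$ directly with the bracket formula and the IHX relation among H-graphs, and then invoke $\Sp(H_\Q)$-equivariance.)

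The substantial point is the reverse inclusion $\Ker Y_2\subseteq$ (submodule $\Sp(H_\Q)$-spanned by $r_1,r_2$), and this is where I expect the real work. Since $Y_2\colon\Lie_2(\Lambda^3 H_\Q)=\Lambda^2(\Lambda^3 H_\Q)\to\A_2^c(H_\Q)$ is $\Sp(H_\Q)$-equivariant, Schur's lemma shows that its restriction to any irreducible summand of the source is either zero or injective; hence $\Ker Y_2$ is a sum of irreducible summands, and one must show that the summands it contains are exactly those accounting for $r_1$ and $r_2$ --- two irreducibles for $g\ge 4$ and one for $g=3$ (consistently with $r_1=0$ when $g=3$, since the summand carrying $r_1$ appears only for $g\ge 4$). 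This forces one to write down the explicit $\Sp(H_\Q)$-decompositions of $\Lambda^2(\Lambda^3 H_\Q)$ and of $\A_2^c(H_\Q)=\operatorname{D}_4(H_\Q)\oplus\Q\cdot\Theta$ (these are the computations of Hain and of Habegger--Sorger), to match their common constituents, and to verify, for each surviving summand, a single non-vanishing statement $Y_2(w_\lambda)\ne0$ on a highest-weight vector $w_\lambda$. Equivalently --- via the square (\ref{eq:Hain_to_LMO}) and Theorem \ref{th:LMO_iso} --- one is proving that $\Gr\mcyl\otimes\Q$ is injective in degree $2$, \ie solving Problem \ref{pb:gr_mapping_cylinder} rationally in degree $2$; this is exactly why the representation-theoretic input cannot be replaced by the earlier results alone. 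The main obstacle is thus this bookkeeping: getting the $\Sp(H_\Q)$-decompositions right uniformly in $g\ge 3$ and confirming that $r_1$ and $r_2$ account for every irreducible annihilated by $Y_2$.
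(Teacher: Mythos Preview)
Your overall strategy --- compute the bracket explicitly, then use the $\Sp(H_\Q)$-decomposition of source and target --- is the right one and matches the approach the paper points to (the proof in \cite{HMass} is exactly this representation-theoretic bookkeeping). But there is a genuine error in your identification of $\A_2^c(H_\Q)$.

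The ``bigon with two legs'' is the Phi graph, and it does \emph{not} vanish by AS and IHX. Applying AS at one internal vertex only exchanges the two possible vertex-orientations of the Phi graph (giving $\Phi_+ = -\Phi_-$), it does not force $\Phi = 0$; and IHX on one edge of the bigon relates $\Phi$ to a diagram with a self-loop (which does vanish) and to another copy of $\Phi$, so it gives no new relation. In fact the paper says explicitly, just after the proposition, that ``Phi graphs are missing to the image'' of $\Gr\mcyl\otimes\Q$: they are nonzero elements of $\A_2^c(H_\Q)$ not hit by $Y_2$. So $\A_2^c(H_\Q)$ is spanned by Theta, H-graphs \emph{and} Phi graphs, and the proposition asserts that the image of $Y_2$ is a \emph{proper} subspace.

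The correct reason that $\operatorname{im}Y_2$ avoids the Phi part is not that $\Phi=0$ but that even-contraction terms cancel in the commutator. From the definition of $\star$, the $k$-gluing contribution to $E\star D$ equals $(-1)^k$ times the $k$-gluing contribution to $D\star E$ (each factor $\omega(\text{color}(v),\text{color}(\beta(v)))$ flips sign when the roles of $D$ and $E$ are exchanged). Hence $[Y(u),Y(v)]_\star$ has only odd-contraction terms: $k=1$ gives H-graphs and $k=3$ gives $\Theta$. This yields $\operatorname{im}Y_2\subset\langle\Theta,\text{H-graphs}\rangle$, and your examples give the reverse inclusion.

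For the kernel, your plan is sound, but note that the target decomposition you must write down is that of $\operatorname{D}_4(H_\Q)\oplus\Q\cdot\Theta$ (the image), not of all of $\A_2^c(H_\Q)$. Since you had already (accidentally) identified the target with this subspace, your dimension count would come out right; but the argument as written rests on a false lemma.
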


\noindent
It follows from those two propositions and from the commutative square (\ref{eq:Hain_to_LMO}) 
that $\Gr \mcyl \otimes \Q$ is injective in degree $2$. 
We also deduce that $\Gr \mcyl \otimes \Q$ is not surjective: Phi graphs are missing to the image.
This is a diagrammatic translation of Morita's results  \cite{Morita_Casson_1,Morita_Casson_2}: 
the quotient $(\Gamma_2 \Torelli_{g,1}/\Gamma_3 \Torelli_{g,1})\otimes \Q$
is determined by the second Johnson homomorphism (which corresponds to
the H graphs)
and by the Casson invariant (which corresponds to the Theta graph).\\

We now outline the case of a closed surface $\Sigma_g$.
Let $I^{<,c}$ be the subspace of $\A^{<,c}(H_\Q)$ spanned  by  sums of connected Jacobi diagrams of the form
$$
\begin{array}{c}
\labellist \small \hair 2pt
\pinlabel {${\displaystyle \quad \sum_{i=1}^g}$} [r] at -10 25
\pinlabel {$=:$} at 173 27 
\pinlabel {$\omega$} [t] at 249 0
\pinlabel {$\alpha_i$} [t] at 2 0
\pinlabel {$<$} at 18 5
\pinlabel {$< \cdots$} [l] at 42 4
\pinlabel {$< \cdots$} [l] at 254 4
\pinlabel {$\beta_i$} [t] at 36 0
\endlabellist
\centering
\includegraphics[scale=0.6]{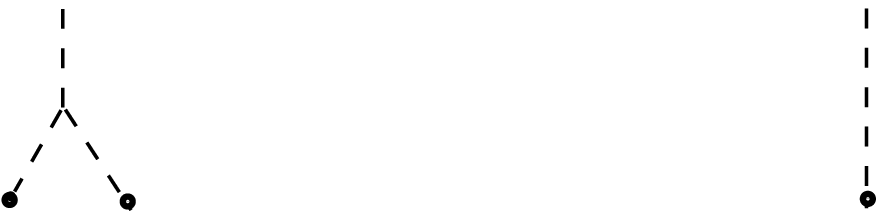}
\end{array}
$$ 
where only the lowest external vertices are shown.
This is an ideal of the Lie algebra $\A^{<,c}(H_\Q)$ which corresponds by $\chi^{-1}$ 
to the ideal $I^{c}$ introduced in \S \ref{subsec:symplectic_Jacobi_diagrams}.
Thus, we can consider the quotient Lie algebra
$$
\clocase{\A}^{<,c}(H_\Q) := \A^{<,c}(H_\Q)/I^{<,c}.
$$
Similarly to the bordered case, surgery along graph claspers defines a canonical map
$$
\psi: \clocase{\A}^{<,c}(H_\Q)   \longrightarrow \Gr^Y \cyl_{g} \otimes \Q
$$
of graded Lie algebras which is surjective and $\Sp(H)$-equivariant \cite{Habiro,HMass}.
Furthermore, if the surface $\Sigma_g$ is regarded as the union of $\Sigma_{g,1}$ with a closed disk $D$,
then there is the LMO homomorphism $Z: \cyl_g  \to \clocase{\A}(H_\Q)$.
The following is deduced from (\ref{eq:Z_Z}) and Theorem \ref{th:LMO_iso}.

\begin{theorem}[See \cite{HMass}]
\label{th:LMO_iso_closed}
The LMO homomorphism $Z: \cyl_{g} \to \clocase{\A}(H_\Q)$ induces a map
$\Gr Z: \Gr^Y \cyl_{g} \otimes \Q \to  \clocase{\A}^c(H_\Q)$ at the graded level. 
Moreover, the following prism in the category  of graded Lie algebras with $\Sp(H_\Q)$-actions is commutative:
$$
\xymatrix{
& \A^{<,c}(H_\Q) \ar@{->>}[rr]   \ar[ld]^-\simeq_-\psi & & \clocase{\A}^{<,c}(H_\Q) \ar[ld]^-\simeq_-\psi\\ 
\Gr^Y \cyl_{g,1} \otimes \Q  \ar[d]_-{\Gr Z}^-\simeq \ar@{->>}[rr]^-{\clocase{\centereddot}} & & \Gr^Y \cyl_{g} \otimes \Q \ar[d]_-{\Gr Z}^-\simeq & \\
\A^c(H_\Q) \ar@{->>}[rr]  \ar@/_1pc/[ruu]^(.4){\simeq}_(.4)\chi |!{[u];[urr]}\hole &  & \clocase{\A}^c(H_\Q) \ar@/_1pc/[ruu]^-\simeq_-\chi &
}
$$
\end{theorem}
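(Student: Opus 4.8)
The plan is to deduce the closed case from the bordered case of Theorem~\ref{th:LMO_iso} by ``quotienting'', using the commutative square~(\ref{eq:Z_Z}) to control how $Z$ behaves under the closure operation $\clocase{\centereddot}\colon\cyl_{g,1}\to\cyl_g$ of Lemma~\ref{lem:closing}. Throughout, write $q$ for the canonical projections $\A^{c}(H_\Q)\twoheadrightarrow\clocase{\A}^{c}(H_\Q)$ and $\A^{<,c}(H_\Q)\twoheadrightarrow\clocase{\A}^{<,c}(H_\Q)$; since $I^{<,c}=\chi(I^{c})$, the symmetrization isomorphism $\chi$ descends to an isomorphism $\clocase{\A}^{c}(H_\Q)\xrightarrow{\simeq}\clocase{\A}^{<,c}(H_\Q)$, still denoted $\chi$, so that the ``bottom'' rectangular face $\chi\circ q=q\circ\chi$ of the prism is automatic.

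First I would check that $\clocase{\centereddot}$ preserves the $Y$-filtration: if $M'=M_{C}$ for a $Y_{k}$-graph $C$, then $C$ lies in the interior of $M$ and can be isotoped off the cocore of the $2$-handle glued in during closure, so that $\clocase{M'}=(\clocase{M})_{C}$ and the $Y_k$-equivalence persists; hence $\clocase{\centereddot}$ induces a graded homomorphism $\Gr^{Y}\cyl_{g,1}\otimes\Q\to\Gr^{Y}\cyl_{g}\otimes\Q$. Feeding this into~(\ref{eq:Z_Z}), which reads $Z\circ\clocase{\centereddot}=q\circ Z$, and lifting a degree-$v$ graph clasper $C\subset M\in\cyl_{g}$ to $\widetilde{C}\subset\widetilde{M}$ for some preimage $\widetilde{M}\in\cyl_{g,1}$, one gets $Z(M)-Z(M_{C})=q\bigl(Z(\widetilde{M})-Z(\widetilde{M}_{\widetilde{C}})\bigr)$, which starts in degree $v$ by the bordered case. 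This proves that $Z$ induces $\Gr Z\colon\Gr^{Y}\cyl_{g}\otimes\Q\to\clocase{\A}^{c}(H_\Q)$ and, simultaneously, that the ``middle'' rectangular face $\Gr Z\circ\clocase{\centereddot}=q\circ\Gr Z$ of the prism commutes.

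It then remains to treat the ``top'' rectangular face and to harvest the isomorphism statement. The identity $\psi\circ q=\clocase{\centereddot}\circ\psi$ holds because $\psi$ is realised by surgery along a clasper $C(D)$ lying in the interior of the trivial cylinder, so that closing commutes with this surgery; moreover a diagram carrying an $\omega$-coloured vertex realises a leaf running parallel to $\zeta=[\partial\Sigma_{g,1}]$, which bounds a disc after closure and hence produces a trivial surgery --- this is exactly the well-definedness of $\psi$ on $\clocase{\A}^{<,c}(H_\Q)$ recalled just before the statement. Granting these three rectangular faces together with the front triangle of Theorem~\ref{th:LMO_iso}, a routine diagram chase using the surjectivity of $q$ gives $\Gr Z\circ\psi\circ\chi=\Id$ on $\clocase{\A}^{c}(H_\Q)$ in the closed case; since $\chi$ is an isomorphism and $\psi$ is surjective (as recalled before the statement), this forces $\psi$ to be bijective and $\Gr Z=\chi^{-1}\circ\psi^{-1}$ to be an isomorphism, so the whole prism commutes. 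The $\Sp(H_\Q)$-equivariance and the Lie-algebra-morphism property of $\Gr Z$ are then inherited from those of $\psi$ and $\chi$.

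The point I expect to require real care is the geometric input behind the top face: the compatibility of clasper surgery with the closure operation, and the verification that the kernel of $\clocase{\centereddot}\colon\Gr^{Y}\cyl_{g,1}\otimes\Q\to\Gr^{Y}\cyl_{g}\otimes\Q$ matches $\psi(I^{<,c})$ precisely --- everything else is a formal consequence of Theorem~\ref{th:LMO_iso} once this bit of clasper calculus is in place.
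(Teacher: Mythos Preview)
Your approach is correct and matches the paper's, which merely says the theorem ``is deduced from (\ref{eq:Z_Z}) and Theorem \ref{th:LMO_iso}'' --- you have spelled out exactly how that deduction goes, via the three rectangular faces and the surjectivity of $q$. One small caveat: your heuristic that an $\omega$-coloured vertex ``realises a leaf running parallel to $\zeta$'' is not quite accurate (an $\omega$-vertex is a \emph{sum} over a symplectic basis, not a single leaf), but since you rightly defer to the well-definedness of $\psi$ on $\clocase{\A}^{<,c}(H_\Q)$ already stated in the text, this does not affect the validity of your argument.
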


\noindent
It follows from this diagram that the map $\clocase{\centereddot}: \Gr^Y \cyl_{g,1} \otimes \Q \to \Gr^Y \cyl_{g} \otimes \Q$ depends only 
on the map $H_1(\Sigma_{g,1};\Q) \to H_1(\Sigma_{g};\Q)$ induced by the inclusion $\Sigma_{g,1} \subset \Sigma_g$.
(By Remark \ref{rem:integral_coefficients}, the same observation holds with integral coefficients.)

In the closed case, too, the map $\Gr \mcyl \otimes \Q$ has an algebraic description obtained
by combining Hain's results \cite{Hain} to Theorem \ref{th:LMO_iso_closed}.
For $g\geq 3$, the first Johnson homomorphism induces an isomorphism between $\left(\Torelli_g/\Gamma_2 \Torelli_g\right)\otimes \Q$
and $\Lambda^3 H_\Q/ \omega \wedge H_\Q$, hence a Lie algebra surjective map
$$
J: \Lie\left(\Lambda^3 H_\Q/ \omega \wedge H_\Q\right) \longrightarrow \Gr^\Gamma \Torelli_g \otimes \Q.
$$
The analogue of Theorem \ref{th:Hain} holds in the closed case as well \cite{Hain}:
depending on the genus $g$, the ideal of relations $\operatorname{R}(\Torelli_g):= \Ker J$ is quadratic or cubic.
In this context,  we have the following commutative diagram in the category of Lie algebras with $\Sp(H_\Q)$-actions:
\begin{equation}
\label{eq:Hain_to_LMO_closed}
\xymatrix{
\Gr^\Gamma \Torelli_{g} \otimes \Q \ar[r]^-{\Gr \mcyl \otimes \Q} & \Gr^Y \cyl_{g} \otimes \Q \ar[d]_-{\Gr Z} \\
\Lie(\Lambda^3 H_\Q/ \omega \wedge H_\Q)/ \operatorname{R}(\Torelli_{g}) \ar[u]^-{\overline{J}}_-\simeq \ar[r]_-{\overline{Y}} & 
\clocase{\A}^c(H_\Q) \ar@/_1pc/[u]_-{\psi \circ \chi}^-\simeq
}
\end{equation}

Thanks to this diagram, Problem \ref{pb:gr_mapping_cylinder} can be solved in degree $2$ and for rational coefficients.
Again, representation theory of the symplectic group is the key tool for computations.
Thus, Hain proved in \cite{Hain} that, for $g\geq 3$,  the degree $2$ part of $\operatorname{R}(\Torelli_g)$
is $\Sp(H_\Q)$-spanned by the class of the element $r_1$ defined in Proposition \ref{prop:Hain}. At the diagrammatic level,
we need to consider the Lie bracket of $\clocase{\A}^c(H_\Q)$ in degree $1+1$ or, equivalently, the map
$$
Y_2: \Lie_2\left(\Lambda^3 H_\Q/ \omega \wedge H_\Q\right)
\longrightarrow \clocase{\A}^c_2(H_\Q)= \A_2^c(H_\Q) / I_2^c.
$$
Observe that the subspace $I_2^c$ is spanned by elements of the form
$$
\Ygraphbottoptop{\omega}{y}{x} -\frac{\omega(x,y)}{4} \cdot \thetagraph\ , \quad \quad \forall x,y \in H_\Q.
$$
It can be deduced  from Proposition \ref{prop:Y_2} that the image of $Y_2$ is the subspace generated by (the classes of)
the Theta graph and the H graphs, and that the kernel of $Y_2$ is $\Sp(H_\Q)$-spanned by the class of $r_1$ \cite{HMass}.
It follows that $\Gr \mcyl \otimes \Q$ is injective in degree $2$ but, like in the bordered case, it is not surjective.

\subsection{The degree $1$ part of the Lie ring of homology cylinders}

\label{subsec:degree_one}

We now compute the abelian group $\cyl(\Sigma_{g,b})/Y_2$ for $b=0$ and $b=1$ \cite{Habiro,MM},
and we relate this to the abelianization of $\Torelli(\Sigma_{g,b})$ due to Johnson \cite{Johnson_abelianization}.
For this, we need some homomorphisms introduced by Birman and Craggs \cite{BC}
which are derived from the Rochlin invariant of spin $3$-manifolds.

We consider first the case of $\Sigma_{g,1}$.
The reader is referred to \cite{Milnor} or \cite{Kirby} for an introduction to spin structures. 
The \emph{Rochlin invariant}\index{Rochlin invariant}
of a closed connected oriented $3$-manifold $M$ equipped with a spin structure $\sigma$ is defined as
$$
R_M(\sigma) := \sgn(W) \mod 16
$$
where  $\sgn(W)$ is the signature of  a compact connected oriented smooth $4$-manifold $W$
which is bounded by $M$ and to which $\sigma$ can be extended.
That $R_M(\sigma) \in \Z_{16}$ is well-defined follows from Rochlin's theorem and 
Novikov's additivity of the signature --- see \cite{Kirby}. 
We need two facts about the \emph{Rochlin function} of $M$
$$
R_M: \Spin(M) \longrightarrow \Z_{16}.
$$
First, it is trivial modulo $8$ if $H_1(M)$ is torsion-free \cite{BM} and, second,
it is a cubic function with respect to the affine action of $H^1(M;\Z_2)$ on $\Spin(M)$ \cite{Turaev}.
More precisely, Turaev proved that the $3$-rd derivative of  $R_M$ at a point  $\sigma\in \Spin(M)$ 
$$
\diff^3_\sigma R_M: H^1(M;\Z_2) \times H^1(M;\Z_2)  \times H^1(M;\Z_2)  \longrightarrow \Z_{16}
$$
which, by definition, sends $(x_1,x_2,x_3)$ to
$$
R_M(\sigma) - \sum_{1\leq i \leq 3} R_M(\sigma+ \vec{x_i}) +
\sum_{1 \leq i<j \leq 3} R_M(\sigma + \vec{x_i} + \vec{x_j}) - R_M(\sigma + \vec{x_1} + \vec{x_2} + \vec{x_3})
$$
does not depend on $\sigma$ and coincides with the mod $2$ triple-cup product form:
\begin{equation}
\label{eq:Rochlin_to_cohomology_mod_2}
\diff^3_\sigma R_M(x_1,x_2,x_3) = 8 \cdot \left\langle x_1 \cup x_2 \cup x_3, [M]\right\rangle \in \Z_{16}.
\end{equation}

Let $C \in \cyl_{g,1}$ and consider the closed oriented $3$-manifold $\closure{C}$ defined in \S \ref{subsec:closure}. 
The inclusion $c_\pm:\Sigma_{g,1} \to C \subset \overline{C}$ gives an isomorphism
$c^*: H^1(\overline{C};\Z_2) \to H^1(\Sigma_{g,1};\Z_2)$ in cohomology
and, so, it induces an affine isomorphism $c^*: \Spin(\overline{C}) \to \Spin(\Sigma_{g,1})$.
Spin structures on $\Sigma_{g,1}$ can be described in the following algebraic way. Let 
$$
\mathcal{Q} := \left\{H\otimes \Z_2 \stackrel{q}{\longrightarrow} \Z_2: \forall x,y \in H\otimes \Z_2, \ 
q(x+y) - q(x) - q(y) = \omega(x,  y) \mod 2 \right\}
$$
be the set of \emph{quadratic forms} whose polar form is the intersection pairing  mod $2$.
This is an affine space over the $\Z_2$-vector space $H\otimes  \Z_2$, the action being given by 
$$
\forall x \in H\otimes \Z_2, \forall q\in \mathcal{Q}, \ q + \vec{x} := q + \omega(x,\centereddot). 
$$
As observed by Atiyah \cite{Atiyah} and Johnson \cite{Johnson_quadratic}, there is a canonical bijection
\begin{equation}
\label{eq:Atiyah-Johnson}
\Spin(\Sigma_{g,1}) \stackrel{\simeq}{\longrightarrow} \mathcal{Q}, \ \sigma \longmapsto q_\sigma
\end{equation}
which is affine over the isomorphism $H^1(\Sigma_{g,1};\Z_2) \simeq H_1(\Sigma_{g,1};\Z_2)$ induced by $\omega$.
For any simple oriented closed curve $\gamma \subset \Sigma_{g,1}$,
the quadratic form $q_\sigma$ sends $[\gamma]$ to the cobordism class of $(\gamma, \sigma\vert_{\gamma})$ in $\Omega^{\Spin}_1 \simeq \Z_2$.
In the sequel, we denote by  $B_{\leq d}$ the space of polynomial functions $\Spin(\Sigma_{g,1}) \to \Z_2$ of degree $\leq d$. 

\begin{definition}
The \emph{Birman--Craggs homomorphism}\index{homomorphism!Birman--Craggs}\index{Birman--Craggs homomorphism} is the map
$$
\beta: \cyl_{g,1} \longrightarrow B_{\leq 3}
$$
which sends a $C \in \cyl_{g,1}$ to the cubic function $\frac{1}{8} R_{\overline{C}} \circ c^{*,-1}$.
\end{definition}

\noindent
The Birman--Craggs homomorphism was originally defined (for the Torelli group) 
as a collection of many homomorphisms derived from the Rochlin invariant of homology $3$-spheres \cite{BC}.
Those homomorphisms were further studied by Johnson, who noticed that they are naturally indexed
by spin structures of the surface  
and can be unified into a single map $\beta: \Torelli_{g,1} \to B_{\leq 3}$ \cite{Johnson_BC}. 
The idea of defining $\beta$ using the mapping torus construction is due to Turaev \cite{Turaev}.

The fact that  $\beta$ is indeed a monoid homomorphism can be deduced from the following variation formula.

\begin{proposition}
\label{prop:variation_Rochlin}
Let $N$ be a closed connected oriented $3$-manifold.
Given a cobordism $C \in \cyl_{g,1}$ and an embedding $j:\Sigma_{g,1} \to N$,
consider the $3$-manifold $N'$ obtained by ``cutting'' $N$ along $j$ and by ``inserting'' $C$.
Then, there is a canonical bijection $\sigma \mapsto \sigma'$ between $\Spin(N)$ and $\Spin(N')$,  such that
\begin{equation}
\label{eq:variation_Rochlin}
R_{N'}(\sigma') - R_N(\sigma) =  8 \cdot \beta(C)(j^*(\sigma)) \ \in \Z_{16}.
\end{equation}
\end{proposition}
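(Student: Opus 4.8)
The plan is to follow the strategy of the proof of Proposition~\ref{prop:variation_Morita}, with the $k$-th nilpotent homotopy type (valued in $H_3$) replaced by the Rochlin invariant (valued in $\Z_{16}$), which plays the role of a spin-bordism invariant via its signature description. First I would fix the canonical bijection $\Spin(N)\to\Spin(N')$. Set $J:=j(\Sigma_{g,1})\subset N$ and $N_0:=N\setminus\interior(J\times[-1,1])$, so that $N=N_0\cup(\Sigma_{g,1}\times[-1,1])$ and $N'=N_0\cup C$. Since $C$ and the trivial cylinder are both homology cylinders, the Mayer--Vietoris argument (equivalently, the van Kampen-type square of Proposition~\ref{prop:variation_Morita} read with $\Z_2$-coefficients) produces a canonical isomorphism $H^1(N;\Z_2)\simeq H^1(N_0;\Z_2)\simeq H^1(N';\Z_2)$ compatible with the restriction maps. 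As spin structures form a torsor over $H^1(-;\Z_2)$, this pins down $\sigma\mapsto\sigma'$ by requiring that $\sigma$ and $\sigma'$ restrict to the same spin structure on $N_0$; in particular $\sigma'\vert_C$ and $\sigma\vert_{\Sigma_{g,1}\times[-1,1]}$ agree across the gluing, so both determine one and the same spin structure $\sigma_C\in\Spin(\overline C)$, namely the one with $c^*\sigma_C=j^*\sigma$ under the Atiyah--Johnson identification~(\ref{eq:Atiyah-Johnson}). By the definition of $\beta$ and the fact that $H_1(\overline C)$ is torsion-free, one has $R_{\overline C}(\sigma_C)=8\cdot\beta(C)(j^*\sigma)\in\Z_{16}$.

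The arithmetic core is a restatement of the well-definedness of $R$: since $R_M(\tau)\equiv\sgn(X)\bmod 16$ for any compact spin $4$-manifold $X$ bounding $(M,\tau)$, Novikov additivity of the signature along closed $3$-manifolds together with Rochlin's theorem (a closed spin $4$-manifold has signature $\equiv 0\bmod 16$) gives the following principle: whenever closed spin $3$-manifolds $(M_1,\tau_1),\dots,(M_r,\tau_r)$ occur together as the spin boundary of a single compact spin $4$-manifold $V$, then $\sgn(V)\equiv\sum_{i}R_{M_i}(\tau_i)\bmod 16$. Hence it suffices to construct a compact spin $4$-manifold $V$ with spin boundary $(N,\sigma)\sqcup(-N',\sigma')\sqcup(-\overline C,\sigma_C)$ for which $\sgn(V)\equiv 0\bmod 16$. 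Granting this, the principle yields $R_N(\sigma)-R_{N'}(\sigma')-R_{\overline C}(\sigma_C)\equiv 0$, hence $R_{N'}(\sigma')-R_N(\sigma)\equiv -R_{\overline C}(\sigma_C)=-8\cdot\beta(C)(j^*\sigma)\equiv 8\cdot\beta(C)(j^*\sigma)\bmod 16$ (the sign being harmless since $8\equiv -8$), which is formula~(\ref{eq:variation_Rochlin}).

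To build $V$ I would mimic the singular $3$-manifold $\widetilde N$ of Proposition~\ref{prop:variation_Morita} one dimension up: attach to $N_0\times[0,1]$, along $\partial N_0\times[0,1]$, a compact spin $4$-manifold $P$ that interpolates between the trivial cylinder $\Sigma_{g,1}\times[-1,1]$, the homology cylinder $C$ and the closed piece $\overline C$, so that one end of $V$ is $N_0\cup(\Sigma_{g,1}\times[-1,1])=N$ while the other end is $(N_0\cup C)\sqcup\overline C=N'\sqcup\overline C$. The existence of such a $P$ reduces to the vanishing of a relative spin-bordism obstruction, which I expect to follow — exactly as the identity $i'_*[N']-i_*[N]=\ell_*[\overline C]$ in $H_3(\widetilde N)$ did in Proposition~\ref{prop:variation_Morita} — from the fact that a homology cylinder induces the identity on homology. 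The main obstacle, and the place where genuine work is required, is to control $\sgn(V)$: since $N_0\times[0,1]$ and $P$ are glued along a $3$-manifold with boundary, Wall's non-additivity contributes a triple Maslov index of three Lagrangian subspaces of $H_1(\partial(\Sigma_{g,1}\times[-1,1]);\R)$ coming from $N_0\times[0,1]$, from $P$, and from the new region. The point I would have to nail down is that, because $C$ is a homology cylinder, these three Lagrangians coincide, so the correction dies and $\sgn(V)=\sgn(N_0\times[0,1])+\sgn(P)$; combined with a choice of $P$ that is homologically trivial rel its boundary, this forces $\sgn(V)\equiv 0\bmod 16$. Once the Wall correction is seen to vanish, the rest is routine bookkeeping of orientations and spin structures.
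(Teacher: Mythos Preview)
Your setup of the spin-structure bijection $\sigma\mapsto\sigma'$ is correct and agrees with the paper's, and you have correctly isolated the decisive point: the Wall non-additivity correction vanishes because $C$ is a homology cylinder (the relevant Lagrangians in $H_1\big(\partial(\Sigma_{g,1}\times[-1,1]);\R\big)$ coincide). Where your proposal falls short is in the packaging of the signature argument.

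You aim to build a spin $4$-manifold $V$ with \emph{three} boundary components $N\sqcup(-N')\sqcup(-\overline C)$ and $\sgn(V)\equiv 0\pmod{16}$, writing $V=(N_0\times[0,1])\cup P$. But you never construct $P$: invoking ``the vanishing of a relative spin-bordism obstruction'' gives existence but no control over $\sgn(P)$, and ``homologically trivial rel its boundary'' does not imply $\sgn(P)=0$. So as written there is a genuine gap. (It can be filled --- one may assemble $P$ from products such as $(\Sigma_{g,1}\times[-1,1])\times I$ and $C\times I$, each with vanishing $H_2$ --- but then you must track the Wall term through two gluings rather than one.)

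The paper short-circuits this. It chooses \emph{arbitrary} compact spin $4$-manifolds $W$ bounding $(N,\sigma)$ and $X$ bounding $(\overline C,\sigma_C)$, and glues $X$ onto $W$ along $J\times[-1,1]\subset\partial W$ identified with $\Sigma_{g,1}\times[-1,1]\subset\overline C=\partial X$, viewing $X$ as a ``generalized handle.'' The result $W'$ bounds $(N',\sigma')$, and a single application of Wall's formula gives $\sgn(W')=\sgn(W)+\sgn(X)$ once the correction term is seen to vanish; reducing mod $16$ yields $R_{N'}(\sigma')=R_N(\sigma)+R_{\overline C}(\sigma_C)$ directly. There is no need to manufacture a $4$-manifold with prescribed signature: the three Rochlin invariants appear automatically as $\sgn(W')$, $\sgn(W)$, $\sgn(X)$.
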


\begin{proof}
As in the proof of Proposition \ref{prop:variation_Morita},
we denote by  $J$ the surface $j(\Sigma_{g,1})$ in $N$. The $3$-manifold $N'$ is then defined as 
\begin{equation}
\label{eq:gluing}
N' := \left(N \setminus \interior\left(J \times [-1,1]\right)\right) \cup_{j' \circ c^{-1}} C
\end{equation}
where $J \times [-1,1]$ denotes a closed regular neighborhood of $J$ in $N$
and $j'$ is the restriction to the boundary of $j \times \Id: \Sigma_{g,1} \times [-1,1] \to J \times [-1,1]$.
Given $\sigma \in \Spin(N)$, we can restrict it to the boundary of $J \times [-1,1]$ and pull-back by $j'$
to $j'^*(\sigma) \in \Spin\left(\partial (\Sigma_{g,1} \times [-1,1])\right)$. 
Next, by pull-back with the homeomorphism $c^{-1}: \partial C \to \partial (\Sigma_{g,1} \times [-1,1])$,
we get a spin structure  $c^{-1,*}j'^*(\sigma)$ on $\partial C$.

\begin{quote}
\textbf{Claim.}
The restriction map $\Spin(C) \to \Spin(\partial C)$ is injective  and, for all $\alpha \in \Spin(\partial C)$,
$\alpha$ comes from $\Spin(C)$ if and only if we have $c_+^*(\alpha) =c_-^*(\alpha) \in \Spin(\Sigma_{g,1})$.
\end{quote}

\noindent
The injectivity of $\Spin(C) \to \Spin(\partial C)$
follows from the injectivity of the homomorphism $H^1(C;\Z_2) \to H^1(\partial C;\Z_2)$ induced by the inclusion.
To prove the equivalence, we observe that $(c^*_+,c^*_-)$ defines an isomorphism
between $H^1(\partial C;\Z_2)$ and the direct sum of two copies of $H^1(\Sigma_{g,1};\Z_2)$, 
which sends the image of $H^1(C;\Z_2)$ to the diagonal.
So, thanks to the affine actions, it suffices to prove the necessary condition. Let $\eta \in \Spin( C)$. 
The quadratic forms $q_{c_{+}^*(\eta)}$ and $q_{c_{-}^*(\eta)}$ are equal since, 
for all simple oriented closed curve $\gamma \subset \Sigma_{g,1}$, we can find a compact oriented surface $S$ properly embedded in $C$
with boundary $c_+(\gamma)\cup \left(-c_-(\gamma)\right)$.
So, by the correspondence (\ref{eq:Atiyah-Johnson}), the two spin structures $c_+^*(\eta)$ and $c_-^*(\eta)$ of $\Sigma_{g,1}$ coincide.

The above claim shows that $c^{-1,*}j'^*(\sigma) \in \Spin(\partial C)$ extends to a unique spin structure on $C$.
Since the gluing locus in (\ref{eq:gluing}) is connected, we can define by gluing the following spin structure on $N'$:
$$
\sigma' := \sigma|_{N \setminus \interior\left(J \times [-1,1]\right)} \cup_{j' \circ c^{-1}} \left(\hbox{extension of }c^{-1,*}j'^*(\sigma)\right).
$$
The  map $\Spin(N) \to \Spin(N')$ defined by $\sigma \mapsto \sigma'$ is bijective, since it is affine over the
isomorphism $H^1(N;\Z_2) \to H^1(N';\Z_2)$ obtained by a Mayer--Vietoris argument. 

Let $W$ be a compact connected oriented smooth $4$-manifold with boundary $N$ and to which $\sigma$ can be extended.
Also, let $X$ be a compact connected oriented smooth $4$-manifold 
bounded by $\overline{C}$ and to which $c^{*,-1}j^*(\sigma)$ can be extended.
As in the proof of  Proposition \ref{prop:variation_Morita}, we think of $\overline{C}$ as the gluing
$$
\overline{C} = C \cup_c -(\Sigma_{g,1} \times [-1,1]).
$$
Then, regarding $X$ as a kind of ``generalized'' handle, we can glue it to $W$ using the attaching map 
$j\times [-1,1]: \Sigma_{g,1} \times [-1,1] \to J \times [-1,1] \subset \partial W$. 
The resulting $4$-manifold $W'$ is bounded by $M'$
and has a spin structure which restricts to $\sigma'$ on the boundary. 
Thus, we have
$$
\sgn(W') = \sgn(W) + \sgn(X) - \hbox{correcting term}.
$$
Since $C$ is a homology cylinder, the kernel of $c_*:H_1\left(\partial(\Sigma_{g,1} \times [-1,1])\right) \to H_1(C)$ 
is the same as that of the map 
$H_1\left(\partial(\Sigma_{g,1} \times [-1,1])\right) \to H_1(\Sigma_{g,1} \times [-1,1])$ induced by the inclusion.
Then, it follows from its description by Wall \cite{Wall} that the correcting term must be zero. 
Thus, reducing mod $16$, we get
$$
R_{N'}(\sigma') = R_N(\sigma) + R_{\overline{C}} \left(c^{*,-1} j^*(\sigma)\right)
$$
and the conclusion follows.
\end{proof}

Since the map $\beta$ is a monoid homomorphism, 
its restriction to $\Torelli_{g,1}$ vanishes on $\Gamma_2 \Torelli_{g,1}$.
By using the characterization of the $Y_2$-equivalence in terms of commutators in the Torelli group (Theorem \ref{th:Y_Torelli}),
we deduce from Proposition \ref{prop:variation_Rochlin} that $\beta$ is invariant under $Y_2$-surgery. 
Thus, we get a homomorphism $\beta: \cyl_{g,1}/Y_2 \to B_{\leq 3}$.

In the sequel, we denote $H_{(2)}:= H \otimes \Z_2$.
The third derivative $\diff^3\! c$ of a cubic function $c:\Spin(\Sigma_{g,1}) \to \Z_2$ is multilinear and (being in characteristic $2$)
it is also alternate: so, $\diff^3\! c$ lives in $\Hom( \Lambda^3H^1(\Sigma_{g,1};\Z_2), \Z_2)$
or, equivalently, in $\Lambda^3 H_1(\Sigma_{g,1}; \Z_2) \simeq \Lambda^3 H_{(2)}$.
According to formula (\ref{eq:Rochlin_to_cohomology_mod_2}) and to Proposition \ref{prop:triple-cup},
the composition $\diff^3 \circ \beta$ coincides with the mod $2$ reduction of the first Johnson homomorphism.
Thus, we have a group homomorphism
$$
(\tau_1, \beta): \cyl_{g,1}/Y_2 \longrightarrow \Lambda^3 H \times_{\Lambda^3 H_{(2)}} B_{\leq 3}
$$
with values in the pull-back of abelian groups
$$
\Lambda^3 H \times_{\Lambda^3 H_{(2)}} B_{\leq 3} 
:= \left\{ (t,c) \in \Lambda^3 H \times B_{\leq 3} : t \!\!\! \mod 2 = \diff^3\! c \right\}.
$$

To find an inverse to the map $(\tau_1,\beta)$, we need to define a third abelian group and, for this, 
it is convenient to introduce the following terminology \cite{MM}. An \emph{abelian group with special element}
is an abelian group $G$ with a distinguished element $s$ of order at most $2$ (the \emph{special} element).
For every abelian group with special element $(G,s)$, 
we consider the abelian group $\A_1(G,s)$ freely generated by $G$-colored Y graphs and subject to the ``multilinearity'' and ``slide'' relations:
$$
\begin{array}{ccc}
\Ygraphbottoptop{g_1+g'_1}{g_2}{g_3}
= \Ygraphbottoptop{g_1}{g_2}{g_3} +\Ygraphbottoptop{g'_1}{g_2}{g_3} & 
\hphantom{BBBBB} & \Ygraphbottoptop{g_2}{g_1}{g_1} = \Ygraphbottoptop{g_2}{g_1}{s} \\
\hbox{Multilinearity} & & \hbox{Slide}
\end{array}
$$
This defines a functor $\A_1$ from the category of abelian groups with special elements to the category of abelian groups.
To every compact oriented (smooth) $3$-manifold $M$, we can associate the abelian group with special element
$$
 \left(H_1(\operatorname{F}(M)),s\right)
$$
where $\operatorname{F}(M)$ is the total space of the bundle of oriented frames on $M$,
and where $s$ is the image of the generator of $H_1(\operatorname{GL}_+(3;\R))\simeq \Z_2$.
We consider the following square in the category of abelian groups with special elements:
\begin{equation}
\label{eq:square}
\xymatrix{ 
\left(H_1(\operatorname{F}(M)),s\right) 
\ar[r]^-{e} \ar[d]_-{p_*}  & \left(B_{\leq 1}(\Spin(M)), \overline{1} \right) \ar[d]^-{\diff^1} \\
(H_1(M),0) \ar[r]_-{\mod 2} & (H_1(M;\Z_2),0). 
}
\end{equation}
Here $B_{\leq 1}(\Spin(M))$ denotes the space of affine functions $\Spin(M) \to \Z_2$
(\ie polynomial functions of degree $\leq 1$), and we take as special element the constant function $\overline{1}:\sigma \mapsto 1$.
The map $\diff^1$ sends an affine function to its linear part, which lives in $\Hom(H^1(M;\Z_2),\Z_2)\simeq H_1(M;\Z_2)$.
The map $p_*$ in the above diagram is induced by the bundle projection $p:\operatorname{F}(M) \to M$, 
while the map $e$ sends all $x\in H_1(\operatorname{F}(M))$ to the evaluation at $x$.
(Here, we are regarding $\Spin(M)$ as the set of those $y \in H^1(\operatorname{F}(M);\Z_2)$ satisfying $\langle y,s \rangle \neq 0$.)
It turns out that (\ref{eq:square}) is a pull-back diagram \cite{MM} 
so, to define an element of $H_1\left(\operatorname{F}(M)\right)$, it is enough to specify its images by $p_*$ and $e$.
Those constructions apply in particular to the $3$-manifold $M:= \Sigma_{g,1} \times [-1,1]$ and we denote
$$
P:= \big(H_1(\operatorname{F}(\Sigma_{g,1}\!\! \times\!\! [-1,1])),s\big).
$$
By applying the functor $\A_1$ to the commutative square (\ref{eq:square}), we get a map
$$
\A_1(P) \longrightarrow \A_1(H,0) \times_{\A_1(H_{(2)},0)} \A_1\left(B_{\leq 1}, \overline{1} \right).
$$
But, $\A_1(H,0)$ is canonically isomorphic to $\Lambda^3 H$ by the map
$$
\Ygraphbottoptop{h_1}{h_2}{h_3} \longmapsto h_1 \wedge h_2 \wedge h_3
$$
which we have already met in Example \ref{ex:degree_one}. 
(Note that $\A_1(H,0)\otimes \Q = \A_1(H_\Q)$ with the notation of \S \ref{subsec:symplectic_Jacobi_diagrams}.)
Besides,  the group $\A_1\left(B_{\leq 1}, \overline{1} \right)$ is canonically isomorphic to the space of cubic functions $B_{\leq 3}$ by the map
$$
\Ygraphbottoptop{a_1}{a_2}{a_3} \longmapsto a_1 \cdot a_2 \cdot a_3.
$$
Thus, we have obtained for free a map
$$
\digamma: \A_1(P) \longrightarrow \Lambda^3 H \times_{\Lambda^3 H_{(2)}} B_{\leq 3}.
$$

\begin{theorem}[See \cite{Habiro,MM}]
\label{th:Y_2}
Surgery along graph claspers of degree $1$ defines a map $\psi_1: \A_1(P) \to \cyl_{g,1}/Y_2$,
and we have the following commutative triangle:
$$
\xymatrix{
\A_1(P) \ar[r]^-{\psi_1}_-\simeq  \ar[rd]_-\digamma^-\simeq & \cyl_{g,1}/Y_2 \ar[d]^-{(\tau_1,\beta)}_-\simeq \\
& \Lambda^3 H \times_{\Lambda^3 H_{(2)}} B_{\leq 3}
}
$$
\end{theorem}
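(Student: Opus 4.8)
The plan is to build $\psi_1$ explicitly from clasper surgery, to check that the relations defining $\A_1(P)$ are exactly the ones forced by clasper calculus, to verify that the triangle commutes on generators, and finally to bootstrap all three maps to isomorphisms using the surgery characterization of homology cylinders. First I would make sense of $\psi_1$. A degree-$1$ graph clasper (a $Y_1$-tree) $C$ in $\Sigma_{g,1}\times[-1,1]$ is a node joined by three edges to three annular leaves $\ell_1,\ell_2,\ell_3$, and surgery produces a class $\{(\Sigma_{g,1}\times[-1,1])_C\}\in\cyl_{g,1}/Y_2$. Using the calculus of claspers of Goussarov and the first author (\cite{Habiro,Goussarov_clovers,GGP}) I would show that, modulo $Y_2$, this class depends only on the triple of classes carried by the \emph{framed} leaves in the frame bundle, i.e.\ on an element of $P^3$: the edges of $C$ may be unknotted and unlinked and the node put in standard position at the cost of claspers of degree $\ge 2$, which die mod $Y_2$; and changing a leaf framing by one unit changes the class exactly by the class attached to the special element $s$. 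The moves "band-summing two leaves of the same colour splits a $Y_1$-surgery into a composition of two" and "two parallel leaves of a $Y$-graph may be merged at the cost of a framing change on the third leaf" translate into the multilinearity and slide relations of $\A_1(P)$, so surgery descends to a well-defined monoid map $\psi_1:\A_1(P)\to\cyl_{g,1}/Y_2$. It is surjective: by Proposition \ref{prop:Y_1_cylinders} every homology cylinder is obtained from the trivial one by a sequence of $Y_1$-surgeries, and by Theorem \ref{th:Y_groups} (which makes $\cyl_{g,1}/Y_2$ abelian) such a composite equals mod $Y_2$ the sum of the individual $Y_1$-surgery classes.

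Next I would prove $(\tau_1,\beta)\circ\psi_1=\digamma$. Since $\A_1(P)$ is generated by $Y$-graphs it is enough to evaluate both sides on a single $P$-coloured $Y$-graph $D$ with leaves $\ell_1,\ell_2,\ell_3$. For the $\tau_1$-component this is the classical computation of the effect of a Borromean ($Y_1$-)surgery on the first Johnson homomorphism: writing $a_i:=p_*[\ell_i]\in H$ one gets $\tau_1(\psi_1(D))=a_1\wedge a_2\wedge a_3$, which is the $\Lambda^3H$-component of $\digamma(D)$. For the $\beta$-component I would run the argument of Proposition \ref{prop:variation_Rochlin}: present the $Y_1$-surgery as surgery on a $2$-component framed link in the closure $\overline{\Sigma_{g,1}\times[-1,1]}$, bound that closure by a spin $4$-manifold, glue in a standard "generalized handle" realizing the surgery, and compute the change of signature; Wall's non-additivity correction term vanishes because the relevant kernel in the first homology of the gluing surface is the same for the homology cylinder as for the trivial one. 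Reducing mod $16$ and dividing by $8$ identifies $\beta(\psi_1(D))\in B_{\le 3}$ with the product of three affine functions on $\Spin(\Sigma_{g,1})$, the $i$-th one being $\sigma\mapsto q_\sigma(a_i)$ corrected by the framing of $\ell_i$ — exactly the image of $[\ell_i]$ under the map $e$ in the square (\ref{eq:square}), hence exactly the $B_{\le 3}$-component of $\digamma(D)$ under $\A_1(B_{\le 1},\overline{1})\simeq B_{\le 3}$. Consistency with the already known identity $\diff^3\circ\beta=\tau_1\bmod 2$ (Proposition \ref{prop:triple-cup} together with (\ref{eq:Rochlin_to_cohomology_mod_2})) is a convenient check on the framing bookkeeping.

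Finally I would note that $\digamma$ is an isomorphism: applying the functor $\A_1$ to the pull-back square (\ref{eq:square}) for $M=\Sigma_{g,1}\times[-1,1]$ and using the canonical identifications $\A_1(H,0)\simeq\Lambda^3H$ and $\A_1(B_{\le 1},\overline{1})\simeq B_{\le 3}$ exhibits $\digamma$ as the comparison map into the pull-back, and a direct check (degree reasons, the slide relation matching the kernel of $\operatorname{F}(M)\to M$) shows it is bijective. Then the commuting triangle forces $\psi_1$ to be injective; being also surjective, $\psi_1$ is an isomorphism, and $(\tau_1,\beta)=\digamma\circ\psi_1^{-1}$ is an isomorphism too. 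The main obstacle is the $\beta$-computation: controlling the framing corrections together with the mod-$16$ Rochlin bookkeeping, and in particular justifying the vanishing of the Wall correction term, is the delicate part; the $\tau_1$-part and the purely algebraic fact that $\digamma$ is an isomorphism are routine.
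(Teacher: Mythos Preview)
Your proposal is correct and follows essentially the same strategy as the paper's proof: construct $\psi_1$ via clasper surgery and check the relations of $\A_1(P)$ via clasper calculus, verify $(\tau_1,\beta)\circ\psi_1=\digamma$ on $Y$-graph generators by computing the variation of $\tau_1$ and of the Rochlin function under a $Y_1$-surgery (the paper likewise points to Propositions~\ref{prop:variation_Morita} and~\ref{prop:variation_Rochlin} for this), and finish by checking directly that $\digamma$ is bijective (the paper does this ``by working with a basis of $H$''). One small slip: a $Y_1$-surgery is not realized by a $2$-component framed link --- after replacing the node by Borromean leaves one obtains three basic claspers, hence a $6$-component link --- but this does not affect your argument.
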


\noindent
This is similar to Johnson's computation of the abelianization of the Torelli group.

\begin{theorem}[Johnson \cite{Johnson_abelianization}]
For $g\geq 3$, the map  
$$
(\tau_1,\beta): \Torelli_{g,1}/\Gamma_2 \Torelli_{g,1} \longrightarrow \Lambda^3 H \times_{\Lambda^3 H_{(2)}} B_{\leq 3}
$$
is an isomorphism.
\end{theorem}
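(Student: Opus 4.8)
The plan is to factor the map through its homology-cylinder analogue, reducing the theorem to Theorem~\ref{th:Y_2} together with one genuinely hard residual statement. By Corollary~\ref{cor:Y_lcs} the mapping cylinder construction sends $\Gamma_2\Torelli_{g,1}$ into $Y_2\cyl_{g,1}$, so it induces a homomorphism
$$
\phi : \Torelli_{g,1}/\Gamma_2\Torelli_{g,1} \longrightarrow \cyl_{g,1}/Y_2 .
$$
First I would check that the triangle formed by $\phi$, the isomorphism $(\tau_1,\beta)$ of Theorem~\ref{th:Y_2}, and the map $(\tau_1,\beta)$ of the statement commutes. For the $\tau_1$-component this is immediate, since $\tau_1$ on either monoid is read off from the action on $\pi/\Gamma_3\pi$ and $\rho_2(\mcyl(f))=f_*$, so $\tau_1(\mcyl(f))$ is Johnson's $\tau_1(f)$. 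For the $\beta$-component this is the observation, due to Turaev \cite{Turaev}, that $\overline{\mcyl(f)}$ is the mapping torus of $f$ with a solid torus glued in --- precisely the closed $3$-manifold underlying the mapping-torus description of the Birman--Craggs--Johnson homomorphism --- so that $\beta(\mcyl(f))$ equals Johnson's $\beta(f)$; formula~(\ref{eq:Rochlin_to_cohomology_mod_2}) together with Proposition~\ref{prop:triple-cup} makes the two targets match inside the fibre product. Since $(\tau_1,\beta):\cyl_{g,1}/Y_2\to\Lambda^3 H\times_{\Lambda^3 H_{(2)}}B_{\leq 3}$ is an isomorphism, the theorem becomes equivalent to the assertion that $\phi$ is an isomorphism.

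Surjectivity of $\phi$ I would obtain from explicit elements. Johnson already shows $\tau_1$ maps onto $\Lambda^3 H$ for $g\geq 2$ by evaluating it on genus-$1$ bounding-pair maps \cite{Johnson_first_homomorphism}, and a genus-$1$ BP map supported on an embedded $\Sigma_{1,2}\subset\Sigma_{g,1}$ is a Torelli element whose mapping cylinder is $\Sigma_{g,1}\times[-1,1]$ surgered along a single degree-$1$ clasper (Figure~\ref{fig:BP}). Thus the image of $\phi$ contains all such clasper classes, and together with the surjectivity of the surgery map $\psi_1$ of Theorem~\ref{th:Y_2} and a short bookkeeping over the colourings of the Y-graphs this yields that $\phi$, hence $(\tau_1,\beta)$ on $\Torelli_{g,1}/\Gamma_2\Torelli_{g,1}$, is onto (for $g\geq 3$).

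The main obstacle is injectivity of $\phi$, \ie the inclusion $\mcyl^{-1}(Y_2\cyl_{g,1})\subset\Gamma_2\Torelli_{g,1}$ (the opposite inclusion being Corollary~\ref{cor:Y_lcs}). This is not formal: it is exactly the degree-$1$ case of the injectivity half of Problem~\ref{pb:gr_mapping_cylinder}, and it is the substance of Johnson's theorem, so the reduction above merely relocates the difficulty. The route I would follow is Johnson's original one \cite{Johnson_abelianization}: use that $\Torelli_{g,1}$ is generated by genus-$1$ bounding-pair maps for $g\geq 3$ \cite{Johnson_generation}, so $\Torelli_{g,1}/\Gamma_2\Torelli_{g,1}$ is spanned by their classes; produce a sufficient family of relations among these generators, coming from lantern and chain relations in $\mcg_{g,1}$ restricted to $\Torelli_{g,1}$ together with commutator relations; and then verify, using the decomposition of $\Lambda^3 H\times_{\Lambda^3 H_{(2)}}B_{\leq 3}$ into isotypic components for the integral and mod-$2$ symplectic groups, that these relations already cut $H_1(\Torelli_{g,1})$ down to exactly the fibre product. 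The delicate point --- and where the hypothesis $g\geq 3$ is genuinely used --- is this last step: confirming that one has enough relations, which rests on symplectic representation theory and a careful genus count rather than on the soft structure assembled earlier in this chapter.
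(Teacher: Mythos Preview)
The paper does not prove this statement: it is a survey, and Johnson's theorem is quoted from \cite{Johnson_abelianization} without proof. In fact, the paper's logic runs in the \emph{opposite} direction from your reduction: it takes Johnson's theorem as established, combines it with Theorem~\ref{th:Y_2}, and then \emph{deduces} that $\Gr_1\mcyl:\Torelli_{g,1}/\Gamma_2\Torelli_{g,1}\to\cyl_{g,1}/Y_2$ is an isomorphism (this is the sentence immediately following the theorem). So your map $\phi$ is, in the paper, the conclusion rather than the stepping stone.

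Your commutative-triangle argument is correct and useful as a way of organising the picture, and you are right that the equivalence between Johnson's theorem and the bijectivity of $\phi$ is formal once Theorem~\ref{th:Y_2} is in hand. You are also honest that the injectivity of $\phi$ is exactly the substance of Johnson's original result, so the reduction does not shorten the proof. One caution on your surjectivity sketch: showing $\tau_1$ is onto $\Lambda^3 H$ via BP maps is not by itself enough, since the target fibre product also carries $2$-torsion coming from $B_{\leq 2}$; you need the separate surjectivity of $\beta$ on $\Torelli_{g,1}$ (this is in \cite{Johnson_BC}), or else a more careful check that the clasper classes you realise via BP maps hit all $P$-coloured $Y$-graphs under $\psi_1$, including those whose colours involve the special element $s$. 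The ``short bookkeeping'' you allude to is genuinely short, but it is not automatic from $\tau_1$ alone.
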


\noindent
We deduce that, for $g \geq 3$,  the mapping cylinder construction induces an isomorphism
$\Gr_1 \mcyl: \Torelli_{g,1}/\Gamma_2 \Torelli_{g,1} \to \cyl_{g,1}/Y_2$. 
This solves Problem \ref{pb:gr_mapping_cylinder} in degree $1$.

\begin{proof}[Sketch of the proof of Theorem \ref{th:Y_2}]
The map $\psi_1$ is defined in a way similar to the surgery map $\psi$ of Theorem \ref{th:surgery}.
Thus, the map $\psi_1$ sends every $P$-colored $Y$ graph 
$$
D=\Ygraphbottoptop{p_2}{p_1}{p_3}
$$
to the 3-manifold obtained from  $\Sigma_{g,1} \times [-1,1]$ by surgery along a certain $Y_1$-tree $C(D)$.
To define it, first consider the oriented surface $S$ consisting of one disk, 
three annuli and three bands connecting the latter to the former.
Use the induced orientation on $\partial S$ to orient the cores of the annuli, 
and to give to the three annuli a cyclic ordering $1 < 2 < 3 <1$.
Then, embed $S$ into the interior of $\Sigma_{g,1} \times [-1,1]$ in such a way 
that the framed oriented knot corresponding to the $i$-th annuli of $S$  
lifts to an oriented curve in $\operatorname{F}\left(\Sigma_{g,1}\! \times\! [-1,1]\right)$ that represents $p_i + s$. 
We obtain in this way a graph clasper $C(D)$.

As in Theorem \ref{th:surgery}, it can be proved by using claspers that $\psi_1$ is well-defined and surjective.
Next, it is not difficult to compute how $\tau_1$ and  $\beta$ change under $Y_1$-surgery. 
For instance, such variation formulas can be derived from Proposition \ref{prop:variation_Morita} (with $k=1$) 
and from Proposition \ref{prop:variation_Rochlin}, respectively. (See also \cite{MM}.)
Such formulas show that $(\tau_1,\beta)\circ \psi_1=\digamma$.
Finally, it can be shown by working with a basis of $H$ that the map $\digamma$ is bijective. 
It follows that $\psi_1$ and $(\tau_1,\beta)$, too, are isomorphisms.
\end{proof}

We now deal with the closed case.
Let $C \in \cyl_g$ and let us consider the closure $\overline{C}$ of $C$. 
The inclusion $c_\pm: \Sigma_{g} \to C \subset \overline{C}$ 
induces an surjective affine map $c^*:\Spin(\overline{C}) \to \Spin(\Sigma_g)$.
Thus, each $\sigma \in \Spin(\Sigma_g)$ has two extensions $\overline{\sigma}_1$ and $\overline{\sigma}_2$
to $\overline{C}$, and we may have $R_{\overline{C}}(\overline{\sigma}_1) \neq R_{\overline{C}}(\overline{\sigma}_2)$.
Nevertheless, this can not occur if $(\Sigma_g,\sigma)$ is the boundary of a spin $3$-manifold, 
because one can then find a compact oriented smooth $4$-manifold with boundary $\overline{C}$
to which both $\overline{\sigma}_1$ and $\overline{\sigma}_2$ extend. 
It follows that, for any section $s$ of the surjective affine map $c^*:\Spin(\overline{C}) \to \Spin(\Sigma_g)$,
the cubic function $\frac{1}{8} \cdot R_{\overline{C}} \circ s : \Spin(\Sigma_g) \to \Z_2$ is independent of $s$ on the subset
$$
\Spin_\partial(\Sigma_g) := 
\left\{ \sigma \in \Spin(\Sigma_g): 
(\Sigma_g,\sigma) \hbox{  is the boundary of a spin $3$-manifold}\right\}.
$$
As in the bordered case, spin structures on $\Sigma_g$ can be identified with quadratic forms:
\begin{equation}
\label{eq:Atiyah-Johnson_closed}
\Spin(\Sigma_{g}) \stackrel{\simeq}{\longrightarrow} \mathcal{Q}, \ \sigma \longmapsto q_\sigma.
\end{equation}
The quadratic forms that correspond to bounding spin structures can be recognized 
thanks to the \emph{Arf invariant}, defined by
$$
\Arf(q) := \sum_{i=1}^g q\left(\alpha_i\right) \cdot q\left(\beta_i\right) \ \in \Z_2
$$
for all $q\in \mathcal{Q}$.
(More precisely, a spin structure $\sigma$ on $\Sigma_g$
belongs to $\Spin_\partial (\Sigma_g)$ if and only if $\Arf(q_{\sigma})=0$ \cite{Kirby}.)
Furthermore,  any two $f,f' \in B_{\leq 3}$ coincide on $\Spin_\partial(\Sigma_g)$ 
if and only if $f-f'$ is a multiple of $\Arf \in B_{\leq 2}$ \cite{Johnson_BC}.
(Here, we still denote by $B_{\leq d}$ the space of polynomial functions $\Spin(\Sigma_g) \to \Z_2$ of degree at most $d$,
so that the $\Arf$ invariant can be regarded as an element of $B_{\leq 2}$.)

\begin{definition}
The \emph{Birman--Craggs homomorphism}\index{homomorphism!Birman--Craggs}\index{Birman--Craggs homomorphism} is the map
$$
\beta: \cyl_{g} \longrightarrow B_{\leq 3}/\Arf\cdot B_{\leq 1}
$$
sending any $C \in \cyl_{g}$ to the class of the cubic function $\frac{1}{8} \cdot R_{\overline{C}} \circ s$,
where $s$ is a section of the surjective affine map $c^*:\Spin(\overline{C}) \to \Spin(\Sigma_g)$.
\end{definition}

Assume that $\Sigma_g$ is decomposed into a closed disk and a copy of $\Sigma_{g,1}$ as in \S \ref{subsec:bordered_to_closed}.
Then, for all $M \in \cyl_{g,1}$,
the closure of $\clocase{M}$ can be obtained from the closure of $M$
by $0$-framed surgery along a null-homologous knot. 
This allows us to bring their Rochlin functions into relation
and to see that the following square is commutative:
$$
\xymatrix{
\cyl_{g,1} \ar@{->>}[d]_-{\clocase{\ \centereddot\ }} \ar[r]^-\beta & B_{\leq 3} \ar@{->>}[d] \\
\cyl_{g} \ar[r]_-\beta & B_{\leq 3}/\Arf\cdot B_{\leq 1}
}
$$
We deduce that, in the closed case, too, $\beta$ is a monoid map and it is invariant under $Y_2$-surgery. 

For every $f\in B_{\leq 1}$, the third derivative of $\Arf \cdot f$ is $\omega \wedge \diff^1\! f \in \Lambda^3 H_{(2)}$. 
It follows from formula (\ref{eq:Rochlin_to_cohomology_mod_2}) and Proposition \ref{prop:triple-cup_closed}
that $\tau_1$ mod 2 coincides with $\diff^3 \circ \beta$. Thus, we obtain a group homomorphism
$$
(\tau_1, \beta): \cyl_{g}/Y_2 \longrightarrow 
\frac{\Lambda^3 H}{\omega\wedge H} \times_{\frac{\Lambda^3 H_{(2)}}{\omega \wedge H_{(2)}}} \frac{B_{\leq 3}}{\Arf \cdot B_{\leq 1}}.
$$
As in the bordered case, we consider the abelian group with special element
$$
P:= \big(H_1(\operatorname{F}(\Sigma_{g}\!\! \times\!\! [-1,1])),s\big)
$$
and, by applying the functor $\mathcal{A}_1$ to the commutative square (\ref{eq:square}), we get a map
$$
\digamma: \A_1(P) \longrightarrow \Lambda^3 H \times_{\Lambda ^3 H_{(2)}} B_{\leq 3}.
$$
Let $I_1$ be the subgroup of $\A_1(P)$ consisting of elements of the form
$$
\sum_{i=1}^g\ \Ygraphbottoptop{z}{\alpha_i^+}{\beta_i^+}
$$
where $z$ is an arbitrary element of $P$ and, for all $h \in H$,
$h^+$ is the unique element of $P$ which, in the pull-back diagram (\ref{eq:square}),
is sent to $h$ by $p_*$ and to the evaluation of quadratic forms at $h$ by $e$. 
We set
$$
\clocase{\A}_1(P) := \A_1(P) /I_1.
$$
Observe that the subgroup $\digamma(I_1)$ consists of elements of the form 
$(\omega \wedge h, \Arf \cdot f)$ where $h \in H$ and $f \in B_{\leq 1}$ are such that $\diff^1\! f = h \mod 2$.
As in the bordered case, the following can then be proved.  

\begin{theorem}[See \cite{Habiro,MM}]
\label{th:Y_2_closed}
Surgery along graph claspers of degree $1$ defines a map $\psi_1: \clocase{\A}_1(P) \to \cyl_{g}/Y_2$,
and we have the following commutative square:
$$
\xymatrix{
\clocase{\A}_1(P) \ar[r]^-{\psi_1}_-\simeq  \ar[d]_-\digamma^-\simeq & \cyl_{g}/Y_2 \ar[d]^-{(\tau_1,\beta)}_-\simeq   \\ 
\frac{\Lambda^3 H \mathop{\times}_{\Lambda^3 H_{(2)}} B_{\leq 3}}{(\omega\wedge H) \times_{\omega \wedge H_{(2)}} (\Arf \cdot B_{\leq 1})}
 \ar[r]_-\simeq & \frac{\Lambda^3 H}{\omega\wedge H} \times_{\frac{\Lambda^3 H_{(2)}}{\omega \wedge H_{(2)}}} \frac{B_{\leq 3}}{\Arf \cdot B_{\leq 1}}
}
$$
\end{theorem}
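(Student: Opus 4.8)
The plan is to deduce Theorem~\ref{th:Y_2_closed} from its bordered counterpart, Theorem~\ref{th:Y_2}, together with the closure map $\clocase{\ \centereddot\ }\colon\cyl_{g,1}\to\cyl_g$ of Lemma~\ref{lem:closing}. First I would define a map $\A_1(P)\to\cyl_g/Y_2$ before passing to any quotient, by repeating verbatim the construction of $\psi_1$ in the proof of Theorem~\ref{th:Y_2}: to a $P$-colored $Y$ graph one associates the oriented surface built from one disk, three annuli and three bands, embeds it in the interior of $\Sigma_g\times[-1,1]$ so that the framed knot corresponding to the $i$-th annulus lifts to a loop in $\operatorname{F}(\Sigma_g\times[-1,1])$ representing the $i$-th color plus $s$, and performs surgery along the resulting $Y_1$-tree. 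Calculus of claspers \cite{Goussarov_clovers,Habiro,GGP} shows, exactly as in the bordered case, that the class of the result in $\cyl_g/Y_2$ depends only on the image of the $Y$ graph in $\A_1(P)$; and Proposition~\ref{prop:Y_1_cylinders} gives surjectivity, since every homology cylinder over $\Sigma_g$ is obtained from $\Sigma_g\times[-1,1]$ by a sequence of $Y_1$-surgeries, \ie of degree-$1$ clasper surgeries.

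Next I would compare this map with the bordered surgery map of Theorem~\ref{th:Y_2} through the closure. Decomposing $\Sigma_g=\Sigma_{g,1}\cup D$ as in \S\ref{subsec:bordered_to_closed}, the inclusion $\Sigma_{g,1}\times[-1,1]\subset\Sigma_g\times[-1,1]$ induces an isomorphism of abelian groups with special element from $\left(H_1(\operatorname{F}(\Sigma_{g,1}\times[-1,1])),s\right)$ onto $P$ --- both being, by the pull-back diagram~(\ref{eq:square}), the fibre product of the same diagram determined by $H$ and $\mathcal{Q}$ --- hence an isomorphism between the $\A_1$-groups attached to $\Sigma_{g,1}$ and to $\Sigma_g$. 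Under this identification I would show that the surgery map $\A_1(P)\to\cyl_g/Y_2$ constructed above equals the composite of the bordered surgery map $\A_1(P)\to\cyl_{g,1}/Y_2$ with the closure homomorphism $\clocase{\ \centereddot\ }\colon\cyl_{g,1}/Y_2\to\cyl_g/Y_2$, since a degree-$1$ clasper inside $\Sigma_{g,1}\times[-1,1]$ is simply carried along when the $2$-handle defining the closure is attached. The closure homomorphism is surjective by Lemma~\ref{lem:closing}, and by the same lemma two cobordisms of $\Sigma_{g,1}$ have the same closure exactly when they differ by surgery on the link $K_0\cup K_1$ of Figure~\ref{fig:link_K}; expressing, up to $Y_2$-equivalence, the class of that move in terms of degree-$1$ claspers should reproduce precisely the images under the bordered surgery map of the generators of $I_1$. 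Matching the kernel of $\clocase{\ \centereddot\ }$ on $Y_2$-quotients with $I_1$ in this way is the main obstacle of the proof; granting it, the surgery map descends to a surjection $\psi_1\colon\clocase{\A}_1(P)\to\cyl_g/Y_2$.

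Finally I would run the diagram-chase. By Theorem~\ref{th:Y_2} the bordered $\digamma$ is bijective, and by the remark on~(\ref{eq:square}) it coincides with the closed $\digamma$ on $\A_1(P)$; evaluating $\digamma$ on a generator $\sum_{i=1}^g$ of $I_1$ gives first component $\sum_i\alpha_i\wedge\beta_i\wedge h=\omega\wedge h$ and second component the cubic function $q\mapsto\bigl(\sum_i q(\alpha_i)q(\beta_i)\bigr)\cdot q(h)=\Arf\cdot f$, where $f\colon\sigma\mapsto q_\sigma(h)$ satisfies $\diff^1 f=h\bmod 2$; hence $\digamma(I_1)=(\omega\wedge H)\times_{\omega\wedge H_{(2)}}(\Arf\cdot B_{\leq 1})$, so $\digamma$ descends to an isomorphism $\clocase{\A}_1(P)\stackrel{\simeq}{\longrightarrow}\frac{\Lambda^3 H\times_{\Lambda^3 H_{(2)}}B_{\leq 3}}{(\omega\wedge H)\times_{\omega\wedge H_{(2)}}(\Arf\cdot B_{\leq 1})}$, and the further identification of this quotient with $\frac{\Lambda^3 H}{\omega\wedge H}\times_{\frac{\Lambda^3 H_{(2)}}{\omega\wedge H_{(2)}}}\frac{B_{\leq 3}}{\Arf\cdot B_{\leq 1}}$ is a formal computation on fibre products of abelian groups. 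On the other hand, the closed $(\tau_1,\beta)$ is compatible with the bordered one under $\clocase{\ \centereddot\ }$ --- for $\tau_1$ by the commutative square~(\ref{eq:Johnson_bordered_to_closed}) with $k=1$, and for $\beta$ by the commutative square relating the two versions of $\beta$ stated just before Theorem~\ref{th:Y_2_closed} --- so combining these with $(\tau_1,\beta)\circ\psi_1=\digamma$ from Theorem~\ref{th:Y_2} and with the factorization of the previous paragraph yields $(\tau_1,\beta)\circ\psi_1=\digamma$ on $\clocase{\A}_1(P)$. Since $\digamma$ is now an isomorphism and $\psi_1$ is surjective, both $\psi_1$ and $(\tau_1,\beta)$ are isomorphisms, which is the assertion of the theorem.
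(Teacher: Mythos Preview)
Your overall architecture is correct and, in fact, more explicit than the paper's treatment: the paper simply writes ``as in the bordered case, the following can then be proved'' and gives no separate argument for Theorem~\ref{th:Y_2_closed}. Reading that phrase against the sketch of Theorem~\ref{th:Y_2}, the intended proof is to run the bordered template directly on $\Sigma_g\times[-1,1]$: define $\psi_1$ by clasper surgery, check by clasper calculus that it is well-defined on $\clocase{\A}_1(P)$ and surjective, verify $(\tau_1,\beta)\circ\psi_1=\digamma$ via the variation formulas (Propositions~\ref{prop:variation_Morita} and~\ref{prop:variation_Rochlin}), and check that the quotiented $\digamma$ is bijective. Your route is genuinely different: you import the bordered Theorem~\ref{th:Y_2} wholesale through the closure map $\clocase{\ \centereddot\ }$, and deduce $(\tau_1,\beta)\circ\psi_1=\digamma$ and the bijectivity of $\digamma$ from the bordered statements plus the compatibility squares~(\ref{eq:Johnson_bordered_to_closed}) and the $\beta$-square preceding the theorem. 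That is a clean economy and is perfectly valid.

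Two points deserve tightening. First, at the ``main obstacle'' you aim for more than you need: to make $\psi_1$ descend to $\clocase{\A}_1(P)$ you only need the inclusion $\psi_1^{\mathrm{bordered}}(I_1)\subset\ker\bigl(\clocase{\ \centereddot\ }\colon\cyl_{g,1}/Y_2\to\cyl_g/Y_2\bigr)$, not a full identification of that kernel with $\psi_1^{\mathrm{bordered}}(I_1)$ via the $K_0\cup K_1$ move of Lemma~\ref{lem:closing}. That inclusion is exactly the statement $\psi_1(I_1)=0$ in $\cyl_g/Y_2$, and this is what the references \cite{Habiro,MM} prove by a direct clasper argument inside $\Sigma_g\times[-1,1]$ (using that the curve $\partial\Sigma_{g,1}$ bounds the disk $D$ there); detouring through Lemma~\ref{lem:closing} does not shortcut this computation. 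Second, in your evaluation of $\digamma$ on a generator $\sum_i\Ygraphbottoptop{z}{\alpha_i^+}{\beta_i^+}$ of $I_1$, the second component is $\Arf\cdot e(z)$ for the \emph{general} affine function $e(z)\in B_{\leq 1}$, not only for $f\colon\sigma\mapsto q_\sigma(h)$; the special element $s\in P$ gives $e(s)=\overline{1}$, for instance. Your conclusion $\digamma(I_1)=(\omega\wedge H)\times_{\omega\wedge H_{(2)}}(\Arf\cdot B_{\leq 1})$ is nonetheless correct and matches the paper's observation just before the theorem, since as $z$ ranges over $P$ the pair $(p_*(z),e(z))$ ranges over all of $H\times_{H_{(2)}}B_{\leq 1}$ by the pull-back square~(\ref{eq:square}).
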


\noindent
This is similar to Johnson's computation of the abelianization of the Torelli group.

\begin{theorem}[Johnson \cite{Johnson_abelianization}]
For $g\geq 3$, the map  
$$
(\tau_1,\beta): \frac{\Torelli_{g}}{\Gamma_2 \Torelli_{g}} \longrightarrow
\frac{\Lambda^3 H}{\omega\wedge H} \times_{\frac{\Lambda^3 H_{(2)}}{\omega \wedge H_{(2)}}} \frac{B_{\leq 3}}{\Arf \cdot B_{\leq 1}}
$$
is an isomorphism.
\end{theorem}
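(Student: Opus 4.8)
The natural plan is to reduce the closed case to the bordered case (Johnson's theorem for $\Sigma_{g,1}$, stated just above) by means of Birman's exact sequence. Fix a closed disk $D\subset\Sigma_g$ so that $\Sigma_g=\Sigma_{g,1}\cup D$. First I would restrict Birman's exact sequence (Theorem \ref{th:Birman}) to the Torelli subgroups: the point-pushing map along a loop $\gamma$ is a product of two Dehn twists forming a bounding pair, and the image of the fibre class is the boundary Dehn twist $T_{\partial\Sigma_{g,1}}$, so in all cases $\operatorname{Push}$ takes values in $\Torelli_{g,1}$; since the inclusion $\Sigma_{g,1}\subset\Sigma_g$ induces an isomorphism on $H_1$, the homomorphism $\,\cdot\,\cup\,\Id_D$ carries $\Torelli_{g,1}$ onto $\Torelli_g$ with kernel exactly $\operatorname{im}(\operatorname{Push})$. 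Hence
$$
\pi_1\big(\operatorname{U}(\Sigma_g)\big)\ \xrightarrow{\ \operatorname{Push}\ }\ \Torelli_{g,1}\ \xrightarrow{\ \cdot\,\cup\,\Id_D\ }\ \Torelli_g\ \longrightarrow\ 1
$$
is exact. Since $\Torelli_g$ is the quotient of $\Torelli_{g,1}$ by the normal closure of $\operatorname{im}(\operatorname{Push})$, and this operation commutes with abelianization in the evident sense, one obtains
$$
\Torelli_g/\Gamma_2\Torelli_g\ \cong\ \big(\Torelli_{g,1}/\Gamma_2\Torelli_{g,1}\big)/\overline{\operatorname{im}(\operatorname{Push})},
$$
where $\overline{\operatorname{im}(\operatorname{Push})}$ denotes the image of $\operatorname{Push}$ in the bordered abelianization.

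Next I would set up the commutative square with top row the surjection $\Torelli_{g,1}/\Gamma_2\Torelli_{g,1}\twoheadrightarrow\Torelli_g/\Gamma_2\Torelli_g$, bottom row the natural surjection $\Lambda^3 H\times_{\Lambda^3 H_{(2)}}B_{\leq 3}\twoheadrightarrow\frac{\Lambda^3 H}{\omega\wedge H}\times_{\frac{\Lambda^3 H_{(2)}}{\omega\wedge H_{(2)}}}\frac{B_{\leq 3}}{\Arf\cdot B_{\leq 1}}$, left vertical the bordered isomorphism $(\tau_1,\beta)$, and right vertical the closed-surface $(\tau_1,\beta)$. Its commutativity is the content of the square (\ref{eq:Johnson_bordered_to_closed}) for $\tau_1$ and of the corresponding square for $\beta$ displayed just before the statement, applied to mapping cylinders. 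An elementary verification shows that the bottom map is surjective with kernel $(\omega\wedge H)\times_{\omega\wedge H_{(2)}}(\Arf\cdot B_{\leq 1})$, which, as noted in the paragraph preceding Theorem \ref{th:Y_2_closed}, equals $\digamma(I_1)$. The left vertical map being an isomorphism, the theorem reduces to the single assertion that $(\tau_1,\beta)$ carries $\overline{\operatorname{im}(\operatorname{Push})}$ exactly onto $\digamma(I_1)$, after which a diagram chase identifies the right vertical map with the induced isomorphism of quotients.

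The inclusion $(\tau_1,\beta)\big(\overline{\operatorname{im}(\operatorname{Push})}\big)\subseteq\digamma(I_1)$ is formal, since every element of $\operatorname{im}(\operatorname{Push})$ dies in $\mcg_g$, hence in the closed $(\tau_1,\beta)$, so its bordered image lies in the kernel of the bottom surjection. For the reverse inclusion I would argue with generators. On one side, $\pi_1(\operatorname{U}(\Sigma_g))$ is generated by the fibre class $z$ and by lifts $\widetilde\gamma_1,\dots,\widetilde\gamma_{2g}$ of a generating set of $\pi_1(\Sigma_g)$ representing a basis $h_1,\dots,h_{2g}$ of $H$; on the other, since the bracket $Y$ is multilinear and $P=(H_1(\operatorname{F}(\Sigma_{g,1}\times[-1,1])),s)$ is generated by $\{h_j^{+}\}\cup\{s\}$, the group $I_1$ is generated by the $2g+1$ elements $\sum_i Y(h_j^{+},\alpha_i^{+},\beta_i^{+})$ and $\sum_i Y(s,\alpha_i^{+},\beta_i^{+})$, whose images under $\digamma$ are $(\omega\wedge h_j,\ q\mapsto q(h_j)\Arf(q))$ and $(0,\Arf)$ (using $\sum_i q(\alpha_i)q(\beta_i)=\Arf(q)$). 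The remaining input is the computation $\tau_1(\mcyl(\operatorname{Push}(\gamma)))=\omega\wedge[\gamma]$ — read off from the explicit point-pushing formula for the action on $\pi_1(\Sigma_{g,1})$ — together with $\beta(\mcyl(T_{\partial\Sigma_{g,1}}))=\Arf\neq 0$. Since $h\mapsto\omega\wedge h$ is injective for $g\ge 2$, these computations pin down the $\digamma$-image of each generator of $\overline{\operatorname{im}(\operatorname{Push})}$ up to the element $(0,\Arf)$, which is itself in the image; hence $\overline{\operatorname{im}(\operatorname{Push})}$ maps onto all of $\digamma(I_1)$.

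The main obstacle is precisely this last step: computing the first Johnson homomorphism and, above all, the Birman--Craggs invariant on point-pushing maps and on the boundary Dehn twist. This is genuinely Johnson's original input and cannot be bypassed; note that the hypothesis $g\ge 3$ enters only here and through the bordered case. An alternative but longer route would be to redo Johnson's argument directly on $\Sigma_g$ — using that $\Torelli_g$ is generated by genus-one bounding-pair maps for $g\ge 3$, checking the defining relations among them, and evaluating $(\tau_1,\beta)$ on those generators — but the Birman-sequence route is preferable because it treats the bordered theorem as a black box.
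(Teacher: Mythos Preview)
The paper does not prove this statement: both the closed and the bordered versions of Johnson's theorem are simply stated with a citation to \cite{Johnson_abelianization}, so there is no proof in the paper to compare against.

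Your reduction of the closed case to the bordered case via Birman's exact sequence is correct and is essentially the route Johnson himself takes. The restriction of Birman's sequence to the Torelli groups is valid (point-pushes are bounding-pair maps, the fibre class gives $T_{\partial\Sigma_{g,1}}$, and surjectivity of $\Torelli_{g,1}\to\Torelli_g$ follows since $H_1(\Sigma_{g,1})\to H_1(\Sigma_g)$ is an isomorphism). Your commutative square is exactly the one the paper sets up via (\ref{eq:Johnson_bordered_to_closed}) and the analogous $\beta$-square, and your identification of the kernel of the bottom surjection with $\digamma(I_1)$ is what the paper records just before Theorem~\ref{th:Y_2_closed}.

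The heart of your argument---that it suffices to know $\tau_1(\operatorname{Push}(\widetilde\gamma))=\omega\wedge[\gamma]$ and $\beta(T_{\partial\Sigma_{g,1}})=\Arf$, because the formal inclusion into $\digamma(I_1)$ forces $\beta(\operatorname{Push}(\widetilde\gamma))$ to be one of two cubics differing by $\Arf$, and $(0,\Arf)$ is already in the image---is a clean way to avoid computing $\beta$ on all point-pushes directly. Both computational inputs are indeed Johnson's: the first is the standard formula for $\tau_1$ on the disk-pushing subgroup, and the second is the case of a separating twist bounding the whole surface in Johnson's evaluation of the Birman--Craggs homomorphisms on BSCC maps. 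You are right that these cannot be bypassed and that $g\geq 3$ enters only through the bordered theorem.
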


\noindent
We deduce that, for $g \geq 3$,  the mapping cylinder construction induces an isomorphism 
$\Gr_1 \mcyl: \Torelli_{g}/\Gamma_2 \Torelli_{g} \to \cyl_{g}/Y_2$.
Thus, Problem \ref{pb:gr_mapping_cylinder} in degree $1$ is also solved in the closed case.

\section{The homology cobordism group}

\label{sec:group}

If homology cobordisms of $\Sigma_{g,b}$ are considered up to the \emph{relation} of homology cobordism, then the monoid
$\cob(\Sigma_{g,b})$ turns into a group. In this final section, 
we outline how the previous techniques apply to the study of this group,
and we mention a few recent results about it.

\subsection{Definition of the homology cobordism group}

\label{subsec:definition_group}

Two homology cobordisms $M$ and $N$ of $\Sigma_{g,b}$ are said to be 
\emph{homology cobordant}\index{homology cobordism!(relation)}\index{relation!homology cobordism}
if the closed oriented $3$-manifold $M \cup_{m \circ n^{-1}} (-N)$ bounds a compact oriented smooth $4$-manifold $W$,
in such a way that the inclusions $M \subset W$ and $N \subset W$ induce isomorphisms in homology.
In this situation, we denote $M \sim_H N$.
This defines an equivalence relation among homology cobordisms, which is compatible with their composition. 
The quotient monoid is denoted by 
$$
\gcob(\Sigma_{g,b}) := \cob(\Sigma_{g,b})/\sim_H.
$$ This monoid is actually a group. Indeed, let $\iota$ be the involution of $\partial(\Sigma_{g,b} \times [-1,1])$
defined by $\iota(x,t)=(x,-t)$.  For any $M \in \cob(\Sigma_{g,b})$, denote by $-M \in \cob(\Sigma_{g,b})$ the
$3$-manifold $M$ with opposite orientation and with boundary parameterized by $m\circ \iota$.  Then, $M \circ (-M)$ is
homology cobordant to $\Sigma_{g,b} \times [-1,1]$ via $M \times [-1,1]$.

\begin{definition}
The \emph{homology cobordism group}\index{homology cobordism group}\index{group!homology cobordism}
of homology cobordisms is $\gcob(\Sigma_{g,b})$.
\end{definition}

The following subgroup of $\gcob(\Sigma_{g,b})$ is of special interest:
$$
\gcyl(\Sigma_{g,b}) := \cyl(\Sigma_{g,b})/\sim_H.
$$
That group should not be confused with the ``group of homology cylinders'' introduced in \S \ref{subsec:Y_cylinders}.
The latter is defined as a completion and is conjectured to contain the monoid $\cyl(\Sigma_{g,b})$,
while the former is a quotient of the monoid $\cyl(\Sigma_{g,b})$.

The homology cobordism group has been introduced by Garoufalidis and Levine 
as an  ``enlargement'' of the mapping class group \cite{GL,Levine}.

\begin{proposition}[Garoufalidis--Levine \cite{Levine, GL}]
\label{prop:injectivity}
The group $\mcg(\Sigma_{g,b})$ embeds into $\gcob(\Sigma_{g,b})$ by the composition
$\xymatrix{
\mcg(\Sigma_{g,b}) \ar[r]^-{\mcyl} & \cob(\Sigma_{g,b}) \ar@{->>}[r] & \gcob(\Sigma_{g,b}).
}$
\end{proposition}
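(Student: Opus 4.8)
The plan is to show that the composition $\mcg(\Sigma_{g,b}) \to \cob(\Sigma_{g,b}) \twoheadrightarrow \gcob(\Sigma_{g,b})$ is injective by constructing, in effect, a retraction or at least an invariant that detects the image of a nontrivial mapping class. First I would recall that $\mcyl$ itself is injective (this is stated in the excerpt, via Baer's theorem: homotopic homeomorphisms rel boundary are isotopic rel boundary). So the real content is that if $f \in \mcg(\Sigma_{g,b})$ and $\mcyl(f) \sim_H \Sigma_{g,b}\times[-1,1]$, then $f = \mathrm{id}$. Equivalently, $\mcyl$ composed with the quotient still has trivial kernel.

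The key idea is to use the action on the fundamental group together with the fact that a homology cobordism preserves all nilpotent quotients (Stallings' theorem, as already invoked for the maps $\rho_k$ in \S\ref{subsec:Dehn-Nielsen}). Concretely: a homology cobordism $W$ realizing $\mcyl(f) \sim_H \Sigma_{g,b}\times[-1,1]$ has the property that the inclusions of $\mcyl(f)$ and of the trivial cylinder into $W$ are homology isomorphisms; by Stallings, they induce isomorphisms on all nilpotent quotients of $\pi_1$. Chasing the resulting diagram shows that $\rho_k(\mcyl(f)) = \rho_k(\Sigma_{g,b}\times[-1,1]) = \mathrm{id}$ on $\pi/\Gamma_{k+1}\pi$ for every $k$ — i.e., $\mcyl(f) \in \bigcap_k \cob[k]$, hence $f_*$ acts trivially on every nilpotent quotient of $\pi_1(\Sigma_{g,b})$. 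Since $\pi_1$ of a surface is residually nilpotent, $f_*$ acts trivially on $\pi_1$ itself (in the bordered case as an automorphism fixing $\zeta$; in the closed case as an outer automorphism). Then the Dehn--Nielsen--Baer theorem (injectivity of $\rho$, stated in \S\ref{subsec:Dehn-Nielsen}) forces $f = \mathrm{id}$ in $\mcg(\Sigma_{g,b})$. For the general $b > 1$ case one reduces to the bordered or closed case by capping boundary components, or argues directly with the based fundamental group, since the residual nilpotence of surface groups holds in all cases.

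I expect the main obstacle to be the homological bookkeeping in the $4$-manifold $W$: one must verify carefully that ``$\mcyl(f) \hookrightarrow W$ and $\Sigma_{g,b}\times[-1,1]\hookrightarrow W$ are homology isomorphisms'' really does feed into Stallings' theorem in the form needed — namely that the composite $\pi_1(\Sigma_{g,b}) \xrightarrow{m_\pm} \pi_1(\mcyl(f)) \to \pi_1(W) \leftarrow \pi_1(\Sigma_{g,b}\times[-1,1]) \leftarrow \pi_1(\Sigma_{g,b})$ identifies the two boundary inclusions on nilpotent quotients. This requires knowing that a homology isomorphism of the \emph{pair} (not just absolute homology) is available, so that Stallings applies to the inclusion-induced maps on $\pi_1$; one should track base points and the ``vertical'' arcs exactly as in the definition of $\rho_k$. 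Once this is set up, the rest is a diagram chase plus invocation of residual nilpotence and Dehn--Nielsen--Baer.

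An alternative, possibly cleaner, route — and the one Garoufalidis and Levine actually take — is to bypass $\pi_1$ and instead use the fact that $\rho_k$ extends to $\gcob(\Sigma_{g,b})$, or rather that the Johnson homomorphisms and the representation $\rho_k$ are invariants of the homology cobordism class (again by Stallings, since a homology cobordism induces isomorphisms on nilpotent quotients). Then $\mcyl$ followed by the quotient, followed by $\varprojlim_k \rho_k$, recovers $\rho: \mcg(\Sigma_{g,b}) \to \mathrm{Aut}(\pi)$ (or $\mathrm{Out}$), which is injective; hence the composition $\mcg \to \gcob$ is injective. I would present this second version as the main argument, remarking that the only nontrivial input beyond the definitions in the excerpt is that homology cobordant homology cobordisms have the same image under every $\rho_k$, which is a direct consequence of Stallings' theorem applied to the bounding $4$-manifold exactly as above.
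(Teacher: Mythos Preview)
Your approach is essentially the same as the paper's for $b\le 1$: apply Stallings' theorem to the inclusions $\Sigma_{g,b}^\pm \hookrightarrow W$ (where $W$ is the $4$-dimensional homology cobordism) to conclude that $f_*$ is trivial on every nilpotent quotient of $\pi=\pi_1(\Sigma_{g,b},\star)$; residual nilpotence of $\pi$ then gives $f_*=\Id$ on $\pi$; and the injectivity of the Dehn--Nielsen representation finishes the argument.

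Your handling of $b>1$, however, has a genuine gap. Neither of the two options you propose works as stated. If $\star$ lies on the $k$-th boundary component $\partial_k\Sigma_{g,b}$, then the Dehn twist along a curve parallel to $\partial_i\Sigma_{g,b}$ with $i\neq k$ acts \emph{trivially} on $\pi_1(\Sigma_{g,b},\star)$; so ``arguing directly with the based fundamental group'' only shows that $f$ is a product of such boundary twists, not that $f=\Id$. And capping boundary components kills precisely these twists, so that route cannot detect them either. The paper's fix (following \cite{CFK}) is to run the argument with the base point on \emph{each} boundary component in turn: for each $k$ one concludes that $f$ is a product of twists along $\partial_i\Sigma_{g,b}$ with $i\neq k$, and intersecting over all $k=1,\dots,b$ forces $f=\Id$. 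The acknowledgements in the paper flag that an earlier version had a gap at exactly this point, so it deserves care.
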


\begin{proof}[Sketch of the proof]
Let $f\in \mcg(\Sigma_{g,b})$ and assume that there exists a homology cobordism $W$ between $\mcyl(f)$ and $\Sigma_{g,b} \times [-1,1]$.
Let $\Sigma_{g,b}^+$ and $\Sigma_{g,b}^-$ be the two copies of $\Sigma_{g,b}$ in $W$ 
along which $\mcyl(f)$ and $\Sigma_{g,b} \times [-1,1]$ are glued. By assumption, the inclusion $\Sigma_{g,b}^\pm \subset W$
induces an isomorphism in homology and so, by Stallings' theorem \cite{Stallings}, 
it induces  an isomorphism at the level of the $k$-th nilpotent quotient of the fundamental group for any $k\geq 1$.
We deduce that $f_*: \pi_1(\Sigma_{g,b},\star) \to \pi_1(\Sigma_{g,b},\star)$ 
is the identity modulo $\Gamma_{k+1}  \pi_1(\Sigma_{g,b},\star)$ for all $k\geq 1$.
Here, the base point $\star$ is chosen on $\partial \Sigma_{g,b}$ if $b>0$ or on a small disk $D$ fixed by $f$ if $b=0$.
Since the group $\pi := \pi_1(\Sigma_{g,b},\star)$  is residually nilpotent, we deduce that $f_*:\pi \to \pi$ is the identity.

If $b=0$ or $b=1$, we conclude that $f$ is isotopic to the identity 
since the Dehn--Nielsen representation is injective (\S \ref{subsec:Dehn-Nielsen}).
If $b>1$, we number the boundary components of $\Sigma_{g,b}$ from $1$ to $b$,
and we assume that $\star$ belongs to the $k$-th component $\partial_k \Sigma_{g,b}$. 
The fact that $f_*:\pi \to \pi$ is the identity implies that
$f$ is a product of Dehn twists along curves parallel to $\partial_i \Sigma_{g,b}$ with $i\neq k$.
The conclusion follows by making $k$ range from $1$ to $b$: we refer to \cite{CFK} for details.
\end{proof}

The following example illustrates the richness of the group $\gcob(\Sigma_{g,b})$, 
which goes far beyond that of the mapping class group $\mcg(\Sigma_{g,b})$.

\begin{example}[Genus $0$]
\label{ex:group_genus_0}
For $b>1$, recall from Example \ref{ex:genus_0_cobordisms} that the monoid $\cob(\Sigma_{0,b})$
can be identified with the monoid of framed $(b-1)$-strand string-links in homology $3$-balls.  
The relation $\sim_H$ then corresponds to a cobordism relation 
which generalizes the concordance relation for string-links in $D^2 \times [-1,1]$ 
studied by Habegger and Lin \cite{HL_concordance,HL_link-homotopy}: see \cite[\S 3]{Levine}.  
For  $b=0$ and $b=1$, recall from Example \ref{ex:genus_0_cobordisms} 
that the monoid $\cob(\Sigma_{0,b})$ can be identified with the monoid
of homology $3$-spheres.  Thus, gauge-theoretic results of Furuta \cite{Furuta} and Fintushel--Stern \cite{FS} tell us
that the group $\gcob(\Sigma_{0,1}) \simeq \gcob(\Sigma_{0,0})$ has infinite rank.
\end{example}

\subsection{Representations of the homology cobordism group}

\label{subsec:group_homomorphisms}

As before, we are specially interested in the group $\gcob(\Sigma_{g,b})$ for $b=0$ and $b=1$,
in which case it is denoted by $\gcob_{g}$ and $\gcob_{g,1}$ respectively. 
Let us see how much the results from the previous sections apply to the study of those groups.
We start by indicating which invariants of homology cobordisms respect the relation $\sim_H$.  

As in the proof of Proposition \ref{prop:injectivity}, one can check that, for any $k\geq 1$,
the $k$-th nilpotent reduction of the Dehn--Nielsen representation (\S \ref{subsec:Dehn-Nielsen}) factorizes by the relation $\sim_H$.
Thus, we obtain group homomorphisms
$$
\gcob_{g,1} \stackrel{\rho_k}{\longrightarrow} \Aut(\pi/\Gamma_{k+1} \pi)
\quad \hbox{and} \quad
\gcob_{g}  \stackrel{\rho_k}{\longrightarrow}  \Out(\clocase{\pi}/\Gamma_{k+1} \clocase{\pi}).
$$
So, the Johnson filtration (\S \ref{subsec:Dehn-Nielsen}) can be defined on the groups $\gcob_{g,1}$ and $\gcob_g$:
$$
\gcob_{g,1}= \gcob_{g,1}[0] \supset \gcob_{g,1}[1] \supset \gcob_{g,1}[2] \supset \cdots
\quad \hbox{and} \quad
\gcob_{g}= \gcob_{g}[0] \supset \gcob_{g}[1] \supset \gcob_{g}[2] \supset \cdots 
$$
Note that $\gcyl_{g,1} =  \gcob_{g,1}[1]$ and $\gcyl_{g} =  \gcob_{g}[1]$.
The Johnson homomorphisms (\S \ref{subsec:Johnson}) can be defined on the subgroups of that filtration,
and similarly for the Morita homomorphisms (\S \ref{subsec:Morita}) in the bordered case.
 
Furthermore, the infinitesimal Dehn--Nielsen representation (\S \ref{subsec:total_Johnson}) 
is defined on the homology cobordism group, for any symplectic expansion $\theta$:
\vspace{-0.2cm}
$$
\gcob_{g,1} \stackrel{\varrho^\theta}{\longrightarrow} \Aut(\Lie_\Q)
\quad \hbox{and} \quad
\gcob_{g}  \stackrel{\varrho^{\theta}}{\longrightarrow}  \Out(\clocase{\Lie}_\Q).
$$
It follows from Theorem \ref{th:tree-reduction_LMO} and Theorem \ref{th:tree-reduction_LMO_closed} 
that the tree-reduction of the LMO homomorphism factorizes to the homology cobordism group of homology cylinders.

\begin{corollary}
The LMO homomorphism induces group homomorphisms
$$
Z^t: \gcyl_{g,1} \longrightarrow \GLike\left(\A^t(H_\Q)\right)
\quad \hbox{and} \quad
Z^t: \gcyl_{g} \longrightarrow \GLike\left(\clocase{\A}^t(H_\Q)\right).
$$
\end{corollary}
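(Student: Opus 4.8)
The plan is to reduce the statement to the fact, already recorded in \S\ref{subsec:group_homomorphisms}, that the infinitesimal Dehn--Nielsen representation $\varrho^\theta$ is defined on the homology cobordism group. Indeed, Theorem~\ref{th:tree-reduction_LMO} provides, for the symplectic expansion $\theta$ induced by the LMO functor, a commutative diagram which amounts on $\cyl_{g,1}$ to the identity
$$
\eta \circ \log_\star \circ Z^t = \log_\circ \circ \varrho^\theta .
$$
Here $\log_\star$, $\eta$ and $\log_\circ$ are \emph{fixed} isomorphisms — structural maps of $\A^t(H_\Q)$, $\A^{t,c}(H_\Q)$ and $\IAut_\omega(\Lie_\Q)$, independent of the homology cylinder — so one may write $Z^t = \Phi \circ \varrho^\theta$ where $\Phi := \log_\star^{-1} \circ \eta^{-1} \circ \log_\circ \colon \IAut_\omega(\Lie_\Q) \to \GLike(\A^t(H_\Q))$ is a fixed bijection. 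It therefore suffices to check that $\varrho^\theta \colon \cyl_{g,1} \to \IAut_\omega(\Lie_\Q)$ is constant on the $\sim_H$-classes.

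For this I would reuse the argument from the proof of Proposition~\ref{prop:injectivity}. If $M \sim_H M'$, a homology cobordism $W$ between them has both copies $\Sigma_{g,1}^{\pm} \subset W$ inducing homology isomorphisms; by Stallings' theorem they induce isomorphisms on every $k$-th nilpotent quotient of the fundamental group, whence $\rho_k(M) = \rho_k(M')$ for all $k \geq 1$. Since $\varrho^\theta$ is obtained from the family $(\rho_k)_{k\geq 1}$ by conjugating with $\theta$ and passing to the limit $k \to \infty$, it follows that $\varrho^\theta(M) = \varrho^\theta(M')$; restricted to homology cylinders these automorphisms lie in $\IAut_\omega(\Lie_\Q)$. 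Composing with the fixed bijection $\Phi$ then yields the induced map $Z^t \colon \gcyl_{g,1} \to \GLike(\A^t(H_\Q))$, which agrees with the tree-reduction of the LMO homomorphism on representatives and is automatically a group homomorphism because $\gcyl_{g,1}$ is a group.

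The closed case follows the same pattern, with Theorem~\ref{th:tree-reduction_LMO_closed} in place of Theorem~\ref{th:tree-reduction_LMO}: one gets $Z^t = \clocase{\Phi} \circ \varrho^\theta$ on $\cyl_g$ for the analogous fixed bijection $\clocase{\Phi} \colon \IOut(\clocase{\Lie}_\Q) \to \GLike(\clocase{\A}^t(H_\Q))$, and $\varrho^\theta$ descends to $\gcyl_g$ since each $\gcob_g \xrightarrow{\rho_k} \Out(\clocase{\pi}/\Gamma_{k+1}\clocase{\pi})$ is well defined by the same Stallings argument with outer automorphisms. Alternatively one transports the bordered statement through the commutative squares~(\ref{eq:Z_Z}) and~(\ref{eq:rho_rho}), using the surjectivity of $\clocase{\ \centereddot\ } \colon \cyl_{g,1} \to \cyl_g$.

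I do not expect a genuine obstacle here: the only point requiring care is the observation that the three isomorphisms appearing in Theorem~\ref{th:tree-reduction_LMO} do not depend on the homology cylinder, so that the factorization of $Z^t$ is literally equivalent to the already-established factorization of $\varrho^\theta$. A more self-contained proof — deducing directly from the construction of the LMO functor that $Z(M)$ and $Z(M')$ agree modulo looped Jacobi diagrams whenever $M \sim_H M'$ — would require a $4$-dimensional bordism argument and is considerably more involved, which is why routing through the infinitesimal Dehn--Nielsen representation is the efficient choice.
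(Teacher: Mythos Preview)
Your proposal is correct and follows essentially the same approach as the paper: the corollary is stated immediately after the observation that $\varrho^\theta$ descends to the homology cobordism group, and the paper simply says it ``follows from Theorem~\ref{th:tree-reduction_LMO} and Theorem~\ref{th:tree-reduction_LMO_closed}''. Your write-up merely spells out this one-line deduction, noting that the isomorphisms $\log_\star$, $\eta$, $\log_\circ$ in those theorems are fixed (independent of the cylinder), so the factorization of $Z^t$ through $\sim_H$ is equivalent to that of $\varrho^\theta$.
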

\noindent
However, the full LMO homomorphism is {\em not} invariant under the relation $\sim_H$.

Finally, by the $4$-dimensional definition of the Rochlin invariant, 
the Birman--Craggs homomorphism (\S \ref{subsec:degree_one}) also gives group homomorphisms
$$
\gcyl_{g,1} \stackrel{\beta}{\longrightarrow} B_{\leq 3}
\quad \hbox{and} \quad
\gcyl_{g} \stackrel{\beta}{\longrightarrow}  B_{\leq 3}/\Arf \cdot B_{\leq 1}.
$$

\subsection{The $Y$-filtration on the homology cobordism group}

For all $g,b\ge 0$, the $Y$-filtration on the monoid $\cyl(\Sigma_{g,b})$ 
induces a filtration on the group $\gcyl(\Sigma_{g,b})$ in the following way.
For all $k\geq 1$, we define $Y_k \gcyl(\Sigma_{g,b})$ to be the subgroup of $\gcyl(\Sigma_{g,b})$
consisting of homology cobordism classes of homology cylinders which are $Y_k$-equivalent to $\Sigma_{g,b} \times [-1,1]$.  
Thus, we get a decreasing sequence of subgroups\index{Y-filtration@$Y$-filtration}
$$
\gcyl(\Sigma_{g,b}) = Y_1\gcyl(\Sigma_{g,b}) \supset  Y_2\gcyl(\Sigma_{g,b})  \supset  Y_3\gcyl(\Sigma_{g,b}) \supset \cdots
$$
Two homology cobordism classes $\{M\},\{N\} \in \gcyl(\Sigma_{g,b})$ are said to be 
\emph{$Y_k$-equivalent}\index{relation!$Y_k$-equivalence}\index{Yk-equivalence relation@$Y_k$-equivalence relation}
if $M,N\in \cyl(\Sigma_{g,b})$ are related by a finite sequence of homology cobordisms and $Y_k$-surgeries.
Thus, we have an inclusion
$$
Y_k \gcyl(\Sigma_{g,b}) \subset 
\big\{ \{M\} \in \gcyl(\Sigma_{g,b}): \{M\} \hbox{ is $Y_k$-equivalent to } \{\Sigma_{g,b} \times [-1,1]\}\big\}.
$$ 

The $Y_k$-equivalence on $\gcyl(\Sigma_{g,b})$ is an equivalence relation,
and the quotient set $\gcyl(\Sigma_{g,b})/Y_k$ can be regarded either 
as a quotient of the group $\gcyl(\Sigma_{g,b})$, or as a quotient of the group $\cyl(\Sigma_{g,b})/Y_{k}$.
Thus, the  projection $\cyl(\Sigma_{g,b}) \to \gcyl(\Sigma_{g,b})$ 
induces a surjective Lie ring homomorphism
\begin{gather}
\label{eq:monoid_to_group}
\xymatrix{
\Gr^Y \cyl(\Sigma_{g,b}) \ar@{->>}[r] & \Gr^Y \gcyl(\Sigma_{g,b})
}
\end{gather}
where $\Gr^Y \cyl(\Sigma_{g,b})$ is the graded Lie ring studied in Section \ref{sec:Lie_ring_homology_cylinders} and where
\begin{gather*}
 \Gr^Y \gcyl(\Sigma_{g,b}) :=\bigoplus_{k\ge1} \frac{Y_k\gcyl(\Sigma_{g,b})}{Y_{k+1}}.
\end{gather*}

We now specialize to the cases $b=0,1$
and we  first deal with the degree $1$ case.

\begin{proposition}
\label{prop:groups_degree_1}
For all $g\geq 0$, we have the following isomorphisms:
$$
\frac{\gcyl_{g,1}}{Y_2}  \mathop{\longrightarrow}^{(\tau_1,\beta)}_\simeq \Lambda^3 H \times_{\Lambda^3 H_{(2)}} B_{\leq 3}
\quad \hbox{and} \quad
\frac{\gcyl_{g}}{Y_2}  \mathop{\longrightarrow}^{(\tau_1,\beta)}_\simeq 
\frac{\Lambda^3 H}{\omega\wedge H} \times_{\frac{\Lambda^3 H_{(2)}}{\omega \wedge H_{(2)}}} \frac{B_{\leq 3}}{\Arf \cdot B_{\leq 1}}.
$$
\end{proposition}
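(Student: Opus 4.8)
The plan is to deduce Proposition~\ref{prop:groups_degree_1} from the degree~$1$ computations already carried out at the monoid level, namely Theorem~\ref{th:Y_2} in the bordered case and Theorem~\ref{th:Y_2_closed} in the closed case, together with a short diagram chase based on the surjection~(\ref{eq:monoid_to_group}).

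First I would check that both $\tau_1$ and $\beta$ are invariant under the relation $\sim_H$. For $\tau_1$ this holds because it is read off from the nilpotent reductions $\rho_k$ of the Dehn--Nielsen representation, and these factor through $\sim_H$ by Stallings' theorem, exactly as in \S\ref{subsec:group_homomorphisms} and as in the proof of Proposition~\ref{prop:injectivity}. For $\beta$ it holds by its $4$-dimensional definition in terms of the Rochlin invariant, again as recorded in \S\ref{subsec:group_homomorphisms}. Hence the isomorphism of Theorem~\ref{th:Y_2},
$$(\tau_1,\beta): \cyl_{g,1}/Y_2 \stackrel{\simeq}{\longrightarrow} \Lambda^3 H \times_{\Lambda^3 H_{(2)}} B_{\leq 3},$$
is constant on the fibres of the canonical surjection $q: \cyl_{g,1}/Y_2 \twoheadrightarrow \gcyl_{g,1}/Y_2$ (the degree~$1$ part of~(\ref{eq:monoid_to_group})), so it descends to a homomorphism $(\tau_1,\beta): \gcyl_{g,1}/Y_2 \to \Lambda^3 H \times_{\Lambda^3 H_{(2)}} B_{\leq 3}$ whose pre-composition with $q$ is again the map of Theorem~\ref{th:Y_2}.

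Now comes the diagram chase. Since the composite $(\tau_1,\beta)\circ q$ is an isomorphism and $q$ is surjective, $q$ is also injective (if $q(x)=q(x')$ then $(\tau_1,\beta)\,q(x)=(\tau_1,\beta)\,q(x')$ forces $x=x'$), hence $q$ is an isomorphism; consequently $(\tau_1,\beta)$ on $\gcyl_{g,1}/Y_2$ equals $((\tau_1,\beta)\circ q)\circ q^{-1}$ and is an isomorphism as well. Unwound, this simply says: a homology cylinder that is homology cobordant to $\Sigma_{g,1}\times[-1,1]$ has vanishing $\tau_1$ and $\beta$, hence is already $Y_2$-equivalent to the trivial cylinder by Theorem~\ref{th:Y_2}, so passing to $\gcyl$ introduces no relation modulo $Y_2$. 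The closed case is obtained verbatim, replacing Theorem~\ref{th:Y_2} by Theorem~\ref{th:Y_2_closed}, the target by $\frac{\Lambda^3 H}{\omega\wedge H} \times_{\frac{\Lambda^3 H_{(2)}}{\omega \wedge H_{(2)}}} \frac{B_{\leq 3}}{\Arf \cdot B_{\leq 1}}$, and using that $\beta$ is still well defined modulo $\Arf\cdot B_{\leq 1}$ after passing to $\sim_H$.

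The main (and essentially only) point requiring care is the $\sim_H$-invariance of $(\tau_1,\beta)$: one must confirm that a homology cobordism $W$ realizing $M\sim_H N$ affects neither the first Johnson homomorphism --- apply Stallings to both copies $\Sigma_{g,1}^{\pm}\subset W$ --- nor the Rochlin-derived function $\beta$ --- glue $W$ into the $4$-manifolds bounding the closures and check, as in the proof of Proposition~\ref{prop:variation_Rochlin}, that the Wall correction term vanishes. Granting this, there is no further geometric input and the proposition follows formally.
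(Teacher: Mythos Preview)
Your proposal is correct and follows essentially the same argument as the paper: invoke the $\sim_H$-invariance of $\tau_1$ and $\beta$ from \S\ref{subsec:group_homomorphisms}, then use the commutative triangle with the surjection $\cyl/Y_2 \twoheadrightarrow \gcyl/Y_2$ and the isomorphisms of Theorem~\ref{th:Y_2} and Theorem~\ref{th:Y_2_closed} to conclude. The paper leaves the diagram chase implicit, whereas you spell it out; your final paragraph on the Wall correction term is unnecessary here since the $\sim_H$-invariance of $\beta$ is already recorded in \S\ref{subsec:group_homomorphisms}.
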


\begin{proof}
We saw in \S \ref{subsec:group_homomorphisms} that $\tau_1$ and $\beta$ factorize by the relation of homology cobordism.
Thus, we have the following commutative triangles:
$$
\xymatrix{
\cyl_{g,1}/Y_2 \ar@{->>}[r] \ar[rd]^-\simeq_-{(\tau_1,\beta)}
& \gcyl_{g,1}/Y_2  \ar[d]^-{(\tau_1,\beta)}\\
&  \Lambda^3 H \times_{\Lambda^3 H_{(2)}} B_{\leq 3}
}
\quad \hbox{and} \quad
\xymatrix{
\cyl_{g}/Y_2 \ar@{->>}[r] \ar[rd]^-\simeq_-{(\tau_1,\beta)}
& \gcyl_{g}/Y_2  \ar[d]^-{(\tau_1,\beta)}\\
&  \frac{\Lambda^3 H}{\omega\wedge H} \times_{\frac{\Lambda^3 H_{(2)}}{\omega \wedge H_{(2)}}} \frac{B_{\leq 3}}{\Arf \cdot B_{\leq 1}}
}
$$
The horizontal arrows correspond to the map (\ref{eq:monoid_to_group}) in degree $1$,
and the diagonal arrows are isomorphisms by Theorem \ref{th:Y_2} and Theorem \ref{th:Y_2_closed}.
\end{proof}

In higher degree, we need the following important fact.

\begin{lemma}[Levine \cite{Levine}]
\label{lem:looped}
Let $M$ be a cobordism of $\Sigma_{g,b}$ 
and let $C$ be a graph clasper in $M$. If $C$ is not a tree clasper, then $M_C$ is homology cobordant to $M$.
\end{lemma}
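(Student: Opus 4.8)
The plan is to exhibit an explicit homology cobordism between $M$ and $M_C$ when $C$ is a graph clasper containing at least one loop. The starting observation is that surgery along $C$ can be realized inside a $4$-dimensional cobordism: thicken $M$ to $M \times [0,1]$, and note that performing surgery along the framed link $L$ associated with $C$ (via the procedure recalled in \S\ref{subsec:Y}) corresponds to attaching $2$-handles to $M \times \{1\}$ along $L \subset M \times \{1\}$. Call the resulting $4$-manifold $W$; its boundary is $(-M) \cup M_C$, glued along the boundary parameterization, so $W$ is a candidate cobordism. It remains to check that the inclusions $M \hookrightarrow W$ and $M_C \hookrightarrow W$ induce isomorphisms on homology, which is where the loopedness hypothesis enters.

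First I would reduce to a standard normal form for $C$. Using calculus of claspers (\cite{Habiro,GGP}), replacing each node by a Borromean configuration of leaves and then each $Y_0$-tree by a $2$-component framed link, the clasper $C$ produces a framed link $L = L_1 \cup L_2$ in $M$ where one sublink consists of $0$-framed unknots (coming from the ``first components'' of each $Y_0$-tree) and the other consists of curves following the edges and leaves of $C$. The key homological input is that the first homology class carried by the leaves of a \emph{tree} clasper can be made trivial after the surgeries, whereas a loop in $C$ forces some leaf (or some combination of edges around the loop) to represent a nontrivial cycle, and precisely this nontriviality makes the corresponding $2$-handle attachment homologically transparent: the attaching circle is null-homologous in $M$ (it bounds, being parallel to an edge-path closing up a loop), so attaching a $2$-handle along it adds a generator to $H_2$ and — together with the dual $3$-handle coming from the $0$-framed partner — contributes trivially to the Mayer–Vietoris sequence computing $H_*(W)$.

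Concretely, the main computation I would carry out is the Mayer–Vietoris / handle-decomposition argument: write $W = (M \times [0,1]) \cup (\text{$2$-handles})$, compute $H_*(W, M \times \{0\})$ as generated by the cores of the $2$-handles, observe that because $C$ is looped each such relative class pairs trivially (over $\Z$) with the image of $H_*(M)$ — equivalently, the attaching maps are null-homologous — and conclude $H_*(M \times \{0\}) \xrightarrow{\simeq} H_*(W)$, hence also $H_*(M_C) \xrightarrow{\simeq} H_*(W)$ by Poincaré–Lefschetz duality on $W$. I expect the main obstacle to be the bookkeeping that extracts, from the loop in $C$, the precise statement that every $2$-handle attaching circle appearing in the surgery presentation is $\Z$-null-homologous in $M$; this is essentially a clasper-combinatorics lemma saying that a non-tree graph clasper can be isotoped so that its leaves (and hence the associated surgery link) lie in the kernel of $H_1$, and one should invoke the clasper moves of \cite{Habiro} (in particular the ability to slide and split edges, as in Example \ref{ex:move_2}) to put $C$ into such a form. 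Once that normal form is in hand, the $4$-dimensional argument and the homology computation are routine.
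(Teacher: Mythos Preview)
Your approach has a genuine gap: the trace $W = (M\times [0,1]) \cup (\text{$2$-handles along }L)$ is essentially \emph{never} a homology cobordism, regardless of whether $C$ is looped. From the long exact sequence of the pair $(W,M)$ one has $H_2(W,M)\cong \Z^{|L|}$ generated by the handle cores, and the connecting map $H_2(W,M)\to H_1(M)$ sends each core to the class of its attaching circle. For $M\hookrightarrow W$ to induce an isomorphism on $H_*$ you would need this connecting map to be simultaneously injective (so that $H_2(M)\cong H_2(W)$) and zero (so that $H_1(M)\cong H_1(W)$), forcing $L=\varnothing$. In particular, if all attaching circles are null-homologous --- which is exactly what you argue --- then each $2$-handle contributes a new class to $H_2(W)$ (core capped with a Seifert surface), so $H_2(M)\to H_2(W)$ fails to be onto. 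There are no $3$-handles in this trace to cancel anything; the ``dual $3$-handle coming from the $0$-framed partner'' does not exist.

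There is also a problem with the combinatorial claim itself: loopedness of $C$ refers to the internal graph of nodes and edges, not to the homology carried by the leaves. The leaves of a looped clasper can represent arbitrary classes in $H_1(M)$, so one cannot arrange by clasper moves that every component of $L$ is null-homologous.

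The argument in \cite{Levine} exploits the loop differently. Cut an edge of the cycle using Move~2 (Example~\ref{ex:move_2}) to obtain an equivalent clasper $C'$ with a new leaf $\ell$ which is a small $0$-framed unknot bounding a disk $D\subset M$ that meets the rest of $C'$ in a single band. In $M\times [0,1]$ one pushes the interior of $D$ off $M\times\{1\}$, so that $\ell$ bounds a disk disjoint from $C'\setminus\ell$. This exhibits a \emph{concordance} in $M\times[0,1]$ between the framed surgery link for $C'$ and the surgery link for a clasper with a trivially-embedded leaf, whose surgery effect is trivial. Surgery on $M\times[0,1]$ along the concordance annuli then produces the desired homology cobordism between $M_C$ and $M$; the Mayer--Vietoris computation works here precisely because one is doing surgery along annuli (hence replacing $S^1\times D^2\times I$ by $D^2\times S^1\times I$), not merely attaching $2$-handles to one end.
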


We  determine the graded Lie algebra $\Gr^Y \gcyl(\Sigma_{g,b})\otimes \Q$ for $b=0,1$ as follows.

\begin{theorem}[Levine \cite{Levine}]
\label{th:groups_rational}
For any $g\geq 0$, the family of all the Johnson homomorphisms $\tau :=\oplus_{k\geq 1} \tau_k$ induces the following isomorphisms:
$$
\Gr^Y \gcyl_{g,1} \otimes \Q  \mathop{\longrightarrow}^{\eta^{-1}  \tau}_\simeq \A^{t,c}(H_\Q)
\quad \hbox{and} \quad
\Gr^Y \gcyl_{g} \otimes \Q  \mathop{\longrightarrow}^{\eta^{-1}  \tau}_\simeq \clocase{\A}^{t,c}(H_\Q)
$$
\end{theorem}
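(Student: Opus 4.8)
The plan is to factor the isomorphism $\Gr^Y\cyl_{g,1}\otimes\Q\xrightarrow{\sim}\A^c(H_\Q)$ of Theorem \ref{th:LMO_iso} through the surjection $\Gr^Y\cyl_{g,1}\otimes\Q\twoheadrightarrow\Gr^Y\gcyl_{g,1}\otimes\Q$ of (\ref{eq:monoid_to_group}), and to show that passing to homology cobordism is the same as killing looped diagrams. First I would compose the surgery isomorphism $\psi\colon\A^{<,c}(H_\Q)\xrightarrow{\sim}\Gr^Y\cyl_{g,1}\otimes\Q$ of Theorem \ref{th:surgery}, \ref{th:LMO_iso} with the projection to $\Gr^Y\gcyl_{g,1}\otimes\Q$ and call the result $\overline{\psi}$; it is a surjective morphism of graded Lie algebras. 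Since $\psi$ sends a connected Jacobi diagram $D$ to the class of $(\Sigma_{g,1}\times[-1,1])_{C(D)}$ for a graph clasper $C(D)$ obtained by thickening $D$, and since $C(D)$ is a tree clasper precisely when $D$ is contractible, Levine's Lemma \ref{lem:looped} shows that $\overline{\psi}$ kills every connected \emph{looped} Jacobi diagram. Moreover, once looped diagrams are killed the STU-like relation degenerates to ``the total order on external vertices is irrelevant'' (fusing two external vertices of a connected tree always creates a cycle), so that $\A^{<,c}(H_\Q)/\langle\hbox{connected looped}\rangle$ is canonically $\A^{t,c}(H_\Q)$, compatibly with $\chi$. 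Hence $\overline{\psi}$ descends to a surjective graded Lie algebra map $\overline{\psi}\colon\A^{t,c}(H_\Q)\to\Gr^Y\gcyl_{g,1}\otimes\Q$.

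For the reverse arrow I would use the Johnson homomorphisms. As recalled in \S\ref{subsec:group_homomorphisms}, each $\tau_k$ factors through $\sim_H$, so using $Y_k\gcyl_{g,1}\subseteq\gcob_{g,1}[k]$ and $Y_{k+1}\gcyl_{g,1}\subseteq\gcob_{g,1}[k+1]=\Ker\tau_k$ one gets $\tau_k\colon Y_k\gcyl_{g,1}/Y_{k+1}\to\operatorname{D}_{k+2}(H)$; summing over $k$, tensoring with $\Q$, and composing with the inverse of the isomorphism $\eta$ of \S\ref{subsec:LMO_tree} yields $\eta^{-1}\tau\colon\Gr^Y\gcyl_{g,1}\otimes\Q\to\A^{t,c}(H_\Q)$. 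The heart of the argument is the identity $\eta^{-1}\tau\circ\overline{\psi}=\Id$ on $\A^{t,c}(H_\Q)$; equivalently, $\tau_k\big((\Sigma_{g,1}\times[-1,1])_{C(T)}\big)=\eta_k(T)$ for every degree-$k$ tree Jacobi diagram $T$. I would obtain this either by the standard clasper computation of the Johnson homomorphism of a $Y_k$-tree surgery, or by projecting the identity $\Gr Z\circ\psi=\chi^{-1}$ (from the proof of Theorem \ref{th:LMO_iso}) to the tree part and invoking Corollary \ref{cor:LMO_to_Johnson}: at the graded level, the degree-$k$ part of the tree-reduced $\Gr Z$ on $Y_k\cyl_{g,1}/Y_{k+1}$ is $\eta^{-1}\tau_k$, and $Z^t$ (hence this graded piece) descends to $\gcyl_{g,1}$.

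Granting $\eta^{-1}\tau\circ\overline{\psi}=\Id$, the map $\overline{\psi}$ is split injective, and being also surjective it is an isomorphism; therefore $\eta^{-1}\tau=\overline{\psi}^{-1}$ is an isomorphism of graded Lie algebras, which is the bordered assertion. For the closed case one runs the same argument with $\cyl_{g,1}$, $\A^{<,c}(H_\Q)$, $\A^{t,c}(H_\Q)$, $\psi$, $\eta$ replaced by $\cyl_g$, $\clocase{\A}^{<,c}(H_\Q)$, $\clocase{\A}^{t,c}(H_\Q)$, the closed surgery map, and the induced isomorphism $\clocase{\A}^{t,c}(H_\Q)\cong\ODer(\clocase{\Lie}_\Q,\clocase{\Lie}_{\Q,\geq2})$; Lemma \ref{lem:looped} holds for any $\Sigma_{g,b}$, and $\Gr Z\circ\psi=\chi^{-1}$ is replaced by its closed counterpart underlying Theorem \ref{th:LMO_iso_closed}. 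Alternatively one deduces the closed statement from the bordered one via the surjection $\clocase{\ \centereddot\ }\colon\cyl_{g,1}\to\cyl_g$ of Lemma \ref{lem:closing} and the commutative prism of Theorem \ref{th:LMO_iso_closed}, checking that the ideal $I^{t,c}$ is exactly what is added to the kernel.

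The step I expect to be the main obstacle is the identity $\eta^{-1}\tau\circ\overline{\psi}=\Id$. This is precisely where one must pin down the normalization of the surgery map against the Johnson homomorphisms (or carry out the clasper computation of $\tau_k$ of a tree surgery), and it is also the point where one genuinely uses that $\eta$ is an \emph{isomorphism} rather than merely a map: surjectivity of $\overline{\psi}$ is cheap, but injectivity on the tree part is extracted entirely from the fact that the (homology-cobordism-invariant) Johnson homomorphisms, read through the isomorphism $\eta$, already separate all tree diagrams.
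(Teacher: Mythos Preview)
Your proposal is correct and follows essentially the same route as the paper: use Lemma~\ref{lem:looped} to factor the surgery map $\psi$ through $\A^{t,c}(H_\Q)$ onto $\Gr^Y\gcyl_{g,1}\otimes\Q$, note that the Johnson homomorphisms descend to the graded quotients via (\ref{eq:Y_to_Johnson}) and \S\ref{subsec:group_homomorphisms}, and then establish the key identity $\tau_k\circ\psi_k=\eta_k$ either by a direct clasper computation of $\tau_k$ under $Y_k$-tree surgery or, alternatively, by combining Corollary~\ref{cor:LMO_to_Johnson} with Theorem~\ref{th:LMO_iso}. The paper's proof is exactly this argument, stated more tersely; your additional remark that the STU-like relation becomes vacuous on connected trees (so that $\A^{<,c}(H_\Q)/\langle\text{looped}\rangle\cong\A^{t,c}(H_\Q)$) is a correct and useful elaboration of a step the paper leaves implicit.
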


\vspace{-0.3cm}

\begin{proof}
According to (\ref{eq:Y_to_Johnson}), the map $\tau$ is well-defined on $\Gr^Y \gcyl_{g,1}$ and $\Gr^Y \gcyl_g$.
It follows from Lemma \ref{lem:looped} that 
the  surgery maps defined in \S \ref{subsec:Lie_algebra_homology_cylinders} factorize to surjective maps
$\psi: \A^{t,c}(H_\Q) \to \Gr^Y \gcyl_{g,1}\otimes \Q$ and $\psi: \clocase{\A}^{t,c}(H_\Q) \to \Gr^Y \gcyl_{g}\otimes \Q$.
An explicit computation of the variation of $\tau_k$ under $Y_k$-surgery shows that the following triangles are commutative:
\vspace{-0.1cm}
$$
\xymatrix{
\A^{t,c}_k(H_\Q) \ar[rd]_{\eta_k}^-\simeq \ar@{->>}[r]^-{\psi_k} & \frac{Y_k \gcyl_{g,1}}{Y_{k+1}}\otimes \Q  \ar[d]^-{\tau_k}\\
& \operatorname{D}_{k+2}(H_\Q)
}
\quad \hbox{and} \quad
\xymatrix{
\clocase{\A}^{t,c}_k(H_\Q) \ar[rd]_{\eta_k}^-\simeq \ar@{->>}[r]^-{\psi_k} & \frac{Y_k \gcyl_{g}}{Y_{k+1}}\otimes \Q  \ar[d]^-{\tau_k} \\
& \clocase{\operatorname{D}}_{k+2}(H_\Q)
}
$$
\vspace{-0.1cm}
(Alternatively, we can deduce this from Corollary \ref{cor:LMO_to_Johnson} and Theorem \ref{th:LMO_iso}.)
\end{proof}

\begin{remark}
\label{rem:groups_integral_coefficients}
The structure of the abelian group $\Gr^Y \gcyl_{g,1}$ does not seem to be known in degree $>1$.
The surgery map $\psi_k:  \A^{t,c}_k(H) \to  Y_k \gcyl_{g,1}/Y_{k+1}$ exists with integral coefficients
and is surjective, for any $k\geq 2$. (See Remark \ref{rem:integral_coefficients}.)
However, the question of its injectivity seems to be open. 
See \cite{Levine_addendum,Levine_quasi-Lie} in this connection.
\end{remark}

The above results show that the $Y$-filtration is not very well suited to the study of the group $\gcob(\Sigma_{g,b})$,
since the associated graded Lie algebra is (for $b=0,1$ and for rational coefficients) 
completely determined by the Johnson homomorphisms.
This inadequacy is particularly apparent in genus $0$.

\begin{example}[Genus $0$]
Recall from Example \ref{ex:group_genus_0} that $\gcyl_{0,1} \simeq \gcyl_{0,0}$
is the homology cobordism group of homology $3$-spheres.
We deduce from Theorem \ref{th:Y_2} that a homology $3$-sphere $M$ is $Y_2$-equivalent to $S^3$
if and only if its Rochlin invariant is trivial. Assume this for $M$. 
Then, it can be proved that, for all $k\geq 2$, $M$ is obtained
from $S^3$ by surgery along a disjoint union of graph claspers which are not trees 
or which have degree $k$. We deduce from Lemma \ref{lem:looped} that, for all $k\geq 2$,
there is a homology $3$-sphere $M_k$ which is simultaneously $Y_k$-equivalent to $S^3$ and homology cobordant to $M$. 
This shows that the $Y$-filtration on the homology cobordism group of homology $3$-spheres stabilizes starting from the degree $2$.  
(This phenomenon is similar to the fact that the Arf invariant of knots is the only
Vassiliev invariant which is preserved by concordance \cite{Ng}.)
\end{example}

\subsection{Other aspects of the homology cobordism group}

The group $\gcob(\Sigma_{g,b})$ has been the subject of recent works in other directions.
The reader is  referred to Sakasai's chapter for other aspects of the homology cobordism group.
To conclude, we simply mention a few developments which tend to show that $\gcob(\Sigma_{g,b})$
has quite different properties from $\mcg(\Sigma_{g,b})$:

\begin{itemize}
\item By using his ``trace maps'' \cite{Morita_Abelian}, Morita shows that the abelianization of the group $\gcyl_{g,1}$ 
has infinite rank \cite{Morita_birthday}. In particular, the group $\gcyl_{g,1}$ is not finitely generated, 
which contrasts with the fact that $\Torelli_{g,1}$ is finitely generated for $g \geq 3$ \cite{Johnson_generation}.

\item By using Heegaard--Floer homology, Goda  and Sakasai show that the submonoid of $\cob_{g,1}$ 
consisting of irreducible homology cobordisms is not finitely generated \cite{GS}. 
This contrasts with the well-known fact that $\mcg_{g,1}$ is finitely generated.

\item By using  Reidemeister torsion, Cha, Friedl and Kim prove that the abelianization of $\gcob(\Sigma_{g,b})$
contains a direct summand isomorphic to $(\Z_2)^{\infty}$ if $2g+b>1$ 
and a direct summand isomorphic to $\Z^\infty \oplus (\Z_2)^{\infty}$ if $b>1$ \cite{CFK}.
Thus, in contrast with $\mcg(\Sigma_{g,b})$, the group $\gcob(\Sigma_{g,b})$ is not finitely generated (and neither is it perfect).
\end{itemize} 

\vspace{0.5cm}

\noindent
\textbf{Acknowledgement.}
The authors would like to thank Jean--Baptiste Meilhan and Takuya Sakasai for various comments and suggestions.
They are also grateful to Stefan Friedl for showing them a gap in a previous version of the proof of Proposition \ref{prop:injectivity}.
The first author was partially supported by Grant-in-Aid for Scientific Research (C) 21540078.
The second author was partially supported by the French ANR research project ANR-08-JCJC-0114-01.

\frenchspacing

\printindex

\end{document}